\newtheorem{thm}{Theorem}[section]
\newtheorem{lem}[thm]{Lemma}
\newtheorem{prop}[thm]{Proposition}
\newtheorem{cor}[thm]{Corollary}
\newtheorem{ques}[thm]{Question}
\theoremstyle{definition}
\newtheorem{defn}{Definition}[section]
\theoremstyle{remark}
\newtheorem{rem}{Remark}[section]
\makeatletter \@addtoreset{equation}{section}
\newcommand{\thmref}[1]{Theorem~\ref{#1}}
\newcommand{\lemref}[1]{Lemma~\ref{#1}}
\newcommand{\corref}[1]{Corollary~\ref{#1}}
\newcommand{\propref}[1]{Proposition~\ref{#1}}
\def\a{\alpha}
\def\b{\beta}
\def\ep{\epsilon}
\def\la{\lambda}
\def\p{\partial}
\def\vphi{\varphi}
\def\cD{{\cal D}}
\def\cD{\mathcal D}
\def\Int{{\rm Int}\,}
\def\and{\quad{\rm and}\quad}
\def\eps{\epsilon}
\let\lra=\longrightarrow
\def\mapright{\xrightarrow}
\def\mapright\#1{\,\smash{\mathop{\lra}\limits^{\#1}}\,}
\def\om{\omega}
\def\Om{\Omega}
\def\tri{\triangle}
\def\ul{\underline}
\newcommand{\norm}[1]{\left\Vert#1\right\Vert}
\newcommand{\abs}[1]{\left\vert#1\right\vert}
\newcommand{\set}[1]{\left\{#1\right\}}
\newcommand{\pfrac}[2]{\frac{\partial #1}{\partial #2}}
\numberwithin{equation}{section}
\DeclareMathOperator{\osc}{Osc}
\DeclareMathOperator{\Tr}{Tr}
\let\norm=\enVert
\newcommand{\Rmnum}[1]{\expandafter\@slowromancap\romannumeral #1@}
\newcommand{\pushright}[1]{\ifmeasuring@#1\else\omit\hfill$\displaystyle#1$\fi\ignorespaces}  
\newcommand{\pushleft}[1]{\ifmeasuring@#1\else\omit$\displaystyle#1$\hfill\fi\ignorespaces}
\title[Uniqueness of cscK cone metrics]{Geodesics in the space of K\"ahler cone metrics, II. Uniqueness of constant scalar curvature K\"ahler cone metrics}
\author [Kai Zheng]{Kai Zheng}
  \address{Mathematics Institute, University of Warwick, Coventry CV4 7AL, UK}
  \email{K.Zheng@warwick.ac.uk}
\keywords{}
\begin{document}
\maketitle
\begin{abstract} This is a continuation of the previous articles on K\"ahler cone metrics. In this article, we introduce weighted function spaces and provide a self-contained treatment on cone angles in the whole interval $(0,1]$. We first construct geodesics in the space of K\"ahler cone metrics (cone geodesics). We next determine the very detailed asymptotic behaviour of constant scalar curvature K\"ahler (cscK) cone metrics, which leads to the reductivity of the automorphism group. Then we establish the linear theory for the Lichnerowicz operator, which immediately implies the openness of the path deforming the cone angles of cscK cone metrics. Finally, we address the problem on the uniqueness of cscK cone metrics and show that the cscK cone metric is unique up to automorphisms.
\end{abstract}
\tableofcontents
\section{Introduction}

Geodesics in the space of K\"ahler metrics, first established in Chen \cite{MR1863016}, are fundamental geometric objects in K\"ahler geometry and link constant scalar curvature K\"ahler (cscK) metrics in differential geometry to geometric invariant theory, see the foundational articles by Calabi-Chen \cite{MR1969662}, and also Donaldson \cite{MR1736211,MR2103718}.
One main aim of this article is to construct geodesics in the space of K\"ahler metrics with cone singularities (cone geodesics), \thmref{geo existence} below. To a large extend, there is a continuously growing interest in studying the constant scalar curvature K\"ahler metrics with cone singularities, see Chen \cite{arXiv:1506.06423}, due to the up-to-date achievement in K\"ahler geometry. One another aim of this article is to cooperate the cone geodesics, roughly speaking, with the constant scalar curvature K\"ahler metrics with cone singularities (cscK cone metrics, Definition \ref{csckconemetricdefn}), \thmref{Uniqueness}, in the general conjectural picture. We refer to the excellent review articles by Chen \cite{arXiv:1506.06423} and Donaldson \cite{MR3381498} for many more references and a more detailed updated account of the development of the cscK metric problem.


The approach we take in the present article is to solve a Dirichlet problem for homogeneous complex Monge-Amp\`ere (HCMA) equation with cone boundary values. Let $D$ be a smooth divisor in a smooth closed $n$-dimensional K\"ahler manifold $X$ and $\om_0$ be a smooth K\"ahler metric in $X$.
In this article, the cone angle is assumed to be $$0 <\b\leq 1.$$ Let $\mathcal H_{\b}$ be the space of K\"ahler cone metrics of cone angle $\b$, which are K\"ahler metrics in $M:=X\setminus D$ and have cone singularities of angle $\b$ along the divisor $D$. The geodesic segment $\{\vphi(t);0\leq t\leq 1\}$ connecting two K\"ahler cone metrics $\{\vphi_i\in\mathcal H_\beta, i=0,1\}$,
turns out to be a HCMA on the product manifold $M\times R  $,
\begin{align}\label{geo smooth}
{(\Om_{0}+i\p\bar\p\Psi)^{n+1}=0}.
\end{align} 
Here $R=S^1\times [0,1]$ is a cylinder with boundary, and $\Om_0, \Psi$ is the pull-back metric of $\om_0$, $\vphi$ respectively
under the natural projection. 

There is extensive literature on Dirichlet problem for homogeneous complex Monge-Amp\`ere equation. We will extend the method developed in Chen \cite{MR1863016} and He \cite{MR3298665} to our cone geodesics.
In our case, the source of difficulties are twofold: not only the right hand side is degenerate, but also the boundary values are singular. They are the main technical difficulties, in particular for the regularity theory. We first construct the generalised cone geodesic, which has bounded spatial Laplacian, see Definition \ref{ctribw}.

\begin{thm}[Cone geodesic]\label{geo existence}
	Suppose that $0<\b\leq1$ and $\om_1,\om_2$ are two K\"ahler cone metrics in $\mathcal H_{\b}$. Then there exists a unique generalised cone geodesic connecting them. 
	
\end{thm}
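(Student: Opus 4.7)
The plan is to adapt the strategy of Chen \cite{MR1863016} and He \cite{MR3298665} to the cone setting. Viewing \eqref{geo smooth} as a degenerate Dirichlet problem on $M\times R$ with singular boundary data $\vphi_0,\vphi_1\in\mathcal H_\b$, the difficulty is twofold: the right-hand side is degenerate and the boundary data are singular along $D\times\p R$. I would regularise both and prove $\ep$-uniform a priori estimates in the weighted function spaces introduced earlier in the paper, then pass to the limit to obtain a generalised cone geodesic with bounded spatial Laplacian in the sense of Definition \ref{ctribw}.

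For the construction, for every $\ep>0$ I would smooth the cone potentials $\vphi_i$ to $\vphi_i\ueps$ (for instance by replacing the model $|s|^{2\b}$-singularity with $(|s|^2+\ep^2)^\b$) and solve the non-degenerate Dirichlet problem
\[
(\Om_0+i\p\bar\p\Psi\ueps)^{n+1}=\ep\,\Om_0^{n+1},\qquad \Psi\ueps\big|_{M\times\p R}=\vphi_i\ueps,
\]
on an exhaustion of $M\times R$ by smooth sub-domains obtained by excising tubular neighbourhoods of $D\times R$; on each such sub-domain the problem is classical by Caffarelli--Kohn--Nirenberg--Spruck. The heart of the matter is then bounding $\Psi\ueps$ in the weighted H\"older norms independently of $\ep$ and of the exhaustion. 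The $C^0$-estimate follows from the maximum principle with the linear interpolation $(1-t)\vphi_0\ueps+t\vphi_1\ueps$ as a barrier; boundary gradient and boundary Laplacian estimates are produced by barrier functions adapted to the $|s|^{2\b-2}$-weight of the cone metric near $D\times R$; interior gradient and spatial Laplacian estimates follow from a Yau-type maximum principle computed in the cone background $\om_0$, the weight $\b$ being exactly what absorbs the singular Ricci contribution of $\om_0$ along $D$.

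Passing to the limit $\ep\to 0$ jointly with the exhaustion yields the desired generalised cone geodesic. For uniqueness, the bounded-spatial-Laplacian hypothesis places two candidate solutions $\Psi,\Psi'$ in a pluripotential class to which the Bedford--Taylor comparison principle for HCMA applies, and coincidence of the boundary values forces $\Psi=\Psi'$. The principal technical obstacle is the boundary spatial Laplacian estimate along $D\times\p R$: $\Om_0$ has unbounded curvature there, so the classical Caffarelli--Kohn--Nirenberg--Spruck barrier construction does not directly apply. Closing this estimate is the main content of the earlier weighted function-space machinery of the paper, and this is where the hypothesis $0<\b\leq 1$ is used essentially.
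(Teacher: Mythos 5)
Your proposal diverges from the paper's argument at the crucial point of choosing the right-hand side of the approximation equation, and this is not a cosmetic difference but one that would break the construction.

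You propose to solve $(\Om_0+i\p\bar\p\Psi^\ep)^{n+1}=\ep\,\Om_0^{n+1}$, i.e.\ a \emph{smooth} volume form on the right. But if the approximate Monge--Amp\`ere measure is comparable to $\Om_0^{n+1}$, then the approximate solution $\Om_{\Psi^\ep}$ has bounded volume form near $D\times R$ and hence cannot be asymptotically conical there; the limit as $\ep\to 0$ would not be a K\"ahler cone metric along $D$. The paper instead approximates by $\Om_\Psi^{n+1}=\tau\,\Om_{\mathbf b}^{n+1}$ where $\Om_{\mathbf b}$ is a \emph{cone} background metric (with smooth regularisation $\Om_{\mathbf b_\eps}^{n+1}=e^{-f_0}\Om_0^{n+1}/(|s|^2+\eps^2)^{1-\b}$), precisely so that the singular weight $|s|^{2\b-2}$ is built into the right-hand side and the $C_\tri^\b$ norm (Definition \ref{ctribw}) is preserved as $\tau\to 0$. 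Your separate exhaustion by excising tubular neighbourhoods of $D\times R$ is also not what the paper does and introduces new boundary components where you would owe additional estimates; the paper works directly on the compact product $\mathfrak X$ and invokes Chen's existence theorem there.

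The second, related gap is in the interior spatial Laplacian estimate. You acknowledge this as ``the principal technical obstacle'' but suggest that ``the weight $\b$'' absorbs the singular curvature, without identifying a mechanism. The paper's actual resolution is the curvature hypothesis \eqref{backgroundmetriccurvature}: $R_{i\bar j k\bar l}(\om_{\mathbf b})\geq -(\tilde g_{\mathbf b})_{i\bar j}(g_{\mathbf b})_{k\bar l}$ with $\tilde\om_{\mathbf b}=C_2\om_{\mathbf b}+i\p\bar\p\Phi\geq 0$ and $\Phi$ \emph{bounded}. The bounded auxiliary $\Phi$ (in the style of Guenancia--P\u aun \cite{MR3488129}) is then added into the barrier function in the Yau-type maximum-principle computation (see the proof of Proposition \ref{prop: interior spatial Laplacian estimate}), cancelling exactly the unbounded curvature contribution. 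Without naming this mechanism, your proposal does not close the key estimate; moreover, since you took $\Om_0$ rather than a cone background on the right, the Ricci identity $\mathrm{Ric}(\Om_\Psi)=\mathrm{Ric}(\Om_0)$ that feeds into Yau's inequality would not even have the structure the paper exploits. The uniqueness via Bedford--Taylor comparison is a reasonable sketch and in line with the paper's intent, but the construction half of your argument needs the cone-weighted right-hand side and the $\Phi$-twisted curvature bound to go through.
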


We then consider more regularity of the cone geodesic, i.e. the full Laplacian.
In \cite{MR3405866}, we restricted the cone angle less than $\frac{1}{2}$ and constructed the $C^{1,1,\b}$ cone geodesic between K\"ahler cone metrics with bounded geometry, i.e. in the space $\mathcal H_C$. The boundary values in $\mathcal H_C$ were used to construct the background metric with bounded Christoffel symbols and Ricci upper bound. The background metric had two functions in the construction of $C^{1,1,\b}$ cone geodesic, it served as initial solution for the continuity method, and the a priori estimates required the geometric conditions of the background metric. We show a different approach in this paper. In Section \ref{Cone geodesic}, we use approximation method and choose different background metrics, as a result, we improve the a priori estimates to relax these geometric conditions \eqref{Om1backgroundmetricChristoffelsymbols}, \eqref{backgroundmetricChristoffelsymbols} and \eqref{backgroundmetriccurvature}; and we construct the $C^{1,1,\b}_{\mathbf w}$ cone geodesics. The precise statements are \thmref{geodesicclosedness} and \thmref{weakgeodesicclosedness}.

\bigskip

In Section \ref{Asymptotic behaviours}, we describe the regularity of the constant scalar curvature K\"ahler cone metrics. 
The following regularity theorem is a loose statement of this result, a full description of the higher order asymptotic of cscK cone metrics are presented in Section \ref{Asymptotic behaviours}, we refer to \thmref{thm:interior} for complete statement.

\begin{thm}[Regularity of cscK cone metrics]\label{csck geo existence}
	Suppose that $0<\b\leq1$ and the H\"older exponent $\alpha$ satisfies 
\begin{align*} 
\a\b<1-\b.
\end{align*} Let $\omega$ be a constant scalar curvature K\"ahler cone metric. Then $\vphi$ is $C^{3,\a,\b}_{\bf w}$. 
\end{thm}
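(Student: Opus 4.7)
The plan is to treat the cscK equation $S(\omega)=\underline{S}$ as the coupled second-order system
\begin{align*}
\omega^n &= e^F |s|^{2(\b-1)}\om_0^n,\\
\Delta_\omega F &= \mathrm{tr}_\omega\mathrm{Ric}(\om_0) - \underline{S} + 2\pi(1-\b)\,\mathrm{tr}_\omega[D],
\end{align*}
where $F$ is the Ricci potential of the cone metric and $s$ is a local defining section of $D$. The first equation is a degenerate Monge--Amp\`ere equation whose right-hand side encodes the conic singularity along $D$; the second is a linear Laplace-type equation for $F$. Regularity is then extracted by bootstrapping between these two equations in the weighted H\"older scale $C^{k,\a,\b}_{\mathbf{w}}$.

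Starting from the baseline regularity built into the definition of a cscK cone metric (essentially the $C^{1,1}_{\mathbf{w}}$/bounded-Laplacian control from Section \ref{Cone geodesic}), I would first upgrade $\vphi$ to $C^{2,\a,\b}_{\mathbf{w}}$ by adapting the Evans--Krylov/Calabi-type argument to the weighted setting. The hypothesis $\a\b<1-\b$ is the sharp threshold under which the holomorphic 1-form $dz_1$, with $z_1$ a local defining coordinate for $D$, defines a $C^{\a,\b}_{\mathbf{w}}$ object in the cone geometry; in particular it guarantees that the singular factor $|s|^{2\b-2}$ combined with any $L^\infty$ term still lies in $C^{\a,\b}_{\mathbf{w}}$, and that the Christoffel symbols of $\omega$ produce no worse than $C^{\a,\b}_{\mathbf{w}}$ error. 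With the coefficients of $\Delta_\omega$ now in $C^{\a,\b}_{\mathbf{w}}$, the weighted interior Schauder estimate for the conic Laplacian applied to the second equation gives $F\in C^{2,\a,\b}_{\mathbf{w}}$.

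Plugging this improved $F$ back into the first equation, the linearised Monge--Amp\`ere operator at $\vphi$ is uniformly conic-elliptic with $C^{1,\a,\b}_{\mathbf{w}}$ coefficients. Differentiating the Monge--Amp\`ere equation tangentially to $D$, where higher regularity is automatic, and using the $F$-equation to control the remaining normal direction, a second application of the weighted Schauder estimate yields $\p_i\vphi\in C^{2,\a,\b}_{\mathbf{w}}$, i.e.\ $\vphi\in C^{3,\a,\b}_{\mathbf{w}}$. The precise bookkeeping of the conic index structure and the commutators between $\p_i$ and $\Delta_\omega$ is routine once the Schauder framework has been set up.

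The hard part will be the weighted Schauder estimate for $\Delta_\omega$ uniformly in a neighbourhood of $D$. The cscK cone metric is only modelled by the flat reference cone $i\p\bar\p|z_1|^{2\b}+\sum_{j\ge 2}i\,dz_j\wedge d\bar z_j$ up to lower-order perturbations, and transferring Schauder-type bounds from the model cone to $\omega$ itself in the $C^{k,\a,\b}_{\mathbf{w}}$ norm requires a delicate cut-off/patching argument whose success hinges on the strict inequality $\a\b<1-\b$. Once this core linear estimate is in place, the coupled bootstrap described above runs without additional structural input and produces the asserted $C^{3,\a,\b}_{\mathbf{w}}$ regularity of $\vphi$.
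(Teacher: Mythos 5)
Your overall strategy---bootstrapping between the Monge--Amp\`ere equation and the Laplace equation for the Ricci potential---matches the paper's strategy in spirit, but the technical route you propose is different from the one the paper actually uses, and the difference matters: as written, your bootstrap has a gap in the normal direction to $D$.

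First, a small point of bookkeeping: the paper's Definition~\ref{csckconemetricdefn} of a cscK cone metric already demands $\vphi\in C^{2,\a,\b}$ (and notes that this can be recovered from the quasi-isometry hypothesis via Moser iteration plus second-order Schauder). The paper's Section~\ref{Higher order estimates} therefore starts directly from $\vphi, K \in C^{2,\a,\b}$, and your Evans--Krylov step, while not wrong, is not needed here and would only complicate matters.

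The substantive gap is in the step where you claim that after plugging in $F\in C^{2,\a,\b}_{\mathbf{w}}$ the linearised Monge--Amp\`ere operator is ``uniformly conic-elliptic with $C^{1,\a,\b}_{\mathbf{w}}$ coefficients'' and that a weighted Schauder estimate then gives $\p_i\vphi\in C^{2,\a,\b}_{\mathbf{w}}$ for all $i$. This is exactly where the loss of regularity along the normal direction intervenes. As the target space $C^{3,\a,\b}_{\mathbf{w}}$ (Definition~\ref{cw3ab}) is defined, the quantities $|z^1|^{1-\b}\,\p_{z^1} g_{k\bar l}$, $|z^1|^{2-2\b}\,\p_{z^1} g_{k\bar 1}$, etc., are \emph{not} required to be H\"older; they are only required to be $O(|z^1|^{\a\b-\b})$, which is a blowup rate, not a modulus of continuity. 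Correspondingly, the Christoffel symbols in the normal direction blow up like $|z^1|^{-\kappa}$ with $\kappa=\b-\a\b$ (Corollary~\ref{connection}), so the coefficients of the linearised operator are genuinely \emph{not} in $C^{1,\a,\b}_{\mathbf{w}}$, and the weighted Schauder estimate you invoke cannot produce the uniform conclusion ``$\p_i\vphi\in C^{2,\a,\b}_{\mathbf{w}}$ for all $i$''---for $i=1$ you only get a growth bound. A direct conic-Schauder bootstrap at third order in the $C^{k,\a,\b}_{\mathbf{w}}$ scale would therefore have to be designed around this anisotropy from the start, and that is precisely the ``hard part'' you flag but do not resolve.

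The paper avoids this difficulty entirely by working in rescaled lifted holomorphic coordinates $\tilde v = (v-v_0)/\rho_0$ with $v_1 = z_1^\b$ (Definitions~\ref{Lifted holomorphic coordinates} and \ref{Rescaled lifted holomorphic coordinates}). In these coordinates the rescaled metric $g_{i\bar j}=\tilde\vphi_{\tilde V, i\bar j}$ is uniformly equivalent to the flat metric on a ball of fixed radius $c_\b$, so \emph{classical} interior Schauder estimates apply on each rescaled ball with constants independent of the distance $\rho_0$ to $D$. The bootstrap in Propositions~\ref{lem:goodmetric} and \ref{lem:goodtangent} is carried out there, the tangential gain of a factor $\rho_0$ per derivative is tracked explicitly, and the weighted statement $\vphi\in C^{3,\a,\b}_{\mathbf{w}}$ is then \emph{deduced} in Corollary~\ref{dmetric} by undoing the scaling and reading off the rates $O(\rho_0^{\a-1})$ for the normal derivatives. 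So the conic structure of $C^{3,\a,\b}_{\mathbf{w}}$, weights included, is not an input to a weighted Schauder estimate but an output of scaling-invariant classical estimates. If you want to salvage your route, this scaling step (or an equivalent blow-up analysis) is the idea you are missing; without it, your ``delicate cut-off/patching argument'' would have to rebuild, from scratch, Schauder estimates for operators whose coefficients blow up along $D$ at a rate controlled only by the norm you are trying to prove.
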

\begin{rem}
The letter '$\bf w$' in $C^{3,\a,\b}_{\bf w}$ stands for "weak". It will become clear in Section \ref{3rdHolderspaces} why we chose this terminology.
\end{rem}
\begin{rem}
\thmref{csck geo existence} is a simple corollary of our main regularity \thmref{thm:interior}. 
\end{rem}  
The strategy pursued in this work is to scale the K\"ahler potential $\vphi$, and then to derive
the estimates by using techniques inspired from our previous paper \cite{arxiv:1609.03111} on asymptotic analysis of complex Monge-Amp\`ere equation with cone singularities. In that article, more result is proved, that is the expansion formula obtained simply as a consequence of the asymptotic analysis. Related difficult studies for K\"ahler-Einstein cone metrics could be found in the references therein, e.g. \cite{MR3432582,MR3668765,MR3488129,MR3431580}, etc, however, we have a more explicit description of the asymptotic, though.

\bigskip

In Section \ref{Linear theory for Lichnerowicz operator}, we investigate the linearisation operator of the cscK cone metric, i.e. the Lichnerowicz operator. 
We introduce a new space $C_\mathbf{w}^{4,\a,\b}(\om)$ (Definition \ref{cw4ab}). We show that this is the right space to have Fredholm alternative theorem for the Lichnerowicz operator. 
\begin{thm}[Linear theory]\label{Fredholm}
	Suppose that $0<\b\leq1$ and $\a\b<1-\b$. Let $\omega$ be a constant scalar curvature K\"ahler cone metric.
    Assume that $f\in C^{0,\a,\b}$ with normalisation condition $\int_X f\om^n =0$. Then one of the following holds:
	\begin{itemize}
		\item Either the Lichnerowicz equation ${{\mathbb{L}\mathrm{ic}}}_{\om}(u)=f$ has a unique $C_\mathbf{w}^{4,\a,\b}(\om)$ solution.
		\item Or the kernel of ${{\mathbb{L}\mathrm{ic}}}_{\om}(u)$ generates a holomorphic vector field tangent to $D$.
	\end{itemize}
\end{thm}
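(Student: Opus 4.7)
The plan is to realize $\mathbb{L}\mathrm{ic}_\omega$ as a Fredholm operator of index zero between the weighted H\"older spaces $C^{4,\alpha,\beta}_{\mathbf w}(\omega)$ and $C^{0,\alpha,\beta}$, and then exploit self-adjointness to identify the cokernel with the kernel. First I would check that the Lichnerowicz operator is a bounded linear map between these spaces: the coefficients involve the metric and its Ricci tensor, which are under control thanks to the regularity result \thmref{csck geo existence} (and its refinement \thmref{thm:interior}), and the choice of $C^{4,\alpha,\beta}_{\mathbf w}$ is dictated precisely so that four derivatives of $u$ land in $C^{0,\alpha,\beta}$. The key analytic input is a Schauder-type a priori estimate
\[
\|u\|_{C^{4,\alpha,\beta}_{\mathbf w}(\omega)} \leq C\bigl(\|\mathbb{L}\mathrm{ic}_\omega u\|_{C^{0,\alpha,\beta}} + \|u\|_{C^0}\bigr),
\]
obtained by freezing coefficients at points away from $D$ (where the operator is uniformly elliptic in the conical sense) and matching with the model fourth-order operator $\Delta_\beta^2$ near $D$ via a separation of variables. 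The constraint $\alpha\beta<1-\beta$ is exactly what is needed to stay below the first nontrivial indicial root of this model operator, avoiding resonance with $\rho^{2/\beta}$ type terms.

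Once the Schauder estimate is secured, Fredholmness of index zero follows from the compactness of the inclusion $C^{4,\alpha,\beta}_{\mathbf w}(\omega)\hookrightarrow C^{0,\alpha,\beta}$ combined with a parametrix/continuation argument, using the invertibility of the model operator $\mathbb{L}\mathrm{ic}_\omega+\lambda$ for $\lambda\gg 0$ as a starting point of the deformation. At this stage the range of $\mathbb{L}\mathrm{ic}_\omega$ is closed and of finite codimension equal to $\dim\ker\mathbb{L}\mathrm{ic}_\omega$.

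Next I would verify formal self-adjointness: the Lichnerowicz operator factors as $\mathbb{L}\mathrm{ic}_\omega(u)=(\mathcal D^*\mathcal D) u$ with $\mathcal D u=\bar\partial\nabla^{1,0}u$, and the identity
\[
\int_X v\,\mathbb{L}\mathrm{ic}_\omega(u)\,\omega^n = \int_X \langle \mathcal D u,\mathcal D v\rangle\,\omega^n = \int_X u\,\mathbb{L}\mathrm{ic}_\omega(v)\,\omega^n
\]
holds for $u,v\in C^{4,\alpha,\beta}_{\mathbf w}(\omega)$: the boundary terms on tubular neighbourhoods of $D$ vanish in the limit, because the weighted control on $u,v$ and their derivatives dominates the shrinking volume of tubes $\{\rho<\varepsilon\}$. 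Consequently the cokernel coincides with the kernel, and the equation $\mathbb{L}\mathrm{ic}_\omega(u)=f$ admits a solution in $C^{4,\alpha,\beta}_{\mathbf w}(\omega)$ iff $f$ is $L^2$-orthogonal to $\ker\mathbb{L}\mathrm{ic}_\omega$. The constants lie in this kernel and are killed by the normalisation $\int_X f\,\omega^n=0$, so either the kernel is spanned by constants (first alternative) or it contains a nonconstant element $u$.

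In the latter case, integration by parts shows $\mathcal D u=0$ in $M=X\setminus D$, so $V:=\nabla^{1,0}u$ is a holomorphic vector field on $M$. The detailed asymptotic expansion of $u$ near $D$ provided by the regularity theory in Section~\ref{Asymptotic behaviours} forces $V$ to extend holomorphically across $D$ and, moreover, to be tangent to $D$: the leading asymptotic terms of $u$ are functions of $\rho^{2\beta}$ and of the transverse angular variable, and the cone derivative structure forces the normal component of $V$ to vanish on $D$. This yields the second alternative. I expect the main obstacle to be the first step: proving the weighted Schauder estimate for a fourth-order operator at the conical edge, with sharp control on the indicial roots, so that the threshold $\alpha\beta<1-\beta$ enters naturally and the compactness-plus-continuation argument closes. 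Once that analytic foundation is in place, the self-adjointness manipulation and the identification of the kernel with tangent holomorphic vector fields proceed along standard lines, aided only by the asymptotic expansions already developed.
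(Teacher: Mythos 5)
Your outline diverges from the paper's route in a way that exposes a genuine misunderstanding of the solution space. You assert that ``the choice of $C^{4,\alpha,\beta}_{\mathbf w}$ is dictated precisely so that four derivatives of $u$ land in $C^{0,\alpha,\beta}$'', but the paper's definition is $C_{\mathbf w}^{4,\a,\b}(\om)=\{u\in C^{2,\a,\b}: \tri_\om u\in C^{2,\a,\b}\}$, and it is explicitly noted there that \emph{not all} fourth-order derivatives of $u$ lie in $C^{0,\a,\b}$ --- in particular $\p_1\p_1 u$ is uncontrolled. This is the crux of the whole section: the pure holomorphic second derivatives escape the space, so the Ricci identity
$\int_M|\p\p u|^2\om^n = \int_M|\p\bar\p u|^2\om^n - \int_M Ric(\p u,\p u)\om^n$ fails, and your proposed factorization $\mathbb{L}\mathrm{ic}_\om=\mathcal D^\ast\mathcal D$ with routine vanishing of boundary terms cannot be invoked directly. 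The paper circumvents this by building its $L^2$-theory on the bilinear form $\mathcal B^K(u,\eta)=\int_M[(i\p\bar\p u,i\p\bar\p\eta)_\om+K(\p u,\p\eta)_\om]\om^n$, which only sees the mixed derivatives, and the integration by parts needed to identify the kernel (Lemmas giving $\int u\,\mathbb{L}\mathrm{ic}_\om(u)\om^n=\int|\nabla^g\nabla^g u|^2_g\om^n$) is only recovered a posteriori using the precise growth rate $|\nabla^g\nabla^g u|_g=O(r^{\a\b-\b})$ from the cscK asymptotics of Section \ref{Asymptotic behaviours}. You cannot treat that step as ``standard'' --- it is exactly where the hypothesis $\a\b<1-\b$ earns its keep, not, as you suggest, as an indicial-root threshold for the model bi-Laplacian.

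Your first step is also a different (and harder) analytic route than the paper's. You propose a direct fourth-order weighted Schauder estimate for $\mathbb{L}\mathrm{ic}_\om$ by freezing coefficients and doing a separation-of-variables / indicial-root analysis against $\Delta_\beta^2$ at the conical edge, then a parametrix/continuation argument. The paper deliberately avoids fourth-order cone-edge analysis: it only ever uses the \emph{second-order} Schauder theory, applied twice to the equation $\tri_\om v=\cdot$ with $v=\tri_\om u$, together with a global $L^2$ estimate (Lemma \ref{l2pureestimate}), Poincar\'e, and a H\"older interpolation inequality, all fed into a continuity path $L_t^K u=(\tri_\om-K)\tri_\om u+t\,u^{i\bar j}T_{i\bar j}$. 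The large parameter $K$ both provides coercivity of $\mathcal B^K$ (so Lax--Milgram applies) and absorbs the zero-order terms in the a priori estimate; the Fredholm alternative for $\mathbb{L}\mathrm{ic}_\om$ itself is then extracted at the very end by treating $K\tri_\om$ as a compact perturbation. Your direct Schauder approach, if pushed through, would in effect have to reproduce the two-step iterated-Laplacian bound, because the space simply does not carry all fourth derivatives; as stated, your ``model operator $\Delta_\beta^2$'' estimate proves more than the space can hold. Finally, your treatment of the kernel is too thin: you need the concrete asymptotic of $\nabla^g\nabla^g u$ and of the Christoffel symbols of the cscK cone metric (Corollary \ref{connection}, Proposition \ref{nnu}) to justify that the cut-off boundary terms vanish and that the lifted vector field extends across $D$ tangentially; merely citing ``the detailed asymptotic expansion'' does not supply the required decay rate.
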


\begin{rem}
The linear theory implies the openness of the path, which deforms the cone angle of cscK cone metrics, by following the implicit function theorem argument in Section 4.4 in Donaldson's paper \cite{MR2975584}, 
\end{rem}

The proof of the linear theory updates the method developed in our paper \cite{arxiv:1703.06312}, where this theorem was proved under angle restriction $0<\b<\frac{1}{2}$, but in the space $C^{4,\a,\b}(\om)$ which has better regularities. The key steps of the proof of \thmref{Fredholm} can be summarised as follows:  
\begin{itemize}
	\item Solving the perturbed bi-Laplacian equation \eqref{KbiLaplacian equation} in $C_\mathbf{w}^{4,\a,\b}(\om)$ by developing a $L^2$-theory for the 4th order elliptic PDEs with cone leading coefficients.
	\item Building the continuity path connecting the perturbed Lichnerowicz equation with the perturbed bi-Laplacian equation and showing that along the continuity path, the $C_\mathbf{w}^{4,\a,\b}(\om)$ estimates hold uniformly.
\end{itemize}

\bigskip

In Section \ref{Reductivity of automorphism group}, Calabi's reductivity theorem of the automorphism group to cscK metric is generalised to cscK metric with cone singularities with full angle $0<\b\leq1$.
In our previous paper \cite{arxiv:1603.01743}, we proved this theorem under the half angle condition $0<\b<\frac{1}{2}$, after proving the regularity of the half angle cscK cone metric. 
New ingredients in this part are the asymptotic of Christoffel symbols of cscK cone metric in Section \ref{Christoffel symbols of cscK cone metric} and asymptotic of functions in $C_\mathbf{w}^{4,\a,\b}(\om)$ in Section \ref{Asymptotics of functions}.

Our precise result is the content of \thmref{preceisereductivity}, let us state it loosely as follows. 
Let $\mathfrak{h}(X;D)$ be the space of all holomorphic vector fields tangential to the divisor and $\mathfrak{h}'(X;D)$ be the complexification of the Lie algebra consisting of Killing vector fields of $X$ tangential to $D$.
\begin{thm}[Reductivity]Suppose $\om$ is a constant scalar curvature K\"ahler cone metric. 
	Then there exists a one-to-one correspondence between $\mathfrak{h}'(X;D)$ and the kernel of ${\mathbb{L}\mathrm{ic}}_{\om}$. Moreover, $\mathfrak{h}(X;D)$ is reductive.
\end{thm}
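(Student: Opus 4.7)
The proof would proceed along the lines of the Matsushima-Lichnerowicz-Calabi argument, adapted to the cone setting. The strategy is to establish a bijection between the kernel of ${\mathbb{L}\mathrm{ic}}_{\om}$ (modulo constants) and holomorphic vector fields admitting holomorphy potentials tangent to $D$, then use the real/imaginary decomposition of such potentials to identify this space with the complexified isometry algebra $\mathfrak{h}'(X;D)$, and finally deduce reductivity of $\mathfrak{h}(X;D)$. The three main inputs are the Fredholm theory of \thmref{Fredholm}, the regularity \thmref{csck geo existence}, and the sharp asymptotic expansions developed in Sections \ref{Christoffel symbols of cscK cone metric} and \ref{Asymptotics of functions}.

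First, given $\theta \in \ker {\mathbb{L}\mathrm{ic}}_{\om}$, which by \thmref{Fredholm} lies in $C_\mathbf{w}^{4,\alpha,\beta}(\om)$, I would consider the complex gradient $V := g^{i\bar j}\partial_{\bar j}\theta\,\partial_i$ and show that $V$ defines a holomorphic vector field tangent to $D$. Holomorphy follows from the standard identity
\[
{\mathbb{L}\mathrm{ic}}_{\om}(\theta) \;=\; (\bar\partial\,\nabla^{1,0})^* (\bar\partial\,\nabla^{1,0})\,\theta,
\]
paired with integration by parts carried out on $X \setminus U_\delta$, where $U_\delta$ is a tubular neighborhood of $D$ of radius $\delta$; the flux integrals on $\partial U_\delta$ are shown to vanish as $\delta \to 0$ using the weighted asymptotics of functions in $C_\mathbf{w}^{4,\alpha,\beta}$ combined with the Christoffel symbol asymptotics of the cscK cone metric. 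The tangency of $V$ to $D$ is then read off from the leading-order terms in the expansion of $\theta$.

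Conversely, given a holomorphic vector field $V \in \mathfrak{h}(X;D)$ admitting a holomorphy potential, the potential $\theta$ is produced by a Hodge-type decomposition argument on the smooth part $X \setminus D$, and its regularity $C_\mathbf{w}^{4,\alpha,\beta}(\om)$ is supplied by the linear theory of Section \ref{Linear theory for Lichnerowicz operator}. Writing $\theta = u + \sqrt{-1}\,v$ with $u, v$ real, both $u$ and $v$ lie in $\ker {\mathbb{L}\mathrm{ic}}_{\om}$ since the operator is real; the classical identity on the smooth part, together with the already-established vanishing of boundary terms on $\partial U_\delta$, implies that $J\nabla u$ and $J\nabla v$ are Killing vector fields tangent to $D$. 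This yields the identification $\ker {\mathbb{L}\mathrm{ic}}_{\om}/\mathbb{C} \cong \mathfrak{h}'(X;D)$, and the resulting splitting into $\mathfrak{k}(X;D)$ and $J\mathfrak{k}(X;D)$ pieces exhibits $\mathfrak{h}(X;D)$ as the complexification of a real Lie algebra with an invariant inner product, hence reductive.

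The main obstacle lies in the rigorous justification of every integration-by-parts step across the singular divisor. In the classical smooth case no boundary terms appear, and the argument is essentially algebraic; in the cone setting, each integration by parts on $X \setminus U_\delta$ generates a boundary contribution whose vanishing as $\delta \to 0$ depends on a delicate matching of several asymptotic expansions: the potential in $C_\mathbf{w}^{4,\alpha,\beta}$, the singular Christoffel symbols of $\om$, and the normal component of candidate vector fields. Carrying this out for the full angle range $0 < \beta \leq 1$, rather than the restricted range $0 < \beta < 1/2$ treated earlier in \cite{arxiv:1603.01743}, is exactly the place where the refined asymptotic analysis of Sections \ref{Christoffel symbols of cscK cone metric} and \ref{Asymptotics of functions} is indispensable.
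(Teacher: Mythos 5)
Your proposal follows essentially the same route as the paper: the paper also establishes $\int_M u\,{\mathbb{L}\mathrm{ic}}_{\om}(u)\,\om^n=\int_M|\nabla^g\nabla^g u|_g^2\,\om^n$ for $u\in C_\mathbf{w}^{4,\a,\b}(\om)$ by integration by parts with cut-off functions near $D$ (your $\partial U_\delta$ flux argument in equivalent form), justified precisely by the Christoffel-symbol asymptotics of the cscK cone metric and the growth rate $|\nabla^g\nabla^g u|_g=O(r^{\a\b-\b})$ from Proposition \ref{nnu}, and then deduces holomorphy and tangency of $g^{i\bar j}\p_{\bar j}u\,\p_i$ and imports the Calabi-type decomposition and reductivity from the earlier half-angle argument. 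The approach and the key inputs you identify match the paper's proof.
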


\bigskip

In Section \ref{bifurcation}, we prove the bifurcation of the $J$-twisted path at a cscK cone metric ($t=1$),
	\begin{align}\label{csckpath}
\Phi(t,\vphi(t))=S(\om_{\vphi(t)})-\underline S_\b-(1-t)(\frac{\om^n}{\om_{\vphi(t)}^n}-1).
\end{align}
 The bifurcation argument of Bando-Mabuchi \cite{MR946233} for Aubin-Yau path at a K\"ahler-Einstein metric was extended to cone path in our previous paper \cite{arxiv:1511.02410}, and also to Chen's path \cite{arXiv:1506.06423} at the extremal metric in \cite{arxiv:1506.01290}. We will prove the bifurcation at a cscK cone metric, by utilising the linear theory in Section \ref{Linear theory for Lichnerowicz operator} and the reductivity theorem in Section \ref{Reductivity of automorphism group}.

\begin{thm}[Bifurcation]\label{Bifurcation}Suppose $\om$ is a constant scalar curvature K\"ahler cone metric with K\"ahler potential $\vphi$. 
	Then there exists a parameter $\tau>0$ such that $\vphi(t)$ is extended uniquely to a smooth one-parameter family of solutions of the $J$-twisted path \eqref{csckpath} on $(1-\tau, 1]$.
\end{thm}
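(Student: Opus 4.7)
The plan is to apply the implicit function theorem at the endpoint $t=1$ of the $J$-twisted path \eqref{csckpath}. Setting $\vphi(1)=\vphi$, the equation degenerates at $t=1$ to the pure cscK cone equation $S(\om_\vphi)-\underline S_\b=0$, which holds by assumption. I would view $\Phi$ as a map
\begin{align*}
\Phi:(1-\tau_0,1+\tau_0)\times U\to C^{0,\a,\b}_0,
\end{align*}
where $U\subset C_\mathbf{w}^{4,\a,\b}(\om)$ is a small neighbourhood of $\vphi$ normalised by $\int_X u\,\om^n=0$ and $C^{0,\a,\b}_0$ denotes the mean-zero subspace. The first variation of the scalar curvature at a cscK metric gives $D_\vphi S(\om_\vphi)(u)=-{\mathbb{L}\mathrm{ic}}_\om(u)$; combined with the observation that both $\p_t$ and $\p_\vphi$ of the correction term $-(1-t)(\om^n/\om_{\vphi(t)}^n-1)$ vanish at $(1,\vphi)$ — the prefactor $(1-t)$ killing the $\vphi$-derivative and $\om^n/\om_\vphi^n=1$ killing the $t$-derivative — this yields
\begin{align*}
D_\vphi\Phi(1,\vphi)(u)=-{\mathbb{L}\mathrm{ic}}_\om(u).
\end{align*}

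By \thmref{Fredholm} together with the reductivity theorem of Section \ref{Reductivity of automorphism group}, the Lichnerowicz operator is self-adjoint and Fredholm between these weighted spaces, with finite-dimensional kernel $\mathcal K\subset C_\mathbf{w}^{4,\a,\b}(\om)$ in one-to-one correspondence with the reductive Lie algebra $\mathfrak{h}'(X;D)$. If $\mathcal K=0$ the classical implicit function theorem immediately produces a unique smooth family $\vphi(t)$ on a neighbourhood of $t=1$. In the general case I would follow the gauge-fixing strategy of Bando--Mabuchi, already adapted to the cone setting in our previous paper \cite{arxiv:1511.02410}: decompose $C_\mathbf{w}^{4,\a,\b}(\om)=\mathcal K\oplus\mathcal K^\perp$ using the $L^2(\om^n)$-pairing, exploit the $\Aut_0(X;D)$-action (whose infinitesimal orbit at $\vphi$ is exactly $\mathcal K$ by reductivity) to define a local slice $\vphi+\mathcal K^\perp$, and restrict $\Phi$ to this slice. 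Self-adjointness identifies the image of $-{\mathbb{L}\mathrm{ic}}_\om|_{\mathcal K^\perp}$ with $\mathcal K^\perp\subset C^{0,\a,\b}_0$, so composing $\Phi$ with the orthogonal projection $C^{0,\a,\b}_0\to\mathcal K^\perp$ yields a map whose potential-variable linearisation at $(1,\vphi)$ is an isomorphism. The Banach-space implicit function theorem then produces a unique smooth curve $t\mapsto\vphi(t)\in\vphi+\mathcal K^\perp$ solving the projected equation on some interval $(1-\tau,1]$, and a short auxiliary argument using self-adjointness and $\p_t\Phi(1,\vphi)=0$ promotes this to a solution of the full equation $\Phi(t,\vphi(t))=0$.

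The hard part will be verifying the smoothness hypotheses of the implicit function theorem in the weighted H\"older setting of Section \ref{Linear theory for Lichnerowicz operator}. Both the scalar curvature operator $S(\om_\vphi)$, which is quasilinear of order four in $\vphi$, and the determinant ratio $\om^n/\om_{\vphi}^n$ have to define smooth Nemytskii-type maps $C_\mathbf{w}^{4,\a,\b}(\om)\to C^{0,\a,\b}$ in a neighbourhood of the cscK cone potential. The composition, product and inversion estimates required for this rely crucially on the sharp interior asymptotics of \thmref{csck geo existence} (in its full form \thmref{thm:interior}), together with the asymptotic description of functions in $C_\mathbf{w}^{4,\a,\b}(\om)$ developed in Section \ref{Asymptotics of functions}, which ensure that all the relevant tensors remain in the correct weighted class after the nonlinear operations. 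Once smoothness is in place, the uniqueness clause of the theorem follows in the standard Bando--Mabuchi manner: any competing smooth family $\tilde\vphi(t)$ with $\tilde\vphi(1)=\vphi$ can be pulled back by a smooth family of automorphisms $\sigma_t\in\Aut_0(X;D)$ into the slice $\vphi+\mathcal K^\perp$, whereupon it must coincide with the family produced by the implicit function theorem.
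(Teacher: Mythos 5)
Your setup is right as far as it goes: the linearisation of \eqref{csckpath} in the potential variable at $t=1$ is $-{\mathbb{L}\mathrm{ic}}_\om$, and \thmref{Fredholm} plus the reductivity theorem identify its kernel $H_\theta$ with $\mathfrak h'(X;D)$. The gap is in how you dispose of that kernel. Your plan is to gauge-fix: use the $\Aut_0(X;D)$-action to pass to the slice $\vphi+\mathcal K^\perp$, solve the $\mathcal K^\perp$-projected equation there by the implicit function theorem, and then "promote" to the full equation by a short auxiliary argument. This cannot work as stated, because the $J$-twisted equation for $t<1$ is \emph{not} invariant under $\Aut_0(X;D)$ — the twist term $(1-t)(\om^n/\om_{\vphi}^n-1)$ is built from the fixed reference volume form and breaks the symmetry. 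Consequently the kernel component of the solution is not a gauge freedom to be quotiented out: for $t<1$ the solution genuinely acquires a component $\vphi^\parallel(t)\in H_\theta$ determined by the equation, and the $H_\theta$-component of $\Phi(t,\cdot)$ evaluated on your slice solution has no reason to vanish. The "promotion" step is precisely the hard finite-dimensional bifurcation problem, not an afterthought. A second, related error: since $\om$ in \eqref{csckpath} is the fixed reference metric, $\om^n/\om_\vphi^n\ne 1$ at a cscK cone metric, so $\p_t\Phi(1,\vphi)=\om^n/\om_\vphi^n-1\ne 0$; what is true — and only after replacing $\vphi$ by the minimiser $\theta$ of $J$ on its orbit (Proposition \ref{minimiseJ}) — is that this term is $L^2$-orthogonal to $H_\theta$, which is \eqref{propeetyofgauge}.

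The argument the paper runs (Theorem \ref{bifrucationattheta}) keeps your first step but replaces the slice by the Bando--Mabuchi two-stage scheme. One solves the $H_\theta^\perp$-projected equation by the implicit function theorem for $\vphi^\perp(t,\vphi^\parallel)$ as a function of \emph{both} $t$ and the kernel variable; differentiating the resulting identity gives $\delta_{\vphi^\parallel}\vphi^\perp|_{(1,0)}=0$ and the relation \eqref{vphibottheta}. For the remaining finite-dimensional equation $\Phi^\parallel(t,\vphi^\parallel)=0$ on $H_\theta$, the linearisation in $\vphi^\parallel$ at $(1,0)$ vanishes identically, so one cannot invert it; instead, using that $\Phi^\parallel$ and (by \eqref{propeetyofgauge}) $\p_t\Phi^\parallel$ both vanish at $(1,0)$, one divides by $(t-1)$ and shows that $\delta_{\vphi^\parallel}\tilde\Phi^\parallel|_{(1,0)}$ pairs to $\int_M\langle\p u,\bar\p v\rangle_\theta\,\om^n$ (Lemma \ref{longcomputation}), which is positive definite on $H_\theta$ modulo constants. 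The transversality that rescues the bifurcation therefore comes from the strict convexity of the twist functional $J$, not from the scalar curvature operator or from self-adjointness; this mechanism is entirely absent from your proposal. Your remarks on the Nemytskii-smoothness of the nonlinear operators in $C_\mathbf{w}^{4,\a,\b}(\om)$ and on uniqueness within the slice are reasonable but secondary to this missing step.
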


\bigskip

To conclude this introductory section, let us mention the uniqueness of the constant scalar curvature K\"ahler cone metric.
This article addresses the following fundamental question: 
\textit{is the constant scalar curvature K\"ahler cone metric in $\mathcal H_\b$ unique up to automorphisms?} 


\begin{thm}[Uniqueness of cscK cone metrics]\label{Uniqueness}
	The constant scalar curvature K\"ahler cone metric along $D$ is unique up to automorphisms.
\end{thm}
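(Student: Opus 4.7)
The strategy is the Berman--Berndtsson/Chen--Tian uniqueness scheme, adapted to the conical setting through the regularity and structural results assembled in the earlier sections. Given two cscK cone metrics $\om_0, \om_1 \in \mathcal H_\b$ with K\"ahler potentials $\vphi_0, \vphi_1$, I would first invoke \thmref{geo existence} to connect them by a generalised cone geodesic $\vphi(t)$, $t\in[0,1]$, in fact of class $C^{1,1,\b}_{\mathbf w}$ by \thmref{weakgeodesicclosedness}. The proof then reduces to two assertions: (i) the cone Mabuchi K-energy $\cM_\b$ is convex along $\vphi(t)$, and (ii) if $\cM_\b\circ\vphi(t)$ is affine, then $\vphi(t)$ is the orbit of a one-parameter subgroup of automorphisms in $\mathfrak h(X;D)$. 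Since cscK cone metrics are critical points of $\cM_\b$, convexity forces $\cM_\b\circ\vphi(t)$ to be constant, and (ii) then furnishes the biholomorphism pulling $\om_1$ back to $\om_0$.

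For (i), the Chen--Tian second-derivative formula reads
\begin{align*}
\tfrac{d^2}{dt^2}\cM_\b(\vphi(t)) = \int_{X\setminus D}\bigl|\bar\p\nabla^{1,0}\dot\vphi(t)\bigr|^2_{\om_{\vphi(t)}}\om_{\vphi(t)}^n + (\text{terms vanishing along the geodesic}).
\end{align*}
In the smooth case this is Mabuchi's classical calculation; in the present setting one has to interpret it distributionally for the $C^{1,1,\b}_{\mathbf w}$ path $\vphi(t)$ measured against a reference metric $\om$ with cone singularities along $D$. The plan is to approximate $\vphi(t)$ by solutions of the $\ep$-relaxed geodesic equation $(\Om_0+i\p\bar\p\Psi)^{n+1}=\ep\,\Om_0^{n+1}$, use the weighted Schauder machinery of Section \ref{3rdHolderspaces} to obtain estimates uniform up to $D$, and pass to the limit $\ep\to 0^+$. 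For (ii), the vanishing of the Dirichlet term above forces $V(t):=\nabla^{1,0}\dot\vphi(t)$ to be holomorphic on $M = X\setminus D$; the asymptotic expansion for cscK cone potentials in Section \ref{Asymptotic behaviours}, combined with the Christoffel-symbol analysis of Section \ref{Christoffel symbols of cscK cone metric} and the description of $C_{\mathbf w}^{4,\a,\b}$-functions in Section \ref{Asymptotics of functions}, ensure that $V(t)$ extends across $D$ with the correct tangentiality, hence $V(t)\in\mathfrak h(X;D)$. The reductivity theorem then identifies the flow of $V(t)$ with a genuine automorphism of $(X,D)$, whose time-$1$ value is the desired biholomorphism.

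\noindent\textbf{Main obstacle.} The hardest step is the convexity assertion (i): making the Chen--Tian identity rigorous when $\vphi(t)$ is only $C^{1,1,\b}_{\mathbf w}$ and the reference metric is conical. The integrand $|\bar\p\nabla^{1,0}\dot\vphi|^2_{\om_\vphi}$ involves the mixed Hessian of the merely $L^\infty$ function $\dot\vphi$, so the formula is a distributional equality, and one must justify that the integration by parts producing it incurs no boundary contribution along $D$. To arrive at a setting where everything is classically defined, I would exploit the bifurcation theorem (\thmref{Bifurcation}): deforming the endpoint cscK cone metric along the $J$-twisted path \eqref{csckpath}, for $t$ slightly less than $1$ the equation is strictly elliptic and the twisted K-energy is \emph{strictly} convex, in the spirit of \cite{arxiv:1511.02410,arxiv:1506.01290}. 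Combined with the uniform $C_{\mathbf w}^{4,\a,\b}$ estimates supplied by \thmref{Fredholm}, this yields strictly convex proxies for $\cM_\b$ whose unique minimisers converge, as $t\to 1$, to the cscK cone metrics, and a compactness argument forces the two minimisers arising from $\om_0$ and $\om_1$ to coincide modulo the (finite-dimensional) automorphism group $\mathfrak h(X;D)$. Controlling the singular integrals near $D$ throughout, by means of the weighted cone H\"older spaces introduced in Sections \ref{3rdHolderspaces}--\ref{Linear theory for Lichnerowicz operator}, is the most delicate technical point of the proof.
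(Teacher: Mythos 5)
Your proposal correctly assembles the toolbox the paper uses (cone geodesics, convexity of energies, bifurcation along the $J$-twisted path, reductivity), but both branches of your plan contain genuine gaps.

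The first branch — connect $\om_0,\om_1$ by a cone geodesic, show $\cM_\b$ is convex, and then argue that affine restriction of $\cM_\b$ forces the geodesic to be an automorphism orbit — is \emph{not} the route the paper takes, precisely because the rigidity step (ii) ``$\cM_\b\circ\vphi(t)$ affine $\Rightarrow$ $\vphi(t)$ is generated by a vector field in $\mathfrak h(X;D)$'' is nowhere established for $C^{1,1,\b}_{\mathbf w}$ cone geodesics. In the smooth case this is the substance of Berman--Berndtsson; it is a delicate statement and would be a theorem in its own right in the conical setting. The paper deliberately avoids it by passing to the twisted energy $\mathcal E_\b = \nu_\b + (1-t)J_\b$, which is \emph{strictly} convex along cone geodesics by \propref{Jconvex} and \propref{convexityKenergy}; strict convexity yields uniqueness of critical points directly, with no rigidity lemma needed.

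Your second branch is closer, but it omits a step that the whole Bando--Mabuchi scheme hinges on: before applying the bifurcation theorem one must first \emph{minimise $J_\b$ over each orbit} $\mathcal O_i$ to obtain a canonical representative $\theta_i$ (\propref{minimiseJ}). This gauge-fixing is not cosmetic. The identity
\begin{align*}
\frac{1}{V}\int_{M}u \cdot \Bigl(\frac{\om^n}{\om_{\theta}^n}-1\Bigr)\, \theta^n=0 \qquad \text{for all } u\in H_\theta,
\end{align*}
i.e.\ the vanishing of $\delta J_\b$ at $\theta$ along kernel directions of the Lichnerowicz operator, is what allows the implicit function theorem to close in Step 3 of the bifurcation argument (equation \eqref{vphibottheta} and the positivity in \lemref{longcomputation}). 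If one bifurcates off an arbitrary point of the orbit, that identity fails and the deformation need not exist. Relatedly, your concluding ``compactness argument modulo automorphisms'' is both unnecessary and imprecise: the bifurcation theorem already furnishes a \emph{smooth} one-parameter family $\theta_i(t)$ on $(1-\tau,1]$, so no compactness is needed to pass to $t\to 1$; and because $J_\b$ is \emph{not} automorphism-invariant, strict convexity of $\mathcal E_\b$ gives $\theta_1(t)=\theta_2(t)$ \emph{exactly} for $t\in(1-\tau,1)$, hence $\theta_1=\theta_2$ and $\mathcal O_1=\mathcal O_2$. The regularity input you should flag here is \corref{dmetric}: the twisted cscK potentials lie in $C^{3,\a,\b}_{\mathbf w}$, which is what \thmref{weakgeodesicclosedness} needs to connect $\theta_1(t)$ and $\theta_2(t)$ by a $C^{1,1,\b}_{\mathbf w}$ cone geodesic in the first place.
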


\begin{rem}
A byproduct of the uniqueness theorem is the log-$K$-energy is bounded from below over the space of K\"ahler cone metric $\mathcal H_\beta$, assuming the existence of a constant scalar curvature K\"ahler cone metric in $\mathcal H_\beta$. That is interpreted as semistability in the language of geometric invariant theory.
\end{rem}

\begin{rem}
On Riemannian surface, the uniqueness of singular Gauss curvature metric is a classical outstanding problem in conformal geometry and geometric topology, see \cite{MR1333510,MR3556430,MR1773558} for references and related uniqueness results. 
About non-uniqueness result, there are examples of constant conical Gauss curvature metrics with angle larger than one, but they are not unique. One may ask whether the similar phenomena occurs for Calabi's extremal metrics (a generalisation notion of cscK metrics), when the cone angle $\beta>1$? However, in general, the uniqueness of such critical metrics fails either, e.g. Chen's examples for HCMU (the Hessian of the curvature of the metric is umbilical) metrics in \cite{MR1753319}. 
\end{rem}

\begin{rem}
In a recent article \cite{MR3323577}, the uniqueness of K\"ahler-Einstein cone metrics is proved. In \cite{arxiv:1511.02410}, we provide an alternative proof of this result.  The uniqueness is also true for more general singularities, i.e. the klt pairs in \cite{arXiv:1111.7158}. 
\end{rem}

As shown in a series of previous paper \cite{arxiv:1603.01743,arxiv:1703.06312}, we observe the loss of regularity phenomena along the singular direction for the cscK cone metrics, when angle is larger than half. This is a main difficulty, which causes the technical complexity. Approaches to address this problem will be presented in Section \ref{Uniqueness of cscK cone metrics}, as well as, Section \ref{Asymptotic behaviours}, \ref{Linear theory for Lichnerowicz operator}, \ref{Reductivity of automorphism group}, \ref{bifurcation}.

The structure of the proof of \thmref{Uniqueness} is organised as follows.
A key ingredient
of the proof of the uniqueness of the cscK, as well as the extremal K\"ahler metric, is the convexity of the $K$-energy along Chen's geodesic in the recent papers by Berman-Berndtsson \cite{MR3671939}, Chen-Li-P\u aun \cite{MR3582114}, Chen-P\u aun-Zeng \cite{arxiv:1506.01290}. It is also shown in Li \cite{arxiv:1511.00178} that the log-$K$-energy is also convex along the $C^{1,1,\b}$ geodesic, which is constructed in our previous paper \cite{MR3405866}.
In Section \ref{Convexity along the generalised cone geodesic}, we will extend their method to prove the convexity of the log-$K$-energy along the $C^{1,1,\b}_{\bf w}$ cone geodesic, that is used to prove the uniqueness of the $J$-twisted cscK cone metrics. The new difficulty is again the loss of regularity of the cone geodesic, which is different to the non-singular case.

Utilising Bando-Mabuchi's bifurcation method, the proof of the uniqueness of cscK cone metrics is now reduced to prove the existence of the deformation from a cscK cone metric to a $J$-twisted cscK cone metric. In order to realise such deformation, we perturb the cscK cone metric along the $J$-twisted path and assemble all the parts of the proof including the following ingredients, which are organised in this article as following
\begin{itemize}
\item the regularity of cscK cone metrics (Section \ref{Asymptotic behaviours}), 
\item the Fredholm alternative theorem for the Lichnerowicz operator (Section \ref{Linear theory for Lichnerowicz operator}), 
\item the reductivity of automorphism group (Section \ref{Reductivity of automorphism group}),
\item the bifurcation argument (Section \ref{bifurcation}).
\end{itemize}

\bigskip

The method in this paper has naturally potential application in other type singularities and we raise the following question.
\begin{ques} {What kind of singularities, the constant scalar curvature (or extremal) K\"ahler metric prescribed with, is generically unique? I.e. such singular constant scalar curvature (or extremal) K\"ahler metric is unique up to automorphisms.} 
\end{ques}

\bigskip

\noindent {\bf Acknowledgments:}
The work of K. Zheng has received funding from the European Union's Horizon 2020 research and innovation programme under the Marie Sk{\l}odowska-Curie grant agreement No. 703949, and was also partially supported by EPSRC grant number EP/K00865X/1.

\section{Cone geodesics}\label{Cone geodesic}
In this section, we solve the boundary value problem for geodesic in the space of K\"ahler cone metrics. 
Recall that $X$ is a smooth compact $n$-dimensional K\"ahler manifold without boundary and $\om_0$ is a smooth K\"ahler metric.
We are also given a smooth hyper-surface $D$ in $X$ and a cone angle $0 <\b\leq 1$.
Let {$s$} is a global section of the associated bundle $L_D$ of $D$ and $h$ is an Hermitian metric on $L_D$.

A \emph{K\"ahler cone metric} $\om$ of cone angle $\b$ along $D$,
is a smooth K\"ahler metric on the regular part $M:=X\setminus D$, and quasi-isometric to the cone flat metric, 
\begin{align}\label{flat cone}
\om_{cone}:=\b^2|z^1|^{2(\b-1)} i dz^1\wedge dz^{\bar 1}+\sum_{2\leq j\leq n} i dz^j\wedge dz^{\bar j},
\end{align}
under the cone chart $\{U_p;z^1,\cdots,z^n\}$ near $p\in D$, where $z^1$ is the local defining function of $D$.

We let $\mathcal H_\beta$ be the space of all K\"ahler potentials such that the associated K\"ahler metrics are K\"ahler cone metrics in $[\om_0]$, i.e. 
\begin{align*}
\mathcal H_\beta=\{\vphi\vert \om_{\vphi}=\om_0+i\p\bar\p\vphi \text{ is a K\"ahler cone metric in }[\om_0]\}.
\end{align*}
The space $\mathcal H_\beta$ contains the 
\emph{model metric}
\begin{align}\label{model cone}
\om_D=\om_0+\delta i \p\bar\p |s|^{2\b}_{h},
\end{align}
provided with a sufficiently small constant $\delta>0$. We would also use $\om$ without specification to denote $\om_D$ for short in this paper.


We denote the cylinder $R=[0,1]\times S^1$ with
the coordinate $$z^{n+1}=x^{n+1}+\sqrt{-1}y^{n+1}.$$
We also denote the $(n+1)$-dimensional product manifold with boundary and its regular part by $$\mathfrak{X} = X \times R;\quad \mathfrak M=M \times R.$$ We extend the segment $\{\vphi(z^1,\cdots,z^n,t);0\leq t\leq 1\}$ to the product manifold $\mathfrak{X}$ as $$\vphi(z',z^{n+1}):=\vphi(z^1,\cdots,z^n,x^{n+1})=\vphi(z^1,\cdots,z^n,t).$$ We let $\pi$ be the natural projection from $\mathfrak{X} $ to $X$ and also denote, after lifting-up, the new coordinates, K\"ahler metric $\om_0$ and K\"ahler potential $\vphi$ on $\mathfrak{X}$ by
\begin{align}\label{conventions}
\left\{
\begin{array}{ll}
z&=(z',z^{n+1})=(z^1,\cdots,z^n,z^{n+1}) \; ,\\
\Om_0&=(\pi^{-1})^\ast \om_0+dz^{n+1} \wedge d\bar z^{n+1} \; ,\\
\Om_{cone}&=(\pi^{-1})^\ast \om_{cone} \; ,\\
\Om&=(\pi^{-1})^\ast\om_D=\Om_0+i \p\bar\p \Psi_D\;,\\
\Psi(z)&=\vphi(z)-|z^{n+1}|^2 \; ,\\
\Om_{\Psi}&=\Om_0+i\p\bar\p\Psi.
   \end{array}
\right.
\end{align}

With the notations above, a \emph{cone geodesic} $\{\vphi(t);0\leq t\leq 1\}$
connecting $\vphi_0 $, $\vphi_1$ is a curve segment $\{\vphi(t);0\leq t\leq 1\}\subset \mathcal H_\b$ and satisfies a degenerate complex Monge-Amp\`{e}re  equation
\begin{align}\label{geo ma}
[\vphi''-(\p\vphi',\p\vphi')_{g_{\vphi}}]
\cdot \om^n_\vphi=\Om_{\Psi}^{n+1}=0\text{ in }\mathfrak M \; .
\end{align}
In which, $$\vphi'=\frac{\p\vphi}{\p t}\text{ and }(\p\vphi',\p\vphi')_{g_{\vphi}}=\sum_{1\leq i,j\leq n}g_\vphi^{i\bar j}\frac{\p\vphi'}{\p z^i}\frac{\p\vphi'}{\p z^{\bar j}}.$$
The cone geodesic segment requires that $\om_{\vphi(t)}$ is a possibly degenerate K\"ahler cone metric for any $0\leq t\leq1$.

The Dirichlet boundary conditions $\Psi_0$ are on two disjoint copies of $X$,
\begin{align}\label{geo ma boundary}
  \left\{
   \begin{array}{ll}
\Psi_0(z',0)&\doteq\Psi(z',\sqrt{-1}y^{n+1})\\
&={ \vphi_0(z')}-(y^{n+1})^2 \text{ on }
X\times \{0\}\times S^1,\\
\Psi_0(z',1)&\doteq\Psi(z',1+\sqrt{-1}y^{n+1})\\
&={ \vphi_1(z')}-1-(y^{n+1})^2 \text{ on }
X\times \{1\}\times S^1.
   \end{array}
  \right.
\end{align}

 We use the notation $\Om_{\Psi_0}:=\Om_0+i\p\bar\p\Psi_0$, which coincides with the boundary values $\om_{\vphi_0}$ and $\om_{\vphi_1}$ on $X\times \{0\}\times S^1$ and $X\times \{1\}\times S^1$, respectively.
We let the H\"older exponent satisfy $$\a\b < 1-\b.$$ The H\"older spaces $C^{0,\a,\b}$, $C^{2,\a,\b}$ on $\mathfrak X$ are defined in Section 2 in \cite{MR3405866}.









Assume that $\Om_{\mathbf b}$ is a K\"ahler cone metric in the product manifold $\mathfrak X$. We will show in the following sections, how a priori estimates depend on $\Om_{\mathbf b}$, and leave the construction of $\Om_{\mathbf b}$ in Section \ref{Geometric conditions on the background metric}.

We approximate the geodesic equation by a family of equation,
\begin{align}\label{per equ a}
\left\{
\begin{array}{ll}
\Om_{\Psi}^{n+1}
=\tau \cdot \Om_{\mathbf b}^{n+1} &\text{ in }\mathfrak{M}\; ,\\
\Psi=\Psi_0 & \text{ on }\p\mathfrak{X} \;.
\end{array}
\right.
\end{align}

We let $\Psi_{0,\eps}$ be the smooth approximation of the boundary value by Richberg's regularisation (see \cite{demaillybook,MR2299485}), since our boundary K\"ahler potentials are H\"older continuous. When the boundary values are cscK cone metrics, we will present alternative approximation of them in Section \ref{Approximation}. We also need the background metric $\Om_{\mathbf b}$ has smooth approximation $\Om_{\mathbf b_\eps}$, see Section \ref{Geometric conditions on the background metric}, that is
$\Om_{\mathbf b_\eps}^{n+1}=\frac{e^{-f_0}\cdot \Om_{0}^{n+1}}{(|s|^2+\eps^2)^{1-\b}}.$
Then we consider the smooth approximation equation
\begin{align}\label{smooth per equ a}
\left\{
\begin{array}{ll}
\Om_{\Psi_\eps}^{n+1}
=\tau \cdot  \Om_{\mathbf b_\eps}^{n+1}&\text{ in }\mathfrak{M}\; ,\\
\Psi_\eps=\Psi_{0,\eps} & \text{ on }\p\mathfrak{X} \;.
\end{array}
\right.
\end{align}
The smooth approximation equation \eqref{smooth per equ a} has a unique smooth solution $\Psi_\eps$. Moreover, according to Chen \cite{MR1863016}, the $C^{1,1}$-norm of $\Psi_\eps$ is uniformly bounded, independent of $\tau$, i.e.
	\begin{align*}
	||\Psi_\eps||_{C^{1,1}}=\sup_{\mathfrak X}\{|\Psi_\eps|+|\p\Psi_\eps|_{\Om_0}+|\p\bar{\p}\Psi_\eps|_{\Om_0}\}.
	\end{align*}
\subsubsection{Geometric conditions on the background metric $\om_{\mathbf b}$ and $\Om_{\mathbf b}$}\label{Geometric conditions on the background metric}

In \cite{MR3405866}, we reduce $C^{1,1,\b}$ estimate to geometric conditions of the background metric $\Om_{1}$, i.e.
\begin{itemize}
\item the cone angle less than $\frac{1}{2}$,
\item bounded Christoffel symbols,
\item bounded Ricci upper bound. 
\end{itemize}
We recall the construction of the background metric $\Om_{1}$. It is constructed by using the boundary values. Let $\tilde{\Psi}_0$ be the line segment between the boundary K\"ahler cone potentials $\tilde{\Psi}_0=t{  \vphi_1} +(1-t){  \vphi_0} ,$
	and  $\Phi$  be a convex function on $z^{n+1}$ and vanish on the boundary. Letting $\Psi_1 : =\tilde{\Psi}_0+m\Phi,$ we knew from Proposition 2.5 in \cite{MR3405866} that,
$\Om_{1}:=\Om_0+\sum_{1\leq i,j\leq n+1}i\p_i\p_{\bar j}\Psi_1$ is a K\"ahler cone metric on $(\mathfrak{X}, \mathfrak{D})$. The $\Psi_1$ inherits regularities from the boundary values. When $\vphi_0,\vphi_1\in C_{\mathbf w}^{3,\a,\b}$, we have $\Psi_1\in C_{\mathbf w}^{3,\a,\b}$, by using the computation from Corollary 4.2 in \cite{MR3405866}. We denote the weight 
\begin{align*}
\kappa=\b-\a\b.
\end{align*}
This weight comes from the computation of the Christoffel symbols of cscK cone metrics, see Section \ref{Christoffel symbols of cscK cone metric}.
We obtain that under the normal coordinate, the Christoffel symbols satisfy
\begin{align}\label{Om1backgroundmetricChristoffelsymbols}
\sup_{\mathfrak X} (\sum_{2\leq i\leq n+1} |\nabla_i^{cone}(\Om_1)_{a\bar b}|_{\Om}+|z^1|^{\kappa}\cdot |\nabla_1^{cone}(\Om_1)_{a\bar b}|_{\Om})\leq C,
\end{align}
according to Corollary \ref{psibcondition} and \ref{pwpsibcondition}.

We will use another background metrics $\Om_{\mathbf b}$ and $\om_{\mathbf b}$, whose geometry could be unbounded. The background metric $\om_{\mathbf b}$ stays on $X$ and $\Om_{\mathbf b}$ is the one on the product manifold $\mathfrak X$, i.e. $\Om_{\mathbf b}=(\pi^{-1})^\ast \om_{\mathbf b}$. We present the construction of the background metrics $\Om_{\mathbf b}$ and $\om_{\mathbf b}$, in order to fulfil the requirement in a priori estimates.


The following geometric conditions of $\om_{\mathbf b}$ (same for $\Om_{\mathbf b}$) will be used in obtaining the a priori estimates. 
\begin{enumerate}
\item Christoffel symbols:
\begin{align}\label{backgroundmetricChristoffelsymbols}
\sup_{\mathfrak X} (\sum_{2\leq i\leq n+1} |\nabla_i^{cone}(\Om_{\mathbf b})_{a\bar b}|_{\Om_{{\mathbf b}}}+|z^1|^{\kappa}\cdot |\nabla_1^{cone}(\Om_{\mathbf b})_{a\bar b}|_{\Om_{{\mathbf b}}})\leq C_1.
\end{align}

\item Riemannian curvature:	
for some fixed constant $C_2,C_3$,
\begin{align}\label{backgroundmetriccurvature}
\left\{
\begin{array}{ll}
&R_{i\bar j k\bar l}(\om_{\bf b} )\geq -(\tilde g_{\mathbf b})_{i\bar j} \cdot (g_{\bf b} )_{k\bar l}\\
&\tilde \om_{\mathbf b}=C_2\cdot \om_{\bf b} +i\p\bar\p \Phi\geq 0,\\
&|\Phi|_\infty\leq C_3.
\end{array}
\right.
\end{align}
\end{enumerate}

A simple way to construct $\om_{\bf b}$ and its approximation is to use $\om_{\bf b}=\om_D$ and $\om_{\bf b_\eps}=\om_0+\delta i \p\bar\p( |s|^{2}+\eps)^{\gamma}$ with small $\delta$ and $\tilde\om_{\bf b_\eps}=\om_0+\delta i \p\bar\p( |s|^{2}+\eps)^{\gamma}$ with $0<\gamma<\min\{\beta,1-\beta\}$. The $\Phi=( |s|^{2}+\eps)^{\gamma}$ is bounded. 

Another way of construction of $\om_{\mathbf b_\eps}$ uses the weight function $\chi_\gamma(\epsilon^2+t)=\gamma^{-1}  \int_0^t \frac{ (\epsilon^2+r)^\gamma- \epsilon^{2\gamma} }{r} dr$ in \cite{MR3488129}; and then
\begin{align*}
&\om_{\bf b_\eps}=\om_0+\delta i\p\bar\p\chi_\beta(|s|^2+\epsilon),\\
&\Phi=\chi_\gamma(|s|^2+\epsilon),\\
&\tilde\om_{\bf b_\eps}=\om_{\bf b_\eps}+i\p\bar\p \Phi.
\end{align*} 
In this construction, $\Phi$ is also bounded. Meanwhile, both $\Om_{\bf b}$ satisfy property \eqref{backgroundmetriccurvature}.



\subsubsection{Generalised cone geodesic}
After taking $\eps\rightarrow 0$, the solution of the smooth approximation equation \eqref{smooth per equ a} provides a solution to the approximation equation \eqref{per equ a}. Then taking $\tau\rightarrow 0$, we obtain a solution of the geodesic equation \eqref{geo ma}.
\begin{defn}\label{ctribw}
	We say $\{\vphi(t), 0\leq t\leq 1\}$ is a generalised cone geodesic, if it is the limit of solutions to the approximation equation \eqref{per equ a} as $\tau\rightarrow 0$ under the following norm,
	\begin{align*}
	||\vphi||_{C_{\tri}^{\b}}=\sup_{(z,t)\in \mathfrak X}\{|\vphi|+|\p_t\vphi|+|\p_z\vphi|_{\om}+|\p_z{\p_{\bar z}}\vphi|_{\om}\}.
	\end{align*}
\end{defn}

The estimate of $|\vphi|+|\p_t\vphi|+|\p_z{\p_{\bar z}}\vphi|_{\om}$ follows from the \lemref{Linfty estimate}, \lemref{boundary gradient estimate}
 and the interior spatial Laplacian estimate (Proposition \ref{bad prop: interior Laplacian estimate}). The interior spatial gradient estimate $|\p_z\vphi|_{\om}$ follows from Proposition \ref{Prop Interior spatial gradient estimate}.

\begin{thm}\label{geodesicclosedness}Assume $\{\vphi_{i}, i=0,1\}$ are two K\"ahler cone metrics.  Assume that the background metric $\Om_{\mathbf b}$ satisfies curvature condition  \eqref{backgroundmetriccurvature}. Then there exists a unique $C_{\tri}^{\b}$ generalised cone geodesic connecting $\vphi_i$ and there is a constant $C$ independent of $\tau$ such that $$||\Psi||_{C_{\tri}^{\b}}\leq C.$$ The constant $C$ depends on constants in \eqref{backgroundmetriccurvature} and 
	\begin{align*}
&\sup_{X} \Tr_{\om}\om_{\vphi_i},\quad\sup_X|\vphi_i|.
	\end{align*}
\end{thm}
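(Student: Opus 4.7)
The plan is to establish the theorem via a two-parameter approximation scheme. For each fixed $\tau > 0$, the smooth approximation equation \eqref{smooth per equ a} is a standard non-degenerate Dirichlet problem for the complex Monge-Amp\`ere equation on the product manifold with smooth boundary data $\Psi_{0,\eps}$ and smooth reference metric $\Om_{\mathbf b_\eps}$; it admits a unique smooth solution $\Psi_\eps$ by the work recalled after \eqref{smooth per equ a}. The strategy is then to derive a priori estimates on $\Psi_\eps$ that are uniform in both $\eps$ and $\tau$ under the sole geometric hypothesis \eqref{backgroundmetriccurvature} on $\Om_{\mathbf b}$, pass to the limit $\eps \to 0$ to get a solution $\Psi_\tau$ of \eqref{per equ a}, and then pass $\tau \to 0$ to produce the generalised geodesic in the sense of Definition \ref{ctribw}.

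First I would obtain the $L^\infty$ and $|\p_t \Psi_\eps|$ bounds via the comparison principle, using $\Psi_1$ as an upper barrier and a suitably chosen subsolution built from the boundary potentials and the cylinder coordinate $|z^{n+1}|^2$ as a lower barrier; these are quoted in the text as \lemref{Linfty estimate}. Next I would establish the boundary gradient estimate via Chen's barrier construction (\lemref{boundary gradient estimate}), in which the background cone metric $\Om_{\mathbf b_\eps}$ provides the admissible tangential-normal control near $\p\mathfrak X$. These two steps require no curvature hypothesis.

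The heart of the argument, and the main obstacle, is the interior spatial Laplacian estimate (Proposition \ref{bad prop: interior Laplacian estimate}). Here I would apply the maximum principle to an auxiliary function of the form
\begin{align*}
H = \log \Tr_{\Om_{\mathbf b_\eps}} \Om_{\Psi_\eps} - A\Psi_\eps + B\Phi,
\end{align*}
where $A, B$ are large constants and $\Phi$ is the bounded potential from \eqref{backgroundmetriccurvature}. Computing $\Delta_{\Om_{\Psi_\eps}} H$ via the Chern-Lu inequality, the bad term produced by the bisectional curvature of $\Om_{\mathbf b_\eps}$ is absorbed precisely by the lower bound $R_{i\bar j k \bar l}(\om_{\mathbf b}) \geq - (\tilde g_{\mathbf b})_{i\bar j}(g_{\mathbf b})_{k\bar l}$ together with the positivity of $C_2 \om_{\mathbf b} + i\p\bar\p \Phi$; the boundedness $|\Phi|_\infty \leq C_3$ guarantees that $B\Phi$ does not affect the ultimate pointwise bound. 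The right-hand side $\tau \cdot \Om_{\mathbf b_\eps}^{n+1}$ enters only through a term $\log \tau$ that drops out under differentiation, so the resulting estimate is genuinely uniform in $\tau$ and $\eps$. This is the step where the weaker hypothesis \eqref{backgroundmetriccurvature} replaces the more restrictive Christoffel and Ricci conditions of \cite{MR3405866}, and getting this replacement to work is the technical crux. With the Laplacian bounded, the interior spatial gradient estimate follows from Proposition \ref{Prop Interior spatial gradient estimate} by a standard Bernstein-type maximum principle applied to $|\p \Psi_\eps|^2_{\Om_{\mathbf b_\eps}} e^{-A\Psi_\eps}$.

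With all four quantities in the $C_{\tri}^{\b}$ norm uniformly controlled, I would pass $\eps \to 0$ (using Arzel\`a-Ascoli and weak convergence of $i\p\bar\p$) to obtain a solution $\Psi_\tau$ of \eqref{per equ a} satisfying the same bound, and then $\tau \to 0$ to obtain the generalised cone geodesic $\Psi$. Uniqueness follows from the comparison principle for the degenerate Monge-Amp\`ere operator on $\mathfrak M$ with agreeing boundary values on $\p \mathfrak X$, applied to the limit $\tau \to 0$ of the standard uniqueness for \eqref{per equ a}. The dependence of the final constant $C$ on the Laplacian and $C^0$ norms of the boundary data $\vphi_i$ is inherited transparently from each of the estimates above.
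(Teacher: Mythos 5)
Your scaffold (two-parameter approximation, $L^\infty$ barriers, boundary gradient, interior Laplacian, interior spatial gradient, then pass to the limit in $\eps$ and $\tau$) matches the paper's structure, and the first two steps are exactly right. But the central step, the interior Laplacian estimate, is set up on the wrong quantity, and as stated it will not close.

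You propose to apply the maximum principle to
\begin{align*}
H = \log \Tr_{\Om_{\mathbf b_\eps}} \Om_{\Psi_\eps} - A\Psi_\eps + B\Phi ,
\end{align*}
computing $\tri_{\Om_{\Psi_\eps}} H$ by Chern--Lu on the $(n+1)$-dimensional product. The problem is that the \emph{unweighted} full trace $\Tr_{\Om_{\mathbf b}}\Om_\Psi$ is genuinely not bounded for $\b>\tfrac12$: it contains the mixed singular tangent--normal component $\p_{z^1}\p_{\bar z^{n+1}}\Psi$ and the normal--normal component $\p_{z^{n+1}}\p_{\bar z^{n+1}}\Psi$, which grow like $|z^1|^{-\kappa}$ and $|z^1|^{-2\kappa}$ near $D$. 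This is precisely why Proposition~\ref{bad prop: interior Laplacian estimate} inserts the weight $|s|_h^{2\kappa}$ into the auxiliary function and only proves $\sup_{\mathfrak X}|s|^{2\kappa}\Tr_{\Om_{\mathbf b}}\Om_\Psi\le C$. With the weight your computation closes (this is exactly the paper's Proposition~\ref{bad prop: interior Laplacian estimate}), but it gives a bound that blows up at the divisor, which is not the $|\p_z\p_{\bar z}\vphi|_\om$ bound required by $C_\tri^\b$. Without the weight, the boundary Hessian input $\sup_{\p\mathfrak X}\Tr_\Om\Om_\Psi$ that your maximum principle needs is itself infinite, so the argument cannot start.

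What you actually need for Theorem~\ref{geodesicclosedness} is the \emph{spatial} Laplacian bound $\sup_{\mathfrak X}\Tr_{\om}\om_\vphi\le C$, and the paper obtains it differently, in Proposition~\ref{prop: interior spatial Laplacian estimate}. The object of the maximum principle there is the fiberwise trace $h=\Tr_{\om_{\mathbf b}}\om_\vphi$ (no normal or mixed components), and the operator is not the product Laplacian $\tri_\Psi$ but the linearisation $L$ of the geodesic operator $G=\vphi''-(\p\vphi',\p\vphi')_{g_\vphi}$, which differs from $\tri_\vphi$ by first-order terms involving $\vphi'$ and $1/G$. Following He's computation, the auxiliary function is $Z=\log\Tr_{\om_{\mathbf b}}\om_\vphi-B_1\vphi+\tfrac{t^2}{2}+B_2\Phi$, where the extra $\tfrac{t^2}{2}$ produces a good $1/G$ term that absorbs the new $1/G$ contributions coming from $L$. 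The curvature hypothesis \eqref{backgroundmetriccurvature} then absorbs the bisectional curvature terms, exactly as you anticipated, but applied to $\om_{\mathbf b}$ on the fibers. This is the argument that delivers an unweighted spatial bound, with constants depending only on $\sup_{\p\mathfrak X}\Tr_\om\om_\vphi$ and $\osc\vphi$ -- matching the theorem statement. Replacing your product Chern--Lu step by this fiberwise argument is the missing ingredient.

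On the gradient step, your Bernstein-type auxiliary function $|\p\Psi_\eps|^2_{\Om_{\mathbf b_\eps}}e^{-A\Psi_\eps}$ again imports unbounded curvature terms of the cone background; the paper avoids Bernstein altogether. Proposition~\ref{Prop Interior spatial gradient estimate} deduces the spatial gradient bound from the already-established $L^\infty$ and spatial Laplacian bounds via local $L^p$-estimates with cone Green's functions, a partition of unity, and the H\"older interpolation inequality of Lemma~\ref{1alphainterpolation}; the paper also gives a separate blow-up / Liouville alternative (Proposition~\ref{prop: interior gradient estimate}). Either of these is preferable to Bernstein here and requires no additional curvature control. This second discrepancy is recoverable once the Laplacian step is fixed, but as written both the Laplacian and gradient steps would fail.
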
	


\subsubsection{Cone geodesic}

\begin{defn}\label{c11bw}
	We say $\{\vphi(t), 0\leq t\leq 1\}$ is a cone geodesic, if it is the limit of solutions to the approximation equation \eqref{per equ a} as $\tau\rightarrow 0$ under the following norm,
	\begin{align*}
	||\vphi||_{C_{\bf w}^{1,1,\b}}=||\vphi||_{C_{\tri}^{\b}}+\sup_{ \mathfrak X}\{\sum_{2\leq i\leq n}|\frac{\p^2\vphi}{\p z^{i}\p t} |_{\Om}+|s|^{\kappa}|\frac{\p^2\vphi}{\p z^{1}\p t} |_{\Om}+|s|^{2\kappa} |\frac{\p^2\vphi}{\p t^2} |\}.
	\end{align*}
\end{defn}

The second order estimate follows from Proposition \ref{roughnormalnormal}, which combines the rough boundary Hessian estimates (Proposition \ref{prop: boundary hessian estimate}), the rough interior Laplacian estimate (Proposition \ref{bad prop: interior Laplacian estimate}) and the interior spatial Laplacian estimate (Proposition \ref{prop: interior spatial Laplacian estimate}).

\begin{thm}\label{weakgeodesicclosedness}
Assume $\{\vphi_{i}, i=0,1\}$ are two $C_{\mathbf w}^{3,\a,\b}$ K\"ahler cone metrics. Assume that the background metric $\Om_{\mathbf b}$ satisfies assumption \eqref{backgroundmetricChristoffelsymbols} and \eqref{backgroundmetriccurvature}. Then there exists a unique $C_{\bf w}^{1,1,\b}$ cone geodesic satisfying \eqref{per equ a} connecting $\vphi_i$ and there is a constant $C$ independent of $\tau$ such that $$||\vphi||_{C_{\bf w}^{1,1,\b}}\leq C.$$ The constant $C$ depends on the constants in
\eqref{Om1backgroundmetricChristoffelsymbols}, \eqref{backgroundmetricChristoffelsymbols} and \eqref{backgroundmetriccurvature}, and 
\begin{align*}
&\sup_{\mathfrak{X}}|\Psi|, \quad
\sup_{\mathfrak X}|\Psi_{\mathbf b}|, \quad 
\sup_{\mathfrak X}|\p\Psi_{\mathbf b}|_{\Om},\quad 
\sup_{\mathfrak X} |\p\Psi_1|^2_\Om,\\
 & \sup_{\mathfrak X}|\Om_{\mathbf b}|_\Om, \quad
 \sup_{\p\mathfrak{X}} \Tr_{\om}\om_\vphi,\quad
 \sup_{\p \mathfrak X}[|s|^{\kappa}(|\frac{\p^2\Psi_1}{\p z^{1}\p z^{\overline{n+1}}} |_{\Om}+ |\frac{\p^2 \Psi_1}{\p z^{1}\p z^{{n+1}}}|_{\Om})]\\
&\sup_{\p \mathfrak X}[ |\frac{\p^2\Psi_1}{\p z^{ i}\p z^{n+1}} |_{\Om}+|\frac{\p^2\Psi_1}{\p z^{\bar i}\p z^{n+1}} |_{\Om}], \quad 2\leq i\leq n.
\end{align*} 

\end{thm}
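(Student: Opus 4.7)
The plan is to carry out a two-parameter approximation: first regularise the background metric and the boundary data in \eqref{smooth per equ a} with parameter $\eps>0$, then let the right-hand side degenerate with parameter $\tau\to 0$ in \eqref{per equ a}. For each fixed pair $(\tau,\eps)$, the right-hand side $\tau\cdot\Om_{\mathbf b_\eps}^{n+1}$ is strictly positive and smooth, and the regularised boundary data $\Psi_{0,\eps}$ are smooth; hence the Dirichlet problem admits a unique smooth admissible solution $\Psi_\eps$ by Caffarelli--Kohn--Nirenberg--Spruck together with Chen's uniform $C^{1,1}$ bound (with respect to the smooth metric $\Om_0$) that is independent of $\tau$. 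The task is therefore reduced to establishing a family of a priori bounds on $\Psi_\eps$ that are uniform in $\eps$ (and $\tau$) under the weighted norm $\Vert\cdot\Vert_{C^{1,1,\b}_{\mathbf w}}$.

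First I would collect the zero--first order estimates. The $L^\infty$ bound for $\Psi_\eps$ comes from \lemref{Linfty estimate} applied to $\Psi_\eps-\Psi_1$, using the K\"ahler cone subsolution $\Psi_1$ built from the boundary data and \eqref{Om1backgroundmetricChristoffelsymbols}. The boundary bound for $\p_t\Psi_\eps$, hence $\sup|\p_t\Psi_\eps|$ by the maximum principle applied to $\pm\p_t$, follows from the boundary gradient estimate \lemref{boundary gradient estimate}, comparing $\Psi_\eps$ with barriers built from $\Psi_1$ and the convex function $\Phi$ appearing in \eqref{backgroundmetriccurvature}. The interior spatial gradient bound $|\p_z\Psi_\eps|_\Om$ is provided by the interior spatial gradient estimate (Proposition \ref{Prop Interior spatial gradient estimate}), which uses the curvature inequality in \eqref{backgroundmetriccurvature} via a Blocki--P\u{a}un style maximum principle argument on $|\p_z\Psi_\eps|^2_\Om$ twisted by $\Phi$.

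The second-order part is the core of the proof. The spatial Laplacian bound $\Tr_\Om\Om_{\Psi_\eps}$ follows from the interior spatial Laplacian estimate (Proposition \ref{prop: interior spatial Laplacian estimate}) together with the rough interior Laplacian estimate (Proposition \ref{bad prop: interior Laplacian estimate}) in the time direction; the curvature bound in \eqref{backgroundmetriccurvature} enters precisely to control the torsion-type error in the Chern--Lu computation, which is where the full angle range $0<\b\le 1$ is accommodated by the absence of any $\b<\tfrac12$ restriction on $\Om_{\mathbf b}$. The mixed boundary terms in the norm require the rough boundary Hessian estimate (Proposition \ref{prop: boundary hessian estimate}); here the weight $|s|^\kappa$ and $|s|^{2\kappa}$ with $\kappa=\b-\a\b$ is forced by the asymptotic behaviour of the Christoffel symbols of the background metric recorded in \eqref{Om1backgroundmetricChristoffelsymbols} and \eqref{backgroundmetricChristoffelsymbols}. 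Propagating the boundary mixed Hessian bounds into the interior is carried out by Proposition \ref{roughnormalnormal}, whose maximum-principle input is exactly the combination of the rough boundary and rough interior estimates; this package yields the weighted bounds on $|\p_t\p_{z^i}\Psi_\eps|_\Om$, $|s|^\kappa|\p_t\p_{z^1}\Psi_\eps|_\Om$, and $|s|^{2\kappa}|\p_t^2\Psi_\eps|$. All constants in these estimates depend only on the quantities listed in the theorem.

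With the weighted estimates in hand, I let $\eps\to 0$ along a subsequence to obtain a solution $\Psi_\tau$ of \eqref{per equ a} in $C^{1,1,\b}_{\mathbf w}$ with bounds independent of $\tau$; then $\tau\to 0$ yields a $C^{1,1,\b}_{\mathbf w}$ solution of the degenerate geodesic equation \eqref{geo ma} satisfying the asserted estimate. Uniqueness follows from the comparison principle for the degenerate complex Monge--Amp\`ere equation on $\mathfrak M$ with the given boundary data, which is valid since the relevant potentials are bounded and plurisubharmonic with respect to $\Om_0$, and both agree on $\p\mathfrak X$. The main obstacle I anticipate is the weighted rough boundary Hessian estimate near $D$: the loss of regularity in the $z^1$ direction, reflected in the weights $|s|^\kappa$ and $|s|^{2\kappa}$, must be balanced against the unbounded Christoffel symbols of $\Om_{\mathbf b}$ permitted by \eqref{backgroundmetricChristoffelsymbols}, and choosing the correct barrier that is compatible with both the convexity condition in \eqref{backgroundmetriccurvature} and the weighted regularity of the boundary data $\vphi_i\in C^{3,\a,\b}_{\mathbf w}$ is the delicate point.
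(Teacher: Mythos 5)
Your proposal follows essentially the same route as the paper: the two-parameter approximation \eqref{smooth per equ a}$\to$\eqref{per equ a}, the $L^\infty$ and boundary gradient bounds from \lemref{Linfty estimate} and \lemref{boundary gradient estimate}, the weighted rough boundary Hessian estimates of Proposition \ref{prop: boundary hessian estimate}, the rough interior and spatial Laplacian estimates (Propositions \ref{bad prop: interior Laplacian estimate} and \ref{prop: interior spatial Laplacian estimate}) assembled via Proposition \ref{roughnormalnormal}, and finally the limits in $\eps$ and $\tau$. The only slight inaccuracy is your description of Proposition \ref{Prop Interior spatial gradient estimate}, which in the paper is derived from the spatial Laplacian bound via local $L^p$/cone Green function estimates and an interpolation inequality rather than a maximum principle on $|\p_z\Psi|^2_\Om$, but this does not affect the validity of the argument.
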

The $C_{\mathbf w}^{3,\a,\b}$-norm of $\vphi_i$ control the terms involving $\Psi$ and $\Psi_1$.
The ideas and techniques developed in \cite{arxiv:1609.03111} could be applied to cone geodesics, we leave all these discussion for future development.

\begin{rem}
	The construction of bounded geodesics (weak solutions) has been elucidated in the well-known literature, see for example \cite{MR2884031,MR3323577,MR2932441,MR3617346}, but it is necessary to establish a self-contained, down-to-earth, asymptotic analysis theory to precisely understand smoothness/regularity of the cone geodesics. 
\end{rem}
\subsubsection{Estimates}
We collect basic estimates of \eqref{per equ a} with sufficient small $\tau$, then
\begin{align}
\left\{
\begin{array}{ll}
\Om_{\Psi}^{n+1}
=\tau \cdot  \frac{\Om_{\mathbf b}^{n+1}}{\Om_{1}^{n+1}} \cdot \Om_{1}^{n+1}\leq  \Om_{1}^{n+1} &\text{ in }\mathfrak{M}\; ,\\
\Psi=\Psi_1=\Psi_0 & \text{ on }\p\mathfrak{X} \;.
\end{array}
\right.
\end{align}
Let $h$ satisfy the linear equation 
\begin{align*}
\left\{
\begin{array}{ll}
\tri h =-n-1 & \text{ in }\mathfrak{M}\; ,\\
h=\Psi_0 & \text{ on }\p\mathfrak{X} \;.
\end{array}
\right.
\end{align*}

For all $0<\b\leq 1$, the maximum principle implies
the $L^\infty$ estimate.
\begin{lem}[$L^\infty$-estimate]\label{Linfty estimate} For any point $x\in \mathfrak{X}$, 
	\begin{align}
	\Psi_{1}(x)\leq \Psi(x)\leq h(x) .
	\end{align}
\end{lem}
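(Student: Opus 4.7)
The plan is to prove the two inequalities separately, in both cases by working with the smooth approximations $\Psi_\eps$ of \eqref{smooth per equ a} and then letting $\eps\to 0$.

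For the lower bound $\Psi_1 \leq \Psi$, first observe that on $\p\mathfrak{X}$ both functions equal $\Psi_0$, and that the displayed reformulation of \eqref{per equ a} (with $\tau$ sufficiently small) yields
\[
\Om_{\Psi}^{n+1} \;\leq\; \Om_1^{n+1} \;=\; \Om_{\Psi_1}^{n+1},
\]
since $\Om_1 = \Om_0 + i\p\bar\p\Psi_1$ by the definition of $\Psi_1$ recalled at the start of \S\ref{Geometric conditions on the background metric}. Applying the Bedford--Taylor comparison principle for the complex Monge--Amp\`ere operator at the level of the smooth approximants $\Psi_\eps$ against the smooth regularisation $\Psi_{1,\eps}$ of $\Psi_1$ (which agree on $\p\mathfrak{X}$ after Richberg smoothing of $\Psi_0$), one obtains $\Psi_\eps \geq \Psi_{1,\eps}$ throughout $\mathfrak{X}$. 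Letting $\eps\to 0$ then gives $\Psi \geq \Psi_1$ on $\mathfrak{X}$.

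For the upper bound $\Psi \leq h$, the key input is the arithmetic--geometric mean inequality for the eigenvalues of $\Om_{\Psi_\eps}$ with respect to $\Om_0$: since $\Om_{\Psi_\eps}$ is semi-positive,
\[
\tri \Psi_\eps + (n+1) \;=\; \Tr_{\Om_0}\Om_{\Psi_\eps} \;\geq\; (n+1)\Bigl(\frac{\Om_{\Psi_\eps}^{n+1}}{\Om_0^{n+1}}\Bigr)^{\frac{1}{n+1}} \;\geq\; 0,
\]
so $\tri \Psi_\eps \geq -(n+1)$. Letting $h_\eps$ be the corresponding linear solution with smooth boundary data $\Psi_{0,\eps}$, the function $\Psi_\eps - h_\eps$ is $\Om_0$-subharmonic and vanishes on $\p\mathfrak{X}$, so the classical maximum principle on the compact smooth manifold with boundary $\mathfrak{X}$ yields $\Psi_\eps \leq h_\eps$. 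Passing to the limit $\eps\to 0$ completes the proof.

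The only delicate point is the $\eps\to 0$ limit near the divisor $\mathfrak{D}$, but this is precisely where the approximation scheme set up earlier in this section plays its role: Richberg's regularisation of the H\"older boundary data and the smoothing $\Om_{\bf b_\eps}$ of the background metric ensure uniform $C^0$-convergence of $\Psi_\eps$, $\Psi_{1,\eps}$ and $h_\eps$ on compact subsets of $\mathfrak{X}$, using Chen's $C^{1,1}$ bound \cite{MR1863016} independent of $\tau$ and $\eps$. Once this convergence is in hand the pointwise inequalities transfer to the limit, so no new estimates are required beyond the AM--GM bound and the two standard comparison arguments.
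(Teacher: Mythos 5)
Your proof is correct and follows the same route the paper intends when it says ``the maximum principle implies the $L^\infty$ estimate'': the lower bound via the Monge--Amp\`ere comparison between $\Psi$ and $\Psi_1$ (using $\Om_\Psi^{n+1}\leq\Om_1^{n+1}$ and equal boundary data), the upper bound via $\tri_{\Om_0}\Psi\geq -(n+1)$ and comparison with $h$, both carried out on the smooth approximants and passed to the limit. Two small simplifications: at the smooth level the ordinary maximum principle for Monge--Amp\`ere suffices (Bedford--Taylor is only needed for non-smooth potentials), and for the upper bound you only use $\Tr_{\Om_0}\Om_{\Psi_\eps}\geq 0$, i.e.\ semipositivity of $\Om_{\Psi_\eps}$, rather than the full strength of AM--GM.
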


The $L^\infty$-estimate yields the boundary gradient estimate.
\begin{lem}[Boundary gradient estimate]\label{boundary gradient estimate}
	\begin{align}
	\sup_{ \p \mathfrak X}|\nabla\Psi|_{\Om}\leq \sup_{\mathfrak{X}}
	|\nabla\Psi_{1}|_{\Om}+\sup_{\mathfrak{X}}|\nabla h|_{\Om}\; .
	\end{align}
\end{lem}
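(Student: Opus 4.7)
The plan is the classical barrier argument exploiting the sandwich inequality $\Psi_1 \le \Psi \le h$ from \lemref{Linfty estimate} together with the fact that all three functions agree on $\p\mathfrak X$ (both $\Psi_1$ and $h$ take the boundary value $\Psi_0$ by construction). This means the two comparison functions $\Psi_1$ and $h$ serve as lower and upper barriers for $\Psi$ along $\p\mathfrak X$, and the gradient bound at the boundary will be obtained by separating tangential and normal components.

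First I would work at a fixed point $p\in\p\mathfrak X$ and choose an adapted coordinate frame, orthonormal with respect to $\Om$, in which one real direction is the unit inward normal $\nu$ to $\p\mathfrak X$ and the remaining $2n+1$ directions are tangential. For the tangential directions, the restrictions $\Psi|_{\p\mathfrak X}$, $\Psi_1|_{\p\mathfrak X}$ and $h|_{\p\mathfrak X}$ all equal $\Psi_0$, so their tangential derivatives at $p$ coincide; in particular the tangential components of $\nabla\Psi(p)$ are bounded by $|\nabla\Psi_1(p)|_\Om$ (equivalently by $|\nabla h(p)|_\Om$).

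For the normal component, since $\Psi-\Psi_1\ge 0$ in $\mathfrak X$ and vanishes at $p\in\p\mathfrak X$, the function $\Psi-\Psi_1$ attains its minimum at $p$ along the inward normal segment, so $\p_\nu(\Psi-\Psi_1)(p)\ge 0$; i.e.\ $\p_\nu\Psi(p)\ge \p_\nu\Psi_1(p)$. Symmetrically, $h-\Psi\ge 0$ vanishes at $p$, yielding $\p_\nu\Psi(p)\le \p_\nu h(p)$. These two inequalities sandwich the normal derivative of $\Psi$ between those of the two barriers, and hence
\begin{align*}
|\p_\nu\Psi(p)|\le \max\{|\p_\nu\Psi_1(p)|,|\p_\nu h(p)|\}\le |\nabla\Psi_1(p)|_\Om+|\nabla h(p)|_\Om.
\end{align*}
Combining the tangential and normal bounds and taking the supremum over $p\in\p\mathfrak X$ gives the stated inequality (since the right-hand side is taken over all of $\mathfrak X$, which only enlarges the bound).

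The only issue requiring care is that $\Om$ degenerates along $\mathfrak D$, so the \enquote{orthonormal frame} must be interpreted with respect to the cone metric; however, this is harmless because the inequalities $\Psi_1\le\Psi\le h$ are pointwise scalar inequalities, and the gradient norm $|\cdot|_\Om$ is precisely the one used on both sides, so the cone degeneracy cancels. The main step is genuinely the sandwich inequality together with the maximum principle for the auxiliary linear equation defining $h$, both of which are already established; no further a priori control is needed.
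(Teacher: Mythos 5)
Your proposal is correct and is exactly the barrier argument the paper intends (the paper gives no explicit proof, merely noting that the $L^\infty$-estimate of \lemref{Linfty estimate} yields the boundary gradient bound): tangential derivatives of $\Psi$, $\Psi_1$, $h$ coincide on $\p\mathfrak X$ since all three restrict to $\Psi_0$ there, and the normal derivative of $\Psi$ is sandwiched, $\p_\nu\Psi_1\le\p_\nu\Psi\le\p_\nu h$, by the sign of $\Psi-\Psi_1$ and $h-\Psi$. The only cosmetic point is your final combination, which as written gives $2\sup|\nabla\Psi_1|_\Om+\sup|\nabla h|_\Om$; this is repaired by observing that, since the tangential parts agree and $\p_\nu\Psi=(1-t)\p_\nu\Psi_1+t\,\p_\nu h$ for some $t\in[0,1]$, the vector $\nabla\Psi(p)$ is a convex combination of $\nabla\Psi_1(p)$ and $\nabla h(p)$, so $|\nabla\Psi(p)|_\Om\le\max\{|\nabla\Psi_1(p)|_\Om,|\nabla h(p)|_\Om\}$, which is dominated by the stated sum of suprema.
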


It also holds that $\frac{\p \vphi}{\p t}$ is uniformly bounded, see \cite{MR3323577}.
Boundary gradient estimate tells us at the boundary, $\frac{\p \Psi}{\p x}$ is uniformly (on $\tau$) bounded and so is $\frac{\p \vphi}{\p t}$.
By \eqref{per equ a}, $\frac{\p^2\vphi}{\p t^2}\geq 0$, thus $\frac{\p \vphi}{\p t}$ is bounded uniformly along the whole path $0\leq t\leq 1$. 

\subsection{Boundary Hessian estimates}\label{Boundary Hessian estimate, I}
In this section, we improve the boundary Hessian estimates, Section 3.3 in \cite{MR3405866}. 

As shown in \cite{MR3405866}, there is an obstruction to directly obtain the mixed singular tangent-normal estimates on the boundary \begin{align*}
| \frac{\p^2\Psi}{\p z^{1}\p z^{\overline{n+1}}}|_{\Om},\quad | \frac{\p^2\Psi}{\p z^{1}\p z^{n+1}}|_{\Om},
\end{align*} since the term $|\nabla_1^{cone}(\Om_{\mathbf b})_{a\bar b}|_{\Om_{cone}}$ is generally not bounded for large angles.
For example, we know from in \cite{MR3405866} that the model metric $\om_D$ has bounded $|\nabla_1^{cone}(\om_D)_{a\bar b}|_{\Om_{cone}}$, when angle is less than $\frac{2}{3}$. KE cone metric \cite{arxiv:1609.03111} or more general cscK cone metric (see \corref{bgdmetric}) has bounded $|\nabla_1^{cone}(\om_{cscK})_{a\bar b}|_{\Om_{cone}}$, when angle is less than $\frac{1}{2}$, moreover, it has growth rate $|z^1|^{-\kappa}$ with $\kappa=\b-\a\b$. In this section, we derive more information of this difficulty.

The Hessian estimates are divided into three parts.
The tangent-tangent estimates on the boundary follow from the boundary values directly,
	\begin{align*}
 \frac{\p^2 (\Psi -\Psi_1)}{\p z^{i}\p z^{\bar j}}= \frac{\p^2 (\Psi -\Psi_1)}{\p z^{\bar i}\p z^{ j}}=0,\quad \forall 1\leq i,j \leq n.
\end{align*}
\begin{enumerate}
	\item the mixed regular tangent-normal estimates on the boundary (Proposition \ref{tangent normal estimates}),
	\begin{align*}
	\sup_{\p \mathfrak{X}} |\frac{\p^2 \Psi  }{\p z^{i}\p z^{\overline {n+1}}} |_{\Om},\quad 
	\sup_{\p \mathfrak{X}} |\frac{\p^2\Psi }{\p z^{i}\p z^{ n+1}}\Psi|_{\Om},\quad \forall 2\leq i \leq n;
	\end{align*}

	\item the rough mixed singular tangent-normal estimates on the boundary (Proposition \ref{bad singular tangent-normal estimates on the boundary}),
	\begin{align*}
	\sup_{\p\mathfrak X} |s|^{\kappa}|\frac{\p^2 \Psi }{\p z^{1}\p z^{\overline{ n+1}}}\Psi|_{\Om},\quad
	\sup_{\p\mathfrak X}|s|^{\kappa} |\frac{\p^2 \Psi}{\p z^{1}\p z^{ n+1}}\Psi|_{\Om}.
	\end{align*}
	
	\item the rough normal-normal estimates on the boundary (Proposition \ref{bad Boundary normal-normal estimate}), 
	\begin{align*}
	\sup_{\p \mathfrak{X}} |s|^{2\kappa}|\frac{\p^2 \Psi }{\p z^{n+1}\p z^{\overline{ n+1}}}\Psi|_{\Om},\quad\sup_{\p \mathfrak{X}} |s|^{2\kappa}|\frac{\p^2 \Psi }{\p z^{n+1}\p z^{ n+1}}\Psi|_{\Om};
	\end{align*}
\end{enumerate}

\begin{prop}[Rough boundary Hessian estimates]\label{prop: boundary hessian estimate} Assume that the Christoffel symbols of the background metrics $\Om_{1}$ and $\Om_{\mathbf b}$ satisfy conditions
\eqref{Om1backgroundmetricChristoffelsymbols} and \eqref{backgroundmetricChristoffelsymbols}, respectively.
	Then the terms in $(1)(2)(3)$ are bounded by 
	\begin{align*}
	C(\sup_{\mathfrak X} |\p\Psi|^2_\Om+1).
	\end{align*}
	The constant $C$ depends on the constants in 
	\eqref{Om1backgroundmetricChristoffelsymbols} and \eqref{backgroundmetricChristoffelsymbols}, and 
\begin{align*}
	&\sup |\Om_{\mathbf b}|_{\Om_{ cone}},\quad \sup_{\mathfrak X} |\p\Psi_1|^2_\Om,\quad
	\sup_{\p \mathfrak X}[|s|^{\kappa}(|\frac{\p^2\Psi_1}{\p z^{1}\p z^{\overline{n+1}}} |_{\Om}+ |\frac{\p^2 \Psi_1}{\p z^{1}\p z^{{n+1}}}|_{\Om})]\\
	&\sup_{\p \mathfrak X}[ |\frac{\p^2\Psi_1}{\p z^{ i}\p z^{n+1}} |_{\Om}+|\frac{\p^2\Psi_1}{\p z^{\bar i}\p z^{n+1}} |_{\Om}], \quad 2\leq i\leq n.
	\end{align*} 
\end{prop}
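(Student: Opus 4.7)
The plan is to adapt the Chen--He barrier argument for Dirichlet Monge--Amp\`ere problems to the cone setting, treating the three families of Hessian components in sequence. The tangent-tangent part of $\p\bar\p(\Psi-\Psi_1)$ vanishes identically on $\p\mathfrak{X}$ because $\Psi=\Psi_1$ there and the derivatives $\p_i\p_{\bar j}$ with $1\le i,j\le n$ are purely tangential; this disposes of the block where no weight is needed. For the regular tangent--normal estimates, I would introduce a tangential holomorphic field $D^i$ ($2\le i\le n$) and apply the linearised operator $\mathcal{L} = \Om_\Psi^{a\bar b}\p_a\p_{\bar b}$ to $w_i := D^i(\Psi-\Psi_1) + A\rho + B\rho^2$, where $\rho$ is a defining function of $\p\mathfrak{X}$ (e.g.\ $\rho = x^{n+1}(1-x^{n+1})$). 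Differentiating the approximated equation \eqref{per equ a} once tangentially produces commutator terms of the form $\mathcal{L}D^i\Psi = D^i\log(\Om_{\mathbf b}^{n+1}/\Om_0^{n+1})$ plus contractions of $\Om_\Psi^{a\bar b}$ against $\nabla_i(\Om_{\mathbf b})_{a\bar b}$; by hypothesis \eqref{backgroundmetricChristoffelsymbols}, for $i\ge 2$ these are bounded by $C(|\p\Psi|_\Om^2+1)$. Choosing $A,B$ large, the comparison principle against the barrier $h$ from Lemma \ref{Linfty estimate} yields the desired bound at $\p\mathfrak{X}$.

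For the singular tangent--normal estimates with $i=1$, I would work with the weighted test function $w_1 := |s|^\kappa\cdot\mathrm{Re}(\p_1(\Psi-\Psi_1)) + A\rho + B\rho^2$. The key input is that the commutator term $\Om_\Psi^{a\bar b}\nabla_1(\Om_{\mathbf b})_{a\bar b}$ is only bounded after multiplication by $|z^1|^\kappa$, precisely the weight \eqref{backgroundmetricChristoffelsymbols} supplies, so applying $\mathcal{L}$ to $w_1$ produces inhomogeneous terms dominated by $C(|\p\Psi|^2_\Om+1)$. On $\p\mathfrak{X}$, the value of $w_1$ is controlled by the hypothesised boundary quantity $\sup_{\p\mathfrak X}|s|^\kappa|\p^2_{1,\overline{n+1}}\Psi_1|_\Om$. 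The outward normal derivative at the boundary then gives the estimate for $|s|^\kappa|\p^2_{1,\overline{n+1}}\Psi|_\Om$, and an identical argument with $\p_1$ replaced by $\p_{\bar 1}$ handles the $\p^2_{1,n+1}$ component.

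Finally, for the rough normal--normal estimates, I would use the Monge--Amp\`ere equation itself. Expanding $\Om_\Psi^{n+1}=\tau\Om_{\mathbf b}^{n+1}$ along the last coordinate yields a cofactor expansion
\begin{align*}
\det(\Psi_{a\bar b} + (\Om_0)_{a\bar b})_{1\le a,b\le n+1} = \tau\,\Om_{\mathbf b}^{n+1}/\Om_0^{n+1},
\end{align*}
and hence $\p^2_{n+1,\overline{n+1}}\Psi$ is expressed as a ratio whose numerator is $\tau$ times a bounded term, and whose denominator is the principal minor in the tangential block, modulated by mixed tangent--normal entries. Since each tangent--normal term in $1$-direction has been bounded with weight $|s|^{-\kappa}$, two of them in a product contribute $|s|^{-2\kappa}$; multiplying the full expression through by $|s|^{2\kappa}$ and using \eqref{Om1backgroundmetricChristoffelsymbols}, \eqref{backgroundmetricChristoffelsymbols} delivers the bound on $|s|^{2\kappa}|\p^2_{n+1,\overline{n+1}}\Psi|_\Om$. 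The holomorphic--holomorphic component $|s|^{2\kappa}|\p^2_{n+1,n+1}\Psi|_\Om$ follows by another tangential differentiation of the boundary identity $\Psi=\Psi_1$ combined with the gradient estimate $\sup_{\p\mathfrak{X}}|\nabla\Psi|_\Om$ from Lemma \ref{boundary gradient estimate}.

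The main obstacle is step two, constructing the weighted barrier $w_1$ so that the $|s|^\kappa$ factor survives the linearisation without producing uncontrolled error terms from $\mathcal{L}(|s|^\kappa)$; this forces a careful choice of auxiliary function and a delicate bookkeeping of powers of the defining function $|s|$, precisely where the threshold $\kappa = \b-\a\b$ enters. Once that is set up, steps one and three are essentially the standard Chen--He argument plus the cofactor trick, whereas the extension to arbitrary $0<\b\le 1$ rests entirely on being able to absorb the $|z^1|^{-\kappa}$ singularity into the weight.
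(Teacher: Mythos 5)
Your decomposition into tangent--tangent, regular tangent--normal, singular tangent--normal, and normal--normal pieces matches the paper, and the cofactor reading of the Monge--Amp\`ere equation for the normal--normal part is essentially what the paper does (Proposition \ref{bad Boundary normal-normal estimate}). However, there are two substantive gaps. First, for the regular tangent--normal case, your proposed barrier $w_i = D^i(\Psi-\Psi_1)+A\rho+B\rho^2$ with $\rho$ a fixed defining function cannot produce the differential inequality you need: the paper's barrier is $h = \lambda_1 v + \lambda_2 u + D_i(\Psi-\Psi_1)$ where $v=(\Psi-\Psi_1)+sx-Nx^2$ is engineered so that, \emph{via the Monge--Amp\`ere equation itself and the AM--GM inequality} (Lemma \ref{eq_subclaim_inequalities_on_v}), one gets $\tri_\Psi v\leq -\tfrac{\eps_0}{2}(1+\Tr_{\Om_\Psi}\Om_{cone})$. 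This $-\Tr_{\Om_\Psi}\Om_{cone}$ term is exactly what absorbs the commutator errors $F_i(1+\Tr_{\Om_\Psi}\Om_{cone})$ from Lemma \ref{realtildepsipsi1}; a barrier built only from $\rho$ has no mechanism to do this, since $\tri_\Psi\rho^k$ only contributes $g_\Psi^{n+1,\overline{n+1}}$ terms.

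Second, and more seriously, the direct weighted barrier $w_1 = |s|^\kappa\,\mathrm{Re}(\p_1(\Psi-\Psi_1))+\cdots$ does not work, and this is not just a matter of ``delicate bookkeeping.'' When you apply $\mathcal{L}=\Om_\Psi^{a\bar b}\p_a\p_{\bar b}$ to the product, the Leibniz rule produces $\tri_\Psi(|s|^\kappa)\cdot\p_1(\Psi-\Psi_1)$ and cross terms $\nabla|s|^\kappa\cdot\nabla\p_1(\Psi-\Psi_1)$. Near $D$ one has $g_\Psi^{1\bar 1}\p_1\p_{\bar 1}|z^1|^\kappa\sim |z^1|^{\kappa-2\b}$, which blows up because $\kappa=\b-\a\b<2\b$, and this cannot be absorbed by $(1+\Tr_{\Om_\Psi}\Om_{cone})$. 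The paper instead passes to \emph{orbit coordinates}, i.e.\ a branched cover with rational exponents $\kappa_s\downarrow\kappa$ (Proposition \ref{bad singular tangent-normal estimates on the boundary}): on the cover, $\tilde D_1(\tilde\Psi-\tilde\Psi_1)$ \emph{is} the weighted derivative $|z^1|^{\kappa_s}D_1(\Psi-\Psi_1)$, and the Laplacian acts on it cleanly without producing any $\tri(|s|^\kappa)$-type singularity; the condition \eqref{backgroundmetricChristoffelsymbols} then exactly compensates the singular growth of $F_1$. This covering trick is the missing idea: without it the $i=1$ estimate does not close.
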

\begin{rem}
	In general, we could apply the argument with different weight to replace $|z^1|^{\kappa}$, which depends on the boundary values.
\end{rem}


\subsubsection{Boundary mixed regular tangent-normal estimates}\label{Boundary Hessian estimates}

 We are given a point $p$ on the boundary of $X$, i.e.  $ \p\mathfrak X$. 
We assume the point $p$ is on the divisor, otherwise, the estimates are simpler.
We use the cone chart $\{U;z^1,\cdots z^{n+1}\}$ centred at $p$ as before, i.e. $p=0$, $z^1$ is the normal direction to $D$ and $z^{n+1} := x+ \sqrt{-1}y$ parametrises the cylinder $R$. We fix a half ball of $p$ i.e. 
$B^+_{\rho_0} $ is the interior of the set $ \mathfrak X\cap B_{\rho_0}(p){\subset U}$ such that
the {radius} $\rho_0<1$ and $B_{5\rho_0}\subset U$. 

We consider in $B^+_{\rho_0}$,
\begin{align}\label{eq_defn_function_v}
 v:= (\Psi -\Psi_1) + sx - N x^2\, ,
\end{align}
where $N$ is determined in \eqref{N} and $s=2 N$.

\begin{lem}\label{eq_subclaim_inequalities_on_v}Let $\tri_{\Psi} $ be the Laplacian with respect to $g_\Psi$. Let $\eps_0$ be the constant such that $\Om_1\geq \eps_0\cdot\Om_{cone}$ in $B^+_{\rho_0}$. Then there exists a constant $N$ depending on $\eps_0$ and $\inf\frac{\Om^{n+1}_{cone}}{\Om^{n+1}_{\bf b}}$ such that the following inequalities hold
\begin{align*}
\left\{
\begin{array}{ll}
\tri_{\Psi}v \leq - \frac{\eps_0}{2}(1 + {\Tr}_{\Om_\Psi}\Om_{cone}) &  \mbox{ in } B^+_{\rho_0},
 \\
  v\geq 0 \qquad & \mbox{ on } \partial B^+_{\rho_0}, \\
  v=0 &\text{ on }\p B^+_{\rho_0}\cap\p \mathfrak X.
\end{array}
\right.
\end{align*}

\end{lem}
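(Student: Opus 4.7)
The plan is a Chen--He style barrier construction (cf.\ \cite{MR1863016,MR3298665}), adapted to the cone setting by using the hypothesis $\Om_1\geq \eps_0\Om_{cone}$ to convert the Monge--Amp\`ere information into the required trace estimate. The boundary conditions are nearly immediate. On $\p B^+_{\rho_0}\cap\p\mathfrak X$ the coordinate $x$ vanishes and the Dirichlet data force $\Psi=\Psi_0=\Psi_1$, so $v=0$. On the remaining curved portion of $\p B^+_{\rho_0}$, \lemref{Linfty estimate} gives $\Psi-\Psi_1\geq 0$, and the choice $s=2N$ yields $sx-Nx^2=Nx(2-x)\geq 0$ for $x\in[0,1]$, so $v\geq 0$.

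For the Laplacian inequality I compute term by term. Since $x=\mathrm{Re}(z^{n+1})$ is pluriharmonic, $\tri_{\Psi}(sx)=0$, so the linear piece serves only the boundary condition. A direct computation gives $\tri_{\Psi}x^2=c\,g_{\Psi}^{(n+1)\overline{(n+1)}}$ for an absolute constant $c>0$, and
\begin{align*}
\tri_{\Psi}(\Psi-\Psi_1)=(n+1)-\Tr_{\Om_{\Psi}}\Om_1\leq (n+1)-\eps_0\,\Tr_{\Om_{\Psi}}\Om_{cone}
\end{align*}
by the pointwise hypothesis $\Om_1\geq \eps_0\Om_{cone}$ in $B^+_{\rho_0}$. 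Summing,
\begin{align*}
\tri_{\Psi}v\leq (n+1)-\eps_0\,\Tr_{\Om_{\Psi}}\Om_{cone}-Nc\,g_{\Psi}^{(n+1)\overline{(n+1)}}.
\end{align*}

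The target estimate is therefore reduced to
\begin{align*}
\frac{\eps_0}{2}\Tr_{\Om_{\Psi}}\Om_{cone}+Nc\,g_{\Psi}^{(n+1)\overline{(n+1)}}\geq (n+1)+\frac{\eps_0}{2}.
\end{align*}
The left-hand side equals $\Tr_{\Om_{\Psi}}\wt\Om$, where $\wt\Om:=\frac{\eps_0}{2}\Om_{cone}+Nc\,i\,dz^{n+1}\wedge d\bar z^{n+1}$ is a \emph{nondegenerate} $(1,1)$-form on the product. The arithmetic--geometric mean inequality applied to the eigenvalues of $\wt\Om$ with respect to $\Om_{\Psi}$ gives
\begin{align*}
\Tr_{\Om_{\Psi}}\wt\Om\geq (n+1)\biggl(\frac{\wt\Om^{n+1}}{\Om_{\Psi}^{n+1}}\biggr)^{1/(n+1)}.
\end{align*}
Expanding via the block structure yields $\wt\Om^{n+1}=(\eps_0/2)^n Nc\cdot(\Om_{cone}+i\,dz^{n+1}\wedge d\bar z^{n+1})^{n+1}$. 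Invoking the approximation equation $\Om_{\Psi}^{n+1}=\tau\,\Om_{\mathbf b}^{n+1}$ with $\tau\leq 1$, together with the positive infimum of $\Om_{cone}^{n+1}/\Om_{\mathbf b}^{n+1}$ entering the hypothesis, then produces a $\tau$-independent lower bound on $\wt\Om^{n+1}/\Om_{\Psi}^{n+1}$ depending only on $\eps_0$, $N$, and this infimum. Taking $N$ sufficiently large, depending only on $\eps_0$ and the infimum, forces this lower bound to exceed $(n+1)+\eps_0/2$, which completes the proof.

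The main obstacle, and the only place where the cone structure intervenes beyond the standard smooth case, is that $\Om_{cone}$ is degenerate in the cylinder direction: without additional input, AM--GM applied to $\Om_{cone}$ alone yields only a trivial lower bound. The $-Nx^2$ term in $v$ supplies the missing $g_{\Psi}^{(n+1)\overline{(n+1)}}$ factor, restoring non-degeneracy and so making the determinant comparison effective; this is the role of this summand beyond its contribution to the sign condition on $\p B^+_{\rho_0}$.
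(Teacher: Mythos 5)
Your proposal is correct and follows essentially the same route as the paper: the same barrier $v=(\Psi-\Psi_1)+sx-Nx^2$, the same reduction via $\Om_1\geq\eps_0\Om_{cone}$ and the splitting of $\frac{\eps_0}{2}\Tr_{\Om_\Psi}\Om_{cone}$, the same weighted arithmetic--geometric mean inequality producing a determinant ratio, and the same use of $\Om_\Psi^{n+1}=\tau\Om_{\mathbf b}^{n+1}$ with $\inf\frac{\Om_{cone}^{n+1}}{\Om_{\mathbf b}^{n+1}}>0$ to fix $N$. Your closing remark on why the $-Nx^2$ term is needed to compensate the degeneracy of $\Om_{cone}$ in the cylinder direction is a correct reading of the same mechanism the paper uses implicitly.
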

\begin{proof}
Recall that $\p_{z^i}=\frac{1}{2}(\p_{x^i}-i\p_{y^i})$. Then direct computation shows that
\begin{align*}
\tri_{\Psi} v&=n+1- \Tr_{\Om_\Psi}\Om_1-N\cdot g_{\Psi}^{n+1,\overline{n+1}}\\
&\leq n+1- \frac{\eps_0}{2}\Tr_{\Om_\Psi}\Om_{cone}- [\frac{\eps_0}{2}\Tr_{\Om_\Psi}\Om_{cone}+N\cdot g_{\Psi}^{n+1,\overline{n+1}}].
\end{align*}
We then have, by the inequality of arithmetic and geometric means,
\begin{align*}
\frac{\eps_0}{2}\Tr_{\Om_\Psi}\Om_{cone}+N\cdot g_{\Psi}^{n+1,\overline{n+1}}
\geq (n+1)[(N+\frac{\eps_0}{2})\frac{1}{2^n}\frac{\Om_{cone}^{n+1}}{\Om_\Psi^{n+1}}]^{\frac{1}{n+1}}.
\end{align*}
Since $\Om^{n+1}_\Psi=\tau\cdot\Om^{n+1}_{\bf b}$ and $\inf\frac{\Om^{n+1}_{cone}}{\Om^{n+1}_{\bf b}}>0$, we choose large $N$ such that 
\begin{align}\label{N}
(n+1)-(n+1)[(N+\frac{\eps_0}{2})\frac{1}{2^n}\inf\frac{\Om^{n+1}_{cone}}{\Om^{n+1}_{\bf b}}]^{\frac{1}{n+1}}\leq - \frac{\eps_0}{2}.
\end{align} So the first inequality is proved. Moreover, the boundary inequality follows from $\Psi_1\leq \Psi$ and the choice of $s$.

\end{proof}

\begin{lem}\label{triu}
	There exits a function $u$ such that
	\begin{itemize}
		\item $u$ vanishes on $ \p B^+_{\rho_0}\cap\p \mathfrak X$;
		\item for $q\in \Gamma^+_{\rho_0}:=\partial B^+_{\rho_0}\cap \Int(\mathfrak X)$, $u(q)\geq \eps_1(\rho_0)$ for some constant $\eps_1(\rho_0)$ depending only on the fixed $\rho_0$;
		\item  For some fixed constant $\eps_2\geq 1$, it holds in $B^+_{\rho_0}$, \begin{align*}
		\tri_\Psi u\leq \eps_2\cdot \Tr_{\Om_\Psi}\Om_{cone}.
		\end{align*}
	\end{itemize}
\end{lem}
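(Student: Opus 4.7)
The plan is to construct $u$ as an explicit geometric barrier on the half-ball $B^+_{\rho_0}$ and to verify the three required properties directly.

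The natural first candidate is $u(z) = K_1 x$ for a constant $K_1 = K_1(\rho_0) > 0$ to be chosen. Since $u$ depends only on the cylinder coordinate $x$, it vanishes on $\p B^+_{\rho_0}\cap \p\mathfrak X \subset \{x = 0\}$, and $i\p\bar\p u = 0$ gives $\triangle_\Psi u = 0 \leq \eps_2 \Tr_{\Om_\Psi}\Om_{cone}$ for any $\eps_2 \geq 1$. On $\Gamma^+_{\rho_0}$ one has $|z-p| = \rho_0$ and $x$ ranges over $(0, \rho_0]$, so $u = K_1 x$ admits a uniform positive lower bound $\eps_1(\rho_0)$ on the portion of $\Gamma^+_{\rho_0}$ bounded away from the equator.

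If a genuinely uniform lower bound on all of $\Gamma^+_{\rho_0}$ is required, I would upgrade to
$$u(z) \;=\; K_1 x \;+\; K_2\,\chi(x)\,\bigl(\rho_0^2 - |z-p|^2\bigr),$$
where $\chi$ is a smooth cutoff with $\chi(0) = 0$, chosen so that $u$ still vanishes on the flat boundary. On $\Gamma^+_{\rho_0}$ the second summand is zero, whereas near the equator the factor $\chi(x)$ ensures $u$ remains positive in a controlled fashion. The PDE calculation is direct: since $i\p\bar\p(K_1 x) = 0$ and $i\p\bar\p|z-p|^2$ is a bounded $(1,1)$-form dominated by $C\,\Om_{cone}$ in the cone chart (its $dz^1\wedge d\bar z^1$ component is of unit size, easily dominated by the $|z^1|^{2(\b-1)}$ weight in $\Om_{cone}$), one obtains $\triangle_\Psi u \leq \eps_2\,\Tr_{\Om_\Psi}\Om_{cone}$ for a suitable $\eps_2 = \eps_2(K_1, K_2, \rho_0) \geq 1$.

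The main obstacle is the tension at the equator $\{x=0\}\cap\{|z-p| = \rho_0\}$ between the vanishing condition on the flat part and the uniform positive bound on $\Gamma^+_{\rho_0}$: for a continuous barrier these two are incompatible in any neighbourhood of the corner. I expect this to be resolved by combining $u$ with the barrier $v$ from \lemref{eq_subclaim_inequalities_on_v} into an effective barrier $Av + u$ with $A$ large, so that the strong negative upper bound $\triangle_\Psi v \leq -\tfrac{\eps_0}{2}(1 + \Tr_{\Om_\Psi}\Om_{cone})$ absorbs the contribution of $u$ near the equator; an alternative is to take $u$ as a Perron-type solution of a Dirichlet problem on $B^+_{\rho_0}$ with discontinuous boundary data ($0$ on the flat part and a positive constant on $\Gamma^+_{\rho_0}$), at the cost of an implicit rather than explicit construction. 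In either case, the quantitative choices of $K_1, K_2, \eps_1, \eps_2$ must be made independent of $\tau$ so the estimate remains uniform when one later lets $\tau \to 0$.
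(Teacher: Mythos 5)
Your observation that the first two bullet points are literally incompatible for a continuous $u$ is correct and exposes a slip in the statement: any $u$ that vanishes on the entire flat face $\p B^+_{\rho_0}\cap\p\mathfrak X$ and is bounded below by $\eps_1>0$ on $\Gamma^+_{\rho_0}$ would be discontinuous at the equator $\{x=0\}\cap\{|z|=\rho_0\}$. The paper's own proof, however, takes a much simpler route than your candidates: it sets $u(z)=|z^1|^{2\b}+\sum_{2\le j\le n+1}|z^j|^2$ (or the global barrier of \cite{MR3405866}). This $u$ does \emph{not} vanish on the flat face; it vanishes only at the centre $p=0$. What the maximum-principle argument in \propref{tangent normal estimates} actually needs is only that $u\geq 0$ with $u(p)=0$: then $h=\lambda_1 v+\lambda_2 u+D_i(\Psi-\Psi_1)$ is nonnegative (not necessarily zero) on $\p B^+_{\rho_0}\cap\p\mathfrak X$, nonnegative on $\Gamma^+_{\rho_0}$ by the choice of $\lambda_2$, and equal to $0$ at $p$, which already gives $\p_x h(p)\geq 0$. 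So the first bullet should read ``$u\geq 0$ on $\bar B^+_{\rho_0}$ and $u(p)=0$,'' and correspondingly the line ``$h=0$ on $\p B^+_{\rho_0}\cap\p\mathfrak X$'' in the proof of \propref{tangent normal estimates} should read ``$h\geq 0$.''

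With that barrier the verification is immediate and requires none of the machinery you sketch: $i\p\bar\p u=\Om_{cone}$ exactly, so $\tri_\Psi u=\Tr_{\Om_\Psi}\Om_{cone}$ with $\eps_2=1$; and on $\Gamma^+_{\rho_0}$, since $\rho_0<1$ forces $|z^1|<1$ and hence $|z^1|^{2\b}\geq|z^1|^2$, one gets $u\geq |z|^2=\rho_0^2$, a uniform positive lower bound (the paper's $\eps_1=\rho_0$ should be $\rho_0^2$, a further minor slip that does not affect anything downstream). Your candidate $K_1 x$ makes exactly the opposite trade-off — it satisfies the literal flat-face vanishing but has no uniform lower bound on $\Gamma^+_{\rho_0}$ near the equator — which is why you were driven toward corner barriers, Perron solutions, or combining with $v$. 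None of that is needed once one realizes the first condition is stated too strongly and relaxes it to nonnegativity with a single zero at $p$; the quadratic cone barrier then does the whole job.
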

\begin{proof}
	We could choose nonnegative $u$ to be the auxiliary global bounded function in $M\times R$ constructed on page 1173 in \cite{MR3405866} or the local function $|z^1|^{2\b}+\sum_{2\leq j\leq n+1}|z^j|^2$. In the latter case, \begin{align}\eps_1(\rho_0)=\rho_0\text{ and }\eps_2=1.\end{align}
\end{proof}

We define the real operator 
\begin{align*}
D_i := \p_{z^i}+\p_{z^{\bar i}} \text{ or }
\sqrt{-1}(\p_{z^i}-\p_{z^{\bar i}}),\quad \forall 2\leq i \leq n+1.
\end{align*}
And we consider $D_i(\Psi-\Psi_1)$.

\begin{lem}\label{realtildepsipsi1}Suppose $2\leq i \leq n+1$. Then there exists a constant \begin{align}\eps_3=\sup \Tr_{\Om_{{\mathbf b}}}\Om_{ cone},
\end{align} such that on $\mathfrak M$,
	\begin{align*}
	\tri_\Psi[D_i(\Psi-\Psi_1)]
	\leq &\eps_3\cdot F_i
	\cdot(1 + {\Tr}_{\Om_\Psi}\Om_{cone}).
	\end{align*}
	In which, $F_i=|\nabla_i^{cone}(\Om_{\mathbf b})_{a\bar b}|_{\Om_{cone}}+|\nabla_i^{cone}(\Om_{1})_{a\bar b}|_{\Om_{cone}}$.
\end{lem}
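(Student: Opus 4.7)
The plan is to differentiate the approximate Monge--Amp\`ere equation $\Om_\Psi^{n+1}=\tau\cdot\Om_{\mathbf b}^{n+1}$ tangentially and then subtract off the identity obtained by performing the same operation with $\Psi_1$ in place of $\Psi$. Taking logarithms and applying the real tangential operator $D_i$, which is permissible since $2\leq i\leq n+1$ avoids the singular radial direction, I obtain
\begin{align*}
g_\Psi^{a\bar b}\,D_i(g_\Psi)_{a\bar b}=g_{\mathbf b}^{a\bar b}\,D_i(g_{\mathbf b})_{a\bar b}.
\end{align*}
Expanding the left side via $(g_\Psi)_{a\bar b}=(g_0)_{a\bar b}+\p_a\p_{\bar b}\Psi$ yields
\begin{align*}
\tri_\Psi(D_i\Psi)=g_{\mathbf b}^{a\bar b}\,D_i(g_{\mathbf b})_{a\bar b}-g_\Psi^{a\bar b}\,D_i(g_0)_{a\bar b},
\end{align*}
and the identical computation with $\Om_1=\Om_0+i\p\bar\p\Psi_1$ gives
\begin{align*}
\tri_\Psi(D_i\Psi_1)=g_\Psi^{a\bar b}\,D_i(g_1)_{a\bar b}-g_\Psi^{a\bar b}\,D_i(g_0)_{a\bar b}.
\end{align*}
Subtracting cancels the $D_i(g_0)$ contributions and leaves the clean identity
\begin{align*}
\tri_\Psi\bigl[D_i(\Psi-\Psi_1)\bigr]=g_{\mathbf b}^{a\bar b}\,D_i(g_{\mathbf b})_{a\bar b}-g_\Psi^{a\bar b}\,D_i(g_1)_{a\bar b}.
\end{align*}

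Each of the two resulting terms I would estimate by the elementary inequality $|g^{a\bar b}T_{a\bar b}|\leq |T|_{\Om_{cone}}\cdot\Tr_g\Om_{cone}$, valid for any Hermitian $(1,1)$-tensor $T$ and positive-definite $g$ by simultaneously diagonalising $T$ against $\Om_{cone}$. Applied with $T=D_i(g_{\mathbf b})$ and $g=g_{\mathbf b}$, and combined with the definition $\eps_3=\sup\Tr_{\Om_{\mathbf b}}\Om_{cone}$, this controls the first term by a constant multiple of $\eps_3\cdot F_i$; applied with $T=D_i(g_1)$ and $g=g_\Psi$ it controls the second by a multiple of $F_i\cdot\Tr_{\Om_\Psi}\Om_{cone}$. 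Replacing the flat derivative $D_i$ by the covariant derivative $\nabla_i^{cone}$ appearing in the definition of $F_i$ costs nothing, because for $2\leq i\leq n+1$ the Christoffel symbols of $\Om_{cone}$ in the $i$-th direction vanish. Summing the two bounds produces the asserted estimate $\eps_3\cdot F_i\cdot(1+\Tr_{\Om_\Psi}\Om_{cone})$.

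The only potential subtlety is precisely this last point -- the identification of $D_i$ with $\nabla_i^{cone}$ -- and it is already built into the hypothesis $2\leq i\leq n+1$, which excludes the singular radial direction in which the Christoffel symbols of the flat cone model would blow up like $1/z^1$. Everything else is a formal differentiation of the Monge--Amp\`ere equation followed by a single linear-algebraic trace inequality, so I do not anticipate a serious obstacle.
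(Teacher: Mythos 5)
Your proposal matches the paper's proof essentially step for step: differentiate the logarithm of the approximate Monge–Ampère equation, reorganize so that the $\Omega_0$ contributions cancel when subtracting the $\Psi_1$ identity, arrive at $\tri_\Psi[D_i(\Psi-\Psi_1)] = g_{\mathbf b}^{a\bar b}D_i(g_{\mathbf b})_{a\bar b} - g_\Psi^{a\bar b}D_i(g_1)_{a\bar b}$, and control the two traces via $|g^{a\bar b}T_{a\bar b}|\le|T|_{\Om_{cone}}\Tr_g\Om_{cone}$ together with the vanishing of the cone Christoffel symbols in the directions $i\ge 2$. The only cosmetic difference is that the paper works with $\p_{z^i}$ first and passes to $D_i$ at the very end, while you carry $D_i$ throughout; that does not change the argument.
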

\begin{proof}
We denote $g$ the local K\"ahler potential of $\om$ and note that $\p_{z^a}\p_{z^{\bar b}}g=g_{a\bar b}$, $1\leq a,b\leq n+1$.
From
$\tri_\Psi=g_\Psi^{a\bar  b}\p_{z^a}\p_{z^{\bar b}}$, 
we have
\begin{align*}
\tri_\Psi \p_{z^i}(\Psi-\Psi_1)=\p_{z^i}\log\Om_{\Psi}^{n+1}-g^{a\bar b}_{\Psi} \p_{z^i}(g_{\Psi_{1}})_{a\bar b}.
\end{align*}
Using the approximation equation \eqref{per equ a}, we have $\p_{z^i}\log\Om_{\Psi}^{n+1}=\p_{z^i}\log\Om_{{\mathbf b}}^{n+1}$, and then
\begin{align}\label{psipsi1}
\tri_\Psi \p_{z^i}(\Psi-\Psi_1)=\p_{z^i}\log\Om_{{\mathbf b}}^{n+1}- g^{a\bar b}_{\Psi} \p_{z^i} (g_{\Psi_1})_{a\bar b}.
\end{align}
Further computing \eqref{psipsi1}, we get
	\begin{align*}
	&\tri_\Psi[\p_{z^i}(\Psi-\Psi_1)]\\
	&=\sum_{1\leq a,b\leq n+1}[g^{a\bar b}_{\Psi_{\mathbf b}}\nabla_i^{cone}(\Om_{\mathbf b})_{a\bar b}-g^{a\bar b}_{\Psi}\nabla_i^{cone}(\Om_{1})_{a\bar b}]\\
	&\leq \eps_3\cdot(|\nabla_i^{cone}(\Om_{\mathbf b})_{a\bar b}|_{\Om_{cone}}+|\nabla_i^{cone}(\Om_{1})_{a\bar b}|_{\Om_{cone}})\cdot(1 + {\Tr}_{\Om_\Psi}\Om_{cone}) .
	\end{align*}
	The lemma follows from the definition of $D_i$.
\end{proof}


The mixed regular tangent-normal estimates on the boundary are already in the proof of Proposition 3.7 in \cite{MR3405866}. We recall the proof for later use.

\begin{prop}[Boundary mixed regular tangent-normal estimates]\label{tangent normal estimates}For $2\leq i \leq n$, there exits a constant $C$ such that
\begin{align}\label{end_section_hessian_bdry_estimate}
&\sup_{\p \mathfrak X}[ |\frac{\p^2(\Psi-\Psi_1)}{\p z^{ i}\p z^{n+1}} |_{\Om}+|\frac{\p^2(\Psi-\Psi_1)}{\p z^{\bar i}\p z^{n+1}} |_{\Om}]\\
&\leq C [1+\sup_{\p \mathfrak X} |\p_{n+1}(\Psi - \Psi_1)|_{\Om} \cdot \sup_{\mathfrak X} |\p_{i}(\Psi - \Psi_1)|_{\Om}]\;.\nonumber
\end{align}
The constant $C$ depends on 
	\begin{align*}
&\sup |\Om_{\mathbf b}|_{\Om_{ cone}},\quad
\sup|\nabla_i^{cone}(\Om_{\mathbf b})_{a\bar b}|_{\Om_{cone}},\\ &\sup|\nabla_i^{cone}(\Om_{1})_{a\bar b}|_{\Om_{cone}},\quad 1\leq a,b\leq n+1.
\end{align*} 
\end{prop}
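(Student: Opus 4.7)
The plan is to prove this bound via the classical boundary barrier/maximum-principle scheme on the half-ball $B^+_{\rho_0}$ around an arbitrary boundary point $p \in \p \mathfrak X$, built from the three preparatory lemmas. For each fixed $i \in \{2, \ldots, n\}$ and each choice of real operator $D_i = \p_{z^i} + \p_{z^{\bar i}}$ or $\sqrt{-1}(\p_{z^i} - \p_{z^{\bar i}})$, I would form the two-sided barrier
$$w^{\pm} = A v + B u \pm D_i(\Psi - \Psi_1),$$
with $v$ from \lemref{eq_subclaim_inequalities_on_v}, $u$ from \lemref{triu}, and constants $A, B > 0$ to be fixed in terms of the sup norms that should appear in the conclusion.

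On the flat face $\p B^+_{\rho_0} \cap \p \mathfrak X$, both $v$ and $u$ vanish while $D_i(\Psi - \Psi_1) = 0$, because $D_i$ is purely spatial for $i \leq n$ and $\Psi = \Psi_1$ on $\p \mathfrak X$; hence $w^{\pm} = 0$ there. On the spherical cap $\Gamma^+_{\rho_0}$, the lower bound $u \geq \eps_1(\rho_0)$ lets me enforce $w^{\pm} \geq 0$ by taking
$$B \,=\, 2\eps_1(\rho_0)^{-1}\sup_{\mathfrak X}|\p_i(\Psi - \Psi_1)|_\Om.$$
Summing \lemref{eq_subclaim_inequalities_on_v}, \lemref{triu} and \lemref{realtildepsipsi1} with coefficients $A$, $B$ and $\pm 1$ gives
$$\tri_\Psi w^{\pm} \leq -\tfrac{A\eps_0}{2}(1 + \Tr_{\Om_\Psi}\Om_{cone}) + B\eps_2 \Tr_{\Om_\Psi}\Om_{cone} + \eps_3 F_i (1 + \Tr_{\Om_\Psi}\Om_{cone}),$$
and since $F_i$ is uniformly bounded for regular indices $2 \leq i \leq n$ by \eqref{Om1backgroundmetricChristoffelsymbols}--\eqref{backgroundmetricChristoffelsymbols}, choosing $A = C_0(1 + B)$ with $C_0$ sufficiently large makes $\tri_\Psi w^{\pm} \leq 0$ throughout $B^+_{\rho_0}$.

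The maximum principle then yields $w^{\pm} \geq 0$ in $B^+_{\rho_0}$, and $w^{\pm}(p) = 0$ forces the inward normal inequality $\p_x w^{\pm}(p) \geq 0$. Combining the two sign choices produces
$$|\p_x D_i(\Psi - \Psi_1)(p)| \leq A\,\p_x v(p) + B\,\p_x u(p).$$
Since $\p_x u(p)$ is a fixed constant depending only on the construction of $u$, and $\p_x v(p) = \p_x(\Psi - \Psi_1)(p) + s \leq C(1 + \sup_{\p\mathfrak X}|\p_{n+1}(\Psi - \Psi_1)|_\Om)$, feeding in $A \sim 1 + B \sim 1 + \sup_{\mathfrak X}|\p_i(\Psi - \Psi_1)|_\Om$ reproduces exactly the stated product bound. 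Running the argument for both real-operator variants of $D_i$ and at every boundary point $p$ captures the four real second partials pairing $z^i$ or $z^{\bar i}$ with $z^{n+1}$ or $z^{\overline{n+1}}$, yielding the full $|\cdot|_\Om$-bound. The main delicacy, flagged by the paper's remark that $|\nabla_1^{cone}(\Om_1)_{a\bar b}|_{\Om_{cone}}$ only satisfies a weighted bound of order $|z^1|^{-\kappa}$, is precisely that this unweighted scheme closes only for the regular tangential directions $i \geq 2$; the singular index $i = 1$ must be handled separately with a weighted barrier in the subsequent rough tangent-normal estimate.
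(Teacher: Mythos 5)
Your proposal matches the paper's argument: same barrier function built from $v$ (\lemref{eq_subclaim_inequalities_on_v}), $u$ (\lemref{triu}), and the $\tri_\Psi$-estimate on $D_i(\Psi-\Psi_1)$ (\lemref{realtildepsipsi1}), same choice of constants up to renaming ($B=\lambda_2$, $A\sim\lambda_1$), and the same Hopf-type conclusion $\p_x w^{\pm}(p)\geq0$ at the boundary point. The only cosmetic difference is that you package both signs as $w^{\pm}$ while the paper runs $h$ once and remarks the same works for $-D_i$.

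One small point that deserves an explicit sentence: the maximum-principle step only controls the $\p_x$-derivative of $D_i(\Psi-\Psi_1)$ at $p$. To assemble the full complex second derivatives $\frac{\p^2(\Psi-\Psi_1)}{\p z^{n+1}\p z^{i}}$ and $\frac{\p^2(\Psi-\Psi_1)}{\p z^{n+1}\p z^{\bar i}}$ you also need $\p_y D_i(\Psi-\Psi_1)=0$ on $\p\mathfrak X$, which holds because $\Psi-\Psi_1$ vanishes on $\p\mathfrak X$ and $y$ is a tangential direction there; the paper records this explicitly. Your phrase about the two variants of $D_i$ "capturing the four real second partials" glosses over this tangential-vanishing input, but the gap is purely expository and the rest of the argument is sound.
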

\begin{proof}

We consider 
\begin{align*}
h:= \lambda_1  v + \lambda_2 u+ D_i ( \Psi - \Psi_1),
\end{align*}
with two constants
\begin{align}
\label{l2}\lambda_2&=\frac{1}{\eps_1}\cdot [1+\sup_{\mathfrak X} |\p_i (\Psi - \Psi_1)|_{\Om}],\\
\label{l1}\lambda_1&=\frac{4}{\eps_0}\cdot[\lambda_2 \cdot\eps_2+\eps_3\cdot\sup F_i].
\end{align} 

We check the boundary value of $h$. On $ \p{B^+_{\rho_0}}\cap\p \mathfrak X$, $h=0$. Using \lemref{eq_subclaim_inequalities_on_v} and \eqref{l2}, we have the estimates of $h$ on the upper boundary, that is, for $q\in \partial B^+_{\rho_0} \cap {\Int}(\mathfrak X)$, 
\begin{align*}
h(q)&\geq \lambda_2  u(q)- | D_i(\Psi -  \Psi_1)(q)|\\
&\geq \lambda_2 \eps_1-\sup_{\mathfrak X} |\p_i(\Psi - \Psi_1)|_{\Om}\\
&\geq 0  .
\end{align*}
Applying \lemref{eq_subclaim_inequalities_on_v}, \lemref{triu}, \lemref{realtildepsipsi1} and \eqref{l1}, we have the differential inequality which $h$ obeys,
\begin{align*}
\tri_\Psi h
\leq& [- \frac{\eps_0}{2}\lambda_1+\lambda_2  \eps_2+\eps_3(|\nabla_i^{cone}(\Om_{\mathbf b})_{a\bar b}|_{\Om_{cone}}+|\nabla_i^{cone}(\Om_{1})_{a\bar b}|_{\Om_{cone}})]\\
&\cdot(1 + {\Tr}_{\Om_\Psi}\Om_{cone})
< 0.
\end{align*} 
Then using the maximum principle and $h(p)=0$,
we have {(recall $z^{n+1}= x+\sqrt{-1}\, y$)}
\[
 \frac{\partial h}{\partial x}(p) \geq 0  .
\]
Thus we have
\begin{align*}
 \frac{\p }{\p x} D_i (\Psi - \Psi_1)(p) \geq 
 -\lambda_1 [\frac{\partial(\Psi - \Psi_1)}{\partial x}+s
 -2Nx ]-\lambda_2 \frac{\p u}{\partial x}(p):=f \;.
\end{align*}
The same argument holds for $ -D_i $,
\begin{align*}
-\frac{\p }{\p x} D_i (\Psi - \Psi_1)(p) \geq f \;.
\end{align*} 
But along the tangent direction $$\frac{\p}{\p y} D_i (\Psi- \Psi_1) =0.$$
Recall that $D_i=\frac{\p}{\p x^i},\frac{\p}{\p y^i}$.
Combining them together, we get that for any $1\leq i \leq n$,
\begin{align*}
|\frac{\p^2(\Psi-\Psi_1)}{\p z^{n+1}\p z^{i}}  |(p)
\leq
|f|\;,\quad |\frac{\p^2(\Psi-\Psi_1)}{\p z^{n+1}\p z^{\bar i}}  |(p)
\leq
|f|\;.
\end{align*} 
From the choice of $u$, we know that $\sup_{\p \mathfrak X} |\p_{n+1} u|_{\Om}\leq 1$.
Therefore, 
\begin{align*}
|f|&\leq \lambda_1[\sup_{\p \mathfrak X} |\p_{n+1}(\Psi - \Psi_1)|_{\Om}+4N]+\lambda_2.
\end{align*}
The conclusion is proved.
\end{proof}

\subsubsection{Rough boundary mixed singular tangent-normal estimates}
\label{Rough boundary mixed singular tangent-normal estimate}

When we directly estimate the mixed singular tangent-normal derivative on the boundary, that is $i=1$, the term $|\nabla_1^{cone}(\Om_1)_{a\bar b}|_{\Om_{cone}}$ is not finite in general when angle is large than $\frac{2}{3}$, as shown in Section 3.3 in \cite{MR3405866}. But it has the growth rate $O(|z^1|^{\a\b-\b})$, when the boundary values are cscK cone metrics, according to \lemref{psibcondition}. We present a direct adaption of previous proof in this section, which gives us a rough estimate of the boundary mixed singular tangent-normal estimates.

\begin{prop}[Rough boundary mixed singular tangent-normal estimates]\label{bad singular tangent-normal estimates on the boundary}These exits a constant $C$ such that
	\begin{align*}
	 &\sup_{\p \mathfrak X}[|s|^{\kappa}(|\frac{\p^2(\Psi- \Psi_1)}{\p z^{1}\p z^{\overline{n+1}}} |_{\Om}+ |\frac{\p^2(\Psi-  \Psi_1)}{\p z^{1}\p z^{{n+1}}}|_{\Om})]\nonumber\\
	 &\leq C [1+\sup_{\p \mathfrak X} |\p_{n+1}(\Psi - \Psi_1)|_{\Om} \cdot \sup_{\mathfrak X} |\p_{1}(\Psi - \Psi_1)|_{\Om}]\;.
	\end{align*}
	The constant $C$ depends on
	\begin{align*}
	&\sup |\Om_{\mathbf b}|_{\Om_{ cone}},\quad
\sup|z^1|^{\kappa}|\nabla_1^{cone}(\Om_{\mathbf b})_{a\bar b}|_{\Om_{cone}},\\ &\sup|z^1|^{\kappa}|\nabla_1^{cone}(\Om_{1})_{a\bar b}|_{\Om_{cone}}\quad 1\leq a,b\leq n+1.
	\end{align*} 
\end{prop}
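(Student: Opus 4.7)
The plan is to mimic the argument of Proposition~\ref{tangent normal estimates} verbatim, but with the modified barrier function
\begin{align*}
h := \lambda_1 v + \lambda_2 u \pm |s|^{\kappa} D_1 (\Psi - \Psi_1),
\end{align*}
where $v$ and $u$ are exactly as in Lemmas~\ref{eq_subclaim_inequalities_on_v} and~\ref{triu}, and $D_1 = \p_{z^1}+\p_{z^{\bar 1}}$ or $\sqrt{-1}(\p_{z^1}-\p_{z^{\bar 1}})$. The role of the factor $|s|^{\kappa}$ is to absorb precisely the singular growth of $F_1 = |\nabla_1^{cone}(\Om_{\mathbf{b}})_{a\bar b}|_{\Om_{cone}}+|\nabla_1^{cone}(\Om_{1})_{a\bar b}|_{\Om_{cone}}$, which by the hypothesis $\sup |z^1|^{\kappa} F_1 < \infty$ is now a controlled quantity.

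The first step is to redo Lemma~\ref{realtildepsipsi1} for $i=1$ with the weighted expression. By the product rule,
\begin{align*}
\tri_\Psi \bigl[|s|^{\kappa} D_1(\Psi-\Psi_1)\bigr]
= |s|^{\kappa}\,\tri_\Psi D_1(\Psi-\Psi_1)
+ 2\,\mathrm{Re}\,\langle\nabla |s|^{\kappa},\nabla D_1(\Psi-\Psi_1)\rangle_{g_\Psi}
+ D_1(\Psi-\Psi_1)\cdot \tri_\Psi |s|^{\kappa}.
\end{align*}
The first piece is bounded by $\eps_3\,|s|^{\kappa} F_1 \cdot(1+\Tr_{\Om_\Psi}\Om_{cone})$, which is uniformly controlled by the hypothesis. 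For the cross term I would apply Cauchy--Schwarz $2\,|\langle\cdot,\cdot\rangle_{g_\Psi}| \le \delta \, g_\Psi^{1\bar 1}|\p D_1(\Psi-\Psi_1)|^2 + \delta^{-1}\,|\nabla |s|^{\kappa}|^2_{g_\Psi}$ with a small $\delta$, noting that $|\nabla|s|^{\kappa}|_{g_\Psi}^2$ and $\tri_\Psi |s|^\kappa$ are controlled by $\Tr_{\Om_\Psi}\Om_{cone}$ because $|s|^{\kappa}$ is quasi-plurisubharmonic with respect to $\Om_{cone}$; the boundedness of $D_1(\Psi-\Psi_1)$ (and therefore the third piece) follows from the interior spatial gradient estimate used in Theorem~\ref{geodesicclosedness}. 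Consequently there exists a constant $C$ for which
\begin{align*}
\tri_\Psi\bigl[|s|^{\kappa} D_1(\Psi-\Psi_1)\bigr]
\le C\bigl(1+\Tr_{\Om_\Psi}\Om_{cone}\bigr).
\end{align*}

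The second step is to verify the boundary/differential inequalities of $h$. On $\p B^+_{\rho_0}\cap \p\mathfrak X$, $\Psi=\Psi_1$, so $h=0$; on the upper cap $\Gamma^+_{\rho_0}$, choose $\lambda_2$ so that $\lambda_2\,\eps_1 \ge \sup_{\mathfrak X}|s|^{\kappa}|D_1(\Psi-\Psi_1)|$, forcing $h\ge 0$ there. In the interior, choose $\lambda_1$ large (in terms of $\eps_0$, $\lambda_2\,\eps_2$, $C$ above) to ensure $\tri_\Psi h < 0$. The maximum principle at $p=0\in\p\mathfrak X\cap D$ then gives $\p_x h(p)\ge 0$; running the argument with the opposite sign and using $\p_y [|s|^{\kappa} D_1(\Psi-\Psi_1)]\equiv 0$ on $\p\mathfrak X$ (since $\Psi-\Psi_1\equiv 0$ there) produces the stated two-sided bound on $|s|^{\kappa}\,\p^2(\Psi-\Psi_1)/\p z^1\p z^{n+1}$ and its conjugate.

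The main obstacle is item~(ii)--(iii) above: since $|s|^{\kappa}$ is only H\"older continuous and its derivatives blow up like $|z^1|^{\kappa-1}$, one must verify that $|\nabla |s|^{\kappa}|^2_{g_\Psi}$ and $\tri_\Psi |s|^{\kappa}$ are dominated by $\Tr_{\Om_\Psi}\Om_{cone}$ with constants depending only on the background data. This requires a direct computation using the cone chart form of $\Om_{cone}$ and the positivity $\Om_\Psi\le \Om_1\le \eps_0^{-1}\Om_{cone}$; once these weighted quadratic quantities are tamed, the absorption into the $-\frac{\eps_0}{2}\lambda_1(1+\Tr_{\Om_\Psi}\Om_{cone})$ term coming from $v$ proceeds exactly as in the regular case $2\le i\le n$.
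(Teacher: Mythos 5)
Your overall strategy (barrier $\lambda_1 v+\lambda_2 u$ plus a weighted first derivative, then the Hopf-type maximum principle at the boundary point) is the right one, but the way you introduce the weight creates a genuine gap. Expanding $\tri_\Psi\bigl[|s|^{\kappa}D_1(\Psi-\Psi_1)\bigr]$ by the product rule produces the cross term $2\,\mathrm{Re}\,\langle\nabla|s|^{\kappa},\nabla D_1(\Psi-\Psi_1)\rangle_{g_\Psi}$, and your Cauchy--Schwarz step converts it into $\delta\,|\nabla D_1(\Psi-\Psi_1)|^2_{g_\Psi}+\delta^{-1}|\nabla|s|^{\kappa}|^2_{g_\Psi}$. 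The first of these is a \emph{positive} quadratic expression in the second derivatives of $\Psi$ — exactly the quantities the proposition is trying to bound — and nothing in the differential inequality can absorb it: the only negative contribution comes from $\tri_\Psi v\leq-\tfrac{\eps_0}{2}(1+\Tr_{\Om_\Psi}\Om_{cone})$, which is linear in $\Tr_{\Om_\Psi}\Om_{cone}$ and does not dominate a square of Hessian entries. Moreover, the taming you defer to a "direct computation" fails: since $\kappa=\b-\a\b<2\b$, one has $i\p\bar\p|s|^{\kappa}\sim|z^1|^{\kappa-2}\,i\,dz^1\wedge dz^{\bar1}$, which is strictly \emph{more} singular than $(g_{cone})_{1\bar1}\sim|z^1|^{2\b-2}$ (the ratio $|z^1|^{\kappa-2\b}$ blows up), so neither $\tri_\Psi|s|^{\kappa}$ nor $|\nabla|s|^{\kappa}|^2_{g_\Psi}$ is bounded by $C\,\Tr_{\Om_\Psi}\Om_{cone}$ with a uniform constant; $|s|^{\kappa}$ is not quasi-plurisubharmonic relative to $\Om_{cone}$ for this exponent.

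The paper avoids all of these terms by never differentiating the weight. It takes rational exponents $\kappa_s$ decreasing to $\kappa$, passes to the associated branched covering (orbifold coordinate), where $\tilde D_1=|z^1|^{\kappa_s}D_1$ becomes an honest coordinate derivative; the key inequality is then $\tilde\tri_\Psi[\tilde D_1(\Psi-\Psi_1)]\leq|z^1|^{\kappa_s}\tri_\Psi[D_1(\Psi-\Psi_1)]$, with no product-rule remainders, and the barrier argument of Proposition~\ref{tangent normal estimates} runs unchanged upstairs. One then transforms back and lets $s\to\infty$. If you want to salvage a direct (non-covering) argument you would need to work with $\log$ of the quantity or exploit a good gradient term, not Cauchy--Schwarz against the barrier; as written, the absorption step does not close.
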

\begin{proof}
We choose $\kappa_s$ to be a sequence of rational numbers decreasing to $\kappa=\b-\a\b$. We then apply the orbit coordinate and use $\tilde{\cdot}$ to denote the pull-back in one branched covering.
From \lemref{realtildepsipsi1}, we have
	\begin{align}\label{1realtildepsipsi1}
	\tilde \tri_\Psi[\tilde D_1(\Psi-\Psi_1)]
	\leq & |z^1|^{\kappa_s}\cdot  \tri_\Psi[ D_1(\Psi-\Psi_1)]\\
	\leq &\eps_3\cdot|z^1|^{\kappa_s}\cdot F_1 \cdot(1 + {\Tr}_{\Om_\Psi}\Om_{cone})\nonumber.
	\end{align}
We consider
\begin{align*}
\tilde h:= \lambda^1_1 \tilde v + \lambda^1_2 \tilde u+\tilde D_1 (\tilde \Psi -  \tilde\Psi_1)\;.
\end{align*}
with two constants determined by
\begin{align}
\label{1l2}\lambda^1_2&=\eps_1^{-1}\cdot [1+|\p (\Psi - \Psi_1)|_{\Om_{cone}}],\\\label{1l1}\lambda^1_1&=\frac{4}{\eps_0}\cdot(\lambda^1_2 \cdot\eps_2+\eps_3\cdot|z^1|^{\kappa_s}\cdot\sup F_1).
\end{align}
	Since $\kappa_s\geq \b-\a\b\geq0$, $|\tilde D_1(\tilde \Psi -  \tilde\Psi_1)|\leq |z^1|^{\kappa_s}\cdot |\p_1(\Psi - \Psi_1)|$ is bounded. The boundary value of $h$ on $\partial \tilde B_{\delta_0} \cap \tilde {\Int}(M\times R)$ satisfies, by \lemref{eq_subclaim_inequalities_on_v} and \eqref{l2},
	\begin{align*}
	\tilde h(q)&\geq \lambda^1_2  \tilde u(q)- |\tilde D_1( \tilde\Psi - \tilde \Psi_1)(q)|\\
	&\geq \lambda^1_2 \eps_1-|z^1|^{\kappa_s}|\p_1(\Psi - \Psi_1)(q)|\\
	&\geq 0 \, .
	\end{align*}
While, on $ \partial \tilde B_{\delta_0} \cap \tilde {\p}(M\times R)$, $\tilde h=0$. By \lemref{eq_subclaim_inequalities_on_v}, \lemref{triu}, \eqref{1realtildepsipsi1} and \eqref{1l1}, $\tilde h$ satisfies the differential inequality
	\begin{align*}
	\tilde\tri_\Psi \tilde h\leq &[- \frac{\eps_0}{2}\lambda^1_1+\lambda^1_2\cdot  \eps_2
	+\eps_3\cdot|z^1|^{\kappa_s}\cdot(|\nabla_1^{cone}(\Om_{\mathbf b})_{a\bar b}|_{\Om_{cone}}+\nabla_1^{cone}(\Om_{1})_{a\bar b}|_{\Om_{cone}})]\\
	&\cdot(1 + {\Tr}_{\Om_\Psi}\Om_{cone})< 0.
	\end{align*}
Since $\tilde h(p)=0$, we have by the strong maximum principle
\[
\frac{\partial \tilde h}{\partial x}(p) \geq 0  ,
\]
That is
\begin{align*}
\frac{\p }{\p x}\tilde D_1 (\tilde\Psi - \tilde\Psi_1)(p) \geq -\lambda^1_1 (\frac{\partial(\tilde\Psi - \tilde\Psi_1)}{\partial x}+s
-2Nx) -\lambda^1_2 \frac{\p\tilde u}{\partial x}(p):=f \;.
\end{align*}
The same inequality holds for $-\tilde D_1$. After transforming back to the $z$ coordinate, we have the mixed singular tangent-normal
derivative is bounded by $|f|_{\infty}$, that is
\begin{align*}
|z^1|^{\kappa_s}\frac{\p^2(\Psi - \Psi_1)}{\p x\p x^1}\leq C\;.
\end{align*} 
where $C$ depends on $N$, $\lambda^1_1\cdot|\p_{n+1}(\Psi - \Psi_1)|_{\Om}$, $\lambda^1_2$  and $|\p u|_{\Om}$.
Using the same argument for $\pm \p_{y^1}(\Psi-\Psi_1)$, combining with the tangent direction $\frac{\p}{\p y} D_i (\Psi- \Psi_1) =0$ and letting $s\rightarrow \infty$, we completes the proof of the proposition.
\end{proof}

Applying the approximation geodesic equation \eqref{per equ a}, we obtain the normal-normal estimates on the boundary immediately.
\begin{prop}[Rough boundary normal-normal estimates]\label{bad Boundary normal-normal estimate}
These exits a constant $C$ such that
\begin{align}\label{badend_normal_normal_bdry_estimate}
&\sup_{\p \mathfrak X}[|s|^{2\kappa} (|\frac{\p^2\Psi}{\p z^{n+1}\p z^{\overline{n+1}}} |_{\Om}+\sup_{\p \mathfrak X} |\frac{\p^2\Psi}{\p z^{n+1}\p z^{{n+1}}}|_{\Om})]\nonumber\\
&\leq  C(\sup_{\mathfrak X} |\p\Psi|^2_\Om+1)\;.
\end{align}
The constant $C$ depends on the constants in Proposition \ref{tangent normal estimates},  Proposition \ref{bad singular tangent-normal estimates on the boundary} and
\begin{align*}
&\sup_{\mathfrak X} |\p\Psi_1|^2_\Om,\quad
\sup_{\p \mathfrak X}[|s|^{\kappa}(|\frac{\p^2\Psi_1}{\p z^{1}\p z^{\overline{n+1}}} |_{\Om}+ |\frac{\p^2 \Psi_1}{\p z^{1}\p z^{{n+1}}}|_{\Om})]\\
&\sup_{\p \mathfrak X}[ |\frac{\p^2\Psi_1}{\p z^{n+1}\p z^{ i}} |_{\Om}+|\frac{\p^2\Psi_1}{\p z^{n+1}\p z^{\bar i}} |_{\Om}], \quad 2\leq i\leq n.
\end{align*} 
\end{prop}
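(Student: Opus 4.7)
Once Propositions \ref{tangent normal estimates} and \ref{bad singular tangent-normal estimates on the boundary} are in place, the normal-normal entry is the only remaining unknown of the full boundary Hessian, and it can be recovered algebraically from the Monge-Amp\`ere equation \eqref{per equ a} without constructing a further barrier. The plan is to apply a Schur complement decomposition to the Hermitian matrix $A=(g_{\Psi,a\bar b})_{1\le a,b\le n+1}$. Writing
\[
A=\begin{pmatrix}B & c \\ c^\ast & d\end{pmatrix}
\]
with $B=(g_{\Psi,i\bar j})_{i,j\le n}$ the tangential block, $c=(g_{\Psi,i,\overline{n+1}})_{i\le n}$, and $d=g_{\Psi,n+1,\overline{n+1}}$, the identity $\det A=\det B\cdot(d-c^\ast B^{-1} c)$ yields
\[
d=c^\ast B^{-1} c+\frac{\det g_{\Psi}}{\det B},
\]
and the task becomes bounding $|s|^{2\kappa}d$ on $\p\mathfrak X$ term by term.

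For the determinant quotient, equation \eqref{per equ a} gives $\det g_\Psi=\tau\cdot\det g_{\mathbf b}$ pointwise. On $\p\mathfrak X$ the tangential block $B$ coincides with the Hessian of the K\"ahler cone potential $\vphi_i$, so $\det B$ is comparable to $\b^2|z^1|^{2(\b-1)}$, and by the construction of $\om_{\mathbf b}$ in \secref{Geometric conditions on the background metric} the quantity $\det g_{\mathbf b}$ has the same cone singular order. Hence $\det g_\Psi/\det B$ is uniformly bounded, and multiplying by the bounded factor $|s|^{2\kappa}$ preserves this bound.

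For $|s|^{2\kappa}c^\ast B^{-1} c$, the most singular contribution arises from the $(\bar 1,1)$ entry of $B^{-1}$, which in the cone chart scales like $\b^{-2}|z^1|^{2(1-\b)}$, producing a summand of the form $|s|^{2\kappa}|z^1|^{2(1-\b)}|g_{\Psi,1,\overline{n+1}}|^2$. This is precisely where the weighted Proposition \ref{bad singular tangent-normal estimates on the boundary} is invoked: since $|s|^{\kappa}|g_{\Psi,1,\overline{n+1}}|_\Om$ is bounded by $C(1+\sup_{\mathfrak X}|\p\Psi|^2_\Om)$ and $\Om_{1\bar 1}\sim\b^2|z^1|^{2(\b-1)}$, squaring and regrouping the weights makes the negative powers of $|z^1|$ cancel exactly, giving a bound of the asserted form. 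The remaining regular mixed components with $i\ge 2$, together with the cross terms in the quadratic form $c^\ast B^{-1} c$, are handled by Proposition \ref{tangent normal estimates}, the boundedness of the non-singular components of $B^{-1}$, and Cauchy-Schwarz.

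Since $\Om_{n+1,\overline{n+1}}$ is a positive constant, the norm $|d|_\Om$ is equivalent to $|d|$, which yields the estimate for $|\p^2\Psi/\p z^{n+1}\p z^{\overline{n+1}}|_\Om$. The companion bound for the $(2,0)$-type second derivative follows from the identity
\[
\frac{\p^2\Psi}{\p z^{n+1}\p z^{n+1}}=\frac{\p^2\Psi}{\p z^{n+1}\p z^{\overline{n+1}}}+1,
\]
which holds because, by the convention \eqref{conventions}, $\vphi$ depends on $z^{n+1}$ only through the real part $x^{n+1}=t$ while the $y^{n+1}$ dependence of $\Psi$ comes entirely from $-|z^{n+1}|^2$. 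The principal subtlety of the argument is the exponent matching: the weight $|s|^{2\kappa}$ is forced by the anisotropic scaling of $B^{-1}$ in the cone direction together with the weighted Hessian bound from the previous proposition, and this is the step at which the threshold $\kappa=\b-\a\b$ enters critically.
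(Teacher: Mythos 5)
Your approach is the same as the paper's one-line indication: apply the Monge-Amp\`ere equation to solve algebraically for the remaining normal-normal entry, and the Schur complement identity $d=c^{\ast}B^{-1}c+\det g_{\Psi}/\det B$ is exactly the right way to make ``apply the equation'' precise. Your bookkeeping of the cone weights in $B^{-1}$ and the $(2,0)$ identity
$\frac{\p^2\Psi}{\p z^{n+1}\p z^{n+1}}=\frac{\p^2\Psi}{\p z^{n+1}\p z^{\overline{n+1}}}+1$
are also correct. So the skeleton of the argument matches the paper.

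However, there is an exponent gap in the final step. You cite Proposition \ref{bad singular tangent-normal estimates on the boundary} to say that $|s|^{\kappa}|g_{\Psi,1,\overline{n+1}}|_{\Om}$ is bounded by $C(1+\sup_{\mathfrak X}|\p\Psi|^2_{\Om})$, and then square. Squaring $C(1+\sup|\p\Psi|^2_{\Om})$ yields $C(1+\sup|\p\Psi|^2_{\Om})^2\sim C(1+\sup|\p\Psi|^4_{\Om})$, which is strictly weaker than the stated $C(1+\sup|\p\Psi|^2_{\Om})$. To close this, observe that both Propositions \ref{tangent normal estimates} and \ref{bad singular tangent-normal estimates on the boundary} produce bounds of the form $C\bigl[1+\sup_{\p\mathfrak X}|\p_{n+1}(\Psi-\Psi_1)|_{\Om}\cdot\sup_{\mathfrak X}|\p_i(\Psi-\Psi_1)|_{\Om}\bigr]$, and the first factor is \emph{not} $\sup_{\mathfrak X}|\p\Psi|_{\Om}$ but the normal derivative of $\Psi-\Psi_1$ restricted to $\p\mathfrak X$, which is already controlled by a fixed constant via the boundary gradient estimate (Lemma \ref{boundary gradient estimate}). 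Hence the weighted mixed entries $|s|^{\kappa}|c_1|_{\Om}$ and $|c_i|_{\Om}$ are effectively $O(1+\sup_{\mathfrak X}|\p\Psi|_{\Om})$, i.e.\ \emph{linear} in $\sup|\p\Psi|_{\Om}$, and squaring inside $c^{\ast}B^{-1}c$ then gives $C(1+\sup_{\mathfrak X}|\p\Psi|^2_{\Om})$ as asserted. This refinement is not cosmetic: the quadratic (rather than quartic) dependence of the boundary Hessian on $\sup|\p\Psi|_{\Om}$ is exactly what makes the rescaled second-derivative bound in Step 1 of the interior gradient estimate (Proposition \ref{prop: interior gradient estimate}) come out $O(1)$ under the $m_s$-scaling, so a quartic bound here would break the blowup argument downstream.
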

\subsection{Interior Laplacian estimate}\label{Interior Laplacian estimate}
This section improves Proposition 3.3 in \cite{MR3405866}, where we get three estimates $C_1,C_2,C_3$. It is sufficient to consider the case when $\frac{1}{2}\leq \b\leq 1$. When angle is large, $\inf Riem(\Om)$ in $C_1$ and $|Riem(\Om_1)|_\infty$ in $C_2$ could be unbounded. 
The constant $C_3$ relies on the boundary estimates $\sup_\mathfrak{\p X} \Tr_{\Om_{\Psi}}\Om$, which may not be bounded, either. However, according to Proposition \ref{prop: boundary hessian estimate} in Section \ref{Boundary Hessian estimate, I}, we have a rough bound on the boundary $\sup_{\p\mathfrak{X}} (|s|^{2\kappa}\cdot \Tr_{\Om}\Om_\Psi(z))$. As a result, we could extend boundary estimates to the interior under a curvature condition of the background metric $\Om_{\mathbf b}$.

\begin{prop}[Rough interior Laplacian estimate]\label{bad prop: interior Laplacian estimate}
Assume that the background metric $\Om_{\mathbf b}$ satisfies curvature condition \eqref{backgroundmetriccurvature}. 
	Then there exists a constant $C$ such that 
	\begin{align}\label{bab claim of subsection interior laplacian estimate}
	\sup_{\mathfrak{X}} (|s|^{2\kappa}\cdot \Tr_{\Om}\Om_\Psi)\leq C .
	\end{align}
	The constant $C$ depends on the constants in \eqref{backgroundmetriccurvature} and 
	\begin{align*}
\sup_{\p\mathfrak{X}} (|s|^{2\kappa}\Tr_{\Om}\Om_\Psi), \quad
 \sup_{\mathfrak X}|\Om_{\mathbf b}|_\Om, \quad \sup_{\mathfrak X}|\Psi_{\mathbf b}|, \quad
 \sup_{\mathfrak X}|\Psi_{\mathbf b}|.
	\end{align*}
\end{prop}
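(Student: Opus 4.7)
The plan is to upgrade the given boundary control of $|s|^{2\kappa}\Tr_\Om\Om_\Psi$ to an interior bound via a weighted maximum principle applied to an Aubin--Yau type auxiliary function. Since $\Om$ and $\Om_{\mathbf b}$ are mutually equivalent K\"ahler cone metrics of the same angle $\b$, it suffices to bound $|s|^{2\kappa}_h\Tr_{\Om_{\mathbf b}}\Om_\Psi$. Working on the smooth approximation \eqref{smooth per equ a} so that every object is classical $C^2$, I would consider
\begin{align*}
H\;=\;\log\bigl(|s|^{2\kappa}_h\cdot\Tr_{\Om_{\mathbf b_\eps}}\Om_{\Psi_\eps}\bigr)+\Phi-A(\Psi_\eps-\Psi_{\mathbf b}),
\end{align*}
where $\Phi$ is the bounded potential supplied by the curvature hypothesis \eqref{backgroundmetriccurvature} and $A>0$ a large constant to be chosen. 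Because $\Om_{\Psi_\eps}$ is equivalent to $\Om_{\mathbf b_\eps}$ as smooth metrics, $\Tr_{\Om_{\mathbf b_\eps}}\Om_{\Psi_\eps}$ stays locally bounded and the weight $|s|^{2\kappa}_h$ forces $H\to-\infty$ along $D\times R$; the supremum of $H$ is therefore attained either on $\p\mathfrak X$---where $\sup_{\p\mathfrak X}(|s|^{2\kappa}\Tr_\Om\Om_\Psi)$ controls it by the equivalence $\Om\sim\Om_{\mathbf b}$---or at an interior point of $\mathfrak M$.

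At an interior maximum the Chern--Lu/Yau inequality applied with $\Om_{\mathbf b_\eps}$ as reference is the key step. Since $\Om_{\Psi_\eps}^{n+1}=\tau\cdot\Om_{\mathbf b_\eps}^{n+1}$, the usual Ricci term $\Delta_{\Om_{\mathbf b_\eps}}\log(\Om_{\Psi_\eps}^{n+1}/\Om_{\mathbf b_\eps}^{n+1})=\Delta_{\Om_{\mathbf b_\eps}}\log\tau$ vanishes identically, and the curvature lower bound of \eqref{backgroundmetriccurvature}, passed to $\Om_{\mathbf b_\eps}$ with uniform constants by the construction in Section \ref{Geometric conditions on the background metric}, reduces the bisectional term to
\begin{align*}
\Delta_{\Om_{\Psi_\eps}}\log\Tr_{\Om_{\mathbf b_\eps}}\Om_{\Psi_\eps}\;\geq\;-\Tr_{\Om_{\Psi_\eps}}\widetilde\Om_{\mathbf b_\eps}\;=\;-C_2\,\Tr_{\Om_{\Psi_\eps}}\Om_{\mathbf b_\eps}-\Delta_{\Om_{\Psi_\eps}}\Phi.
\end{align*}
The smooth form $i\p\bar\p\log|s|^{2}_h=-\Theta_h$ is bounded with respect to $\Om_{\mathbf b_\eps}$, so $\Delta_{\Om_{\Psi_\eps}}\log|s|^2_h\geq-K_0\Tr_{\Om_{\Psi_\eps}}\Om_{\mathbf b_\eps}$ for a uniform $K_0$. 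Combining with $\Delta_{\Om_{\Psi_\eps}}(\Psi_\eps-\Psi_{\mathbf b})=(n+1)-\Tr_{\Om_{\Psi_\eps}}\Om_{\mathbf b_\eps}$ yields
\begin{align*}
\Delta_{\Om_{\Psi_\eps}}H\;\geq\;\bigl(A-C_2-\kappa K_0\bigr)\Tr_{\Om_{\Psi_\eps}}\Om_{\mathbf b_\eps}-A(n+1),
\end{align*}
and fixing $A=C_2+\kappa K_0+1$ makes the coefficient of the trace equal to $1$.

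Applying the maximum principle at the interior max $p_0$ then gives $\Tr_{\Om_{\Psi_\eps}}\Om_{\mathbf b_\eps}(p_0)\leq A(n+1)$. To transfer this back to a bound on $\Tr_{\Om_{\mathbf b_\eps}}\Om_{\Psi_\eps}$ I would use the Monge--Amp\`ere equation: letting $\mu_1,\dots,\mu_{n+1}$ denote the eigenvalues of $\Om_{\Psi_\eps}$ relative to $\Om_{\mathbf b_\eps}$, one has $\prod_i\mu_i=\tau\leq 1$ and $\sum_i\mu_i^{-1}\leq A(n+1)$, whence $\mu_i\geq(A(n+1))^{-1}$ and consequently $\mu_i\leq\tau\cdot(A(n+1))^n$, so $\Tr_{\Om_{\mathbf b_\eps}}\Om_{\Psi_\eps}(p_0)\leq(n+1)^{n+1}A^n\tau$. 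Thus $H(p_0)$ is bounded above by a constant depending only on $\sup|\Om_{\mathbf b}|_\Om$, $|\Phi|_\infty$, and the $L^\infty$-bounds on $\Psi$ and $\Psi_{\mathbf b}$, uniformly in $\tau$ and $\eps$; combined with the boundary case, $H(p)\leq H(p_0)$ for every $p\in\mathfrak X$ yields after exponentiation and the equivalence $\Om\sim\Om_{\mathbf b}$ the desired estimate \eqref{bab claim of subsection interior laplacian estimate}. Finally letting $\eps\to 0$ transfers the bound to the solution of \eqref{per equ a}.

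The main technical point is that the Chern--Lu computation demands a smooth reference, so everything must be done on $\Om_{\mathbf b_\eps}$ and $\Psi_\eps$ with constants uniform in $\eps$; this is exactly what the $\eps$-regularisation in Section \ref{Geometric conditions on the background metric} delivers. The decisive device is the auxiliary potential $\Phi$ furnished by \eqref{backgroundmetriccurvature}: it absorbs the negative part of $\inf\mathrm{Bisec}(\Om_{\mathbf b})$ so that one only needs the tame growth condition on the curvature of $\Om_{\mathbf b}$ rather than any direct control of the bisectional curvature of the cone reference $\Om$---the very quantity that becomes unbounded for cone angles $\b>1/2$ and had blocked the analogous argument in \cite{MR3405866}.
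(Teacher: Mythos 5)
Your proposal is correct and follows essentially the same route as the paper's proof: the same weighted auxiliary function $\log(|s|_h^{2\kappa}\Tr_{\Om_{\mathbf b}}\Om_\Psi)+C\Phi-A(\Psi-\Psi_{\mathbf b})$, the same use of Yau's inequality with the Ricci term killed by $Ric(\Om_\Psi)=Ric(\Om_{\mathbf b})$ and the bisectional term absorbed by the potential $\Phi$ from \eqref{backgroundmetriccurvature}, and the same conversion of the interior bound on $\Tr_{\Om_\Psi}\Om_{\mathbf b}$ into a bound on $\Tr_{\Om_{\mathbf b}}\Om_\Psi$ via the Monge--Amp\`ere equation and the arithmetic--geometric mean inequality. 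Your explicit observation that the weight forces the maximum of $H$ away from the divisor, and your care with the $\eps$-regularisation, are welcome refinements of details the paper leaves implicit.
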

\begin{proof}

We applying the inequality from Yau's second order estimate
\begin{align*}
\tri_{\Psi} \log\Tr_{\Om_{\mathbf b}}\Om_{\Psi}\geq \frac{- g_{\mathbf b}^{i\bar j}R_{i\bar j}(\Om_{\Psi})+g_\Psi^{k\bar l}{R^{i\bar j}}_{k\bar l}(\Om_{{\mathbf b}}){g_\Psi}_{i\bar j}}{\Tr_{\Om_{{\mathbf b}}}\Om_{\Psi}}.
\end{align*}
From approximation geodesic equation \eqref{per equ a}, we have
\begin{align*}
Ric(\Om_{\Psi})=Ric(\Om_{{\mathbf b}})
\end{align*}
and
\begin{align}\label{RHS2nd}
RHS=\frac{-S(\Om_{{\mathbf b}})+g_\Psi^{k\bar l}{R^{i\bar j}}_{k\bar l}(\Om_{{\mathbf b}}){g_\Psi}_{i\bar j}}{\Tr_{\Om_{{\mathbf b}}}\Om_{\Psi}}.
\end{align}
We use an inequality in Proposition 2.1 in \cite{MR3488129}.
Under the local normal coordinate of $ \Om_{{\mathbf b}}$ (understood in the approximation sense), we denote by $\lambda_a, 1\leq a\leq n+1$ the eigenvalue of $\Om_{\Psi}$. So \begin{align*}
RHS\geq \frac{\Sigma_{a\leq b}(\frac{\lambda_a}{\lambda_b}+\frac{\lambda_b}{\lambda_a}-2)R_{a\bar ab\bar b}(\Om_{{\mathbf b}})}{\Sigma \lambda_c}.
\end{align*}
We use the curvature condition \eqref{backgroundmetriccurvature}, $R_{a\bar a b\bar b}(\Om_{\bf b} )\geq -(\tilde g_{\mathbf b})_{a\bar a} \cdot (g_{\bf b} )_{b\bar b}$. Note that $\frac{\lambda_a}{\lambda_b}+\frac{\lambda_b}{\lambda_a}-2\geq 0.$ Then
\begin{align}\label{RHS2ndinequality}
RHS&\geq -C\frac{\Sigma_{a\leq b}(\frac{\lambda_a}{\lambda_b}(\tilde\Om_{{\mathbf b}})_{b\bar b}+\frac{\lambda_b}{\lambda_a}(\tilde\Om_{{\mathbf b}})_{a\bar a})}{\Sigma \lambda_c}\nonumber\\
&\geq -C(\Tr_{\Om_{\Psi}}\Om_{{\mathbf b}}+\tri_{\Psi}\Phi).
\end{align}

Recall that $L$ is the line bundle associated to the divisor $D$ and $s$ is the holomorphic section.
We choose $h$ to be a smooth Hermitian metric on $L$ such that for some constant $C$, 
\begin{align*}
i\p\bar\p \log |s|_h\geq -C\cdot  \Om_{\mathbf b}.
\end{align*}
Combining with \eqref{RHS2ndinequality}, we have
\begin{align*}
\tri_{\Psi} \log(|s|_h^{2\kappa}\Tr_{\Om_{\mathbf b}}\Om_{\Psi})
\geq  -C(\Tr_{\Om_{\Psi}}\Om_{{\mathbf b}}+\tri_{\Psi}\Phi).
\end{align*}

Since  $\tri_{\Psi}(\Psi-\Psi_{\mathbf b})=n+1-\Tr_{\Om_{\Psi}}\Om_{\mathbf b}$, we obtain the differential inequality of $$Z=\log(|s|_h^{2\kappa}\Tr_{\Om_{\mathbf b}}\Om_{\Psi})+C\Phi-(C+1) (\Psi-\Psi_{\mathbf b}),$$ which reads
\begin{align*}
\tri_{\Psi} Z
\geq \Tr_{\Om_{\Psi}}\Om_{{\mathbf b}}-(n+1)(C+1).
\end{align*}
If the maximum achieves on the boundary, the proof is finished. When the maximal point $p$ is an interior point, 
$
\Tr_{\Om_{\Psi}}\Om_{{\mathbf b}}(p)\leq(n+1)(C+1)
$ and the inequality of arithmetic and geometric means implies at $p$, $$\Tr_{\Om_{{\mathbf b}}  }\Om_\Psi
	\leq \frac{n+1}{n}(\Tr_{\Om_\Psi}\Om_{{\mathbf b}})^n \cdot \frac{\Om_\Psi^{n+1}}{\Om_{{\mathbf b}}^{n+1}}=\tau\frac{n+1}{n}(\Tr_{\Om_\Psi}\Om_{{\mathbf b}})^n .$$ Letting $L=C\Phi-(C+1) (\Psi-\Psi_{\mathbf b})$, we have for any $z\in \mathfrak X$, $$\log(|s|_h^{2\kappa}\Tr_{\Om_{\mathbf b}}\Om_{\Psi})(z)\leq Z(p)-L(x).$$
Thus the proof is complete, since $L$ is bounded.
\end{proof}
\subsection{Interior spatial Laplacian estimate}\label{Interior spatial Laplacian estimate}


The interior spatial Laplacian estimate \cite{MR3298665} was considered in singular case in \cite{arxiv:1306.1867} to obtain a rough interior spatial Laplacian estimate. This method, as a generalisation of Yau's second order estimate, relies on the lower bound of the bisectional curvature and upper bound of the scalar curvature, which is bounded for model metric when cone angle is less than $\frac{1}{2}$. However, we will see that the estimate is still true when the curvature is not bounded.
We use the approximate geodesic equation \eqref{per equ a},
\begin{align*}
\Om_\Psi^{n+1}=\vphi''-(\p\vphi',\p\vphi')_{g_{\vphi}}\cdot\om^n_\vphi=\tau \cdot \Om^{n+1}_{\mathbf b}\text{ in }\mathfrak M.
\end{align*}
Recall the notations $\Psi(z)=\vphi(z)-|z^{n+1}|^2 $.
\begin{prop}\label{prop: interior spatial Laplacian estimate}
Assume that the background metric $\Om_{\mathbf b}$ satisfies curvature condition \eqref{backgroundmetriccurvature}. 
	Then there exists a constant $C$ such that 
	\begin{align}\label{bab claim of subsection interior spatial laplacian estimate}
	\sup_{\mathfrak{X}} \Tr_{\om}\om_\vphi\leq C .
	\end{align}
	The constant $C$ depends on the constants in \eqref{backgroundmetriccurvature} and 
	\begin{align*}
&\sup_{\p\mathfrak{X}} \Tr_{\om}\om_\vphi,\quad \osc_\mathfrak{X}\vphi.
	\end{align*}
\end{prop}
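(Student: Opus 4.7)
The plan is to adapt the interior spatial Laplacian estimate of He \cite{MR3298665} to the K\"ahler cone setting, combining it with the curvature--auxiliary function trick used in the proof of the rough interior Laplacian estimate (Proposition \ref{bad prop: interior Laplacian estimate}). I work at the level of the smooth approximation \eqref{smooth per equ a} so that everything is classical on the regular part $\mathfrak M$, and let $\eps\to 0$ at the end. The goal is to derive for $u := \Tr_{\om}\om_\vphi$ a differential inequality on $\mathfrak M$ of exactly the same form as the one that closed Proposition \ref{bad prop: interior Laplacian estimate}.

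First I would carry out a Yau-type computation of $\tri_\Psi \log u$, where $\tri_\Psi$ is the full space--time Laplacian with respect to $\Om_\Psi$. Because the spatial background $\om$ does not depend on $z^{n+1}$, and because the right--hand side of the approximate HCMA, $\tau\cdot \Om_{\mathbf b}^{n+1}$, shares this product structure, the Schur complement form of $\det\Om_\Psi$ triggers a key cancellation: in the Yau calculation the worst fourth--order derivatives in the $z^{n+1}$ direction recombine, and the only curvature contribution that survives is the \emph{bisectional} curvature of the \emph{spatial} metric $\om=\om_{\mathbf b}$. This is the essential reason to use the spatial trace rather than the full trace as in Proposition \ref{bad prop: interior Laplacian estimate}: for large cone angle the spatial bisectional curvature is still controlled through \eqref{backgroundmetriccurvature}, while the Ricci curvature of $\Om_{\mathbf b}$ is not.

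Applying $R_{i\bar j k\bar l}(\om_{\mathbf b})\geq -(\tilde g_{\mathbf b})_{i\bar j}(g_{\mathbf b})_{k\bar l}$ together with the identity $\tilde\om_{\mathbf b}=C_2\om_{\mathbf b}+i\p\bar\p\Phi\geq 0$ and $|\Phi|_\infty\leq C_3$ as in \eqref{RHS2ndinequality}, the bad curvature contribution is bounded above by $C(\Tr_{\Om_\Psi}\Om_{\mathbf b}+\tri_\Psi\Phi)$. I then consider the auxiliary function
\[
Z=\log u+C_4\Phi-C_5(\vphi-\inf_{\mathfrak X}\vphi),
\]
with $C_4,C_5$ to be chosen. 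Since $\vphi=\Psi+|z^{n+1}|^2$, $\tri_\Psi|z^{n+1}|^2=g_\Psi^{n+1,\overline{n+1}}\geq 0$, and $\tri_\Psi(\Psi-\Psi_{\mathbf b})=n+1-\Tr_{\Om_\Psi}\Om_{\mathbf b}$, choosing $C_5$ large enough absorbs the negative curvature contribution and produces
\[
\tri_\Psi Z\geq \Tr_{\Om_\Psi}\Om_{\mathbf b}-C_6 \quad\text{on }\mathfrak M.
\]
If the maximum of $Z$ is attained on $\p\mathfrak X$, the conclusion is immediate from the assumed boundary bound $\sup_{\p\mathfrak X}\Tr_{\om}\om_\vphi$ together with the uniform bounds on $\Phi$ and $\osc_{\mathfrak X}\vphi$. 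Otherwise, at an interior maximum $p$, the inequality forces $\Tr_{\Om_\Psi}\Om_{\mathbf b}(p)\leq C_6$, and the inequality of arithmetic and geometric means together with the HCMA $\Om_\Psi^{n+1}=\tau\cdot \Om_{\mathbf b}^{n+1}$ bounds $\Tr_{\Om_{\mathbf b}}\Om_\Psi(p)$, and a fortiori $u(p)$, from above; transporting the bound from $p$ back to the whole of $\mathfrak X$ via $\sup_{\mathfrak X} Z\leq Z(p)+C$ then yields the uniform bound on $u$. Letting $\eps,\tau\to 0$ completes the proof.

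The main obstacle I anticipate is the Yau-type computation itself: one must verify that the product structure of the HCMA genuinely cancels the worst fourth-order derivatives in the $z^{n+1}$ direction, so that only the spatial bisectional curvature of $\om_{\mathbf b}$ enters the final differential inequality. Once that algebraic identity is secured, the remainder of the argument is structurally parallel to Proposition \ref{bad prop: interior Laplacian estimate}, the only novelty being that \eqref{backgroundmetriccurvature} is invoked in its spatial form, which is exactly what makes the spatial formulation of the Laplacian estimate the natural one for large cone angle.
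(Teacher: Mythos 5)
Your proposal takes the same route as the paper: He's linearized operator $L$ (which coincides with the full space--time Laplacian $\tri_\Psi$ acting on $y^{n+1}$-independent functions, so your $\tri_\Psi\log u$ and the paper's $L(\log h)$ are the same computation), the curvature condition \eqref{backgroundmetriccurvature} to bound the bad terms, an auxiliary function involving $\Phi$ and a potential, the maximum principle, and the AM--GM inequality at the end. The ``key cancellation'' you worry about in the last paragraph is precisely what He's computation secures, and the paper cites it without rederiving it, so that part is fine.

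There is, however, a concrete gap in your auxiliary function. You take $Z=\log u+C_4\Phi-C_5(\vphi-\inf_{\mathfrak X}\vphi)$ and assert $\tri_\Psi Z\geq \Tr_{\Om_\Psi}\Om_{\mathbf b}-C_6$. But $\tri_\Psi(-\vphi)$ does \emph{not} produce the $\frac{1}{G}$-piece of $\Tr_{\Om_\Psi}\Om_{\mathbf b}$. Indeed, since $\vphi=\Psi+|z^{n+1}|^2$ one computes
\begin{align*}
\tri_\Psi(-\vphi)=-(n+1)+\Tr_{\Om_\Psi}\Om_0-g_\Psi^{n+1,\overline{n+1}}
=-(n+1)+\Tr_{\om_\vphi}\om_0+\frac{1}{G}\sum_i\frac{\vphi'_i\vphi'_{\bar i}}{(1+\vphi_{i\bar i})^2},
\end{align*}
so the explicit $\frac{1}{G}$ from $\Tr_{\Om_\Psi}\Om_0$ cancels against $-g_\Psi^{n+1,\overline{n+1}}$, leaving only a nonnegative weighted term that may vanish (e.g.\ where $\p\vphi'=0$). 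The identity you invoke, $\tri_\Psi(\Psi-\Psi_{\mathbf b})=n+1-\Tr_{\Om_\Psi}\Om_{\mathbf b}$, holds for $\Psi-\Psi_{\mathbf b}$, not for $\vphi$, so it does not support the inequality you write. Without a guaranteed $+\frac{1}{G}$ at the interior maximum you only get $\Tr_{\om_\vphi}\om_{\mathbf b}(p)\leq C$; the AM--GM step $\Tr_{\om_{\mathbf b}}\om_\vphi+G\leq\frac{n+1}{n}\big(\Tr_{\om_\vphi}\om_{\mathbf b}+\frac{1}{G}\big)^n\cdot\frac{\Om_\Psi^{n+1}}{\Om_{\mathbf b}^{n+1}}$ then cannot be closed, because the factor $\big(\cdots+\frac{1}{G}\big)^n$ is uncontrolled. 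This is exactly why the paper's auxiliary function $Z=\log\Tr_{\om_{\mathbf b}}\om_\vphi-B_1\vphi+\frac{t^2}{2}+B_2\Phi$ carries the extra term $\frac{t^2}{2}$, whose $L$-derivative is the pure $\frac{1}{G}$ that closes the argument. To repair your proof, either add $+\frac{t^2}{2}$ to $Z$, or replace $-C_5(\vphi-\inf\vphi)$ by $-C_5(\Psi-\Psi_{\mathbf b})$ (with $\Om_{\mathbf b}$ understood as the non-degenerate product metric on $\mathfrak X$), so that the cited identity becomes applicable and $\Tr_{\Om_\Psi}\Om_{\mathbf b}$ in the resulting inequality genuinely contains $\frac{1}{G}$.

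A secondary imprecision: the paper's curvature bound splits into $I$ (scalar-plus-bisectional) and $II$ (a $\vphi'$-weighted bisectional term), and the $\Phi$-term in the auxiliary function is chosen so that $L(B_2\Phi)$ produces the matching $\frac{1}{G}\sum_i\frac{\Phi_{i\bar i}\vphi'_i\vphi'_{\bar i}}{(1+\vphi_{i\bar i})^2}$ to absorb $II$. Your single bound ``$C(\Tr_{\Om_\Psi}\Om_{\mathbf b}+\tri_\Psi\Phi)$'' is morally correct (both pieces are encoded once $\tri_\Psi\Phi$ and $\Tr_{\Om_\Psi}\Om_{\mathbf b}$ are expanded in the block form), but deserves to be carried out explicitly since it is precisely there that the $\frac{1}{G}$-weighted pieces must match up with the ones generated by the auxiliary function.
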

\begin{proof}
Letting $G=\vphi''-(\p\vphi',\p\vphi')_{g_{\vphi}}$, its linearised operator with variation $\delta \vphi=u$ at $\vphi$ is
\begin{align*}
L(u)=\frac{u''+g_\vphi^{i\bar l}g_\vphi^{k\bar j}u_{i\bar j}\vphi'_i\vphi'_{\bar j}-(\p u',\p\vphi')_{g_{\vphi}}-(\p\vphi',\p u')_{g_{\vphi}}}{G}+\tri_\vphi u.
\end{align*}
We denote $h=\Tr_{\om_{{\mathbf b}}}\om_\vphi$.
Using the computation in \cite{MR3298665}, we get
\begin{align*}
L(h)=|\p \log G|_{g_{{\mathbf b}}}^2+I+\frac{II+III}{G}+IV.
\end{align*} 
In which, the third term 
\begin{align*}
III\geq A G h,
\end{align*} with the notation \begin{align*}
A=\frac{[h'-(\p\vphi',\p h)_{g_\vphi}][h'-(\p\vphi',\p h)_{g_\vphi}]}{h^2 G};
\end{align*}  and
the fourth term 
\begin{align}
IV=g_\vphi^{i\bar j}g_\vphi^{p\bar q}\vphi_{i\bar q k}\vphi_{p\bar j k}\geq h|\p \log h|^2_{g_\vphi}.
\end{align}

We need to handle the terms $I$ and $II$, which contain the curvature.
We use the curvature condition \eqref{backgroundmetriccurvature}.
Similarly to the proof of Proposition \ref{bad prop: interior Laplacian estimate}, i.e. \eqref{RHS2ndinequality} and \eqref{RHS2nd},
we control the first term by
\begin{align*}
I&=-S(\om_{{\mathbf b}})+g_\vphi^{k\bar l}{R^{i\bar j}}_{k\bar l}(\om_{{\mathbf b}}){g_\vphi}_{i\bar j}\\
&\geq -C(\Tr_{\om_{\vphi}}\om_{{\mathbf b}}+\tri_{\vphi}\Phi)\Tr_{\om_{{\mathbf b}}}\om_{\vphi}.
\end{align*} 

The second term 
\begin{align*}
II&=g^{i\bar q}_\vphi g^{p\bar j}_\vphi R_{p\bar qk\bar l}g_{\vphi l\bar k}\vphi'_i\vphi'_{\bar j}\\
&\geq-2 C h \sum_i \frac{(1+\Phi_{i\bar i})\vphi'_i \vphi'_{\bar i} }{(1+\vphi_{i\bar i})^2}.
\end{align*} 

In conclusion, $L(h)$ satisfies
\begin{align*}
L(h)\geq -C(\Tr_{\om_{\vphi}}\om_{{\mathbf b}}+\tri_{\vphi}\Phi)h- \frac{2 C h}{G}\sum_i \frac{(1+\Phi_{i\bar i})\vphi'_i \vphi'_{\bar i} }{(1+\vphi_{i\bar i})^2}+Ah+h|\p \log h|^2_{g_\vphi}.
\end{align*} 

Letting $Z=\log\Tr_{\om_{{\mathbf b}}}\om_\vphi-B_1\vphi+\frac{t^2}{2}+B_2\Phi$, we have the identity
\begin{align*}
L(Z)&=\frac{L(h)}{h}-|\p \log h|_{g_\vphi}^2-A-(n+1)B_1+B_1\Tr_{\om_\vphi}\om_{{\mathbf b}}+\frac{B_1}{G} \sum_i \frac{\vphi'_i \vphi'_{\bar i} }{(1+\vphi_{i\bar i})^2}+\frac{1}{G}\\
&+\frac{B_2}{G}\sum_i \frac{\Phi_{i\bar i}\vphi'_i \vphi'_{\bar i} }{(1+\vphi_{i\bar i})^2}+B_2 \tri_\vphi \Phi.
\end{align*} 
Inserting $L(h)$ into the formula of $L(Z)$, we have
\begin{align*}
L(Z)&\geq-(n+1)B_1+(B_1-C)\Tr_{\om_\vphi}\om_{{\mathbf b}}+\frac{B_1-2C}{G} \sum_i \frac{\vphi'_i \vphi'_{\bar i} }{(1+\vphi_{i\bar i})^2}+\frac{1}{G}\\
&+\frac{B_2-2C}{G}\sum_i \frac{\Phi_{i\bar i}\vphi'_i \vphi'_{\bar i} }{(1+\vphi_{i\bar i})^2}+(B_2-C) \tri_\vphi \Phi.
\end{align*} 
We deal with the terms involved with $\Phi$, using $\Tr_{\om_\vphi}\tilde \om_{{\mathbf b}}=\Tr_{\om_\vphi}\om_{{\mathbf b}}+\tri_\vphi \Phi$,
\begin{align*}
&(B_1-C)\Tr_{\om_\vphi}\om_{{\mathbf b}}+(B_2-C) \tri_\vphi \Phi\\
&=(B_1-B_2)\Tr_{\om_\vphi}\om_{{\mathbf b}}+(B_2-C)(\Tr_{\om_\vphi}\tilde \om_{{\mathbf b}}).
\end{align*} 
and 
\begin{align*}
&\frac{B_1-2C}{G} \sum_i \frac{\vphi'_i \vphi'_{\bar i} }{(1+\vphi_{i\bar i})^2}+\frac{B_2-2C}{G}\sum_i \frac{\Phi_{i\bar i}\vphi'_i \vphi'_{\bar i} }{(1+\vphi_{i\bar i})^2}\\
&=\frac{B_1-B_2}{G} \sum_i \frac{\vphi'_i \vphi'_{\bar i} }{(1+\vphi_{i\bar i})^2}+\frac{B_2-2C}{G}\sum_i \frac{(1+\Phi_{i\bar i})\vphi'_i \vphi'_{\bar i} }{(1+\vphi_{i\bar i})^2}.
\end{align*} 
We choose $B_1=B_2+1=2C+2$, then
\begin{align*}
L(Z)&\geq-(n+1)B_1+\Tr_{\om_\vphi}\om_{{\mathbf b}}+\frac{1}{G}.
\end{align*} 
We denote the addendum $E=-B_1\vphi+\frac{t^2}{2}+B_2\Phi$, which is bounded.
When the maximal point is on the boundary, the proof is finished. 
If the maximal point $p$ appeared in the interior of $\mathfrak X$, $\Tr_{\om_\vphi}\om_{{\mathbf b}}+\frac{1}{G}$ at $p$ is bounded. We apply the inequality between two positive matrices $M$ and $N$, i.e. $\Tr_M N\leq \frac{n+1}{n}(\Tr_N M)^n\frac{\det N}{\det M}$, to see at $p$,
\begin{align*}
\Tr_{\om_{{\mathbf b}}}\om_\vphi+G\leq \frac{n+1}{n}(\Tr_{\om_\vphi}{\om_{{\mathbf b}}}+\frac{1}{G})^n\frac{\Om_\Psi^{n+1}}{\Om_{{\mathbf b}}^{n+1}}.
\end{align*} 
Thus at any point $Z$, 
\begin{align*}
\log\Tr_{\om_{{\mathbf b}}}\om_\vphi(z)\leq \log\Tr_{\om_{{\mathbf b}}}\om_\vphi(p)+E(p)-E(z).
\end{align*}
The proof is complete.
\end{proof}

Combining the rough Laplacian estimate (Proposition \ref{bad prop: interior Laplacian estimate}) with the spatial Laplacian estimate (Proposition \ref{prop: interior spatial Laplacian estimate}), we get the following global Laplacian estimate.
\begin{prop}\label{roughnormalnormal}
Assume that the background metric $\Om_{\mathbf b}$ satisfies curvature condition \eqref{backgroundmetriccurvature}. 
	Then there exists a constant $C$ (independent of $\tau$), which depends on the constants in Proposition \ref{bad prop: interior Laplacian estimate} and Proposition \ref{prop: interior spatial Laplacian estimate}, such that 
	\begin{itemize}
	\item The rough mixed singular tangent-normal estimates:
 \begin{align*}
	 \sup_{ \mathfrak X}[|s|^{\kappa}(|\frac{\p^2\Psi}{\p z^{1}\p z^{\overline{n+1}}} |_{\Om}+ |\frac{\p^2\Psi}{\p z^{1}\p z^{{n+1}}}|_{\Om})]\leq C .
	\end{align*}
	\item The rough normal-normal estimates: 
	\begin{align*}
\sup_{ \mathfrak X}[|s|^{2\kappa} (|\frac{\p^2\Psi}{\p z^{n+1}\p z^{\overline{n+1}}} |_{\Om}+ |\frac{\p^2\Psi}{\p z^{n+1}\p z^{{n+1}}}|_{\Om})]\leq  C.
\end{align*}
	\end{itemize}
\end{prop}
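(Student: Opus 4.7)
The plan is to read off both bounds directly from the combined trace estimates of Propositions \ref{bad prop: interior Laplacian estimate} and \ref{prop: interior spatial Laplacian estimate}, by exploiting the product structure in \eqref{conventions}. Since $\vphi$ depends only on $(z',t)$ with $t=x^{n+1}$, the potential $\Psi = \vphi - |z^{n+1}|^2$ is independent of $y^{n+1}$; using $\p_{n+1} = \tfrac12(\p_x - i\p_y)$ one checks the identities
\begin{align*}
\frac{\p^2\Psi}{\p z^{n+1}\p z^{n+1}} = \frac{\p^2\Psi}{\p z^{n+1}\p z^{\overline{n+1}}} + 1, \qquad \frac{\p^2\Psi}{\p z^{1}\p z^{{n+1}}} = \frac{\p^2\Psi}{\p z^{1}\p z^{\overline{n+1}}}.
\end{align*}
Hence, up to a bounded shift, it suffices to control $|s|^{2\kappa}|\Psi_{(n+1)\overline{(n+1)}}|$ and $|s|^\kappa|\Psi_{1\overline{(n+1)}}|_\Om$.

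For the normal-normal estimate, the product form of $\Om_0$ in \eqref{conventions} makes the inverse $G^{a\bar b}$ block-diagonal with $G^{(n+1)\overline{(n+1)}} = 1$, and combined with $\Psi_{i\bar j} = \vphi_{i\bar j}$ for $1\le i,j\le n$ one obtains the clean trace splitting
\begin{align*}
\Tr_\Om\Om_\Psi = \Tr_\om\om_\vphi + 1 + \Psi_{(n+1)\overline{(n+1)}}.
\end{align*}
Positivity of $\Om_\Psi$ forces $1 + \Psi_{(n+1)\overline{(n+1)}} \ge 0$. Multiplying through by $|s|^{2\kappa}$, inserting $|s|^{2\kappa}\Tr_\Om\Om_\Psi \le C$ from Proposition \ref{bad prop: interior Laplacian estimate} together with $\Tr_\om\om_\vphi \le C$ from Proposition \ref{prop: interior spatial Laplacian estimate}, and using that $|s|$ is bounded on $\mathfrak X$, immediately gives the desired normal-normal bound.

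For the mixed singular tangent-normal estimate, I would invoke the Cauchy-Schwarz inequality for the positive Hermitian form $\Om_\Psi \ge 0$, noting $(\Om_0)_{1\overline{(n+1)}}=0$, to obtain
\begin{align*}
\left|\frac{\p^2\Psi}{\p z^{1}\p z^{\overline{n+1}}}\right|^2 = |(\Om_\Psi)_{1\overline{(n+1)}}|^2 \le (\Om_\Psi)_{1\bar 1}\cdot (\Om_\Psi)_{(n+1)\overline{(n+1)}},
\end{align*}
where the second factor is $O(|s|^{-2\kappa})$ by the previous step, and the first factor becomes bounded after contracting with $G^{1\bar 1}$, which is precisely one diagonal piece of $\Tr_\om\om_\vphi$. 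Converting from the scalar component to the $|\cdot|_\Om$-norm contributes an extra factor $\sqrt{G^{1\bar 1}}\sim|z^1|^{1-\b}$, and the three cone-weight exponents combine via $\kappa = \b - \a\b$ to cancel exactly, yielding the required $|s|^\kappa$ bound.

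The one subtle point to verify is that the trace splitting of $\Tr_\Om\Om_\Psi$ carries no cross term between the spatial and the $(n+1)$ directions; this is automatic from the product form of $\Om_0$ together with $\p_{y^{n+1}}\vphi \equiv 0$. Beyond that, the argument is purely algebraic --- positivity of a Hermitian matrix plus cone-weight bookkeeping --- and I do not expect any essential obstacle beyond keeping the exponent arithmetic straight.
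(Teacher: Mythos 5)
Your argument is correct and fills in exactly what the paper leaves implicit — the paper gives no separate proof of this proposition, merely stating that it follows ``by combining'' Propositions \ref{bad prop: interior Laplacian estimate} and \ref{prop: interior spatial Laplacian estimate}, and the block-diagonal trace splitting plus Cauchy--Schwarz is the natural way to carry this out. Two small quibbles for precision, neither of which affects correctness: the vanishing of cross terms in $\Tr_\Om\Om_\Psi$ is due solely to the block-diagonality of the inverse of $\Om$ (i.e. $G^{i\overline{(n+1)}}=0$), not to $\p_{y^{n+1}}\vphi\equiv 0$; and the exponent arithmetic in the mixed estimate works for any weight $\kappa$ (one gets $|(\Om_\Psi)_{1\overline{(n+1)}}|_\Om^2 = G^{1\bar1}|(\Om_\Psi)_{1\overline{(n+1)}}|^2 \le C G^{1\bar1}\cdot G_{1\bar1}\cdot |s|^{-2\kappa} = C|s|^{-2\kappa}$), so the remark that the specific relation $\kappa=\b-\a\b$ ``cancels exactly'' is a red herring — no such cancellation is needed.
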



\subsection{Interior gradient estimate}
\subsubsection{Interior spatial gradient estimate}\label{Interior spatial gradient estimate}
We use the interior spatial Laplacian estimate (Proposition \ref{prop: interior spatial Laplacian estimate}) to prove the interior spatial gradient estimate.
\begin{prop}\label{Prop Interior spatial gradient estimate}
There is a constant $C$ depending on the constants in \lemref{Linfty estimate} and Proposition \ref{prop: interior spatial Laplacian estimate}, such that
\begin{align}
\sup_{\mathfrak X}|\p_z\vphi|_{\om} \leq C.
\end{align}
\end{prop}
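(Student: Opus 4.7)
The plan is a Bernstein-type maximum principle argument for the linearisation $\tri_\Psi$ of the approximation equation \eqref{smooth per equ a}, applied to an auxiliary function built from $|\p_z\vphi|^2_\om$ and $\vphi$, and combined with the boundary gradient estimate \lemref{boundary gradient estimate}. I would work throughout with the smooth approximations $\vphi_\eps$ to \eqref{smooth per equ a}, derive the estimate uniformly in $\eps$ and $\tau$, and pass to the limits at the end.

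First, I would introduce an auxiliary function of the form
\begin{equation*}
H := e^{-A\vphi}\,|\p_z\vphi|^2_{\om_{\mathbf{b}_\eps}},
\end{equation*}
with $A>0$ a large constant to be fixed. A standard Bochner-Weitzenb\"ock computation along the lines of the classical Bernstein argument yields an inequality of the shape
\begin{equation*}
\tri_\Psi H \geq e^{-A\vphi}\Big[|\nabla\p_z\vphi|^2_\Psi - (\text{curvature terms of }\om_{\mathbf{b}_\eps}) - A\,\tri_\Psi\vphi\cdot |\p_z\vphi|^2_{\om}\Big] + 2A\,\Re\langle\p H,\p\vphi\rangle_\Psi,
\end{equation*}
after differentiating \eqref{smooth per equ a} spatially to replace $\p_z\log\Om_\Psi^{n+1}$ by $\p_z\log\Om_{\mathbf{b}_\eps}^{n+1}$, and using Cauchy--Schwarz to absorb cross terms into the nonnegative $|\nabla\p_z\vphi|^2_\Psi$ piece.

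Second, I would apply the maximum principle to $H$ on $\mathfrak X$. At a boundary maximum, the bound follows directly from \lemref{boundary gradient estimate}. At an interior maximum $p$ we have $\tri_\Psi H(p)\leq 0$ and $\p H(p)=0$, and the interior spatial Laplacian estimate \propref{prop: interior spatial Laplacian estimate} ensures $\om_\vphi\leq C\om$ (so that the curvature contributions from $\om$ are effectively dominated by those of $\om_{\mathbf{b}_\eps}$), while $\tri_\Psi\vphi$ is controlled through \eqref{per equ a}. Choosing $A$ large enough, using the $L^\infty$ bound from \lemref{Linfty estimate} so that the term $-A\,\tri_\Psi\vphi\cdot H$ overcomes the bad curvature contributions, one obtains a uniform upper bound on $|\p_z\vphi|^2_\om$ at $p$, hence on all of $\mathfrak X$. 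The limits $\eps\to 0$ and $\tau\to 0$ preserve the bound.

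The main obstacle I foresee is managing the unbounded curvature of $\om=\om_D$ near the divisor $D$: a naive Bochner computation against $\om$ will not close. This is circumvented by performing the Bernstein computation at the smooth approximation level, where $\Om_{\mathbf{b}_\eps}$ has bounded geometry in the sense of the constructions of Section \ref{Geometric conditions on the background metric}, and then exploiting the spatial Laplacian estimate $\Tr_\om\om_\vphi\leq C$ to control the terms that would otherwise blow up, in the same spirit as the handling of curvature terms in the proofs of \propref{bad prop: interior Laplacian estimate} and \propref{prop: interior spatial Laplacian estimate}.
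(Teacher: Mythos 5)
Your proposal takes a genuinely different route from the paper, and unfortunately it has a gap in the key step.

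The paper's proof is a linear elliptic regularity argument, not a maximum‐principle argument. Having already established that $\tri_\om\vphi$ is uniformly bounded (from \propref{prop: interior spatial Laplacian estimate}) and that $\vphi$ is uniformly bounded (from \lemref{Linfty estimate}), the paper treats $\tri_{\om_{cone}}(\rho_i\vphi)$ as a linear equation with an $L^\infty$ right-hand side on each local cone chart, applies the local $C^{1,\a,\b}$ estimate \eqref{c1alphalocal} (proved via the cone Green function), patches with a partition of unity, and absorbs the extra terms via the interpolation inequality \lemref{1alphainterpolation}. The dependence on the constants in \lemref{Linfty estimate} and \propref{prop: interior spatial Laplacian estimate} is then immediate. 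No curvature of the background metric enters at all.

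The gap in your Bernstein approach is that you have not controlled the curvature terms in the Bochner identity, and the mechanisms you invoke do not do so. Differentiating $\tri_\Psi|\p_z\vphi|^2_{\om_{\mathbf b_\eps}}$ produces a bisectional-curvature contribution of the form $-R^{i\bar j k\bar l}(\om_{\mathbf b_\eps})\vphi_i\vphi_{\bar j}(g_\Psi)_{k\bar l}$ (and an analogous Ricci term). The bisectional curvature of $\om_{\mathbf b_\eps}$ is \emph{not} uniformly bounded as $\eps\to 0$: the only control provided by the construction of Section \ref{Geometric conditions on the background metric} is the weighted lower bound \eqref{backgroundmetriccurvature}, $R_{i\bar j k\bar l}(\om_{\bf b})\geq -(\tilde g_{\mathbf b})_{i\bar j}(g_{\bf b})_{k\bar l}$ with $\tilde\om_{\mathbf b}=C_2\om_{\bf b}+i\p\bar\p\Phi$. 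In the paper's Laplacian estimates (Propositions \ref{bad prop: interior Laplacian estimate} and \ref{prop: interior spatial Laplacian estimate}) this is exploited by inserting a compensating term $B_2\Phi$ into the auxiliary function $Z$, which converts the curvature contribution into $\tri_\Psi\Phi$ and lets the good sign of $\tri_\Psi(\Psi-\Psi_{\mathbf b})$ absorb it. Your auxiliary function $H=e^{-A\vphi}|\p_z\vphi|^2_{\om_{\mathbf b_\eps}}$ has no such compensating $\Phi$-term, and the two ways you propose to absorb the bad terms do not work. The claim that $\om_\vphi\leq C\om$ (from \propref{prop: interior spatial Laplacian estimate}) makes the curvature contributions "effectively dominated by those of $\om_{\mathbf{b}_\eps}$" is not coherent: the curvature of $\om_{\mathbf b_\eps}$ is precisely the unbounded quantity that must be controlled, and a one-sided trace bound on $\om_\vphi$ does not bound $\Tr_{\om_\Psi}\om_{\mathbf b}$, which is what multiplies the curvature. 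Similarly, the claim that the term $-A\,\tri_\Psi\vphi\cdot H$ "overcomes the bad curvature contributions" via the $L^\infty$ bound from \lemref{Linfty estimate} does not follow: an $L^\infty$ bound on $\vphi$ gives no control on the sign or size of $\tri_\Psi\vphi$ (which involves time derivatives through $g_\Psi^{n+1,\overline{n+1}}$, a quantity that blows up as $\tau\to 0$), and the Laplacian estimate of \propref{prop: interior spatial Laplacian estimate} bounds $\tri_\om\vphi$, a different operator. To make your route close you would at minimum need to redesign the auxiliary function with a $\Phi$-correction in the manner of \propref{prop: interior spatial Laplacian estimate} and then carefully track the sign of all resulting terms; as written, the key absorption step is not justified.
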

\begin{proof}
According to the interior spatial Laplacian estimate (Proposition \ref{prop: interior spatial Laplacian estimate}), we have already known that $\tri_\om\vphi$ is bounded. The potential $\vphi$ is also bounded, due to \lemref{Linfty estimate}.

In order to get $|\p_z\vphi|_{\om}$, we apply the global $L^p$-estimate for cone metric to $\tri_\om\vphi$, which is glued from the local $L^p$-estimate for cone metric.
The local $L^p$-estimate is proved by applying cone Green function (see \cite{MR3668765}), there exists a constant $C$ depending on $n,p,\b$ such that
	\begin{align}\label{c1alphalocal}
	&\Vert u\Vert_{C^{1,\a,\b}(\om_{cone};B_r)}\\
	&\leq C(\Vert \tri_{\om_{cone}} u\Vert_{L^p(\om_{cone};B_{2r})}+\Vert u\Vert_{W^{1,2}(\om_{cone};B_{2r})})\nonumber.
	\end{align}

We will also need an interpolation inequality. 

\begin{lem}[Interpolation inequality]
\label{1alphainterpolation}
Suppose that $\eps>0$ and $1<p<\infty$. There exists a constant $C$ such that for all $u\in C^{1,\a,\b}(\om)$, we have
\begin{align*}
\Vert u\Vert_{W^{1,p}(\om)}\leq \eps \Vert u\Vert_{C^{1,\a,\b}(\om)}+ C\Vert u\Vert_{L^{p}(\om)}.
\end{align*}
\end{lem}

The proof of this lemma is similar to the proof of the Proposition 3.1 in \cite{arxiv:1703.06312}. We could replace $W^{2,p,\b}_{\bf s}$ with $C^{1,\a,\b}$ and use the compactness argument of $C^{1,\a,\b}$.

We continue back to our proof and patch the local estimates together. We let the manifold $B$ be covered by a finite number of coordinates charts $\{U_i,\psi;1\leq i\leq N\}$. We let $\rho_i$ be the smooth partition of unity subordinate to $\{U_i\}$ and be supported in $B_r\subset B_{3r}\subset U_i$ for each $i$. From
	\begin{align*}
	\tri_{\om_{cone}} (\rho_i \vphi)
	=\tri_{\om_{cone}}  \rho_i \vphi
	+ \rho_i \tri_{\om_{cone}}  \vphi
	+2(\p \rho_i,\p \vphi)_{\om_{cone}} ,
	\end{align*}
using the equation and putting all estimates in each $U_i$ together, we obtain from \lemref{c1alphalocal},
	\begin{align*}
	\Vert \vphi\Vert_{C^{1,\a,\b}(\om)}&=\left\Vert \textstyle\sum_{i}\rho_i \vphi\right\Vert_{C^{1,\a,\b}(\om)}\leq C\left\Vert \textstyle\sum_{i}\rho_i \vphi\right\Vert_{C^{1,\a,\b}(\om_{cone})}\\
	&\leq C(\Vert \textstyle\sum_{i}\rho_i \vphi\Vert_{W^{1,2}(\om_{cone})}
	+\Vert \tri_{\om_{cone}} (\rho_i \vphi)\Vert_{L^{p}({\om_{cone}} )}).
	\end{align*}
	Since are bounded 
		\begin{align*}
	RHS.\leq C(\Vert  \vphi\Vert_{L^{\infty}}+\Vert  \tri_{\om_{cone}}  \vphi\Vert_{L^{\infty}}+\Vert \vphi\Vert_{W^{1,p}(\om )}).
	\end{align*}
	Applying the interpolation inequality in \lemref{1alphainterpolation}, we obtain that
			\begin{align*}
	\Vert \vphi\Vert_{C^{1,\a,\b}(\om)}\leq C(\Vert  \vphi\Vert_{L^{\infty}}+\Vert  \tri_{\om_{cone}}  \vphi\Vert_{L^{\infty}}).
	\end{align*}
	Thus the interior spatial gradient estimate is proved.

\end{proof}
\subsubsection{Contradiction method}
We already have the gradient estimate from the bound of $\p_t\vphi$ and the spatial estimates in Section \ref{Interior spatial gradient estimate}. We would present an alternative proof, under the condition that the Laplacian estimate holds.
This section improves the interior gradient estimate, Section 3.4 in \cite{MR3405866}.
\begin{prop}\label{prop: interior gradient estimate}	
	Assume that
	\begin{itemize}
		\item Interior Laplacian estimate: $\sup_{ \mathfrak X}\Tr_{\Om}\Om_{\Psi}\leq C_1$;
		\item Boundary Laplacian estimate: $\sup_{\p \mathfrak X}\Tr_{\Om}\Om_{\Psi}\leq  C_2(\sup_{\mathfrak X} |\p\Psi|^2_\Om+1)\;.$
	\end{itemize}
Then there exists a constant $C$ depending on the constants in both $C_1$ and $C_2$ such that
\begin{align*}
\sup_{\mathfrak X}|\p\Psi|_{\Om} \leq C \; .
\end{align*}
\end{prop}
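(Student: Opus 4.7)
The plan is to argue by contradiction along the approximate solutions, as the name ``Contradiction method'' suggests. Suppose the proposition fails: there is a sequence of approximating potentials $\Psi_k$ solving \eqref{per equ a} such that $M_k := \sup_{\mathfrak X}|\p\Psi_k|_\Om \to \infty$, and choose points $p_k \in \mathfrak X$ with $|\p\Psi_k|_\Om(p_k) \geq \tfrac12 M_k$.

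The first step is to localise $p_k$ in the interior $\Int\mathfrak X$ for $k$ large. The boundary gradient estimate \lemref{boundary gradient estimate} already furnishes a uniform bound on $\sup_{\p\mathfrak X}|\p\Psi_k|_\Om$ depending only on the boundary data $\Psi_1$ and the harmonic extension $h$, and this bound is independent of $\tau$ (and hence of $k$). Since $M_k \to \infty$, the selected points $p_k$ must eventually lie in $\Int\mathfrak X$.

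The second step is a blow-up/regularity argument around $p_k$. By \lemref{Linfty estimate} the potentials $\Psi_k$ are uniformly bounded in $L^\infty$, and by the interior hypothesis $\Tr_\Om \Om_{\Psi_k} \leq C_1$, so $|\tri_\Om \Psi_k|$ is uniformly bounded. If, along a subsequence, $p_k$ stays at a fixed positive distance from the divisor $D\times R$, then on a small ball around $p_k$ the cone metric $\Om$ is smoothly equivalent to $\Om_0$, and classical interior $L^p$-estimates combined with Sobolev embedding upgrade the bounded-Laplacian-plus-$L^\infty$ data to a uniform $C^{1,\alpha}$ bound on $\Psi_k$ around $p_k$. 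This directly contradicts $|\p\Psi_k|_\Om(p_k)\to\infty$.

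The main obstacle is the singular case $p_k\to D\times R$, where the cone metric degenerates and Euclidean elliptic theory does not apply directly. Here I would invoke the cone $L^p$-theory via the cone Green function recorded in \eqref{c1alphalocal} together with the interpolation inequality \lemref{1alphainterpolation}, patched through a partition of unity exactly as in the proof of \propref{Prop Interior spatial gradient estimate}. This yields a uniform $C^{1,\alpha,\b}(\Om)$ bound on $\Psi_k$ in a cone ball of definite size around $p_k$. Since $|\p\Psi_k|_\Om$ is controlled by this $C^{1,\alpha,\b}(\Om)$ norm, we again reach a contradiction. In both cases $M_k$ is forced to remain bounded, proving the desired uniform estimate.
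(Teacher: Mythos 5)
Your argument has a genuine gap: you never handle the case in which the blow-up points $p_k$, though interior, accumulate at a point of $\p\mathfrak X$.

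Your Step~1 correctly shows that $p_k\in\Int\mathfrak X$ — the boundary gradient estimate excludes the supremum being achieved \emph{on} $\p\mathfrak X$ — but it does not prevent $\mathrm{dist}(p_k,\p\mathfrak X)\to 0$. Both branches of your Step~2 then rely on \emph{interior} elliptic theory (classical $L^p$ estimates near the regular part, or the cone Green-function estimate \eqref{c1alphalocal} near the divisor) on a ball of \emph{definite} size around $p_k$ that must lie entirely inside $\mathfrak X$. When $p_k\to p\in\p\mathfrak X$, these balls shrink with $k$, the interior constants degenerate, and no uniform $C^{1,\alpha}$ (or $C^{1,\alpha,\b}$) bound near $p_k$ follows; in particular your case distinction ``fixed distance from $D\times R$'' versus ``$p_k\to D\times R$'' does not cover the half-ball geometry at $\p\mathfrak X$. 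The paper meets exactly this difficulty head-on: it performs an anisotropic blow-up at $p$ (with scales $\lambda_1=m_s^{1/\b}$ in the singular direction and $m_s$ in the others), obtains a non-constant limit $\tilde\Psi_\infty$ on a half-space, and then uses the two-sided $L^\infty$ barriers $\Psi_{\mathbf b}\leq\Psi_s\leq h$ together with $\Psi_{\mathbf b}=h$ on $\p\mathfrak X$ to force $\tilde\Psi_\infty$ to be a constant — the contradiction. You would need an argument of this type (or an explicit global boundary-gradient/barrier argument valid near $\p\mathfrak X$) to close your proof.

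A second, smaller point: your proposal replaces the paper's interior blow-up (and the Liouville theorem of Proposition~\ref{Liouville theorem} for the interior-divisor case) by a direct application of the cone $W^{2,p}$-to-$C^{1,\a,\b}$ estimate. This is a legitimate shortcut for the interior case, and it parallels the technique already used in Proposition~\ref{Prop Interior spatial gradient estimate}; but note that \eqref{c1alphalocal} is stated for $\tri_{\om_{cone}}$ while your hypothesis controls $\Tr_\Om\Om_\Psi$, so you should record the uniform equivalence $\Om\sim\Om_{cone}$ in cone charts and verify that the resulting $L^p$ data for $\tri_{\Om_{cone}}\Psi_k$ is indeed uniform. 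Once the boundary case is fixed as above, that part of your argument is a reasonable alternative to the paper's Liouville-type contradiction.
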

\begin{proof}
We prove it with an argument by contradiction. Assume that we have a sequence of $\{\Psi_s\}$ and points $\{p_s\}$ such that
$$
|\p\Psi_s|_{\Om}(p_s)=\sup_{\mathfrak X}|\p\Psi_s|_{\Om} \rightarrow \infty \text{ as }s\rightarrow \infty.
$$

We take the limit of this sequence in the underlying topology on $\mathfrak X$ 
(not in the topology induced by the cone metrics which are incomplete), 
and we denote the limit point $p$. 
We need to consider where the limit point $p$ locates.	

When $p$ locates in $\mathfrak X\setminus \mathfrak D$. The argument follows directly from \cite{MR1863016}.
The new situation is $p\in \mathfrak D$.
We will use different scaling method.

There are two sub-cases, 
\begin{itemize}
	\item $p$ is in $\mathfrak D$, but not in the boundary $\p \mathfrak X$,
	\item $p$ lies on the boundary $ \mathfrak D\cap\p \mathfrak X$.
\end{itemize}

Choose a small ball $B_{\rho_0}(p)$ centred at $p$ with fixed radius $\rho_0$ and holomorphic normal coordinate chart $z$ in $B_{\rho_0}(p)$ and $p=0$.
Because this is a local argument, we choose $\Om_{cone}=\b^2|z^1|^{2(\b-1)} i dz^1\wedge dz^{\bar 1}+\sum_{2\leq j\leq n+1} i dz^j\wedge dz^{\bar j}$ as background metric. When $p\in \mathfrak D\cap \p\mathfrak X$, we choose half ball $B_{\rho_0}^+(p)$ instead.

We assume that $\Psi_s$ achieves supremum at points $p_s\in \mathfrak X$ with
\begin{align}\label{gradientzero}
|\p_z\Psi_s|_{\Om_{cone}}(p_s)=\sup_{\mathfrak X}|\p_z\Psi|_{\Om_{cone}}=\frac{1}{m_s}\; 
\end{align} such that $m_s\rightarrow 0$ as $s\rightarrow \infty$.

\textbf{Step 1:}
For any $q\in B_{\rho_0}(p)$, 
\begin{align}\label{gradientest1st} |\p_{z}\Psi_s|_{\Om_{cone}}(q)\leq \frac{1}{m_s}.
\end{align}
Then the interior Laplacian estimate and boundary Laplacian estimate imply that 
\begin{align}
&\label{gradientest2ndhessian}
(g_{cone})_{ j\bar j}(q)+\p_{z_j}\p_{z_{\bar j}}\Psi_s(q)\leq \frac{C}{m_s^2}(g_{cone})_{ j\bar j}(q); \\
&\label{gradientest2nd}
|\p_z\p_{\bar z}\Psi_s|_{\Om_{cone}}(q)\leq \frac{C}{m_s^2}.
\end{align}
 The constant $C$ depends on the constants in both Proposition \ref{prop: interior Laplacian estimate} and Proposition \ref{prop: boundary hessian estimate}.

\textbf{Step 2:}
We define $\lambda=(\lambda_1,\cdots,\lambda_j,\cdots)$, 
\begin{align*}
\lambda_1=m_s^\frac{1}{\b},\quad\lambda_j=m_s,\quad \forall 2\leq j\leq n+1.
\end{align*}
We use the transform $T: B_{\rho_0}(p)\rightarrow B_{\frac{\rho_0}{m_s}}(0)\subset \mathbb C^n$, $z\mapsto \tilde z$, 
\begin{align*}
\tilde z^j=\frac{z^j}{\lambda_j},\quad \forall 1\leq j\leq n+1.
\end{align*}
Now we define the following rescaled sequences of functions 
\begin{align*}
&\tilde\Psi_s(\tilde z)=\Psi_s(\lambda\cdot \tilde z)\, ,\quad
\tilde\Psi_{\mathbf b}(\tilde z)=\Psi_{\mathbf b}(\lambda\cdot \tilde z)\, ,\quad
\tilde h(\tilde z)=h(\lambda\cdot \tilde z)\, ,
\end{align*}
for any $\tilde z\in B_{\frac{\rho_0}{m_s}(0)} $.
We also denote $\tilde \Om(\tilde z)=\Om(\lambda\cdot \tilde z)$ and $$\tilde p_s=(\frac{z^1(p_s)}{\lambda_1},\cdots,\frac{z^j(p_s)}{\lambda_j},\cdots,\frac{z^{n+1}(p_s)}{\lambda_1}).$$
It is direct to compute that $\tilde\Om_{cone}=m_s^2\Om_{cone}$.

Then \eqref{gradientzero}\eqref{gradientest1st}\eqref{gradientest2nd} are rescaled to be
\begin{align}
\label{gradientzeroback}
|\p_{\tilde z}\tilde \Psi_s|_{\tilde\Om_{cone}}(\tilde p_s)&=1;\\
\label{gradientest1stback}
\sup_{B_{\frac{\rho_0}{m_s}}(0)}|\p_{\tilde z}\tilde\Psi_s|_{\tilde\Om_{cone}}
&=
m_s\sup_{B_{\rho_0}(p)}|\p_z\Psi_s|_{\Om_{cone}}\leq 1 ;\\
\label{gradientest2ndback}
\sup_{B_{\frac{\rho_0}{m_s}}(0)}|\p_{\tilde z}\p_{\bar{\tilde z}}\tilde\Psi_s|_{\tilde\Om_{cone}}
&=
m_s^2\sup_{B_{\rho_0}(p)}|\p_z\p_{\bar z}\Psi_s|_{\Om_{cone}}\leq C\; .
\end{align}

\textbf{Step 3:}
Any closed set $K\subset \mathbb C^{n+1}$ stays in $B_{\frac{\rho_0}{m_s}}(0)$ for sufficient large $i$.
According to \eqref{gradientest1stback} and \eqref{gradientest2ndback}, after taking the standard diagonal sequence and the Arzela-Ascoli theorem, 
we could extract a subsequence of $\tilde\Psi_s$ converges to $\tilde\Psi_\infty$ 
in $C_{loc}^{1,\a,\b}(\mathbb{C}^n,  \Om_{cone})$ and from \eqref{gradientzeroback}, the limiting function $\tilde\Psi_\infty$ is not a constant.

\textbf{Step 4:}
When $p$ lies on the boundary $\p \mathfrak X$, we always choose half balls in the argument above.
From the $L^\infty$ estimate \eqref{Linfty estimate} i.e. $\Psi_{\mathbf b}(z)\leq\Psi_s(z)\leq h(z),  \forall s $, we have 
\begin{align*}
\tilde \Psi_{\mathbf b}(\tilde z)\leq\tilde \Psi_s(\tilde z)\leq \tilde h(\tilde z), \quad \forall s \; .
\end{align*}
So after taking $s\rightarrow\infty$, the rescaled sequence converges as
\begin{align*}
\Psi_{\mathbf b}(0)\leq\tilde\Psi_\infty(\tilde z)\leq h(0) \; .
\end{align*}
On the boundary $\p \mathfrak X$, $\Psi_{\mathbf b}(0)=h(0)$, 
thus $\tilde\Psi_\infty$ has to be constant; contradiction!

\textbf{Step 5:}
When $p$ is in $\mathfrak D$, but not in the boundary $\p \mathfrak X$, we apply \eqref{gradientest2ndhessian} on the complex plane $\mathbb C$ expended by $\{\p_{z^j},\p_{z^{\bar j}}\}$ in $B_{\rho_0}(p_s)$. It suffices to consider $j=1$, otherwise we just take $\b=1$ for $2\leq j\leq n+1$. We compute 
\begin{align*}
0<|z^1|^{2\b-2}+\p_{z_1}\p_{{z_{\bar 1}}}\Psi_s\leq \frac{C}{m_s^2}|z^1|^{2\b-2}\; 
\end{align*}
then scaling, we have on $B_{\frac{\rho_0}{m_s}}(0)$,
\begin{align*}
0<|\lambda_1\tilde z^1|^{2\b-2}+\lambda_1^{-2}\p_{\tilde z_1}\p_{\tilde z_{\bar 1}}\tilde\Psi_s\leq Cm_s^{-2}|\lambda_1\tilde z^1|^{2\b-2}\; .
\end{align*}
Then 
\begin{align*}
0<\lambda_1^{2\b}|\tilde z^1|^{2\b-2}+\p_{\tilde z_1}\p_{\tilde z_{\bar 1}}\tilde\Psi_s\leq C|\tilde z^1|^{2\b-2}\; .
\end{align*}
After taking $s\rightarrow\infty$, we have on $\mathbb C$,
\begin{align*}
0<\p_{\tilde z_1}\p_{{\tilde z_{\bar 1}}}\tilde\Psi_\infty\leq C |\tilde z^1|^{2\b-2}\; \text{ in } W^{2,p}
\end{align*}
Due to the following Liouville theorem (\propref{Liouville theorem}), $\Psi_\infty$ is a constant. This is a contradiction. Since $\tilde \Psi_s\rightarrow \tilde \Psi_\infty$ in $C_{loc}^{1,\a,\b}(\mathbb{C}^n,  \Om_{cone})$, we obtain, from \eqref{gradientzeroback}, that $|\p_{\tilde z}\tilde \Psi_s|_{\tilde\Om_{cone}}(\tilde p_s)>0.5$, when $s$ is sufficiently large.
\end{proof}

\begin{prop}[Liouville theorem]\label{Liouville theorem}
We denote the cone metric $\tilde\om=|z|^{2-2\b}idz\wedge d \bar z$ in $\mathbb C$.
Suppose $u\in C^{1,\a,\b}\cap W^{2,p}(\mathbb C)$ has bounded $C^1(\tilde{ \om})$-norm, i.e. $$|u|_{C^{1}(\tilde{ \om})}=\sup_{z\in \mathbb C}\{|u(z)|+|z|^{1-\b}|\p u(z)|\}<\infty.$$
Suppose $u$ satisfies in the distribution sense,
\begin{align*}
|z|^{2-2\b}\p_{ z}\p_{\overline { z}}u\geq0.
\end{align*} Then $u$ must be a constant.
\end{prop}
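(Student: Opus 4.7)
My plan is to reduce the statement to the classical planar Liouville theorem for bounded subharmonic functions. The two ingredients I need are: first, to promote the distributional inequality with the degenerate weight $|z|^{2-2\b}$ to ordinary subharmonicity of $u$ on all of $\mathbb{C}$; second, to invoke the planar Liouville theorem.

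\medskip

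For the first step, since $u\in W^{2,p}(\mathbb{C})$, the quantity $\p_z\p_{\bar z}u$ is a genuine $L^p_{\mathrm{loc}}$-function, and the distributional hypothesis $|z|^{2-2\b}\p_z\p_{\bar z}u\geq 0$ then holds pointwise a.e. Because the weight $|z|^{2-2\b}$ is strictly positive on $\mathbb{C}\setminus\{0\}$, this forces $\p_z\p_{\bar z}u\geq 0$ a.e., equivalently $\Delta u\geq 0$ a.e. on $\mathbb{C}$. Combined with the continuity of $u$ (which is even $C^{1,\a,\b}$) and the fact that the isolated point $\{0\}$ is polar in $\mathbb{R}^2$, this yields that $u$ is subharmonic on all of $\mathbb{C}$ in the distributional sense; no hidden delta-mass can appear at $z=0$ since $\Delta u$ is already a function by the $W^{2,p}$ regularity.

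\medskip

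For the second step, I apply the classical Liouville theorem: a subharmonic function on $\mathbb{C}$ that is bounded above is constant. The standard argument goes via the spherical mean $A(r):=\frac{1}{2\pi}\int_0^{2\pi}u(re^{i\th})\,d\th$. Two well-known facts from planar potential theory are that $A$ is non-decreasing in $r$ and convex as a function of $\log r$. Writing $\phi(s):=A(e^s)$, this makes $\phi$ a convex, non-decreasing function on $\mathbb{R}$, and the bound $\phi\leq \sup_{\mathbb{C}}u<\infty$ makes it bounded above. A convex non-decreasing function on $\mathbb{R}$ that is bounded above must be constant, so $\phi$ is constant and hence $A(r)\equiv \lim_{r\to 0^+}A(r)=u(0)$ for all $r>0$. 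The identity $\int_{B_r}\Delta u\,dA=2\pi r\,A'(r)$ then forces $\int_{B_r}\Delta u=0$ for every $r$, and since $\Delta u\geq 0$ a.e., this gives $\Delta u\equiv 0$. Hence $u$ is harmonic and, being bounded, constant by the classical Liouville theorem for bounded harmonic functions on $\mathbb{C}$.

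\medskip

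No step is particularly obstructive: the degenerate weight $|z|^{2-2\b}$ is harmless away from the origin, and the $W^{2,p}$ regularity plus continuity of $u$ handles the origin itself. The weighted gradient bound $|z|^{1-\b}|\p u|\leq K$ in the hypothesis is actually not used for this particular conclusion; it is recorded because that is the precise form in which the boundedness of $\p u$ arises from the rescaling performed in the preceding interior gradient estimate.
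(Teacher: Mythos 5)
Your proof is correct, but it runs along a genuinely different track from the paper's. You use the $W^{2,p}$ hypothesis to convert the weighted distributional inequality into genuine Euclidean subharmonicity: since $\partial_z\partial_{\bar z}u$ is an honest $L^p_{\mathrm{loc}}$ function, the hypothesis reads pointwise a.e.\ and, the weight being strictly positive off the origin, gives $\Delta u\geq 0$ a.e.\ with no singular mass at $0$, so $u$ is subharmonic on all of $\mathbb{C}$, and the classical planar Liouville theorem for subharmonic functions bounded above, which you reprove via monotonicity and $\log$-convexity of the spherical mean, finishes the argument. The paper instead runs an intrinsic Moser--Yau integration-by-parts argument directly in the cone metric $\tilde\omega$: after normalizing $u>0$ it computes $\Delta_{\tilde\omega}u^p\geq p(p-1)|\partial u|_{\tilde\omega}^2 u^{p-2}$, multiplies by a radial cut-off $\eta^2$ supported in $B_r$ and a second cut-off $\chi_\eps$ excising a tubular neighbourhood of the origin, integrates by parts, uses the boundedness of $u$ and of $|\partial u|_{\tilde\omega}$ to make the $\chi_\eps$-boundary term vanish, applies Cauchy--Schwarz, and then exploits the two-dimensionality so that the $\tilde\omega$-energy of the cut-off on the annulus $B_r\setminus B_s$ decays as $r\to\infty$, forcing $|\partial u|_{\tilde\omega}\equiv 0$. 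Your reduction is shorter and more elementary and shows that the $W^{2,p}$ hypothesis alone suffices; the paper's argument does not touch the $W^{2,p}$ information but leans on the $C^1(\tilde\omega)$ gradient bound to control the boundary terms, and is intrinsic to the cone geometry rather than a reduction to the Euclidean case. Your closing remark that the weighted gradient bound is not used is correct for your proof; it is, however, exactly what closes the paper's integration by parts, so the two arguments rest on complementary parts of the stated hypotheses.
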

\begin{proof}
Since $u$ is bounded, we could assume that $u$ is positive. We denote $\tri$ be the associated Laplacian. 
We see that for $p\geq 2$
\begin{align*}
\tri u^p=p(p-1)|\p u|^2 u^{p-2}+p u^{p-1}\tri u
\geq p(p-1)|\p u|^2 u^{p-2} \; .
\end{align*}
Choose $0<r_0<s<r$ and a cut-off function $\eta$ with $\eta=1$ in $B_s$ and $\eta=0$ outside $B_r$. 
Let $ \chi_\eps$ be the be the smooth cut off function supported outside the $r_0$-tubular neighbourhood of the divisor with the properties that such that 
\begin{align*}
|\nabla\chi_{\ep}|=\eps \cdot O(r^{-1}).
\end{align*} Then we have
\begin{align*}
\int_{B_r}  \chi_\eps div(\eta^2\p u^p) \tilde\om
=\int_{B_r} -(\p \chi_\eps ,\p u^p)_{\tilde g} \eta^2 \tilde\om \; .
\end{align*}
Since $u$ and $|\p u|_{\tilde g}$ are both bounded we have, as $\eps\rightarrow0$,
\begin{align*}
\int_{B_r}  div_{\tilde g}(\eta^2\p u^p) \tilde\om
=0 \; .
\end{align*}
That is
\begin{align*}
0\geq\int_{B_r} 2p \eta(\p\eta,\p u)_{\tilde g} u^{p-1} \tilde\om
+\int_{B_r} p(p-1)\eta^2|\p u|_{\tilde g}^2 u^{p-2} \tilde\om \; .
\end{align*}
Applying the Schwartz inequality, we have
\begin{align*}
[\int_{B_r} \eta^2|\p u|_{\tilde g}^2 u^{p-2} \tilde\om]^2
\leq \frac{2}{p-1}\int_{B_r\setminus B_s} |\p \eta|_{\tilde g}^2 u^{p} \tilde\om
\int_{B_r\setminus B_s} \eta^2|\p u|_{\tilde g}^2 u^{p-2}  \tilde\om \; .
\end{align*}
Since $u$ is bounded, we have from the assumption that the real dimension is $2$,
\begin{align*}
\int_{B_s} \eta^2|\p u|_{\tilde g}^2 u^{p-2} \tilde\om\leq\int_{B_r} \eta^2|\p u|_{\tilde g}^2 u^{p-2} \tilde\om\leq C\frac{1}{r-s} \; .
\end{align*}
Fix $s$ and take $r\rightarrow\infty$, we obtain that $u$ is a constant.
\end{proof}







\section{Uniqueness of cscK cone metrics}\label{Uniqueness of cscK cone metrics}
We prove \thmref{Uniqueness} in this sections.\begin{proof}(proof of \thmref{Uniqueness})
We are given two cscK cone metrics $\om$. We assume that there are two different orbits $\mathcal O_1$ and $\mathcal O_2$ of cscK cone metrics. We minimise the functional $J$ (see \eqref{energyJ}) in each orbit, i.e. in $\mathcal O_1$ to get a cscK cone metric $\theta_1$, meanwhile, in $\mathcal O_2$ to get $\theta_2$ (Proposition \ref{minimiseJ}).

According to the bifurcation theorem (\thmref{bifrucationattheta} in Section \ref{bifurcation}), at each cscK cone metric $\theta_i, i=1,2$, we are able to perturb $\theta_i$ to two continuity paths of $J$-twisted cscK cone metrics $\theta_i(t)$ for $1-\tau<t\leq 1$. The bifurcation construction requires the linear theory for Lichnerowicz operator (Section \ref{Linear theory for Lichnerowicz operator}) and the reductivity of the automorphism group (Section \ref{Reductivity of automorphism group}).

The regularity theorem of the $J$-twisted cscK cone metric (Corollary \ref{dmetric} in Section \ref{Asymptotic behaviours})
tells us that its K\"ahler potential is $C^{3,\a,\b}_{\bf w}$. Then fixing $t\in(1-\tau,1)$, we could connect $\theta_1(t),\theta_2(t)$ with the cone geodesic, according to \thmref{weakgeodesicclosedness} in Section \ref{Cone geodesic}.
The $J$-twisted cscK cone metric is the critical point of the twisted log-$K$-energy $\mathcal E$.
While, the twisted log-$K$-energy is strictly convex along the cone geodesic (Proposition \ref{Econvex}). Thus the  $J$-twisted cscK cone metric is unique.

The uniqueness of the $J$-twisted cscK cone metrics implies two paths have to coincide with each other when $1-\tau<t< 1$. As a consequence, $\mathcal O_1=\mathcal O_2$. Therefore the proof of the main theorem (\thmref{Uniqueness}) is complete.
\end{proof}

\subsection{CscK cone metrics and $J$-twisted cscK cone metrics}\label{cscKcone}
We would explain in this section how to define cscK cone metric. We need to start with a reference metric. 
\subsubsection{References metrics}
There are two ways to construct reference metric $\om_\theta$.

\textbf{Solving cone Calabi's conjecture}
	In any K\"ahler class $[\om_0]$, given a smooth $(1,1)$-form $$\theta\in C_1(X)-(1-\b)C_1(L_D),$$ there exists a unique $\om_\theta:=\om_{\vphi_\theta}=\om_0+i\p\bar\p \vphi_\theta$ with $\vphi_\theta\in C^{2,\a,\b}$ such that
	\begin{align}\label{Rictheta}
	Ric(\om_\theta)=\theta+2\pi (1-\b)[D].
	\end{align}
The potential $\vphi_\theta$ satisfies the equation
\begin{align}\label{theta}
\frac{\om^n_\theta}{\om^n_0}=\frac{e^{h_0}}{|s|_h^{2-2\b}}.
\end{align}
In which, $h_0$ is a smooth function and $h$ is a smooth hermitian metric on $L_D$. The construction of weak solution could be found in \cite{MR3368100}, and actually $C^{2,\a,\b}$ in \cite{MR3488129}. According to our expansion formula of the complex Monge-Amp\`ere equation in the authors' previous article \cite{arxiv:1609.03111}, $\vphi_\theta$ has higher order estimates and the expansion formula. 

\textbf{Perturbation method}	
We could perturb the model metric $\om_D$ a little bit to have bounded Ricci curvature. This method is presented in \cite{arxiv:1612.01866}. The function $$f:=\log\frac{|s|^{2\b-2}\om^{n}_0}{\om_D^{n}}$$ is $C^{0,\a,\b}$ and could be approximated by smooth function $f_0$ in ${C^{0,\a,\b}}$ with smaller $\alpha'<\alpha$. We use the same $\alpha$, but keep in mind that $\alpha$ could be adjusted to be smaller. By the implicit function theorem and resolvability of the linear equation with cone coefficients, we could solve $\om_\theta:=\om+i\p\bar{\p} \vphi_\theta$ with $\vphi_\theta$ in $C^{2,\a,\b}$ such that
\begin{align}\label{approximationback}
\frac{\om^{n}_{\theta}}{\om_D^{n}}=e^{f-f_0} \text{ in } M.
\end{align} Then direct computation shows that 
\begin{align*}
Ric(\om_{\theta})=Ric(\om_0)+i\p\bar{\p} f_0+i\p\bar{\p }\log |s|^{2-2\b}_h:=\theta+2\pi (1-\b)[D].
\end{align*}


\subsubsection{Constant scalar curvature K\"ahler cone metrics}
Motivated from the definition of the K\"ahler-Einstein cone metrics, we consider the constant scalar curvature K\"ahler cone metrics. Some progress in this direction has already been made in very recent papers \cite{MR3496771,arXiv:1506.06423,arxiv:1511.00178,arxiv:1703.06312,arxiv:1508.02640}.
\begin{defn}\label{csckconemetricdefn}
	We say $\om_{cscK}=\om_0+i\p\bar\p\vphi$ is a constant scalar curvature K\"ahler cone metric, if 
	\begin{enumerate}
		\item $\om_{cscK}$ is a K\"ahler cone metric with angle $\b$;
		\item $\om_{cscK}$ satisfies the equations
                 \begin{numcases}{}
		\label{2nd equ}
		\frac{\om_{cscK}^n}{\om_\theta^n}=e^P,\\
		\label{P}
		\tri_{\om_{cscK}} P=\Tr_{\om_{cscK}}\theta-\underline S_\b;
		\end{numcases} 
		Here $\theta$ is a smooth $(1,1)$-form  satisfies \eqref{Rictheta}.

		\item its K\"ahler potential $\vphi\in C^{2,\a,\b}$ and the H\"older exponent $\alpha$ satisfies	
		\begin{align}\label{anglealpha}
		\a\b<1-\b.
		\end{align}
	\end{enumerate} 
\end{defn}
The condition $(3)$ could be removed. When $ C^{-1}\om_D\leq \om_{cscK}\leq C\om_D$, for a positive constant $C$, by condition $(1)$, we have by applying Moser iteration (see \cite{arxiv:1511.02410}) to \eqref{P} that $P\in C^{0,\a,\b}$, and then from Schauder estimate $\vphi\in C^{2,\a,\b}$. 

Since the cscK cone potential $\vphi_{cscK}$ is in $C^{2,\a,\b}$, we are able to solve the second linear equation to obtain that $P$ is $C^{2,\a,\b}$, according to the 2nd order linear Schauder theory for K\"ahler cone metrics.



Using the equation \eqref{theta} of $\om_\theta$ to \eqref{2nd equ} in the Definition \ref{csckconemetricdefn}, we have the potential equation,
\begin{align}\label{2nd equ smooth}
\frac{\om_{cscK}^n}{\om_0^n}=\frac{e^{P+h_0}}{|s|_h^{2-2\b}},
\end{align}

We denote the tensor
\begin{align*}
T=-i\p\bar \p P+\theta.
\end{align*}
\begin{lem}\label{Ttensor}
	The tensor $T$ is $C^{0,\a,\b}$, and $Ric(\om_{cscK})=T+2\pi(1-\b)[D]$. Moreover, the volume an the averaged scalar curvature $\underline S_\b$ are well-defined topological invariant.
\end{lem}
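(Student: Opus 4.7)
The approach I would take is to extract the three claims from the defining relations \eqref{2nd equ}, \eqref{P}, \eqref{theta} and \eqref{Rictheta}, leveraging the $C^{2,\a,\b}$ regularity of $P$ supplied just above the lemma. For the regularity of $T$: since $\vphi\in C^{2,\a,\b}$ the metric $\om_{cscK}$ has $C^{0,\a,\b}$ coefficients, so the right-hand side of \eqref{P} lies in $C^{0,\a,\b}$; the cited second-order Schauder theory for K\"ahler cone metrics then yields $P\in C^{2,\a,\b}$ and hence $i\p\bar\p P\in C^{0,\a,\b}$. Combined with the smoothness of $\theta$, this gives $T\in C^{0,\a,\b}$.

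For the Ricci identity, I would take $-i\p\bar\p\log$ of the volume ratio \eqref{2nd equ smooth}. On the regular part $M$ this yields $\Ric(\om_{cscK})=\Ric(\om_0)-i\p\bar\p(P+h_0)+(1-\b)\,i\p\bar\p\log|s|_h^2$. Applying the same operation to the reference identity \eqref{theta} and invoking \eqref{Rictheta} identifies the combination $\Ric(\om_0)-i\p\bar\p h_0+(1-\b)\,i\p\bar\p\log|s|_h^2$ with $\theta+2\pi(1-\b)[D]$ as currents on $X$; subtracting $i\p\bar\p P$ then gives $\Ric(\om_{cscK})=T+2\pi(1-\b)[D]$. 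The passage from the pointwise identity on $M$ to the current identity on $X$ is handled by Poincar\'e--Lelong applied to $\log|s|_h^2$, which is the source of the $2\pi(1-\b)[D]$ term and is the only genuinely delicate point in the computation.

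For the topological invariance, since $-i\p\bar\p P$ is exact, $[T]=[\theta]=C_1(X)-(1-\b)C_1(L_D)$ in $H^{1,1}(X,\mathbb R)$. The volume $\int_X\om_{cscK}^n=[\om_0]^n$ is manifestly topological. Tracing the Ricci identity on $M$ gives $S(\om_{cscK})\,\om_{cscK}^n=n\,T\wedge\om_{cscK}^{n-1}$; integrability of $|s|_h^{-2(1-\b)}$ for $0<\b\leq 1$ together with boundedness of $T$ renders the integral $\int_X S(\om_{cscK})\om_{cscK}^n$ finite and equal to the topological pairing $n(C_1(X)-(1-\b)C_1(L_D))\cdot[\om_0]^{n-1}$. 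Dividing by $[\om_0]^n$ then expresses $\underline S_\b$ purely in cohomological terms, and in particular independently of the chosen cscK cone representative.
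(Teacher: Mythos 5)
Your first two steps (the $C^{0,\a,\b}$ regularity of $T$ via the Schauder argument, and the Ricci identity via Poincar\'e--Lelong on \eqref{2nd equ smooth}) match the paper's proof up to minor reorganisation, so those are fine. The gap is in the third step. You correctly reduce the scalar--curvature average to $\int_M n\,T\wedge\om_{cscK}^{n-1}$ and establish that this integral is \emph{finite}, but you then assert that it \emph{equals} the cohomological pairing $n\bigl(C_1(X)-(1-\b)C_1(L_D)\bigr)\cdot[\om_0]^{n-1}$ without argument. That equality is precisely the nontrivial point: it amounts to justifying several $\p\bar\p$-integrations by parts against cone-singular currents, namely showing that one may replace $T$ by a smooth representative (absorbing $i\p\bar\p P$) and $\om_{cscK}^{n-1}$ by $\om_0^{n-1}$ (absorbing products of $i\p\bar\p\vphi_{cscK}$ with powers of the two metrics). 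Neither $P$ nor $\vphi_{cscK}$ is smooth across $D$, and the derivatives of their currents blow up near the divisor, so the vanishing of the boundary terms is not automatic and does not follow from the $L^1$ integrability of the volume form alone.

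The paper's proof circumvents a direct cohomological computation: given two cscK cone metrics $\om_1,\om_2$, it first shows (via a lower bound on Ricci and the observation that the Ricci potentials are globally bounded, Proposition~2.8 of \cite{arxiv:1511.00178}) that the currents $T_i=\Ric(\om_i)+C\om_D$ are nonnegative with bounded potentials, and then invokes Theorem~1.14 of Boucksom--Eyssidieux--Guedj--Zeriahi \cite{MR2746347} to obtain the integration-by-parts identities $\int_M(\Ric(\om_1)-\Ric(\om_2))\wedge\om_1^{n-1}=0$ and $\int_M(\om_1-\om_2)\wedge(\Ric(\om_i)+C\om)\wedge\om_i^{n-1}=0$, from which $\int_M\Ric(\om_1)\wedge\om_1^{n-1}=\int_M\Ric(\om_2)\wedge\om_2^{n-1}$ follows. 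To repair your argument you would either need to invoke the same BEGZ machinery (or an equivalent integration-by-parts theorem for $\p\bar\p$-cohomologous closed positive currents with bounded potentials) to justify each replacement, or adopt the paper's pairwise comparison strategy, which is calibrated to make exactly those hypotheses available.
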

\begin{proof}
	The first conclusion holds, since $P\in C^{2,\a,\b}$ and $\theta$ is smooth. The second conclusion follows from \eqref{2nd equ},
	\begin{align*}
	Ric(\om_{cscK})=Ric(\om_\theta) - i\p\bar\p P=\theta+2\pi (1-\b)[D]- i\p\bar\p P.
	\end{align*}
	Thus $Ric(\om_{cscK})$ has lower bound, i.e. $Ric(\om_{cscK})\geq -C\cdot \om_{cscK}$.
We will see that the scalar curvature of $\om_{cscK}$ is equal to the averaged scalar curvature $\underline S(\om_{cscK})$ over the regular part $M$, 
\begin{align*}
\underline S(\om_{cscK})=\frac{\int_M Ric(\om_{cscK})\wedge\om^{n-1}_{cscK}}{\int_M\om^{n}_{cscK}}
\end{align*}
and it is a topological invariant, denoted by $\underline S_\b$. The observation is  Proposition 2.8 in \cite{arxiv:1511.00178}, i.e. when $\om_{cscK}$ has $C^{2,\a,\b}$ K\"ahler potential, not only the K\"ahler potential is for sure globally bounded, but also its Ricci potential is also globally bounded. Precisely, when given two cscK cone metrics $\om_i$, $i=1,2$, we could choose a large constant $C$ such that they are both controlled by the model metric multiplying with the constant $C$, that is $$\om_i\leq C\om_D.$$
Thus the lower bound of the Ricci curvature implies that the twisted Ricci curvature $T_i=Ric(\om_i)+C\om_D$ is non-negative. Since $$T_1-T_2=i\p\bar\p\log\frac{\om^n_2}{\om^n_1},$$ the Ricci potential $\log\frac{\om^n_2}{\om^n_1}$ is globally bounded. We are able to apply Theorem 1.14 in \cite{MR2746347} with $u=1$ and $v=\log\frac{\om^n_2}{\om^n_1}$ to obtain that
\begin{align*}
\int_M (Ric(\om_{1})-Ric(\om_{2}))\wedge\om^{n-1}_{1}=0
\end{align*}
Similarly, we have
\begin{align*}
\int_M (\om_{1}-\om_{2})\wedge Ric(\om_i)\wedge\om^{n-1}_{i}=\int_M (\om_{1}-\om_{2})\wedge (Ric(\om_i)+C\om)\wedge\om^{n-1}_{i}=0
\end{align*}
Arranging them together, we have that
\begin{align*}
\int_M Ric(\om_{1})\wedge\om^{n-1}_{1}=
\int_M Ric(\om_{2})\wedge\om^{n-1}_{2}.
\end{align*}
The volume of $\om_i$ is also topological invariant, since their K\"ahler potential is $C^{2,\a,\b}$ and the integration by parts works.

\end{proof}

Assume that the potential function $\vphi_{cscK}$ of a cscK cone metric is $C^{2,\a,\b}$ with the angle $0<\b<\frac{1}{2}$ and H\"older exponent $\a\b < 1-2\b$. Then $\vphi_{cscK}$ is actually in $C^{4,\a,\b}$ in \cite{arxiv:1603.01743}.
We are going to prove the higher regularity of $\vphi_{cscK}$ for any $0<\beta\leq1$ in Section \ref{Higher order estimates} and the following Sections, i.e \thmref{thm:interior}. The theorem implies immediately that, $\vphi\in C_{\mathbf w}^{3,\a,\b}$, according to \corref{dmetric} and \corref{connection}.

\subsubsection{$J$-twisted constant scalar curvature K\"ahler cone metrics}
We recall that $\om_0$ is a smooth K\"ahler metric and $h$ is a smooth Hermitian metric on $L_D$.

We recall the following functionals, regarding to the smooth background metric $\om_0$,
\begin{align*}
E_{\om_\theta,\b}(\vphi)&=\frac{1}{V}\int_M\log\frac{\om^n_\vphi}{\om_0^n|s|_h^{2\b-2}e^{h_0}}\om_\vphi^{n},\\
D_{\b}(\vphi)
&=\frac{1}{V}\frac{1}{n+1}\sum_{j=0}^{n}\int_{M}\vphi\om_0^{j}\wedge\om_{\varphi}^{n-j},\\
j_{\chi,\b}(\vphi)&=-\frac{1}{V}\sum_{j=0}^{n-1}\int_{M}\vphi
\om_0^{j}\wedge
\om_\vphi^{n-1-j}\wedge \chi.
\end{align*}
Here $\chi$ is the Ricci curvature of $\om_0$ twisted by the curvature of the smooth Hermitian metric $h$, i.e. $-(1-\b)i\p\bar\p\log h$, that is by \eqref{theta},
\begin{align*}
\chi=Ric(\om_\theta)=Ric(\om_0)-i\p\bar{\p} h_0+i\p\bar{\p }\log |s|^{2-2\b}_h.
\end{align*}
Denoting the smooth function $f:=-h_0+(1-\b)i\p\bar\p\log h$ on $M$, by \eqref{Rictheta}, we have the smooth $(1,1)$-form 
\begin{align*}
\chi=\theta=Ric(\om_0)+i\p\bar{\p} f.
\end{align*}

The log-$K$-energy defined over the space of K\"ahler cone metrics $\mathcal H_\b$ is
\begin{align*}
\nu_{\b}(\vphi)
&=E_{\om_\theta,\b}(\vphi)
+\ul{S}_\b\cdot D_{\b}(\vphi)+j_{\chi,\b}(\vphi)-\frac{1}{V}\int_M f\om^n.
\end{align*}

The log-$J$-functional is defined on $\mathcal H_\b$ as following,
\begin{align}\label{energyJ}
J_{\b}(\vphi)
&=-D_{\b}(\vphi)+\frac{1}{V}\int_{M}\vphi\om_0^{n}\\
&=-\frac{1}{V}\frac{1}{n+1}\sum_{i=0}^{n}\int_{M}\vphi\om_0^{i}\wedge\om_{\varphi}^{n-i}
+\frac{1}{V}\int_{M}\vphi\om_0^{n}\nonumber.
\end{align}
\begin{lem}
The cscK cone metrics (Definition \ref{csckconemetricdefn}) are the critical points of the log-$K$-energy.
\end{lem}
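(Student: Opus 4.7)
The plan is to compute the first variation of $\nu_\b$ along a smooth variation $\delta\vphi=u$ in $\mathcal H_\b$ and verify that the resulting Euler--Lagrange equation reproduces the cscK cone condition of Definition \ref{csckconemetricdefn}. I would first rewrite the entropy term as
\[
E_{\om_\theta,\b}(\vphi) \;=\; \frac{1}{V}\int_M P_\vphi\,\om_\vphi^n,\qquad P_\vphi:=\log(\om_\vphi^n/\om_\theta^n),
\]
by absorbing the reference volume using \eqref{theta}. Since $\delta P_\vphi = \tri_\vphi u$ and $\delta\om_\vphi^n = \tri_\vphi u \cdot \om_\vphi^n$, one has
\[
\delta E_{\om_\theta,\b}(u) \;=\; \frac{1}{V}\int_M \tri_\vphi u\,\om_\vphi^n \,+\, \frac{1}{V}\int_M P_\vphi\,\tri_\vphi u\,\om_\vphi^n.
\]
The first integral vanishes by integration by parts and the second is integrated by parts onto $u$. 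Taking $i\p\bar\p$ of $P_\vphi$ on the regular part $M$ and tracing with $g_\vphi^{-1}$ yields $\tri_\vphi P_\vphi = \Tr_{\om_\vphi}\theta - S(\om_\vphi)$, using \eqref{Rictheta} and the fact that the divisor current $[D]$ is supported away from $M$. Hence
\[
\delta E_{\om_\theta,\b}(u)\;=\;\frac{1}{V}\int_M u\bigl(\Tr_{\om_\vphi}\theta - S(\om_\vphi)\bigr)\om_\vphi^n.
\]

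For the remaining summands, the standard Aubin--Mabuchi telescoping identities give
\[
\delta D_\b(u) \,=\, \frac{1}{V}\int_M u\,\om_\vphi^n,\qquad
\delta j_{\chi,\b}(u)\,=\,-\frac{n}{V}\int_M u\,\chi\wedge\om_\vphi^{n-1}\,=\,-\frac{1}{V}\int_M u\,\Tr_{\om_\vphi}\chi\cdot\om_\vphi^n,
\]
with $\chi=\theta$; the trailing summand $-\frac{1}{V}\int_M f\,\om^n$ in $\nu_\b$ is $\vphi$-independent. Summing the three variations, the $\pm\Tr_{\om_\vphi}\theta$ contributions cancel and one arrives at the Euler--Lagrange identity
\[
\delta\nu_\b\big|_\vphi(u)\;=\;\frac{1}{V}\int_M u\bigl(\ul S_\b - S(\om_\vphi)\bigr)\om_\vphi^n.
\]
Vanishing for every smooth test function $u$ is therefore equivalent to $S(\om_\vphi)\equiv \ul S_\b$ on $M$. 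The equivalence between this pointwise equation and the two-equation system \eqref{2nd equ}--\eqref{P} is immediate from the identification $P=P_\vphi$: substituting into \eqref{P} recovers $\tri_\vphi P_\vphi = \Tr_{\om_\vphi}\theta - \ul S_\b$, which combined with the trace formula $\tri_\vphi P_\vphi = \Tr_{\om_\vphi}\theta - S(\om_\vphi)$ forces $S(\om_\vphi)=\ul S_\b$ and conversely.

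The main obstacle is justifying the integration by parts near the divisor $D$, where $\om_\vphi$ is incomplete. I would handle it by a cutoff argument: introduce smooth cutoffs $\chi_\eps$ supported outside an $\eps$-tubular neighborhood of $D$, with $|\nabla\chi_\eps|_\om$ controlled on the transition annulus exactly as on p.\,1173 of \cite{MR3405866} and in the proof of \propref{Liouville theorem}, carry out the classical computation on the regular open set, and pass to the limit $\eps\to 0$. The required uniform bounds on $P_\vphi$, $u$ and $\tri_\vphi u$ follow from the $C^{2,\a,\b}$ regularity of $\vphi$ (so $P_\vphi\in C^{2,\a,\b}$ by \lemref{Ttensor}) together with the smoothness of $u$, and the cutoff remainders vanish in the limit by the cone integration-by-parts framework already invoked in the proof of \lemref{Ttensor} via Theorem~1.14 of \cite{MR2746347}. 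The same cutoff scheme legitimises the telescoping identities used for $D_\b$ and $j_{\chi,\b}$.
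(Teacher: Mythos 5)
Your computation reaches the same Euler--Lagrange identity, but you take a slightly different route from the paper. You derive the full variation $\delta\nu_\b(u)=\frac{1}{V}\int_M u(\ul S_\b - S(\om_\vphi))\om_\vphi^n$ by first integrating the entropy term by parts twice onto $u$ and invoking the pointwise curvature identity $\tri_\vphi P_\vphi=\Tr_{\om_\vphi}\theta - S(\om_\vphi)$ on $M$. The paper never invokes that pointwise identity: it records $\delta\nu=\delta E+\frac{1}{V}\int\dot\vphi(\ul S_\b-\Tr_{\om_\vphi}\theta)\om_\vphi^n$ with $\delta E=\frac{1}{V}\int\tri_\vphi\dot\vphi\,\om_\vphi^n+\frac{1}{V}\int\log\frac{\om_\vphi^n}{\om_\theta^n}\tri_\vphi\dot\vphi\,\om_\vphi^n$, then \emph{substitutes} the defining equations \eqref{2nd equ}--\eqref{P} (which specify $P$ as an independent $C^{2,\a,\b}$ function solving the linear equation $\tri_\om P=\Tr_\om\theta-\ul S_\b$, rather than via the scalar curvature) and sees the two remaining terms $\frac{1}{V}\int P\,\tri_\vphi\dot\vphi\,\om_\vphi^n-\frac{1}{V}\int\dot\vphi\,\tri_\vphi P\,\om_\vphi^n$ cancel under one integration by parts, valid because both $P$ and $\dot\vphi$ are $C^{2,\a,\b}$. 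The paper's route is slightly leaner in regularity: it never needs $\tri_\vphi P_\vphi$ to exist pointwise, only the integration by parts formula for $C^{2,\a,\b}$ functions. Your route is more explicit and produces the Euler--Lagrange equation $S=\ul S_\b$, but the step where you identify $\tri_\vphi P_\vphi$ with $\Tr_{\om_\vphi}\theta-S(\om_\vphi)$ needs $\vphi$ to have three derivatives on $M$, which is automatic at a cscK cone metric by interior elliptic regularity but not a priori available for a general $\vphi\in\mathcal H_\b\cap C^{2,\a,\b}$. Also note that the paper's proof establishes only the implication ``cscK $\Rightarrow$ critical point''; your final sentences sketch the converse, but passing from the distributional vanishing $\int u(\ul S_\b-S)\om_\vphi^n=0$ to the pointwise system \eqref{2nd equ}--\eqref{P} requires a regularity bootstrap you don't spell out, and your ``immediate from $P=P_\vphi$'' argument is a little circular since it presupposes $\tri_\vphi P_\vphi$ makes classical sense. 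Your cutoff scheme for the near-divisor integration by parts is sound and matches the treatment used elsewhere in the paper.
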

\begin{proof}
We compute the first variation of the log-$K$-energy with the variation $\dot \vphi$ of $\vphi$, both in $C^{2,\a,\b}$,
\begin{align*}
\delta \nu(\dot\vphi)=\delta E(\dot{\vphi})+\frac{1}{V}\int_M\dot \vphi(\ul{S}_\b-\Tr_{\om_{\vphi}}\theta)\om^n_\vphi.
\end{align*}
and
\begin{align*}
\delta E(\dot{\vphi})=\frac{1}{V}\int_M\tri_\vphi\dot \vphi\cdot\om^n_\vphi+\frac{1}{V}\int_M\log\frac{\om^n_\vphi}{\om^n_\theta}\cdot\tri_\vphi\dot \vphi\cdot\om^n_\vphi.
\end{align*}
If $\om_\vphi=\om_{cscK}$, we insert the cscK equations \eqref{2nd equ} and \eqref{P} to the identities above, 
\begin{align*}
\delta \nu(\dot\vphi)=\frac{1}{V}\int_MP\cdot\tri_\vphi\dot \vphi\cdot\om^n_\vphi-\frac{1}{V}\int_M\dot \vphi\cdot\tri_{cscK} P\cdot\om^n_\vphi.
\end{align*}
It is equal to zero, since both $P$ and $\dot\vphi$ are $C^{2,\a,\b}$ and we could apply the integration by parts formula.
\end{proof}
\begin{rem}	
We could also use the K\"ahler cone metric $\om_\theta$ constructed in \eqref{theta} to define the functionals above.
\end{rem}

We consider the twisted log-$K$-energy over $\mathcal H_\b$,
\begin{align}\label{energyE}
\mathcal E_{\b}(\vphi)=\nu_{\b}(\vphi)+(1-t)J_{\b}(\vphi).
\end{align}
In this paper, we remove the lower index $\b$ without confusion when we use these functionals.

We call its critical points $J$-twisted cscK cone metrics. Letting \[\gamma=\underline S_\b+(1-t)(\frac{\om_0^n}{\om_\vphi^n}-1),\] we see that the $J$-twisted cscK cone metric $\om_\vphi$ solves the following equations,
\begin{align}\label{Jcsckcone}
\left\{
\begin{array}{ll}
		\frac{\om_{\vphi}^n}{\om_\theta^n}=e^P,\\
		\tri_{\om_{\vphi}} P=\Tr_{\om_{\vphi}}\theta-\gamma.
\end{array}
\right.
\end{align}
It further implies the following equation outside the divisor, 
\begin{align*}
S(\om_\vphi)-\underline S_\b-(1-t)(\frac{\om_0^n}{\om_\vphi^n}-1)=0.
\end{align*}
Thanks to the reference metric $\theta$ in \eqref{theta}, the $J$-twisted cscK cone metric $\vphi\in C^{2,\a,\b}$ satisfies the equation,
\begin{align}\label{cscKconeapproximation}
\frac{\om_\vphi^n}{\om_0^n}=\frac{e^{P+h_0}}{|s|_h^{2-2\b}}.
\end{align}

The following lemma is obvious from the formulas of the functional.
\begin{lem}
The log-$K$-energy $\nu$, the log energy functional $\mathcal E$ as well as log-$D$ and log-$j$ functionals are well-defined on the space $\mathcal H_\tri:=\{\vphi\in\mathcal H_\b\vert \sup_{\mathfrak X} \{|\vphi|+|\p_z\vphi|_\om+|\p_z{\p_{\bar z}}\vphi|_{\om}\}<\infty\}$, particularly the $C^\b_\tri$ generalised cone geodesic.
\end{lem}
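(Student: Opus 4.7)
The plan is to reduce every functional to an integral of an essentially bounded integrand against the cone volume form $\om^n$, which has finite total mass $V$. The starting observation is that, for $\vphi\in\mathcal H_\tri$, the bound on $|\p_z\p_{\bar z}\vphi|_\om$ combined with $\om_\vphi\ge 0$ yields the one-sided comparison $0\le\om_\vphi\le C\om$; moreover $\om_0\le C\om$ by construction of the model metric, $\chi$ is a smooth $(1,1)$-form with $|\chi|_\om\le C$, and the reference cone metric $\om_\theta$ from Section \ref{cscKcone} is a genuine K\"ahler cone metric quasi-isometric to $\om$, so $c\om\le\om_\theta\le C\om$.

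For $D_\b$, $j_{\chi,\b}$ and $J_\b$ each summand is of the form $\int_M\vphi\cdot(\om_0^{j}\wedge\om_\vphi^{n-j})$, with an extra factor $\chi$ in the case of $j_{\chi,\b}$. By the previous step each such wedge product is dominated by a constant multiple of $\om^n$, and $\vphi$ is uniformly bounded by hypothesis. Absolute convergence of every integral then follows from $\int_X\om^n=V<\infty$, with explicit bounds of the form $C\|\vphi\|_{L^\infty}V$; the isolated term $\tfrac{1}{V}\int_M\vphi\om_0^n$ appearing in $J_\b$ is treated identically.

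The only term requiring care is the entropy functional, which I would rewrite as $E_{\om_\theta,\b}(\vphi)=\tfrac{1}{V}\int_M a\log a\cdot\om_\theta^n$ with $a:=\om_\vphi^n/\om_\theta^n$. The two-sided comparison of $\om_\theta$ with $\om$ together with $\om_\vphi\le C\om$ gives $0\le a\le A$ almost everywhere for a uniform constant $A$; since the extended function $x\mapsto x\log x$ is continuous and bounded on $[0,A]$, the integrand is $L^\infty$ and integration against the finite measure $\om_\theta^n$ produces a finite value. This is the subtle step: along a generalised cone geodesic $\om_\vphi$ may genuinely degenerate, but the entropy form $x\log x$ (rather than $\log a$ alone) renders the integrand bounded even on the zero set of $a$.

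The log-$K$-energy $\nu_\b$ and twisted log-$K$-energy $\mathcal E_\b$ are then finite linear combinations of the finite quantities above together with the $\vphi$-independent constant $-\tfrac{1}{V}\int_M f\om^n$, which is finite since $f$ and $\om^n$ both have the requisite integrability on $M$. Finally, for the $C^\b_\tri$ generalised cone geodesic $\{\vphi(t):0\le t\le 1\}$ produced by \thmref{geodesicclosedness}, Definition \ref{ctribw} supplies $C^\b_\tri$-bounds that are uniform in $t$, so all the preceding estimates hold with constants independent of $t$ and the functionals are well-defined (and in fact locally uniformly bounded) along the entire geodesic.
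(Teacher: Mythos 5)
Your proof is correct and fills in exactly the details the paper leaves implicit when it says this lemma is ``obvious from the formulas of the functional'': the $C^\b_\tri$-bounds give $0\le\om_\vphi\le C\om$ hence boundedness of all the wedge-product integrands, and the entropy is handled by the boundedness of $x\log x$ on $[0,A]$ against the finite measure $\om_\theta^n$. This is the same (and only reasonable) approach.
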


\begin{lem}\label{Dweakconvex}
	Assume $\{\vphi(t),0\leq t\leq 1\}$ is a $C^{1,1,\b}_{\bf w}$ cone geodesic. Then the log-$D$-functional is convex in the distribution sense, i.e. letting $\eta$ be a smooth non-negative cut-off function supported in the interior of $[0,1]$, we have
	\begin{align*}
	\int_0^1 \p_t^2\eta \cdot Ddt=\int_0^1\eta\cdot[\vphi''-(\p\vphi',\p\vphi')_{g_{\vphi}}]dt,
	\end{align*}
	otherwise $\vphi(t)$ is a constant geodesic.
\end{lem}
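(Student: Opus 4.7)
\textbf{Proof proposal for Lemma~\ref{Dweakconvex}.}
The plan is to establish the identity first for the smooth approximating solutions $\vphi_{\tau,\eps}$ of the perturbed equation~\eqref{smooth per equ a}, and then pass to the limit $\eps\to 0$, $\tau\to 0$ using the uniform $C^{1,1,\b}_{\bf w}$ bounds from \thmref{weakgeodesicclosedness}. Continuity of the map $t \mapsto D(\vphi(t))$ follows immediately from uniform convergence $\vphi_{\tau,\eps}\to\vphi$ together with Bedford--Taylor continuity of the mixed Monge--Amp\`ere masses $\om_0^j\wedge\om_{\vphi}^{n-j}$ under $L^\infty$-convergence of bounded potentials, so the distribution $D(\vphi(\cdot))$ on $(0,1)$ is well defined.

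For the smooth approximants, differentiate $D(\vphi_{\tau,\eps}(t))$ twice in $t$ and integrate by parts in the spatial variables, using that $\vphi_{\tau,\eps}$ is smooth on $M$ with bounded cone-derivatives. One obtains the classical Mabuchi-type identity
\[
\frac{d^2}{dt^2}D(\vphi_{\tau,\eps})
= \frac{1}{V}\int_M \bigl[\vphi_{\tau,\eps}''-(\p\vphi_{\tau,\eps}',\p\vphi_{\tau,\eps}')_{g_{\vphi_{\tau,\eps}}}\bigr]\,\om_{\vphi_{\tau,\eps}}^n,
\]
which by~\eqref{smooth per equ a} equals $\tau/V$ times the mass of $\Om_{\bf b_\eps}^{n+1}/(i\,dz^{n+1}\wedge d\bar z^{n+1})$ and is therefore strictly positive. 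Integrating twice by parts in $t$ against the test function $\eta\geq 0$ produces the asserted identity at the approximation level and shows that $D(\vphi_{\tau,\eps})$ is strictly convex in $t$.

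The main obstacle is passage to the limit $\tau,\eps\to 0$ of the spatial integrand, because the pointwise quantities $\vphi''$ and $|\p\vphi'|^2_{g_\vphi}$ are only bounded in the \emph{weighted} sense built into $C^{1,1,\b}_{\bf w}$, namely $|s|^{2\kappa}\vphi''$ and $|s|^{\kappa}|\p\vphi'|_\om$ are uniformly bounded. Since $\kappa=\b-\a\b<\b$, the reciprocal weights $|s|^{-2\kappa}$ and $|s|^{-\kappa}$ are integrable against $\om^n$ near $D$; combined with Bedford--Taylor weak convergence of $\om_{\vphi_{\tau,\eps}}^n$ to $\om_\vphi^n$ and dominated convergence in $t$, this justifies taking the limit in the approximate identity. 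The limiting right-hand side is nonnegative, which is the convexity of $D$ in the distribution sense.

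For the rigidity part: when the right-hand side vanishes identically, the nonnegative integrand $\vphi''-(\p\vphi',\p\vphi')_{g_\vphi}$ is zero almost everywhere with respect to $\om_\vphi^n$ on $M\times(0,1)$. Since both summands are individually nonnegative (the first by convexity in $t$ inherited from the approximation, the second trivially), we get $\vphi''=0$ and $\p\vphi'=0$ on the support of $\om_\vphi^n$. Hence $\vphi'(t,\cdot)$ is a constant function of $z$ on each slice, and the Dirichlet boundary data $\vphi(0)=\vphi_0$, $\vphi(1)=\vphi_1$ force $\vphi_0\equiv\vphi_1$, i.e.\ $\vphi(t)$ is the constant geodesic, completing the dichotomy.
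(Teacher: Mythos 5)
Your overall strategy---establish the Mabuchi second-variation identity for smooth approximants, then pass to the limit using the uniform $C^{1,1,\b}_{\bf w}$ bounds---has the same skeleton as the paper's, but you choose a genuinely different regularisation: you approximate along the PDE family $\Psi_{\tau,\eps}$ solving \eqref{smooth per equ a}, whereas the paper fixes the limiting geodesic $\Psi$ and regularises it by a \emph{decreasing} sequence of smooth plurisubharmonic functions $\Psi_s$ (Richberg/Demailly), precisely so that Bedford--Taylor's monotone continuity theorem applies verbatim to $\Om_{\Psi_s}^{n+1}$ on the product manifold. Your route could in principle be made to work (the approximants converge uniformly, and the Monge--Amp\`ere operator is also continuous under uniform convergence of bounded potentials), but as written the limit passage is not justified.

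The gap: you treat the spatial integrand as a product of the function $\vphi''_{\tau,\eps}-(\p\vphi'_{\tau,\eps},\p\vphi'_{\tau,\eps})_{g}$, controlled only through the weights $|s|^{2\kappa}$ and $|s|^{\kappa}$, with the fibrewise measure $\om_{\vphi_{\tau,\eps}}^n$, and then invoke ``weak convergence plus dominated convergence.'' This does not close: the weighted bounds give only uniform \emph{boundedness} of $\vphi''_{\tau,\eps}$ and $\p\vphi'_{\tau,\eps}$, not pointwise convergence, and a product of a merely bounded sequence with a weakly convergent sequence of measures has no limit in general. The correct observation is that the product \emph{is} the $(n{+}1)$-dimensional Monge--Amp\`ere measure $\Om_{\Psi_{\tau,\eps}}^{n+1}$ on $\mathfrak M$, and one must apply the Bedford--Taylor continuity theorem to that operator under an admissible mode of convergence (monotone, or uniform), with the uniform bound $\Om_{\Psi_{\tau,\eps}}^{n+1}=O(|z^1|^{2\b-2-2\kappa})$ --- integrable since $2\b-2-2\kappa=-2+2\a\b>-2$ --- supplying the equi-integrability near $D$ needed to upgrade local weak convergence on $M\times R$ to convergence of the integrals. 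This is exactly the step the paper isolates with its decreasing sequence. Separately, your rigidity paragraph is incorrect: from $\vphi''-(\p\vphi',\p\vphi')_{g_\vphi}=0$ with both terms nonnegative you may only conclude $\vphi''=(\p\vphi',\p\vphi')_{g_\vphi}$, not that each vanishes; and even $\vphi''=\p\vphi'=0$ would only force $\vphi(t)=\vphi_0+ct$, not $\vphi_0\equiv\vphi_1$. The paper's proof does not attempt this clause, and your argument for it does not stand.
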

\begin{proof}
Along the smooth geodesic, we know that
\begin{align*}
\p_t^2 D&=\frac{1}{V}\int_M[\vphi''-(\p\vphi',\p\vphi')_{g_{\vphi}}]\om^n_{\vphi}\\
&=\frac{1}{V}\int_M\Om^{n+1}_{\Psi}.
\end{align*}	
In order to prove this identity in the distribution sense along the cone geodesic, we use the approximation of the pluri-subharmonic function. Since $\Psi$ is H\"older continuous, there exists a smooth sequence $\Psi_s$ decreasing to $\Psi$ in any open subset of $[0,1]\times X$, (see e.g. \cite{demaillybook,MR2299485}). Thus along the smooth approximation, we have
	\begin{align*}
\int_0^1 \p_t^2\eta \cdot D(\vphi_s)dt=\int_0^1\eta\cdot\frac{1}{V}\int_M\Om^{n+1}_{\Psi_s}dt,
\end{align*}
From the $C^{1,1,\b}_{\bf w}$ estimates of $\Psi$, we know $\Om^{n+1}_{\Psi_s}=O(|z^1|^{2\b-2-2\kappa})$. It is integrable, since $2\b-2-2\kappa=-2+2\a\b>-2$. The Monge-Amp\`ere operator is continuous under deceasing sequence $\Psi_s$ by Bedford-Taylor's monotonic continuity theorem, (see e.g. \cite{MR2352488}), thus the RHS converges to \begin{align*}
\int_0^1\eta\cdot\frac{1}{V}\int_M\Om^{n+1}_{\Psi}dt,
\end{align*} by Lebesgue's dominated convergence theorem. The log-$D$-functional is well defined along $C^\b_\tri$ generalised cone geodesic, the LHS converges by the dominated convergence theorem too. In conclusion, we prove that along the $C^{1,1,\b}_{\bf w}$ cone geodesic,
	\begin{align*}
\int_0^1 \p_t^2\eta \cdot D(\vphi)dt=\int_0^1\eta\cdot\frac{1}{V}\int_M[\vphi''-(\p\vphi',\p\vphi')_{g_{\vphi}}]dt.
\end{align*}

\end{proof}

Similarly, we have
\begin{lem}\label{jweakconvex}
	Assume $\{\vphi(t),0\leq t\leq 1\}$ is a $C^{1,1,\b}_{\bf w}$ cone geodesic. Then the log-$j$- functional is convex in the distribution sense, i.e. letting the test function $\eta$ as above, we have
	\begin{align*}
	\int_0^1 \p_t^2\eta \cdot jdt=\int_0^1\eta\cdot \Om_\Psi^{n}\wedge\chi dt,
	\end{align*}
	otherwise $\vphi(t)$ is a constant geodesic.
\end{lem}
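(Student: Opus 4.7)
The strategy is entirely parallel to the proof of Lemma \ref{Dweakconvex}, with the smooth closed spatial $(1,1)$-form $\chi$ absorbed as an extra weight in the relevant wedge products. First, along a smooth path $\vphi(t)$, the standard variational formula together with $d\chi=0$ and one integration by parts gives
\begin{align*}
\p_t j(\vphi)=-\frac{n}{V}\int_M\vphi'\,\om_\vphi^{n-1}\wedge\chi,
\end{align*}
and a further differentiation combined with one more integration by parts produces
\begin{align*}
\p_t^2 j(\vphi)=-\frac{n}{V}\int_M\vphi''\,\om_\vphi^{n-1}\wedge\chi+\frac{n(n-1)}{V}\int_M i\p\vphi'\wedge\bar\p\vphi'\wedge\om_\vphi^{n-2}\wedge\chi.
\end{align*}
A direct bidegree computation on $\mathfrak X$, using the decomposition $\Om_\Psi=\om_\vphi+B_{\rm mix}+B_{tt}$ into spatial, time-mixed, and $dz^{n+1}\wedge d\bar z^{n+1}$ parts, and using that $\om_\vphi^n\wedge\chi$ vanishes by dimension as a form on $X$, identifies the right-hand side above (up to a universal positive constant suppressed in the schematic statement of the lemma) with the fiber integral of $\Om_\Psi^n\wedge\chi$. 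Hence for each smooth approximant the smooth identity
\begin{align*}
\int_0^1\p_t^2\eta\cdot j(\vphi_s)\,dt=\int_0^1\eta\cdot\frac{1}{V}\int_M\Om_{\Psi_s}^n\wedge\chi\,dt
\end{align*}
holds.

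Second, following the argument of Lemma \ref{Dweakconvex}, I approximate $\Psi$ from above by a decreasing sequence of smooth functions $\Psi_s\downarrow\Psi$ via Richberg-Demailly regularisation and apply the smooth identity at each level $s$. For the passage to the limit on the right-hand side, the mixed Monge-Amp\`ere measure $\Om_{\Psi_s}^n\wedge\chi$ is continuous under decreasing sequences of bounded quasi-psh potentials by Bedford-Taylor's monotonic continuity theorem (here using that $\chi$ is smooth and closed), and so converges weakly to $\Om_\Psi^n\wedge\chi$ on $\mathfrak M$. The $C^{1,1,\b}_{\bf w}$-bounds of Definition \ref{c11bw} give the pointwise estimate $\Om_{\Psi_s}^n\wedge\chi=O(|z^1|^{2\b-2-2\kappa})$ near the divisor, which is integrable since $2\b-2-2\kappa=-2+2\a\b>-2$. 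Lebesgue's dominated convergence then yields convergence of the inner spatial integral, and since $\eta$ has compact support in $(0,1)$, of the full $dt$-integral. For the left-hand side, $j(\vphi_s)\to j(\vphi)$ because the log-$j$-functional is well-defined on $\mathcal H_\tri$ and the $C_\tri^\b$-norms of $\Psi_s$ are uniformly bounded along the approximation, so dominated convergence applies once more.

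The main technical obstacle is controlling the mixed product $\Om_\Psi^n\wedge\chi$ near the divisor: although $\chi$ is smooth, the cone singularities of $\Om_\Psi$ must be tamed by the weighted $C^{1,1,\b}_{\bf w}$-estimates, and precisely the inequality $2\b-2-2\kappa>-2$ used in Lemma \ref{Dweakconvex} for $\Om_\Psi^{n+1}$ is what secures integrability here. Everything else is a routine transcription of the log-$D$-functional argument.
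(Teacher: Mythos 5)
Your proof is correct and takes essentially the same route as the paper: the paper gives a full proof only for the log-$D$-functional (Lemma \ref{Dweakconvex}) and then states Lemma \ref{jweakconvex} with the preamble ``Similarly, we have,'' leaving the transcription to the reader, which is exactly what you carry out. Your fleshing out of the three key points — the smooth variational identity for $\p_t^2 j$ matching the fiber integral of $\Om_\Psi^n\wedge\chi$, the Richberg–Demailly decreasing approximation plus Bedford–Taylor monotone continuity, and the integrability bound $\Om_{\Psi_s}^n\wedge\chi = O(|z^1|^{2\b-2-2\kappa})$ with $2\b-2-2\kappa = -2+2\a\b > -2$ coming from the $C^{1,1,\b}_{\bf w}$ estimates — is a faithful and complete reproduction of the paper's implicit argument.
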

Then, we apply the lemma to $J=-D+\frac{1}{V}\int_{M}\vphi\om_0^{n}$ and carry on the same procedure for
\begin{align*}
\p_t^2 [\frac{1}{V}\int_{M}\vphi\om_\theta^{n}]=\frac{1}{V}\int_M\vphi''\om_\theta^n.
\end{align*} We prove the following strict convexity of the $J$-functional.
\begin{lem}\label{Jweakconvex}
	Assume $\{\vphi(t),0\leq t\leq 1\}$ is a $C^{1,1,\b}_{\bf w}$ cone geodesic. Then the log-$J$-functional is strictly convex in the distribution sense, i.e. letting the test function $\eta$ as above, we have
	\begin{align*}
	\int_0^1 \p_t^2\eta \cdot Jdt=\int_0^1\eta\cdot(\int_M |\p\vphi'|_\vphi^2\om_\theta^n)dt>0,
	\end{align*}
	otherwise $\vphi(t)$ is a constant geodesic.
\end{lem}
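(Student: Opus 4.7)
My plan is to mimic the approach of \lemref{Dweakconvex} and \lemref{jweakconvex}, decomposing $J=-D+L$ with $L(\vphi)=\frac{1}{V}\int_M\vphi\,\om_0^n$, computing the distributional second time-derivative of each summand separately, and invoking the vanishing of $\Om_\Psi^{n+1}$ to convert $\vphi''$ into $|\p\vphi'|^2_{g_\vphi}$. \lemref{Dweakconvex} directly yields
$$-\int_0^1\p_t^2\eta\cdot D\,dt=-\int_0^1\eta\cdot\frac{1}{V}\int_M[\vphi''-(\p\vphi',\p\vphi')_{g_\vphi}]\om_\vphi^n\,dt=0,$$
since the bracketed density is that of $\Om_\Psi^{n+1}$ and vanishes in the Bedford-Taylor sense for the $C^{1,1,\b}_{\mathbf w}$ cone geodesic.

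For the affine piece $L$ I would apply the same decreasing plurisubharmonic regularisation $\Psi_s\downarrow\Psi$ of \cite{demaillybook,MR2299485} used in \lemref{Dweakconvex}. Because the reference form is $t$-independent, two integrations by parts in $t$ at the smooth level give
$$\int_0^1\p_t^2\eta\cdot L(\vphi_s)\,dt=\int_0^1\eta\cdot\frac{1}{V}\int_M\vphi''_s\,\om_\theta^n\,dt,$$
and the integrability needed to send $s\to\infty$ by dominated convergence is precisely the weighted one: the sharp boundary Hessian and normal-normal bounds of \propref{prop: boundary hessian estimate} and \propref{roughnormalnormal} give $\vphi''=O(|s|^{-2\kappa})$ with $\kappa=\b-\a\b$, while $\om_\theta^n$ carries cone weight $|s|^{2\b-2}$, so the combined integrand is $O(|s|^{-2+2\a\b})$, integrable under the standing hypothesis $\a\b<1-\b$. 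Combining the two contributions and using the fact that $\vphi''=|\p\vphi'|^2_{g_\vphi}$ almost everywhere on the regular locus $M$ (forced by $\Om_\Psi^{n+1}=0$ together with the non-degeneracy of $\om_\vphi$ on $M$), I arrive at the claimed identity.

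Positivity of the right-hand integrand is pointwise; should equality hold for every non-negative test function $\eta$, then $|\p\vphi'|^2_{g_\vphi}\equiv 0$ a.e., so $\vphi'$ is spatially constant, and the geodesic equation then gives $\vphi''\equiv 0$, i.e.\ $\vphi(t)=(1-t)\vphi_0+t\vphi_1$ with $\vphi_1-\vphi_0$ a constant on $X$---the (trivial) constant geodesic case permitted by the statement. The main obstacle will be the second step: the dominated convergence along $\Psi_s\downarrow\Psi$ sits right at the edge of integrability in the weighted setting, and it is here that the sharp weight $\kappa=\b-\a\b$ and the angle condition $\a\b<1-\b$ feed decisively into the convexity argument.
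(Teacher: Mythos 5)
Your proposal is correct and follows essentially the same route as the paper: decompose $J=-D+\frac{1}{V}\int_M\vphi\,\om_0^n$, kill the $-D$ contribution via \lemref{Dweakconvex} and the vanishing of $\Om_\Psi^{n+1}$, and run the same decreasing psh regularisation on the affine term, with the weighted bound $\vphi''=O(|s|^{-2\kappa})$ supplying the integrability for dominated convergence. The only cosmetic quibble is that integrability needs merely $-2\kappa>-2$ (i.e.\ $\a\b>0$) rather than the full condition $\a\b<1-\b$, but this does not affect the argument.
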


\begin{lem}\label{DJjcontinuity}
	The log-$D$-functional, log-$J$-functional and log-$j$-functional are continuous along the $C^{1,1,\b}_{\bf w}$ cone geodesic.
\end{lem}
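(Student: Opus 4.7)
The plan is to prove that each of $D$, $J$, and $j$ is in fact Lipschitz as a function of $t\in[0,1]$ along a $C^{1,1,\b}_{\bf w}$ cone geodesic $\{\vphi(t)\}$, which immediately yields the claimed continuity. Two uniform bounds built into the $C^{1,1,\b}_{\bf w}$ estimates drive everything. First, $\sup_{X\times[0,1]}|\p_t\vphi|\leq C$ gives $\|\vphi(\cdot,t_1)-\vphi(\cdot,t_2)\|_{L^\infty(X)}\leq C|t_1-t_2|$. Second, $\sup_{X\times[0,1]}|\p_z\p_{\bar z}\vphi|_{\om}\leq C$ gives $\om_{\vphi(t)}\leq C\om$ uniformly in $t$, so that each mixed Monge--Amp\`ere current $\om_0^j\wedge\om_{\vphi(t)}^{n-j}$ (and its variants carrying an extra factor $\chi$) has total mass bounded uniformly in $t$.

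For the log-$D$-functional, set $\vphi_k=\vphi(t_k)$ and $\om_k=\om_0+i\p\bar\p\vphi_k$, and decompose
\begin{align*}
(n+1)V\bigl(D(\vphi_1)-D(\vphi_2)\bigr)
&=\sum_{j=0}^{n}\int_M(\vphi_1-\vphi_2)\,\om_0^j\wedge\om_1^{n-j}\\
&\quad+\sum_{j=0}^{n}\int_M\vphi_2\,\om_0^j\wedge(\om_1^{n-j}-\om_2^{n-j}).
\end{align*}
The first sum is $O(|t_1-t_2|)$ by the Lipschitz bound on $\vphi$ and the uniform mass bound on $\om_0^j\wedge\om_1^{n-j}$. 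For the second, I would telescope
\[
\om_1^{n-j}-\om_2^{n-j}=i\p\bar\p(\vphi_1-\vphi_2)\wedge\sum_{k=0}^{n-j-1}\om_1^{k}\wedge\om_2^{n-j-1-k},
\]
integrate by parts to transfer $i\p\bar\p$ onto $\vphi_2$, and replace $i\p\bar\p\vphi_2$ by $\om_2-\om_0$; the result is again controlled by $\|\vphi_1-\vphi_2\|_{L^\infty}$ times a current of bounded mass, giving $O(|t_1-t_2|)$.

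The log-$j$-functional is handled by exactly the same telescoping, simply inserting the smooth closed $(1,1)$-form $\chi$ as an additional wedge factor (harmless once $\chi$ is dominated by a multiple of $\om_0$, so that positive-current estimates still apply). The log-$J$-functional reduces to the log-$D$ case via $J(\vphi)=-D(\vphi)+\frac{1}{V}\int_M\vphi\,\om_0^n$, whose additional term is patently Lipschitz in $t$ since $\om_0$ is a fixed smooth K\"ahler form. The only technical point, as in Lemma~\ref{Dweakconvex} and Lemma~\ref{jweakconvex}, is justifying the integration by parts: one approximates each $\vphi_k$ by a smooth decreasing Richberg regularisation, performs the integration by parts at the smooth level, and passes to the limit using Bedford--Taylor's monotone continuity of Monge--Amp\`ere operators together with dominated convergence (the dominant being $C\om^n$). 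This is the main obstacle in principle, but no new difficulty arises beyond what has already been handled for the distributional convexity statements, because the $C^{1,1,\b}_{\bf w}$ bounds furnish the same dominating measure used there.
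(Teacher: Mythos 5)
Your proof is correct and rests on the same ingredients as the paper's proof: the $L^\infty$-Lipschitz bound on $t\mapsto\vphi(t)$ coming from $\sup|\p_t\vphi|\leq C$, the uniform mass bound on the mixed Monge--Amp\`ere currents coming from the spatial Laplacian estimate $\sup|\p_z\p_{\bar z}\vphi|_\om\leq C$, Richberg regularisation of the potentials, and Bedford--Taylor/Guedj--Zeriahi continuity of the weighted Monge--Amp\`ere operator to pass to the limit. The only real difference is how $D(\vphi_1)-D(\vphi_2)$ is recast. The paper quotes the cocycle identity
\[
D(\vphi_1)-D(\vphi_2)=\frac{1}{V(n+1)}\sum_{j=0}^{n}\int_M(\vphi_1-\vphi_2)\,\om_{\vphi_1}^j\wedge\om_{\vphi_2}^{n-j},
\]
together with its analogue for $j$ (with $\chi$ inserted), which exhibits the difference at once as $\vphi_1-\vphi_2$ paired against a positive current of uniformly bounded mass. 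You instead split off a $(\vphi_1-\vphi_2)$-term and a $\vphi_2\cdot(\om_{\vphi_1}^{n-j}-\om_{\vphi_2}^{n-j})$-term, telescope, integrate by parts, and substitute $i\p\bar\p\vphi_2=\om_{\vphi_2}-\om_0$; this is in effect a re-derivation of the cocycle identity from first principles, so you land on the same estimate with a bit more algebra and one extra integration by parts to justify at the approximant level. Both routes are sound and use the same dominating measure; the cocycle-formula route is simply shorter.
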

\begin{proof}
	Given two points $\vphi_i=\vphi(t_i),i=1,2$ in the $C^{1,1,\b}_{\bf w}$ cone geodesic,
we have their decreasing approximation $\vphi_{i\eps}$ as \cite{MR2299485,MR1863016}. The cocycle condition of $D$, i.e.
\begin{align*}
D(\vphi_{1\eps})-D(\vphi_{2\eps})
&=\frac{1}{V}\frac{1}{n+1}\sum_{j=0}^{n}\int_{M}(\vphi_{1\eps}-\vphi_{2\eps})\om_{\vphi_{1\eps}}^{j}\wedge\om_{\varphi_{2\eps}}^{n-j},
\end{align*}
implies that $D$ is bounded by $|\vphi_{1\eps}-\vphi_{2\eps}|_\infty$ multiplying with a constant depending on the spatial Laplacian estimate of $\vphi_i$. The approximation $D_{i\eps}$ converges to $D_{i}$ by the continuity of the weighted Monge-Amp\'ere operator \cite{MR2352488}. Thus $D$ is continuous along the $C^{1,1,\b}_{\bf w}$ cone geodesic.

The continuity of $j$ follows in the same way, but using the cocycle condition that
\begin{align*}
j(\vphi_1)-j(\vphi_2)=-\frac{1}{V}\sum_{j=0}^{n-1}\int_{M}(\vphi_1-\vphi_2)
\om_{\vphi_1}^{j}\wedge
\om_{\vphi_2}^{n-1-j}\wedge \chi.
\end{align*}
Since $\int_{M}\vphi\om_0^{n}$ is continuous when $\vphi(t)$ is continuous on $t$, we have the continuity of $J$.
\end{proof}	
Combining the lemmas above, we have
\begin{prop}\label{Jconvex}
	Assume $\{\vphi(t),0\leq t\leq 1\}$ is a $C^{1,1,\b}_{\bf w}$ cone geodesic. Then the log-$J$-functional is strictly convex along $\vphi(t)$, otherwise $\vphi$ is a constant geodesic.
\end{prop}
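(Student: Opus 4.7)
The plan is to upgrade the distributional convexity established in Lemma \ref{Jweakconvex} to honest pointwise convexity on $[0,1]$, and then to read off strictness from the integrand on the right-hand side of Lemma \ref{Jweakconvex}. The two ingredients in place are (i) the distributional identity
\[
\int_0^1 \p_t^2\eta \cdot J(\vphi(t))\,dt \;=\; \int_0^1 \eta(t)\Bl\int_M |\p\vphi'|_\vphi^2 \,\om_\theta^n\Br dt \;\geq\; 0
\]
for every smooth $\eta\geq 0$ with compact support in $(0,1)$ (Lemma \ref{Jweakconvex}); and (ii) the continuity of $t\mapsto J(\vphi(t))$ along the $C^{1,1,\b}_{\bf w}$ cone geodesic (Lemma \ref{DJjcontinuity}).

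First I would invoke the standard fact that a continuous function on $[0,1]$ whose distributional second derivative is a nonnegative Radon measure must be convex in the usual sense. Applied to $f(t):=J(\vphi(t))$, (i) and (ii) give that $f$ is convex on $[0,1]$, which already establishes the convexity part of the proposition. Next, to get strict convexity I would argue by contradiction: if $f$ were affine on some subinterval $[a,b]\subset [0,1]$, then its distributional second derivative would vanish on $(a,b)$, so taking $\eta$ with support in $(a,b)$ in Lemma \ref{Jweakconvex} forces
\[
\int_M |\p\vphi'(t,\cdot)|_{\vphi(t)}^2 \,\om_\theta^n \;=\; 0 \qquad \text{for a.e. } t\in (a,b).
\]
Since $\om_\theta$ is a K\"ahler cone metric and $\vphi(t)\in\mathcal H_\b$, this forces $\p\vphi'(t,\cdot)\equiv 0$ on $M$ for a.e.\ $t$, i.e.\ $\vphi'(t,\cdot)$ is (spatially) constant on the regular part; by the $C^\b_\tri$ bound $\vphi'$ is bounded and continuous in $t$, so $\vphi'$ depends only on $t$. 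Combined with the HCMA equation \eqref{geo ma} for the generalised geodesic, $\vphi''-(\p\vphi',\p\vphi')_{g_\vphi}=0$ in $\mathfrak M$ reduces to $\vphi''=0$ on $(a,b)$; hence $\vphi(t)$ is affine in $t$ with spatially constant derivative on that subinterval, and spreading this by real analyticity/continuity of the geodesic equation it is a constant geodesic in the sense of the proposition (the K\"ahler form $\om_{\vphi(t)}$ is independent of $t$).

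The main technical point, which is already handled by the preceding lemmas, is the regularity/integrability needed to make sense of $\int_M |\p\vphi'|_\vphi^2 \om_\theta^n$ and of the distributional identity: the $C^{1,1,\b}_{\bf w}$ bound controls $|\p\vphi'|_\vphi^2 \om_\theta^n$ near $D$ (using $2\b-2-2\kappa=-2+2\a\b>-2$, exactly as in the proof of Lemma \ref{Dweakconvex}), and the decreasing Richberg-type approximation of $\Psi$ combined with Bedford--Taylor monotone continuity justifies passing from the smooth case to the $C^{1,1,\b}_{\bf w}$ setting. So the only remaining task is the short convexity-from-distributions argument together with the contradiction step above; no new estimates are required.
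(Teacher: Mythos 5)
Your proposal is correct and follows essentially the same route as the paper: the paper derives Proposition \ref{Jconvex} directly by "combining the lemmas above," namely the distributional strict convexity of $J$ from Lemma \ref{Jweakconvex} together with the continuity of $J$ along the $C^{1,1,\b}_{\bf w}$ cone geodesic from Lemma \ref{DJjcontinuity}. Your write-up simply makes explicit the standard upgrade from nonnegative distributional second derivative plus continuity to genuine convexity, and the contradiction argument for strictness via the geodesic equation, which is exactly what the paper leaves implicit.
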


We then consider the entropy on the cone geodesic.
\begin{lem}\label{lsc}
Then the log-entropy $E(\vphi)$, and also the log-$K$-energy are lower semi-continuous along the $C^{1,1,\b}_{\bf w}$ cone geodesic.
\end{lem}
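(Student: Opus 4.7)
The plan is to reduce lower semi-continuity of the log-$K$-energy $\nu$ to that of the log-entropy $E$, and then to express $E$ along the geodesic as a pointwise supremum of continuous functions of $t$ via a duality formula.

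First, using the identity $\nu(\vphi)=E(\vphi)+\underline S_\b\, D(\vphi)+j_\chi(\vphi)-\frac{1}{V}\int_M f\om^n$, Lemma~\ref{DJjcontinuity} says that $D$ and $j_\chi$ are continuous along the $C^{1,1,\b}_{\bf w}$ cone geodesic, and the last term is constant. So the lower semi-continuity of $\nu(\vphi(t))$ is equivalent to that of $E(\vphi(t))$. Moreover, by Lemma~\ref{Ttensor} the volume $V=\int_M\om^n_\vphi=\int_M\om^n_\theta$ is a topological invariant, so $\mu_\vphi:=\om^n_\vphi/V$ and $\mu_\theta:=\om^n_\theta/V$ are probability measures on $X$ with common total mass, and $E(\vphi)$ is precisely the relative entropy $H(\mu_\vphi\,|\,\mu_\theta)$.

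Second, I would invoke the Donsker--Varadhan variational characterisation
\begin{align*}
E(\vphi)=\sup_{g}\left\{\frac{1}{V}\int_M g\,\om^n_\vphi-\log\Bigl(\frac{1}{V}\int_M e^g\,\om^n_\theta\Bigr)\right\},
\end{align*}
where $g$ ranges over bounded continuous functions on $X$. For each fixed $g$ the second term is independent of $\vphi$; for the first, observe that along a $C^{1,1,\b}_{\bf w}$ cone geodesic, $\vphi(t)$ is uniformly Lipschitz in $t$ (by the bound on $\p_t\vphi$) and has uniformly bounded spatial Laplacian $\Tr_\om\om_{\vphi(t)}$. Hence $\vphi(t_n)\to\vphi(t_0)$ uniformly on $X$ whenever $t_n\to t_0$, and by Bedford--Taylor stability for bounded pluri-subharmonic functions one obtains weak convergence of the Monge--Amp\`ere measures $\om^n_{\vphi(t_n)}\rightharpoonup\om^n_{\vphi(t_0)}$. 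Therefore $t\mapsto\frac{1}{V}\int_M g\,\om^n_{\vphi(t)}$ is continuous for each $g$, and $E(\vphi(t))$, being a pointwise supremum of continuous functions of $t$, is lower semi-continuous, which gives the same for $\nu$.

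The main delicate point is to justify the variational formula in our singular setting: the reference measure $\om^n_\theta$ carries an $|s|^{2\b-2}$ singularity along $D$ (though it is a finite Radon measure), and along the geodesic the Monge--Amp\`ere density of $\om^n_\vphi$ with respect to $\om^n_\theta$ is only $L^1$, so the entropy may a priori be $+\infty$. I would handle this by an exhaustion argument: first restrict $g$ to functions compactly supported in $M$ (where the standard Donsker--Varadhan formula applies to the restricted Radon probability measures), then remove the truncation by monotone convergence using the nonnegativity $E\geq 0$ of the relative entropy. Lower semi-continuity is preserved under monotone suprema, so the conclusion extends to the full entropy. Since only weak stability of $\om^n_{\vphi(t)}$ is required—and this is supplied by the uniform $C^{1,1,\b}_{\bf w}$ bounds—no additional regularity along the geodesic is needed.
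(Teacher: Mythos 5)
Your proof is valid, and it takes a genuinely different (and more self-contained) route than the paper. The paper's argument is terse: it observes from the spatial Laplacian bound that the density $h=\om^n_\vphi/\om^n$ is uniformly bounded in $L^\infty$ (hence $L^p$, $p>1$), extracts a weakly convergent subsequence, and then invokes Lemma~4.7 of Chen--Li--P\u aun (\cite{MR3582114}) as a black box to conclude lower semi-continuity of the entropy. Your proof instead works directly with the Donsker--Varadhan duality
$E(\vphi)=\sup_g\{\frac{1}{V}\int_M g\,\om^n_\vphi-\log(\frac{1}{V}\int_M e^g\,\om^n_\theta)\}$
and establishes continuity of the linear pairing $t\mapsto\int g\,\om^n_{\vphi(t)}$ via the Lipschitz-in-$t$ bound on $\vphi$ together with Bedford--Taylor stability, so that $E$ is a pointwise supremum of continuous functions of $t$. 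This effectively re-derives the content of the cited lemma in place. The two approaches rest on the same underlying mechanism (l.s.c.\ of relative entropy under weak convergence of densities), but yours replaces the external citation with an explicit variational argument. What the paper's approach buys is brevity; what yours buys is transparency, and it makes visible exactly which bounds from the $C^{1,1,\b}_{\bf w}$ geodesic are being used (the $\p_t\vphi$ bound for uniform convergence, the spatial Laplacian bound for controlling the density). One small simplification you could make: the exhaustion argument by compactly supported $g$ is not strictly necessary, since the Donsker--Varadhan formula holds for arbitrary Borel probability measures on a compact metric space with $g$ ranging over continuous functions, and $\om^n_\theta$ is a finite Radon measure absolutely continuous with respect to Lebesgue; no truncation away from $D$ is needed for the variational formula itself.
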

\begin{proof}
Since $\vphi(t)\in C^\b_\tri$, the volume ratio $h(\vphi)=\frac{\om^n_\vphi}{\om^n}$ is uniformly bounded and non-negative. The sequence $h_i=\frac{\om^n_{\vphi_i}}{\om^n}$ is then uniformly bounded in $L^1(\om_0)$ and $L^p(\om_0)$ for some $p>1$. So it has a weakly $L^1(\om_0)$ convergent subsequence. The subsequence also converges weakly star to a limit $f$, which is $L^\infty$. Therefore the assumption of Lemma 4.7 in \cite{MR3582114} is verified and $E$ is lower semi-continuous. Then log-$K$-energy is lower semi-continuous, since $D$ and $j$ are continuous. 
\end{proof}


\subsection{Approximation of $J$-twisted cscK cone metrics}\label{Approximation}
In order to construct generalised cone geodesic between $J$-twisted cscK cone metrics, we need the smooth approximation equation \eqref{smooth per equ a} with smooth boundary values. So in this section, we construction a smooth approximation sequence the $J$-twisted cscK cone metric.

\subsubsection{Approximation of the reference metric $\om_\theta$}
We solve smooth $\vphi_{\theta_\eps}$ which solves the following approximation equation,
\begin{align}\label{thetaeps}
\frac{\om^n_{\theta_\eps}}{\om^n_0}=\frac{e^{h_0}}{(|s|^2_h+\eps)^{1-\beta}}.
\end{align}
Direct computation shows that
\begin{align}\label{thetaeps}
Ric(\om_{\theta_\eps})=Ric(\om_0)-i\p\bar\p h_0+(1-\beta)i\p\bar\p\log(|s|^2_h+\eps).
\end{align}
By \eqref{theta}, the right hand side becomes
\begin{align*}
&=Ric(\om_\theta)-(1-\beta)i\p\bar\p\log|s|^2_h+(1-\beta)i\p\bar\p\log(|s|^2_h+\eps)\\
&=\theta-i\p\bar\p\log h+(1-\beta)i\p\bar\p\log(|s|^2_h+\eps).
\end{align*}
Then we use 
\begin{align*}
i\p\bar\p\log(|s|^2_h+\eps)\geq
\frac{|s|^2_h}{|s|^2_h+\eps}i\p\bar\p\log |s|^2_h\geq i\p\bar\p\log h,
\end{align*}
to see
\begin{align}\label{thetaepstheta}
Ric(\om_{\theta_\eps})
\geq\theta.
\end{align}
\subsubsection{Approximation of $P$}
Let $\eta_\eps$ be a smooth sequence approximating the volume ratio $\frac{\om_\vphi^n}{|s|_h^{2\b-2}\om_0^n}$ in $C^{0,\a,\b}$.
Then we solve the following equation to get $\vphi_\eps$,
\begin{align}
\frac{\om^n_{\vphi_\eps}}{\om^n_0}=\frac{\eta_\eps}{(|s|^2_h+\eps)^{1-\beta}}.
\end{align}
While, we solve $P_\eps$ from solving
\begin{align}
&\tri_{\om_{\vphi_\eps}} P_\eps=\Tr_{\om_{\vphi_\eps}}\theta-\gamma.
\end{align}
Since $\vphi_\eps$ has uniform $C^{2,\a,\b}$ bound and $\theta, \gamma$ have uniform $C^{0,\a,\b}$ bound, we obtain uniform $C^{2,\a,\b}$ estimate of $P_\eps$.
\subsubsection{Approximation of $\vphi$}
We approximate $\vphi$ by the smooth sequence $\psi_\eps$ solving,
\begin{align} \label{psieps}
\frac{\om_{\psi_\eps}^n}{\om_0^n}=\frac{e^{P_\eps+h_0}}{(|s|^2_h+\eps)^{1-\beta} }.
\end{align}
Since $P_\eps$ is uniformly bounded, $\psi_\eps$ has uniform $C^\alpha$ bound and converges to $\psi_0$ in $C^{\alpha'}$ for any $\alpha'<\alpha$ as $\eps\rightarrow 0$.
Because both $\omega_{\psi_\eps}^n$ and $\omega_{\vphi_\eps}^n$ converge to $\om_\vphi^n$ in $L^p$-norm, we see that
	\begin{align*}
	\lim_{\eps\rightarrow 0}\psi_\eps=\vphi+constant.
	\end{align*}
Furthermore, the smooth sequence $\psi_\eps$ converges to $\vphi$ smoothly outside the divisor.

We compute the Ricci curvature of $\om_{\psi_\eps}$. By \eqref{psieps},
	\begin{align*}
	Ric(\om_{\psi_\eps})
	=Ric(\om_0)-i\p\bar\p h_0+(1-\beta)i\p\bar\p\log(|s|^2_h+\eps)-i\p\bar\p P_\eps.
	\end{align*}
	Then by \eqref{thetaeps}, the RHS
	\begin{align*}
	=Ric(\om_{\theta_\eps})-i\p\bar\p P_\eps.
	\end{align*}
	Using \eqref{thetaepstheta}, we have
	\begin{align*}
	Ric(\om_{\psi_\eps})\geq T_\eps:=\theta-i\p\bar\p P_\eps.
	\end{align*}
	Furthermore, we could see that the scalar curvature of $\om_{\psi_\eps}$ is 
\begin{align*}
S(\om_{\psi_\eps})=\Tr_{\om_{\psi_\eps}} (T_\eps-i\p\bar\p\log h+(1-\beta)i\p\bar\p\log(|s|^2_h+\eps)).
\end{align*}




\subsection{Convexity along cone geodesics}\label{Convexity along the generalised cone geodesic}
The convexity of the $K$-energy along the $C^{1,1}$-geodesic is proved in \cite{MR3671939,MR3582114} and extended to $C^{1,1,\b}$ cone geodesic in \cite{arxiv:1511.00178}. We adapt the convexity to $C^{1,1,\b}_{\mathbf w}$ generalised cone geodesic in this section. The delicate issue is that we lose regularity along the directions  $|\frac{\p^2\vphi}{\p z^{1}\p t} |_{\Om}$ and $|\frac{\p^2\vphi}{\p t^2} |$, but they could be bounded with proper weights, i.e. $|s|^{\kappa}|\frac{\p^2\vphi}{\p z^{1}\p t} |_{\Om}$ and $|s|^{2\kappa} |\frac{\p^2\vphi}{\p t^2} |$. We observe that that the convexity still survives.
\begin{prop}\label{convexityKenergy}
The log-$K$-energy is continuous and convex along the $C^{1,1,\b}_{\mathbf w}$ cone geodesic. 
\end{prop}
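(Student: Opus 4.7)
The plan is to split the log-$K$-energy as
\begin{align*}
\nu_\b(\vphi)=E_{\om_\theta,\b}(\vphi)+\underline S_\b\cdot D_\b(\vphi)+j_{\chi,\b}(\vphi)-\tfrac{1}{V}\int_M f\,\om^n.
\end{align*}
The last term is a constant, and the functionals $D_\b$ and $j_{\chi,\b}$ have already been shown to be continuous along the $C^{1,1,\b}_{\mathbf w}$ cone geodesic (\lemref{DJjcontinuity}) and convex in the distribution sense (\lemref{Dweakconvex}, \lemref{jweakconvex}). Hence everything reduces to showing that the log-entropy $E_{\om_\theta,\b}$ is continuous and convex along the $C^{1,1,\b}_{\mathbf w}$ cone geodesic.

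For convexity of $E_{\om_\theta,\b}$, I would adapt the Berman--Berndtsson / Chen--Li--P\u aun argument to the cone setting. Given a $C^{1,1,\b}_{\mathbf w}$ cone geodesic $\{\vphi(t)\}$, approximate it by the two-parameter family $\Psi_{\tau,\eps}$ solving the smooth approximation equation \eqref{smooth per equ a} with smooth boundary data $\Psi_{0,\eps}$ built as in Section \ref{Approximation}. Along each smooth $(\tau,\eps)$-geodesic a direct computation (or the Bismut-type formula used in \cite{MR3671939,MR3582114,arxiv:1511.00178}) yields
\begin{align*}
\p_t^2 E_{\om_\theta,\b}(\vphi_{\tau,\eps})=\frac{1}{V}\int_M |\bar\p \dot\vphi_{\tau,\eps}|^2_{\vphi_{\tau,\eps}}\,\om_{\vphi_{\tau,\eps}}^n+R_{\tau,\eps},
\end{align*}
where $R_{\tau,\eps}$ collects contributions from the inhomogeneous right hand side of \eqref{smooth per equ a} and tends to zero as $\tau\to 0$. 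Testing against a non-negative $\eta\in C^\infty_c((0,1))$ and integrating gives a non-negative distributional inequality for the approximants. Passing $\eps\to 0$ first and then $\tau\to 0$, using smooth convergence on compact subsets of $M$ together with the Bedford--Taylor monotone continuity of Monge--Amp\`ere measures, transfers this inequality to the cone geodesic in the distribution sense. The reason all terms remain integrable is the key weighted bound from \thmref{weakgeodesicclosedness}: $|s|^{\kappa}|\p_1\p_t\vphi|_\Om$ and $|s|^{2\kappa}|\p_t^2\vphi|$ are both bounded, while $\om_\vphi^n$ carries the factor $|z^1|^{2\b-2}$, and the constraint $\a\b<1-\b$ gives $2\b-2-2\kappa=-2+2\a\b>-2$, so every integrand appearing in the expansion of $\p_t^2 E$ is locally integrable near $D$.

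For continuity of $E_{\om_\theta,\b}$, combine the lower semi-continuity already proved in \lemref{lsc} with the matching upper semi-continuity obtained from the distributional convexity of the previous step: a lower semi-continuous, convex function on $[0,1]$ whose values at the endpoints are finite is automatically continuous on the interior, and continuity at the endpoints follows from dominated convergence applied to the family $\Psi_{\tau,\eps}$, since the volume ratio $\om_\vphi^n/(|s|^{2\b-2}\om_0^n\,e^{h_0})$ stays uniformly bounded along the approximation in view of the $C^{1,1,\b}_{\mathbf w}$ estimate. Together with the continuity of $D_\b$, $j_{\chi,\b}$ and the convexity of all three pieces, this gives the stated continuity and convexity of $\nu_\b$ along the cone geodesic.

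The main obstacle I anticipate is the rigorous passage to the limit in $\p_t^2 E$ near the divisor, where one loses regularity in the $(z^1,t)$ directions. The saving feature is the exact matching between the singular weight $|s|^{2\kappa}$ in the derivative bounds and the cone volume $|z^1|^{2\b-2}$, so careful bookkeeping of the weighted $C^{1,1,\b}_{\mathbf w}$ norms, together with Bedford--Taylor continuity for the relevant Monge--Amp\`ere type measures, should close the argument without requiring any additional regularity beyond what \thmref{weakgeodesicclosedness} provides.
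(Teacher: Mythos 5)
Your reduction — constant term aside, split $\nu_\b$ into $E_{\om_\theta,\b}$, $\underline S_\b D_\b$, $j_{\chi,\b}$, invoke Lemmas \ref{Dweakconvex}, \ref{jweakconvex}, \ref{DJjcontinuity}, \ref{lsc}, and focus on the entropy — is a sound first step and consistent with the paper. The gap lies in the mechanism you propose for the convexity of $E_{\om_\theta,\b}$. You assert that along the smooth approximation $\Psi_{\tau,\eps}$ solving \eqref{smooth per equ a}, one has $\p_t^2 E(\vphi_{\tau,\eps})=\frac{1}{V}\int_M|\bar\p\dot\vphi_{\tau,\eps}|^2_{\vphi_{\tau,\eps}}\om_{\vphi_{\tau,\eps}}^n + R_{\tau,\eps}$ with $R_{\tau,\eps}\to 0$. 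But the error term is, up to bounded pieces, the pairing of the geodesic defect $(\ddot\vphi - |\p\dot\vphi|^2_\vphi)\om_\vphi^n=\tau\,\Om_{\mathbf b}^{n+1}$ with $\tri_\vphi\log(\om_{\vphi_{\tau,\eps}}^n/\mu)$ — essentially the (unbounded) scalar curvature of the approximate slice metrics — and neither the factor $\tau$ nor the weighted $C^{1,1,\b}_{\mathbf w}$ bounds from Theorem \ref{weakgeodesicclosedness} gives you a uniform bound on that second factor as $\tau\to 0$. The limit geodesic is only $C^{1,1,\b}_{\mathbf w}$, so there is no uniform third-order control, and the product is a genuine $0\times\infty$. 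This is precisely the classical obstruction that makes convexity of the $K$-energy along $C^{1,1}$ geodesics a theorem rather than a limit passage; the weighted integrability observation ($2\b-2-2\kappa>-2$) controls the limiting integrands but says nothing about boundedness of $R_{\tau,\eps}$ along the approximating family.

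The paper does not compute $\p_t^2 E$ at all. After regularizing the geodesic in the product (its Step 1), it solves on each fibre a family of auxiliary complex Monge--Amp\`ere equations, $(-\eps\chi+\tilde\om_\delta+\eps\,i\p\bar\p\vphi_\theta)^n = e^{\vphi_\theta}\om_0^n/(|s|_h^2+\mu)^{1-\b}$ — the Berman--Berndtsson / Chen--Li--P\u aun device — then proves uniform zero-order and Laplacian estimates for $\vphi_\theta$ in the cone setting (this is where the cone-specific work happens: choice of $\mu$ vs.\ $\delta$, the weight $(|s|_h^2+\mu)^{1-\b}$, the background $\om_{\mathbf b_\eps}$ from Section \ref{Geometric conditions on the background metric}), and establishes weak convergence of the volume forms. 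Convexity is then read off from the positivity $-\chi+\frac{\Om_\Psi}{\eps_j}+i\p\bar\p\vphi_{\eps_j}\geq 0$ on the product, wedged against $\Om_\Psi^n$ and combined with $\Om_\Psi^{n+1}=0$, together with $i\p\bar\p\log\bigl(\frac{1}{k}\sum_j e^{\vphi_{\eps_j}}\bigr)\geq i\p\bar\p\vphi_{\eps_j}$ and a Banach--Saks argument. This is a positivity-of-direct-images type argument, not a second-variation computation, and your proposal never reaches it; invoking the ``Bismut-type formula'' as an aside does not substitute for constructing and estimating the auxiliary $\vphi_\theta$. Without that machinery, your passage to the limit does not close.
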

\begin{proof}
It's sufficient to prove the log-$K$-energy is convex. As long as we have convexity, it implies the upper semi-continuity. Then \lemref{lsc} provides the lower semi-continuity. So the log-$K$-energy is continuous. We follows the proof along the spirit of \cite{MR3671939,MR3582114,arxiv:1511.00178}, which contains three steps.
Let $\eps,\delta$ be small constants.

\textbf{Step 1:}  Since the potential $\Psi$ of the cone geodesic $\Om_\Psi$ is H\"older continuous, there exits a smooth approximation $\Om_\delta$ in the interior of the product manifold $R_0\times X\subset\subset \mathfrak X$ (\cite{demaillybook,MR2299485}). We use $\vphi_\delta$ to denote the potential of the restriction of $\Om_\delta$ on each fibre $\{t\}\times X$, i.e. 
\begin{align*}
\Om_\delta\vert_{\{t\}\times X} =\om_0 + i\p\bar\p\vphi_\delta = \om_{D\delta} + i\p\bar\p (\vphi_\delta-\vphi_{D\delta}),\quad
\vphi_{D\delta}=\delta_0 i \p\bar\p (|s|^{2}_{h}+\delta)^\b
\end{align*}
 and $\vphi$ is the restriction of the cone geodesic, respectively. We also denote $$\tilde \om_\delta=C\delta\om_0+\Om_\delta\vert_{\{t\}\times X} \geq 0$$ on $\{t\}\times X$ and the following estimates on each fibre $\{t\}\times X$,
\begin{itemize}
\item $\sup_{\{t\}\times X}|\vphi_\delta-\vphi|_{C^{0}}\rightarrow 0$, as $\delta\rightarrow 0$.
\item $|i\p\bar\p\vphi_\delta|_\om$ is uniformly bounded and then $g_{\vphi_\delta}$ converges to $g_\vphi$ in $L^p$-norm for some $p>1$.
\end{itemize}

\textbf{Step 2:}  On each fibre $\{t\}\times X$, $C_1(K_X+D)+\frac{\tilde\om_\delta}{\eps}$ is K\"ahler. We solve the Monge-Amp\`ere equation of $\vphi_\theta$ which depends on the parameters $\theta=(\eps,\delta,\mu)$,
\begin{align*}
(-\chi+\frac{\tilde\om_\delta}{\eps}+i\p\bar\p \vphi_\theta)^n=\eps^{-n} e^{\vphi_\theta}\frac{\om_0^n}{(|s|_h^2+\mu)^{1-\b}}.
\end{align*}
The parameter $\mu$ depends on the parameter $\delta$ and will be determined later .
The equation is equivalent to
\begin{align}\label{fibre approximation equation}
(-\eps \chi+\tilde\om_\delta+\eps i\p\bar\p \vphi_\theta)^n=e^{\vphi_\theta}\frac{\om_0^n}{(|s|_h^2+\mu)^{1-\b}}
\end{align}
and the solution satisfies the following properties on each fibre $\{t\}\times X$.
\begin{enumerate}
\item The form $-\chi+\frac{\tilde\om_\delta}{\eps}+i\p\bar\p \vphi_\theta>0$, and it could be extended to the product manifold $R_0\times X$, according to Theorem 4.1 in \cite{MR3582114}. That is 
\begin{align}\label{globalpositivity}
-\chi+\frac{\tilde\Om_\delta}{\eps}+i\p\bar\p \vphi_{\theta}\geq 0.
\end{align}

\item There exists a constant $C$ independent of $\eps,\delta$ such that 
\begin{align}\label{zeroupper}
\sup_{\mathfrak X}\vphi_{\theta}\leq C.
\end{align}
\begin{proof}
At the maximum point $q$ of $\vphi_\theta$, $i\p\bar\p \vphi_\theta(q)\leq 0$. By \eqref{fibre approximation equation},
we have $$e^{\vphi_\theta}\leq \frac{-\eps \chi+\tilde\om_\delta}{\om^n (|s|_h^s+\mu)^{\b-1}}(q).$$ Since the numerator is cone metric, i.e. $O((|s|_h^s+\delta)^{\b-1})$, we could choose $\mu$ depending on $\delta$ such that the fraction is bounded.
\end{proof}

\item There exists a constant $C$ independent of $\eps,\delta$ such that 
\begin{align}\label{zerolower}
 -\eps\inf_{\mathfrak X}\vphi_{\theta}\leq C.
\end{align}
\begin{proof}
The Monge-Amp\`ere equation \eqref{fibre approximation equation} also reads 
\begin{align}\label{fibre approximation equation re}
(\om_{\eps,\delta}+i\p\bar\p \tilde \vphi_\theta)^n=e^{\frac{\tilde \vphi_\theta-\vphi_\delta+\vphi_{D\delta}}{\eps}}\frac{\om_0^n}{(|s|_h^2+\mu)^{1-\b}}.
\end{align}
with background K\"ahler cone metric $\om_{\eps,\delta}=-\eps \chi+C\delta \om_0+\om_{D\delta}$ and $\tilde \vphi_\theta=\vphi_\delta-\vphi_{D\delta}+\eps \vphi_\theta$.
At the minimum point of $\tilde \vphi_\theta$, $i\p\bar\p \tilde\vphi_\theta\geq 0$. By \eqref{fibre approximation equation re},
we have $$e^{\frac{\tilde \vphi_\theta(q)-\vphi_\delta(q)+\vphi_{D\delta}(q)}{\eps}}\geq \frac{\om^n_{\eps,\delta}}{\om_0^n( |s|_h^s+\mu)^{\b-1}}(q).$$ Since  $\om^n_{\eps,\delta}$ is $O((|s|_h^s+\delta)^{\b-1})$, the lower bound of the right hand side is strictly positive, denoted by $C$. Thus, $$\eps \vphi_\theta\geq -\vphi_\delta+\vphi_{D\delta}+\eps\log C+\vphi_\delta(q)-\vphi_{D\delta}(q),$$ which is bounded below.
\end{proof}

\item The solution $\vphi_{\theta}(t)$ is equicontinuous. From equicontinuity of $\vphi_{\theta}=\vphi_{\eps,\delta,\mu(\delta)}$, when $\delta\rightarrow 0$, we have $\vphi_{\eps,\delta,\mu(\delta)}$ converges to a limit $\vphi_\eps$ in $C^0$-norm.
\begin{proof}
Let $\tilde\om_{\vphi_\theta(t)}=-\eps \chi+\tilde\om_\delta(t)+\eps i\p\bar\p \vphi_\theta(t)$. Then we compare two solutions at $t_1,t_2$,
\begin{align*}
(\om_{\eps,\delta}+i\p\bar\p (\tilde \vphi_\theta(t_1)-\tilde \vphi_\theta(t_2)))^n=e^{\frac{(\tilde \vphi_\theta(t_1)-\tilde \vphi_\theta(t_2))-(\vphi_\delta(t_1)-\vphi_\delta(t_2))}{\eps}}\tilde\om^n_{\vphi_\theta(t_2)}.
\end{align*}
Since $\p_t \vphi$ is bounded, we have $\p_t \vphi_\delta$ is uniformly bounded. The maximum principle implies the equicontinuity of $\vphi_{\theta}(t)$ on $t$. Similarly, using $|\p_z\vphi|_{\om_D}$ is uniformly bounded. We obtain the conclusion.
\end{proof}

\item There exists a constant $C$ independent of $\eps,\delta$ such that 
\begin{align*}
\eps\sup_{\mathfrak X}|i\p\bar{\p}\vphi_{\theta}|_{\om_{D\delta}}\leq C.
\end{align*}
\begin{proof}
It is similar to the proof of Proposition \ref{Interior Laplacian estimate}. We use $\om_{\mathbf b_\eps}$ in Section \ref{Geometric conditions on the background metric}, choose background K\"ahler cone metric $$\om=-\eps \chi+C\delta \om_0+\om_{\mathbf b_\eps}.$$ We denote \begin{align*}
\tilde\om=\om+i\p\bar\p \tilde \vphi_\theta,\quad \tilde \vphi_\theta= \vphi_\delta+\eps \vphi_\theta-\vphi_{\mathbf b_\eps}
\end{align*} and rewrite \eqref{fibre approximation equation re} as
\begin{align}\label{fibre approximation equation re re}
\tilde\om^n=e^{\frac{\tilde \vphi_\theta-\vphi_\delta+\vphi_{\mathbf b_\eps}}{\eps}+F}\om^n.
\end{align}
In which, we denote \begin{align*}F&=\log\frac{\om_0^n(|s|_h^2+\mu)^{\b-1}}{\om^n}, \\
 \tilde F&= F+\frac{\tilde \vphi_\theta-\vphi_\delta+\vphi_{\mathbf b_\eps}}{\eps}.
\end{align*}
Yau's computation shows that
\begin{align*}
\tilde\tri \log\Tr_{\om}\tilde\om\geq \frac{- g^{i\bar j}R_{i\bar j}(\tilde\om)+\tilde g^{k\bar l}{R^{i\bar j}}_{k\bar l}(\om)\tilde g _{i\bar j}}{\Tr_{{\om}}\tilde\om}.
\end{align*}
Using \eqref{fibre approximation equation re re}, we have $Ric(\tilde \om)=Ric(\om)-i\p\bar\p\tilde F$ and then
\begin{align*}
RHS.\geq \frac{\tri\tilde F- S(\om)+\tilde g^{k\bar l}{R^{i\bar j}}_{k\bar l}(\om)\tilde g _{i\bar j}}{\Tr_{\om}\tilde\om}.
\end{align*}
Then the last two terms are controlled by the same reason of \eqref{RHS2ndinequality} by adding the extra weight $\tri_{\tilde\om}\Phi$,
\begin{align*}
RHS.\geq \frac{\tri_{\om}\tilde F}{\Tr_{\om}\tilde\om} -C\Tr_{\tilde\om}\om-C\tri_{\tilde\om}\Phi.
\end{align*}
Using the computation in \cite{MR3488129}, 
\begin{align*}
i\p\bar\p F\geq - C (\om_{\mathbf b_\eps}+i\p\bar \p\Phi)\text{ and } i\p\bar\p\Phi+C\om_0>0,
\end{align*} we have 
\begin{align*}
\tri F&\geq -\Tr_\om[C\om_{\mathbf b_\eps} + C(i\p\bar\p\Phi+C\om_0)]+C^2\Tr_\om\om_0\\
&\geq- C \Tr_{\om}\tilde\om \Tr_{\tilde\om}{(i\p\bar\p\Phi+C\om_0)}- C .
\end{align*}Meanwhile, $\tri\frac{\tilde \vphi_\theta-\vphi_\delta+\vphi_{\mathbf b_\eps}}{\eps}
=\tri\vphi_\theta$.
Using $\Tr_{\tilde\om_{\vphi_\theta}}\om_{0}\leq \Tr_{\tilde\om_{\vphi_\theta}}\om_{\eps,\delta}$ and putting the inequalities above together, we have
\begin{align*}
RHS.\geq \frac{-C}{\Tr_{\om}\tilde\om} -C\Tr_{\tilde\om}\om-C\tilde\tri\Phi+\frac{\tri\vphi_\theta}{ \Tr_{\om}\tilde\om}.
\end{align*}
Assume that $\Tr_{\om}\tilde\om\geq 1$, otherwise we are done. So we have
\begin{align*}
\tilde\tri \log\Tr_\om\tilde\om\geq -C -C\Tr_{\tilde\om}\om-C\tilde\tri\Phi+\frac{\tri\vphi_\theta}{ \Tr_{\om}\tilde\om}.
\end{align*}
Then use the auxiliary function $Z=-(C+1)\tilde\vphi_\theta+C\Phi$ to cancel the middle terms. Since $\tilde\tri\tilde\vphi_\theta=n-\Tr_{\tilde\om}\om$, we have
\begin{align*}
\tilde\tri (\log\Tr_{\om}\tilde\om+Z)\geq  -C(n+1)-C+\frac{\tri\vphi_\theta}{\Tr_{\om}\tilde\om}.
\end{align*}
At the maximum point $p$ of $\log\Tr_{\om}\tilde\om+Z$, it holds
\begin{align*}
\tri\vphi_\theta\leq C\Tr_{\om}\tilde\om .
\end{align*}
Now we use
 \begin{align*}
 \tri\vphi_\theta=\tri\frac{\tilde \vphi_\theta-\vphi_\delta+\vphi_{\mathbf b_\eps}}{\eps}
 \geq\frac{\tri\tilde\vphi_\theta-C}{\eps}
 \end{align*} and $\Tr_{\om}\tilde\om=n+\tri\tilde\vphi_\theta$. Direct computation shows that 
\begin{align*}
\tri\tilde\vphi_\theta(p)\leq \frac{C}{1-\eps C}
\end{align*}
is bounded, as long as $\eps\leq (2C)^{-1}$. At arbitrary point $z$, we have the estimate of $\log\Tr_{\om}\tilde\om(z)$,
\begin{align*}
\log\Tr_{\om}\tilde\om(z)\leq\log\Tr_{\om}\tilde\om(p)+Z(p)-Z(z).
\end{align*}
According to the zero estimates \eqref{zeroupper} and \eqref{zerolower}, the auxiliary function $Z$ is bounded.
Therefore the Laplacian estimate follows.

\end{proof}

\item

When $\eps$ is fixed, recall that $\vphi_\eps$ is the limit of $\vphi_{\theta}$, as $\delta\rightarrow 0$, due to the equicontinuity of $\vphi_{\theta}$. With the help of the zero estimate and the Laplacian estimate of $\eps\vphi_\theta$, together with the corresponding estimates of $\vphi_\delta$, the limit $\vphi_\eps$ also satisfies the limit of \eqref{fibre approximation equation}, as $\delta\rightarrow 0$,
\begin{align}\label{fibre approximation equation limit}
(-\eps \chi+\om_0+i\p\bar\p (\vphi+\eps \vphi_\eps))^n=e^{\frac{\eps\vphi_\eps}{\eps}}\cdot\frac{\om_0^n}{|s|_h^{2-2\b}}.
\end{align}
Moreover, from \eqref{globalpositivity} we get
\begin{align}\label{globalpositivitylimit}
-\chi+\frac{\tilde\Om_\Psi}{\eps}+i\p\bar\p \vphi_{\eps}\geq 0\text{ in }R_0\times X.
\end{align}
Furthermore, we take $\eps\rightarrow 0$. Then the potential $\eps\vphi_\eps$ converges to a limit $\phi$ in $C^{1,\a,\b}$-norm. The volume form $(-\eps \chi+\om_0+i\p\bar\p (\vphi+\eps \vphi_\eps))^n$ converges to $(\om_\vphi+i\p\bar\p\phi)^n$ in $L^p$-norm for some $p$. Moreover, from \eqref{fibre approximation equation limit}, we see that $\phi\leq 0$.

\item The volume form $e^{\vphi_\eps}\frac{\om_0^n}{|s|_h^{2-2\b}}\rightarrow \om_{\vphi}^n$ weakly in $L^p$, as $\eps\rightarrow 0$.
\begin{proof}
According to the convergence of the volume form $(-\eps \chi+\om_0+i\p\bar\p (\vphi+\eps \vphi_\eps))^n$ above, it is sufficient to prove $\phi=0$. So we consider the domain when $\phi<-a$ for small constant $a$. According to the comparison theorem (Theorem 3 in \cite{MR2299485}), 
\begin{align*}
\int_{\phi<-a}(\om_0+i\p\bar\p\vphi)^n\leq \int_{\phi<-a}(\om_0+i\p\bar\p(\vphi+\phi))^n
\end{align*}
which has to be zero, from the approximation equation \eqref{fibre approximation equation limit}. Since $(\om_0+i\p\bar\p\vphi)^n$ is integrable, we could let $a\rightarrow 0$ and obtain $(\om_\vphi+i\p\bar\p\phi)^n=\om_{\vphi}^n$ on the closure of the domain $\phi<0$. Outside this domain, the identity is clear, since $\phi=0$. But, since both $(\om_\vphi+i\p\bar\p\phi)^n$ and $\om_{\vphi}^n$ are $L^p$, the uniqueness theorem in \cite{MR2425147} infers that $\phi$ has to vanish. Thus the proof of the convergence of the approximation volume form is finished.
\end{proof}

\end{enumerate}

\textbf{Step 3:}  
Choose three points $t_p, p=1,2,3$ on the geodesic.
With the fourth property, the Banach-Saks theorem implies that the convex combination $\frac{1}{k}\sum_{j=1}^k e^{\vphi_{\eps_j}(t_p)}\rightarrow e^{\vphi(t_p)}$ in $L^1(\{t_p\}\times X)$, as $k\rightarrow\infty$. We solve a K\"ahler metric $\om_k(t)$ from $\om_k^n(t)=\frac{1}{k}\sum_{j=1}^ke^{\vphi_{\eps_j}(t)} \om^n$ and denote the approximation $K$-energy by
\begin{align*}
\nu_\b^k(t)&=E^k_\b(\vphi)
+\ul{S}_\b\cdot D_{\b}(\vphi)+j_{\b}(\vphi)
\end{align*}
with 
\begin{align*}
E_\b(\vphi)&=\frac{1}{V}\int_M\log\frac{\om^n_k}{\om_0^n|s|_h^{2-2\b}}\om_\vphi^{n}.
\end{align*}

Thanks to \lemref{DJjcontinuity}, the approximation entropy is continuous.
So the approximation log-$K$-energy is continuous on $[0,1]$, by \lemref{DJjcontinuity}.
Noticing that the volume form of the cone geodesic is integrable, since the order along singular direction is $2\b-2-2\kappa=-2+\a\b$. Direct computation shows that \begin{align*}
i\p\bar\p\log\frac{1}{k}\sum_{j=1}^k e^{\vphi_{\eps_j}}\geq i\p\bar\p\vphi_{\eps_j}.
\end{align*}
From \eqref{globalpositivitylimit}, we have $-\chi+\frac{\Om_\Psi}{\eps_j}+i\p\bar\p \vphi_{\eps_j}\geq 0$. 
So using the geodesic equation $\Om_\Psi^{n+1}=0$, we obtain that in the distribution sense \begin{align*}
i\p\bar\p\log\frac{1}{k}\sum_{j=1}^k e^{\vphi_{\eps_j}}\wedge \Om_\Psi^n\geq \chi\wedge \Om_\Psi^n
\end{align*}
Then the approximation log-$K$-energy is convex in the distribution sense along the cone geodesic. Therefore, after taking limit, the log-$K$-energy is convex along $\vphi(t)$.

\end{proof}

Combining the convexity of the log-$J$-functional (Proposition \ref{Jconvex}) and the convexity of the log-$K$-energy (Proposition \ref{convexityKenergy}), we have proved the convexity of the log-$\mathcal E$-energy.
\begin{prop}\label{Econvex}
	Suppose $\vphi(t)$ is a $C^{1,1,\b}_{\mathbf w}$ cone geodesic. The log-$\mathcal E$-energy is strictly convex along the cone geodesic and the $J$-twisted cscK cone metric is unique.
\end{prop}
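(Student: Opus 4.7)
The plan is to obtain strict convexity as an immediate consequence of the two convexity results already proved in this section, and then to deduce uniqueness from strict convexity by the standard critical-point argument. Along the $J$-twisted path the parameter satisfies $t<1$, so in $\mathcal E_\b = \nu_\b + (1-t) J_\b$ the coefficient $1-t$ is strictly positive. \propref{convexityKenergy} gives distributional convexity of $\nu_\b$ along any $C^{1,1,\b}_{\mathbf w}$ cone geodesic, and \propref{Jconvex} gives strict convexity (in the same sense) of $J_\b$ along any nonconstant geodesic; the sum is therefore strictly convex along every nonconstant $C^{1,1,\b}_{\mathbf w}$ cone geodesic.

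For uniqueness, suppose $\vphi_0,\vphi_1$ are two $J$-twisted cscK cone metrics at the same value of $t$. Both lie in $C^{3,\a,\b}_{\bf w}$ by \corref{dmetric}, so \thmref{weakgeodesicclosedness} supplies a $C^{1,1,\b}_{\mathbf w}$ cone geodesic $\vphi(s)$, $s\in[0,1]$, joining them. Set $f(s):=\mathcal E(\vphi(s))$. Each $\vphi_i$ satisfies the system \eqref{Jcsckcone}, which is precisely the Euler--Lagrange equation of $\mathcal E$, so testing the first variation of $\mathcal E$ at $s=0$ and $s=1$ against the geodesic tangent yields $f'(0^+) = f'(1^-) = 0$. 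Strict convexity forces $f'(1^-) > f'(0^+)$ unless the geodesic is constant, and hence $\vphi_0 = \vphi_1$.

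The main obstacle is to make rigorous the values of $f'(0^+)$ and $f'(1^-)$ when the geodesic has only $C^{1,1,\b}_{\mathbf w}$ regularity and the $t$-direction second derivatives are controlled only with the weights $|s|^\kappa, |s|^{2\kappa}$: the formal first-variation integrand $\vphi'(s)\cdot\bigl(\underline S_\b - S(\om_{\vphi(s)}) + (1-t)(\om_0^n/\om_{\vphi(s)}^n - 1)\bigr)\om_{\vphi(s)}^n$ contains terms of borderline integrability. I would proceed exactly as in the proof of \propref{convexityKenergy}: approximate the endpoint $J$-twisted cscK potentials by the smooth family $\psi_\eps$ of \secref{Approximation}, approximate the cone geodesic by smooth solutions of the perturbation \eqref{smooth per equ a}, carry out the variational identities at the smooth level, and pass to the limit via the uniform estimates of \thmref{weakgeodesicclosedness} together with Bedford--Taylor continuity of Monge--Amp\`ere masses and dominated convergence, in the same spirit as \lemref{Dweakconvex}, \lemref{jweakconvex} and \lemref{Jweakconvex}. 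The H\"older constraint $\a\b < 1-\b$ is what ultimately guarantees that the relevant integrands remain $L^1$ once the singular weights are absorbed.
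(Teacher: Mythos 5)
Your argument follows the same route as the paper: the proposition is presented there as an immediate consequence of \propref{convexityKenergy} and \propref{Jconvex} (with $1-t>0$), followed by the standard critical-point deduction of uniqueness via \corref{dmetric} and \thmref{weakgeodesicclosedness}. Your additional discussion of how to justify $f'(0^+)=f'(1^-)=0$ under the limited $C^{1,1,\b}_{\mathbf w}$ regularity by smoothing both endpoints and the geodesic, then passing to the limit as in \lemref{Dweakconvex}--\lemref{Jweakconvex}, is exactly the mechanism implicit in the paper's preceding lemmas, so the proposal is correct and consistent with the paper's intent.
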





\section{Regularity of cscK cone metrics}\label{Asymptotic behaviours}
We prove the regularity of cscK cone metrics in this section. The main theorem is \thmref{Higher order estimates}, as well as, Proposition \ref{lem:goodmetric} and Proposition \ref{lem:goodtangent}. 
\subsection{H\"older spaces $C_{\mathbf {pw}}^{3,\a,\b}$ and $C_{\mathbf {w}}^{3,\a,\b}$}\label{3rdHolderspaces}
Before we prove the regularity theorem, we introduce new function spaces, which are motivated from the regularity of cscK cone metrics.

\begin{defn}\label{cpw3ab}
	A function $\vphi$ belongs to $C_{\mathbf {pw}}^{3,\a,\b}$ with the H\"older exponent $\alpha$ satisfying	
	$
	\a\b<1-\b,
	$
	if it holds
	\begin{itemize}
		\item $\vphi\in C^{2,\a,\b}$;
		\item the first derivatives of the corresponding metric $g$ satisfy for any $2\leq i,k,l\leq n$, the following items are $C^{0,\a,\b}$,	
		\begin{align*}
		\frac{\p g_{ k\bar l}}{\p z^i},\quad
		|z^1|^{1-\b}\frac{\p g_{k\bar{1}}}{\p z^{ i}},\quad
		|z^1|^{1-\b}\frac{\p g_{ 1\bar l}}{\p z^i},\quad
		|z^1|^{2-2\b}\frac{\p g_{1\bar 1}}{\p z^i}.
		\end{align*}
		
	\end{itemize}
	When the coordinate chart does not intersect the divisor, all definitions are in the classical way.
\end{defn}

We define another space $C_{\mathbf w}^{3,\a,\b}$, with information along the singular directions.

\begin{defn}\label{cw3ab}
	A function $\vphi$ is in $C_{\mathbf w}^{3,\a,\b}$ with the H\"older exponent $\alpha$ satisfying $\a\b<1-\b,$
	if $\vphi\in C_{\mathbf {pw}}^{3,\a,\b}$, and additionally in the normal cone chart, the following terms are $O(|z^1|^{\a\b-\b})$,	
	\begin{align*}
	|z^1|^{1-\b}\frac{\p g_{ k\bar l}}{\p z^1},\qquad
	|z^1|^{2-2\b}\frac{\p g_{k\bar{1}}}{\p z^{  1}},\quad
	|z^1|^{2-2\b}\frac{\p g_{1\bar l}}{\p z^1},\qquad
	|z^1|^{3-3\b}\frac{\p g_{1\bar 1}}{\p z^1} .
	\end{align*}

\end{defn}

The spaces above could be generalised to the product manifold $\mathfrak X$, adding the now direction $z^{n+1}$ as the regular direction $2\leq j\leq n$, i.e. changing $n$ to $n+1$ in the definition. 
We list the terms $|\nabla_i^{cone}(g_{\Psi_{\mathbf b}})_{a\bar b}|_{\Om_{cone}}$ computed in the next section.
\begin{cor}\label{psibcondition}
	Suppose $\Psi\in C_{\mathbf {pw}}^{3,\a,\b}$. Then, for all $2\leq i\leq n+1$ and $1\leq a,b\leq n+1$, it holds $|\nabla_i^{cone}(g_{\Psi})_{a\bar b}|_{\Om_{cone}}$ are in $C^{0,\a,\b}$.	
\end{cor}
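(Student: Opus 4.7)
The plan is to reduce the assertion to a direct verification against the coordinate conditions in Definition \ref{cpw3ab}, by unpacking both the cone covariant derivative $\nabla_i^{cone}$ and the cone norm $|\cdot|_{\Om_{cone}}$ componentwise, so that the four weighted components turn out to be exactly the four items in that definition.

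First I would note that the model cone metric $\Om_{cone}$ on $\mathfrak X$, i.e.\ \eqref{flat cone} extended by the cylinder factor $idz^{n+1}\wedge d\bar z^{n+1}$, has coefficients depending only on $|z^1|$. Therefore for every regular index $2\leq i\leq n+1$ one has $\p_i(\Om_{cone})_{a\bar b}\equiv 0$, the cone Christoffel symbols $\Gamma^c_{ia}=g_{cone}^{c\bar d}\p_i g_{cone,a\bar d}$ vanish, and the cone covariant derivative in such a direction reduces to the plain partial derivative on components:
\begin{align*}
\nabla_i^{cone}(g_\Psi)_{a\bar b}=\p_i(g_\Psi)_{a\bar b},\qquad 2\leq i\leq n+1.
\end{align*}

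Next I would expand $|\cdot|_{\Om_{cone}}$ in coordinates. Since $\Om_{cone}$ is diagonal with $g_{cone}^{1\bar 1}=\b^{-2}|z^1|^{2-2\b}$ and $g_{cone}^{j\bar j}=1$ for $j\geq 2$, the squared cone-norm of the $(0,2)$-tensor $\p_i g_\Psi$ reads
\begin{align*}
|\p_i g_\Psi|^2_{\Om_{cone}}=\sum_{a,b}g_{cone}^{a\bar a}g_{cone}^{b\bar b}\,|\p_i(g_\Psi)_{a\bar b}|^2.
\end{align*}
Splitting into the four cases $(a,b)=(1,1)$, $(1,l)$ with $l\geq 2$, $(k,1)$ with $k\geq 2$, and $(k,l)$ with $k,l\geq 2$, membership of this expression in $C^{0,\a,\b}$ is equivalent to the $C^{0,\a,\b}$ regularity of the four families
\begin{align*}
|z^1|^{2-2\b}\p_i(g_\Psi)_{1\bar 1},\quad |z^1|^{1-\b}\p_i(g_\Psi)_{1\bar l},\quad |z^1|^{1-\b}\p_i(g_\Psi)_{k\bar 1},\quad \p_i(g_\Psi)_{k\bar l}.
\end{align*}

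These are precisely the four items in Definition \ref{cpw3ab}, extended to $\mathfrak X$ by placing $z^{n+1}$ on the same footing as the regular coordinates $z^2,\dots,z^n$, so they are exactly what the hypothesis $\Psi\in C_{\mathbf{pw}}^{3,\a,\b}$ provides. I do not foresee any genuine obstacle: the argument is essentially a matter of matching definitions. The only conceptual step worth isolating is the vanishing of $\Gamma^c_{ia}$ for $i\geq 2$, which is what prevents an extra weight from attaching to the index $i$ and which makes the regularity profile of the definition transfer directly to the cone-normed covariant derivative.
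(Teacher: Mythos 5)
Your proposal is correct and follows essentially the same route as the paper: for the regular directions $2\leq i\leq n+1$ the cone Christoffel symbols vanish (the only nonzero one being $\Gamma^1_{11}$), so $\nabla_i^{cone}(g_\Psi)_{a\bar b}=\p_i(g_\Psi)_{a\bar b}$, and the weights produced by $|\cdot|_{\Om_{cone}}$ are exactly the four weighted quantities required to be $C^{0,\a,\b}$ in Definition \ref{cpw3ab}. Your write-up actually spells out the norm computation more explicitly than the paper, which simply records the formulas and refers back to the component regularity.
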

\begin{proof}
	When $2\leq i,k,l\leq n+1$, we have the formulas
	\begin{align*}
	\left\{
	\begin{array}{ll}
	&\nabla_i^{cone}(g_{\Psi})_{k\bar l}
	=\frac{\p (g_{\Psi})_{ k\bar l}}{\p z^i},\quad
	\nabla_i^{cone}(g_{\Psi})_{1\bar l}=\frac{\p (g_{\Psi})_{ 1\bar l}}{\p z^i},\quad\\
	&\nabla_i^{cone}(g_{\Psi})_{1\bar 1}=\frac{\p (g_{\Psi})_{1\bar 1}}{\p z^i},\quad \nabla_i^{cone}(g_{\Psi})_{k\bar 1}
	=\frac{\p (g_{\Psi})_{ k\bar 1}}{\p z^i}.
	\end{array}
	\right.
	\end{align*}
	Then the conclusions follow from Corollary \ref{dmetric}.
\end{proof}

\begin{cor}\label{pwpsibcondition}
	Suppose $\Psi\in C_{\mathbf w}^{3,\a,\b}$. Then for all $2\leq k,l\leq n+1$, 
	the following terms are $O(|\rho_0|^{\a-1})$,	
	\begin{align*}
	\left\{
	\begin{array}{ll}
	&|\nabla_1^{cone}(g_{\Psi})_{k\bar l}|_{\Om_{cone}},\quad
	|\nabla_1^{cone}(g_{\Psi})_{k\bar 1}|_{\Om_{cone}},\\
	&|\nabla_1^{cone}(g_{\Psi})_{1\bar l}|_{\Om_{cone}},\quad
	|\nabla_1^{cone}(g_{\Psi})_{1\bar 1}|_{\Om_{cone}}.
	\end{array}
	\right.
	\end{align*}
\end{cor}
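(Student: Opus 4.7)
The plan is to unravel the cone-covariant derivative $\nabla_1^{cone}$ explicitly in the normal cone chart and then read off each of the four estimates by matching against Definition~\ref{cw3ab}. The cone K\"ahler metric has components $(g_{cone})_{1\bar 1}=\b^2|z^1|^{2\b-2}$ and $(g_{cone})_{j\bar k}=\delta_{jk}$ for $j,k\geq 2$, so a direct computation gives $(\Gamma^{cone})^1_{11}=(\b-1)/z^1$ as the only nonvanishing holomorphic Christoffel symbol. Consequently
$$\nabla_1^{cone}(g_\Psi)_{a\bar b}=\p_1(g_\Psi)_{a\bar b}-\delta_{a1}\tfrac{\b-1}{z^1}(g_\Psi)_{1\bar b}.$$
Moreover, the norm $|\cdot|_{\Om_{cone}}$ contributes a factor $\b^{-1}|z^1|^{1-\b}$ for every holomorphic index equal to $1$ (including the derivative direction) and an analogous factor for every antiholomorphic index equal to $\bar 1$. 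In each of the four cases these factors exactly reproduce the weights appearing in the four lines of Definition~\ref{cw3ab}, so that $|\rho_0|^{\a-1}=|z^1|^{\b(\a-1)}$ is the correct target growth rate.

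For the flat-index cases $(a,b)=(k,l)$ and $(k,1)$ with $2\leq k,l\leq n+1$, the Christoffel correction vanishes. Then $|\nabla_1^{cone}(g_\Psi)_{a\bar b}|_{\Om_{cone}}$ reduces to one of the weighted quantities $|z^1|^{1-\b}|\p_1 g_{k\bar l}|$ or $|z^1|^{2-2\b}|\p_1 g_{k\bar 1}|$, and the hypothesis $\Psi\in C_{\mathbf w}^{3,\a,\b}$ gives the $O(|z^1|^{\a\b-\b})$ bound directly.

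For the singular-index cases $(a,b)=(1,l)$ and $(1,1)$, the triangle inequality splits the estimate into a $\p_1$-term that is again controlled by Definition~\ref{cw3ab}, together with a Christoffel correction of the form $|z^1|^{-1}|(g_\Psi)_{1\bar b}|$. To handle the correction I would invoke the pointwise asymptotic behaviour of the metric components that comes from the $C^{2,\a,\b}$ part of the regularity together with the refined vanishing at the divisor encoded in $C_{\mathbf w}^{3,\a,\b}$ (essentially \corref{dmetric}), yielding $|(g_\Psi)_{1\bar l}|=O(|z^1|^{\a\b+\b-1})$ for $l\geq 2$ and $|(g_\Psi)_{1\bar 1}|=O(|z^1|^{\a\b+2\b-2})$. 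Multiplying these by the weights $|z^1|^{2-2\b}$ or $|z^1|^{3-3\b}$ respectively gives exactly $O(|z^1|^{\a\b-\b})$, as required.

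I expect the hard part to be the cancellation in the $(1,1)$ case: the two contributions $\p_1(g_\Psi)_{1\bar 1}$ and $\Gamma^1_{11}(g_\Psi)_{1\bar 1}$ individually blow up like $|z^1|^{2\b-3}$ at the divisor, so the sharp combined bound requires a genuine subtraction between them. This cancellation is forced by the precise way the weighted $\p_1$-bound is formulated in Definition~\ref{cw3ab}, but verifying it rigorously for general $\Psi\in C_{\mathbf w}^{3,\a,\b}$ relies on the asymptotic analysis developed in Section~\ref{Asymptotic behaviours}.
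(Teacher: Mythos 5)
Your overall route coincides with the paper's: the stated proof of Corollary \ref{pwpsibcondition} is a one-line pointer to the computation in Corollary \ref{dmetric}, which is exactly what you reproduce — the only nonvanishing Christoffel symbol of $\Om_{cone}$ is $\Gamma^1_{11}=(\b-1)/z^1$, one expands $\nabla_1^{cone}$ accordingly, matches the cone-norm weights against Definition \ref{cw3ab}, and controls the Christoffel correction by the decay of the off-diagonal metric components at the divisor. The cases $(k,\bar l)$, $(k,\bar 1)$ and $(1,\bar l)$ are handled correctly in your write-up.

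The gap is in the $(1,\bar 1)$ case. The bound $|(g_\Psi)_{1\bar 1}|=O(|z^1|^{\a\b+2\b-2})$ that you invoke is false: for any K\"ahler cone metric one has $(g_\Psi)_{1\bar 1}\sim \b^2|z^1|^{2\b-2}$, which is strictly larger, so with this term alone the correction $|z^1|^{3-3\b}\cdot|z^1|^{-1}|(g_\Psi)_{1\bar 1}|$ is of size $|z^1|^{-\b}$ and misses the target $O(|z^1|^{\a\b-\b})$ by a factor $|z^1|^{\a\b}$. The cancellation you anticipate in your last paragraph is real, but it does not require the asymptotic analysis of Section \ref{Asymptotic behaviours} (which concerns cscK metrics, whereas this corollary must hold for an arbitrary $\Psi\in C_{\mathbf w}^{3,\a,\b}$, e.g.\ the background potential $\Psi_1$ built from the boundary data); it is the purely algebraic identity $\nabla_1^{cone}(g_{cone})_{1\bar 1}=0$. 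Using it,
$$\nabla_1^{cone}(g_\Psi)_{1\bar 1}=\p_1\bigl((g_\Psi)_{1\bar1}-(g_{cone})_{1\bar1}\bigr)+\frac{1-\b}{z^1}\bigl((g_\Psi)_{1\bar1}-(g_{cone})_{1\bar1}\bigr),$$
so the Christoffel correction only sees the deviation from the model; its weighted version $|z^1|^{2-2\b}\bigl((g_\Psi)_{1\bar1}-(g_{cone})_{1\bar1}\bigr)$ is $O(\rho_0^{\a})$ by the $C^{0,\a,\b}$-regularity of the metric coefficients together with their agreement with $\Om_{cone}$ on $D$, and multiplying by $|z^1|^{3-3\b}\cdot|z^1|^{-1}$ gives exactly $O(\rho_0^{\a-1})$. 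This subtraction of the flat cone model is precisely how the last line of \eqref{bgdmetricbad} is established in Corollary \ref{dmetric}, and it is the step your argument is missing.
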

\begin{proof}
	The computation is direct, as Corollary \ref{dmetric}.
\end{proof}

\subsection{Christoffel symbols of cscK cone metrics}\label{Christoffel symbols of cscK cone metric}
We apply Proposition \ref{lem:goodmetric} and Proposition \ref{lem:goodtangent} to  the following asymptotic behaviours of the cscK cone metric $g$ and its Christoffel symbols for further application.


We are given a point $z$ outside the divisor and in the cone chart we denote $$\rho_0=|z^1|^\b.$$

\begin{cor}\label{dmetric}
	Let $\varphi$ be a $C^{2,\a,\b}\cap\mathcal H_{\b} $ solution of \eqref{eqn:main} of the cscK cone metric (or $J$-twisted cscK cone metric). Then $\vphi\in C_{\mathbf w}^{3,\a,\b}\cap\mathcal H_{\b} $ i.e. the first derivatives of the corresponding metric $g$ satisfy for any $2\leq i,k,l\leq n$,	
	\begin{align}\label{bgdmetric}
	\left\{
	\begin{aligned}
	&\frac{\p g_{ k\bar l}}{\p z^i}=C^{0,\a,\b},\quad
	|z^1|^{1-\b}\frac{\p g_{ k\bar 1}}{\p z^i}=C^{0,\a,\b},\quad\\
	&|z^1|^{1-\b}\frac{\p g_{ 1\bar l}}{\p z^i}=C^{0,\a,\b},\quad
	|z^1|^{2-2\b}\frac{\p g_{1\bar 1}}{\p z^i}=C^{0,\a,\b},\quad\\
	&|z^1|^{1-\b}\frac{\p g_{ k\bar l}}{\p z^1}=O(\rho_0^{\a-1}),\quad
	|z^1|^{2-2\b}\frac{\p g_{k\bar{1}}}{\p z^{  1}}=O(\rho_0^{\a-1}),\\
	&|z^1|^{2-2\b}\frac{\p g_{1\bar l}}{\p z^1}
	=O(\rho_0^{\a-1}),\quad
	|z^1|^{3-3\b}\frac{\p g_{1\bar 1}}{\p z^1} 
	=O(\rho_0^{\a-1}).
	\end{aligned}
	\right.
	\end{align}
	Moreover,	
	\begin{align}\label{bgdmetricbad}
	\left\{
	\begin{aligned}
	&|z^1|^{1-\b}\nabla_1^{cone}g_{k\bar l}
	=O(\rho_0^{\a-1}),\\
	&|z^1|^{2-2\b}\nabla_1^{cone}g_{k\bar 1}
	=O(\rho_0^{\a-1}),\\
	&|z^1|^{2-2\b}\nabla_1^{cone}g_{1\bar l}
	=O(\rho_0^{\a-1}),\\
	&|z^1|^{3-3\b}\nabla_1^{cone}g_{1\bar 1}
	=O(\rho_0^{\a-1}).
	\end{aligned}
	\right.
	\end{align}
\end{cor}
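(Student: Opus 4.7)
The plan is to read off the corollary from the regularity of the cscK metric $g$ in weighted frames adapted to the cone structure, as established in Proposition \ref{lem:goodmetric} and Proposition \ref{lem:goodtangent}. In the cone chart near a divisor point, $\om_{cone}$ is diagonal with $g_{cone,1\bar 1}=\beta^2|z^1|^{2\beta-2}$ and $g_{cone,k\bar l}=\delta_{kl}$ for $k,l\geq 2$. In the orthonormal coframe $\{\beta|z^1|^{\beta-1}dz^1,dz^2,\ldots,dz^n\}$ adapted to $\om_{cone}$, the cited propositions control the frame components of $g$ to be $C^{0,\alpha,\beta}$, their tangent derivatives (in $z^j$, $j\geq 2$) to be $C^{0,\alpha,\beta}$, and the singular derivative (in the cone direction $|z^1|^{1-\beta}\partial_{z^1}$) to have the sharp growth $O(\rho_0^{\alpha-1})$. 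The weights $|z^1|^{1-\beta}$, $|z^1|^{2-2\beta}$, $|z^1|^{3-3\beta}$ appearing in \eqref{bgdmetric} are exactly the conversion factors between the coordinate basis and this frame.

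Unpacking the frame components back to the coordinate basis, one has schematically
\begin{equation*}
g_{k\bar l}=\tilde g_{k\bar l},\qquad g_{1\bar l}=\beta(z^1)^{\beta-1}\tilde g_{1\bar l},\qquad g_{1\bar 1}=\beta^2|z^1|^{2\beta-2}\tilde g_{1\bar 1},
\end{equation*}
where $\tilde g_{a\bar b}$ denote the frame components. Differentiating in $z^i$ for $2\leq i\leq n$ commutes with the singular prefactors, and Proposition \ref{lem:goodtangent} directly yields the first four estimates of \eqref{bgdmetric}. Differentiating in $z^1$ produces two contributions: the $|z^1|^{1-\beta}\partial_{z^1}\tilde g_{a\bar b}$ term, which by Proposition \ref{lem:goodmetric} is $O(\rho_0^{\alpha-1})$, and a chain-rule contribution from differentiating the prefactors $(z^1)^{\beta-1}$ and $|z^1|^{2\beta-2}$. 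The second contribution is controlled by the fact that, for the components where it appears ($g_{1\bar l}$, $g_{1\bar 1}$), the frame components $\tilde g_{1\bar l}$ vanish at the origin since $g_{cone,1\bar l}=0$, while the deviation of $\tilde g_{1\bar 1}$ from its value at the origin vanishes to order $\rho_0^\alpha$, both by the $C^{0,\alpha,\beta}$ regularity. Absorbing both pieces into the stated weights produces the last four estimates of \eqref{bgdmetric}.

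The bounds \eqref{bgdmetricbad} follow immediately: $\nabla_1^{cone}g_{a\bar b}-\partial_{z^1}g_{a\bar b}$ involves only the Christoffel symbol $(\Gamma^{cone})^1_{11}=(\beta-1)/z^1$ of $\om_{cone}$ acting on $g_{1\bar b}$, which is of the same weighted order as the $\partial_{z^1}$-derivative already controlled in \eqref{bgdmetric}. The substantive content of the corollary sits upstream, in Propositions \ref{lem:goodmetric} and \ref{lem:goodtangent}, which exploit the asymptotic expansion of the cscK equation from \cite{arxiv:1609.03111}. Given that input, the main obstacle in the corollary is verifying the cancellation between the two singular-direction contributions: without the $O(\rho_0^\alpha)$ vanishing of the off-diagonal frame components of $\tilde g$ and of the deviation of $\tilde g_{1\bar 1}$ at the divisor, the Christoffel correction and the chain-rule contribution would each produce terms of order $|z^1|^{-\beta}$, worse than the claimed $O(\rho_0^{\alpha-1})=O(|z^1|^{\alpha\beta-\beta})$. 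Pinning down this cancellation is what distinguishes the $C^{3,\alpha,\beta}_{\mathbf w}$ regularity from a merely formal chain-rule count.
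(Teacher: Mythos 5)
Your proposal is correct and follows essentially the same route as the paper: pass to the lifted holomorphic ($V$-)coordinates where the cone metric flattens, apply Propositions~\ref{lem:goodmetric} and~\ref{lem:goodtangent} to bound the frame components and their derivatives, and transform back to $z$, absorbing the chain-rule contribution from the singular prefactors $(z^1)^{\beta-1}$, $|z^1|^{2\beta-2}$ by the $O(\rho_0^\alpha)$ vanishing of the off-diagonal frame components and of the deviation of $\tilde g_{1\bar 1}$ from its cone value. The one point worth tightening in your write-up is the justification of that vanishing: it is a consequence of the $C^{2,\alpha,\beta}$ regularity of the potential $\vphi$ (Donaldson's space builds in that $\vphi_{1\bar j}$ vanishes along $D$ in the cone frame), not merely of the $C^{0,\alpha,\beta}$ regularity of the metric components, which alone would give boundedness but not decay.
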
	
\begin{proof}
	Using Proposition \ref{lem:goodmetric}, we have in $V$-coordinate, the metric $g$ satisfy $\norm{g_{i\bar{j}}}_{C^{1,\a}_{\tilde{V}}(B_{3c_\beta/4})} \leq C(1)$. We choose the normal coordinate such that $g$ is the flat cone metric and $\p_{\tilde v}g_{k\bar l}=0$ at $\tilde v=0$.
	

Then we have
	\begin{align*}
	\left\{
	\begin{aligned}
	&\p_{v^i}g_{k\bar l}=C^{0,\a,\b},\quad
	\p_{v^i} g_{k\bar 1}=C^{0,\a,\b},\quad\\
	&	\p_{v^i} g_{1\bar l}=C^{0,\a,\b},\quad
	\p_{v^i} g_{1\bar 1}=C^{0,\a,\b},\quad\\
	&	\p_{v^{1}} g_{k\bar l}
	=O(\rho_0^{\a-1}),\quad
	\p_{v^{1}} g_{k\bar 1}
	=O(\rho_0^{\a-1}),\\
	&\p_{v^1} g_{1\bar l}
	=O(\rho_0^{\a-1}),\quad
	\p_{v^1} g_{1\bar 1}
	=O(\rho_0^{\a-1}),
	\end{aligned}
	\right.
	\end{align*}
	then transforming back to the $z$-coordinate with $v^1=(z^1)^\b$ and $\tilde g_{1\bar l}=\b^{-1}(z^1)^{1-\b}g_{1\bar l}$, $\tilde g_{1\bar 1}=\b^{-2}|z^1|^{2-2\b}g_{1\bar 1}$, we obtain \eqref{bgdmetric}.

We use the standard formula,
	\begin{align*}
	\Gamma^{k}_{ij}(\om_{cone})=g_{cone}^{k\bar{l}}\frac{\partial (g_{cone})_{i\bar{l}}}{\partial
		z^{j}}
	\end{align*}
	and compute
	\begin{align*}
	\frac{\p}{\p z^1}|z^1|^{2\b-2}=(\b-1)|z^1|^{2\b-4} z^{\bar 1}
	\end{align*}
	to see that the Christoffel symbols of $\om_{cone}$ in a cone chart are, for all $2\leq i,j,k\leq n$,
	\begin{align*}
	\Gamma^1_{1k}(\om_{cone})=
	\Gamma^i_{11}(\om_{cone})=
	\Gamma^1_{jk}(\om_{cone})=
	\Gamma^i_{1k}(\om_{cone})=
	\Gamma^i_{jk}(\om_{cone})=0.
	\end{align*} 
	The only exception is
	\begin{align*}
	\Gamma^1_{11}(\om_{cone})=-\frac{1-\b}{z^1}.
	\end{align*} 
	
	Then we compute $\nabla_1^{cone}g_{1\bar l}$ and $\nabla_1^{cone}g_{1\bar 1}$. We use the formulas of the first covariant derivative w.r.t $\om_{cone}$ 
	\begin{align*}
	\nabla^{cone}_1 \vphi_1
	&=\frac{\p \vphi_1}{\p z^1}-\sum_{p=1}^n \Gamma_{11}^p \vphi_p
	=\frac{\p \vphi_1}{\p z^1}+\frac{1-\b}{z^1}\vphi_1.
	\end{align*}
	For a $(1,1)$-form $a_{i\bar j}dz^1\wedge dz^{\bar j}$, the first covariant derivative w.r.t $\om_{cone}$ is
	\begin{align*}
	\nabla^{cone}_1 a_{i\bar j}
	&=\frac{\p a_{i\bar j}}{\p z^1}-\sum_{p=1}^n \Gamma_{i1}^p a_{p\bar j}-\sum_{p=1}^n \Gamma_{1\bar j}^{\bar p} a_{i\bar p}
	=\frac{\p a_{i\bar j}}{\p z^1}+\frac{1-\b}{z^1}a_{1\bar j}.
	\end{align*}
	Thus we have
	\begin{align*}
	&|z^1|^{1-\b}\nabla_1^{cone}g_{k\bar l}
	=|z^1|^{1-\b}\frac{\p g_{k\bar l}}{\p z^1},\\
	&|z^1|^{2-2\b}\nabla_1^{cone}g_{k\bar 1}
	:=|z^1|^{2-2\b}[\frac{\p g_{k\bar 1}}{\p z^1}+\frac{1-\b}{z^1}g_{k\bar 1}],\\
	&|z^1|^{2-2\b}\nabla_1^{cone}g_{1\bar l}
	:=|z^1|^{2-2\b}[\frac{\p g_{1\bar l}}{\p z^1}+\frac{1-\b}{z^1}g_{1\bar l}].
	\end{align*}
	The first three identities in the conclusion \eqref{bgdmetricbad} follows from \eqref{bgdmetric} and the following estimates, 	
	\begin{align*}
	&|z^1|^{2-2\b}\frac{1-\b}{z^1}g_{k\bar 1}=O(\rho_0^{\a-1}),\\
	&|z^1|^{2-2\b}\frac{1-\b}{z^1}g_{1\bar l}=O(\rho_0^{\a-1}),
	\end{align*}
	since $g_{k\bar 1},g_{1\bar l}\rightarrow 0$, as points converges to the point on the divisor.
	The last identity in the conclusion is proved by using $\nabla_1^{cone}(g_{cone})_{1\bar 1}=0$, \eqref{bgdmetric} and 
	\begin{align*}
|z^1|^{3-3\b}\frac{1-\b}{z^1}(g_{1\bar 1}-(g_{cone})_{1\bar 1})=O(\rho_0^{\a-1}).
	\end{align*} Thus we have
	\begin{align*}
	&|z^1|^{3-3\b}\nabla_1^{cone}g_{1\bar 1}\\
	&:=|z^1|^{3-3\b}[\frac{\p g_{1\bar 1}-(g_{cone})_{1\bar 1}}{\p z^1}+\frac{1-\b}{z^1}(g_{1\bar 1}-(g_{cone})_{1\bar 1})]\\
	&=O(\rho_0^{\a-1})
	\end{align*}
	and complete the proof.

\end{proof}

We determine the Christoffel symbols of the cscK cone metric. 

\begin{cor}\label{connection}
	Let $\varphi$ be a $C^{2,\a,\b}$ solution of \eqref{eqn:main} of the cscK cone metric (or $J$-twisted cscK cone metric). Then the Christoffel symbols of the corresponding metric $g$ satisfy for any $2\leq i,j,k\leq n$,
	\begin{align}\label{bgconnection}
	\left\{
	\begin{aligned}
	&\Gamma^{i}_{jk}=C^{0,\a,\b}, \quad ,\quad 
	|z^1|^{1-\b}\Gamma^{1}_{11}=O(\rho_0^{\a-1}),\\
	&|z^1|^{\b-1}\Gamma^{1}_{jk}=C^{0,\a,\b},\quad
	|z^1|^{1-\b}\Gamma^{i}_{1k}(\om)=C^{0,\a,\b},\quad
	|z^1|^{1-\b}\Gamma^{i}_{j1}=O(\rho_0^{\a-1}), \\
	&\Gamma^{1}_{j1}=O(\rho_0^{\a-1}), \quad
	\Gamma^{1}_{1k}=O(\rho_0^{\a-1}),\quad
	|z^1|^{2-2\b}\Gamma^{i}_{11}=O(\rho_0^{\a-1}).
	\end{aligned}
	\right.
	\end{align} 
\end{cor}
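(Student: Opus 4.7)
The plan is to apply the standard formula $\Gamma^{k}_{ij}=g^{k\bar l}\p_j g_{i\bar l}$ term by term in the cone chart, combining the derivative estimates of Corollary~\ref{dmetric} with size control on the inverse matrix $g^{k\bar l}$ coming from the quasi-isometry $\om\simeq \om_{cone}$. The first ingredient I would record is that, since $\vphi\in C^{2,\a,\b}$, a block matrix inversion splitting the normal index $1$ from the tangential ones $2,\dots,n$ (equivalently, reading off $\tilde g^{-1}$ in the $V$-coordinate of Section~\ref{3rdHolderspaces} and pushing it back to the $z$-coordinate) gives
\begin{align*}
|g^{1\bar 1}|\leq C|z^1|^{2-2\b},\quad |g^{1\bar l}|=|g^{l\bar 1}|\leq C|z^1|^{1-\b},\quad |g^{k\bar l}|\leq C
\end{align*}
for all $2\leq k,l\leq n$.

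The second step is a case-by-case expansion
\begin{align*}
\Gamma^{k}_{ij}=g^{k\bar 1}\p_j g_{i\bar 1}+\sum_{l\geq 2}g^{k\bar l}\p_j g_{i\bar l},
\end{align*}
in which each summand is controlled by pairing the inverse metric bound of Step~1 with the corresponding derivative bound of Corollary~\ref{dmetric}. A tangential derivative ($j\geq 2$) plugs in the $C^{0,\a,\b}$ bounds of \eqref{bgdmetric}; a normal derivative $\p_1$ supplies the additional weight $O(\rho_0^{\a-1})$ of \eqref{bgdmetric}--\eqref{bgdmetricbad}. Whenever the two lower indices of $\Gamma^k_{ij}$ differ I would exploit the K\"ahler symmetry $\Gamma^k_{ij}=\Gamma^k_{ji}$ to take the derivative in the tangential rather than the normal direction; this is precisely what distinguishes the sharp $C^{0,\a,\b}$ estimate for $\Gamma^i_{1k}$ from the weaker $O(\rho_0^{\a-1})$ estimate that is unavoidable for $\Gamma^i_{j1}$. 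Multiplying by the singular normalisation factor $|z^1|^{1-\b}$, $|z^1|^{\b-1}$ or $|z^1|^{2-2\b}$ and matching exponents reproduces every line of \eqref{bgconnection} except the $\Gamma^1_{11}$ case.

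The one delicate case is $\Gamma^1_{11}$, because $\om_{cone}$ already contributes the singular $\Gamma^{1}_{11}(\om_{cone})=-(1-\b)/z^1$ and $\p_1 g_{1\bar 1}$ contains the leading term $\p_1(\b^2|z^1|^{2\b-2})$. My plan is to separate this cone part at the level of Christoffel symbols via the tensorial identity
\begin{align*}
\Gamma^{k}_{ij}(\om)=\Gamma^{k}_{ij}(\om_{cone})+g^{k\bar l}\nabla^{cone}_j g_{i\bar l},
\end{align*}
which is an immediate consequence of $\p_j g_{i\bar l}=\nabla^{cone}_j g_{i\bar l}+\Gamma^{p}_{ij}(\om_{cone})g_{p\bar l}$ combined with $g^{k\bar l}g_{p\bar l}=\delta^{k}_{p}$. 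Because $\nabla^{cone}g_{cone}=0$, the correction tensor $g^{k\bar l}\nabla^{cone}_j g_{i\bar l}$ sees only the weak bounds \eqref{bgdmetricbad}, and applying Step~1 to it produces the $O(\rho_0^{\a-1})$ weight that \eqref{bgconnection} records for $\Gamma^1_{11}$, and reproves the weaker $O(\rho_0^{\a-1})$ bounds for $\Gamma^1_{j1}$, $\Gamma^1_{1k}$ without appealing to symmetry. The only real obstacle is purely bookkeeping: in each of the eight sums one must verify that the inverse metric weight, the metric derivative weight and the singular multiplier on the left-hand side of \eqref{bgconnection} combine with exact cancellations to produce the stated exponent.
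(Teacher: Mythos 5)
Your case-by-case expansion of $\Gamma^{k}_{ij}=g^{k\bar l}\p_j g_{i\bar l}$ together with the inverse-metric bounds $|g^{1\bar 1}|\leq C|z^1|^{2-2\b}$, $|g^{1\bar l}|\leq C|z^1|^{1-\b}$, $|g^{k\bar l}|\leq C$ is exactly what the paper does (the paper just leaves the inverse-metric bounds implicit), so for the bulk of the corollary the two arguments coincide. Two remarks before the main issue. First, your claim that the weaker $O(\rho_0^{\a-1})$ bound is ``unavoidable for $\Gamma^i_{j1}$'' is not right: since $g_{j\bar l}=\p_j\p_{\bar l}\vphi$ implies $\p_1 g_{j\bar l}=\p_j g_{1\bar l}$, the symmetry $\Gamma^i_{j1}=\Gamma^i_{1j}$ is available and gives the same $C^{0,\a,\b}$ bound; the paper is simply recording what falls out of each of the two equal expansions without invoking symmetry. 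Second, your tensorial identity $\Gamma^k_{ij}(\om)=\Gamma^k_{ij}(\om_{cone})+g^{k\bar l}\nabla^{cone}_j g_{i\bar l}$ is correct and is a genuine reorganisation relative to the paper, which never splits at the level of Christoffel symbols.

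The genuine gap is in the $\Gamma^1_{11}$ case, and your own decomposition exposes it. You isolate $\Gamma^1_{11}(\om)=\Gamma^1_{11}(\om_{cone})+g^{1\bar l}\nabla^{cone}_1 g_{1\bar l}$ and correctly observe, via \eqref{bgdmetricbad} and Step~1, that the correction tensor contributes $O(\rho_0^{\a-1})$ after multiplying by $|z^1|^{1-\b}$. But the cone piece contributes
\begin{align*}
|z^1|^{1-\b}\,\bigl|\Gamma^1_{11}(\om_{cone})\bigr| \;=\; |z^1|^{1-\b}\cdot\frac{1-\b}{|z^1|}\;=\;(1-\b)\,|z^1|^{-\b}\;=\;(1-\b)\,\rho_0^{-1},
\end{align*}
which is strictly more singular than $\rho_0^{\a-1}$ whenever $\a\b>0$, and you never say how this term is to be absorbed. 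So as written your proof does not deliver the claimed $O(\rho_0^{\a-1})$ bound for $\Gamma^1_{11}$. The paper sidesteps this by plugging in the direct expansion $\Gamma^1_{11}=g^{1\bar l}\p_1 g_{1\bar l}$ and citing $|z^1|^{3-3\b}\p_1 g_{1\bar 1}=O(\rho_0^{\a-1})$ from \eqref{bgdmetric}; but since $\p_1 g_{1\bar 1}=\nabla^{cone}_1 g_{1\bar 1}-\frac{1-\b}{z^1}g_{1\bar 1}$, that bound would force the explicit $-\frac{1-\b}{z^1}g_{1\bar 1}$ piece (of order $|z^1|^{2\b-3}$) to be as small as the correction, which multiplying by $|z^1|^{3-3\b}$ it is not. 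Your rewriting therefore surfaces a tension that the paper's direct computation conceals; to make the $\Gamma^1_{11}$ line of \eqref{bgconnection} genuinely proved you need to either restate the target bound as $O(\rho_0^{-1})$, or supply an argument (e.g.\ a cancellation specific to cscK cone metrics coming from the expansion in \cite{arxiv:1609.03111}) that kills the leading cone contribution. Simply running the decomposition on the correction tensor alone does not close the case.
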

\begin{proof} 
	The connection of $\om_\vphi$ is given by the following formulas and check one by one by using \corref{dmetric} as
	\begin{align*}
	&\Gamma^{i}_{jk}(\om)
	=\sum_{1\leq l\leq n}g^{i\bar{l}}\frac{\partial g_{j\bar{l}}}{\partial
		z^{k}}
	=\sum_{2\leq l\leq n}g^{i\bar{l}}\frac{\partial g_{j\bar{l}}}{\partial
		z^{k}}+g^{i\bar{1}}\frac{\partial g_{j\bar{1}}}{\partial
		z^{k}}.
	\end{align*} 
	Then
	\begin{align*}
		&|z^1|^{1-\b}\Gamma^{i}_{1k}(\om)
	=|z^1|^{1-\b}[\sum_{2\leq l\leq n}g^{i\bar{l}}\frac{\partial g_{1\bar{l}}}{\partial
		z^{k}}+g^{i\bar{1}}\frac{\partial g_{1\bar{1}}}{\partial
		z^{k}}],\\
	&|z^1|^{1-\b}\Gamma^{i}_{j1}(\om)
	=|z^1|^{1-\b}[\sum_{2\leq l\leq n}g^{i\bar{l}}\frac{\partial g_{j\bar{l}}}{\partial
		z^{1}}+g^{i\bar{1}}\frac{\partial g_{j\bar{1}}}{\partial
		z^{1}}].
	\end{align*} 
	Also
	\begin{align*}
	|z^1|^{\b-1}\Gamma^{1}_{jk}(\om)
	=|z^1|^{\b-1}[\sum_{2\leq l\leq n}g^{1\bar{l}}\frac{\partial g_{j\bar{l}}}{\partial
		z^{k}}+g^{1\bar{1}}\frac{\partial g_{j\bar{1}}}{\partial
		z^{k}}].
	\end{align*} 
	Furthermore, 
	\begin{align*}
	&\Gamma^{1}_{j1}(\om)
	=\sum_{2\leq l\leq n}g^{1\bar{l}}\frac{\partial g_{j\bar{l}}}{\partial
		z^{1}}+g^{1\bar{1}}\frac{\partial g_{j\bar{1}}}{\partial
		z^{1}},\\
	&\Gamma^{1}_{1k}(\om)
	=\sum_{2\leq l\leq n}g^{1\bar{l}}\frac{\partial g_{1\bar{l}}}{\partial
		z^{k}}+g^{1\bar{l}}\frac{\partial g_{1\bar{l}}}{\partial
		z^{k}}.
	\end{align*} 
	The next two terms involve $\frac{\partial g_{1\bar{l}}}{\partial
		z^{1}}$ and $\frac{\partial g_{1\bar{1}}}{\partial
		z^{1}}$,
	\begin{align*}
	|z^1|^{2-2\b}\Gamma^{i}_{11}(\om)=|z^1|^{2-2\b}[\sum_{2\leq l\leq n}g^{i\bar{l}}\frac{\partial g_{1\bar{l}}}{\partial
		z^{1}}
	+g^{i\bar{1}}\frac{\partial g_{1\bar{1}}}{\partial
		z^{1}}]
	\end{align*} 
	and
	\begin{align*}
	&|z^1|^{1-\b}\Gamma^{1}_{11}(\om)
	=|z^1|^{1-\b}[\sum_{2\leq l\leq n}g^{1\bar{l}}\frac{\partial g_{1\bar{l}}}{\partial
		z^{1}}+g^{1\bar{1}}\frac{\partial g_{1\bar{1}}}{\partial
		z^{1}}].
	\end{align*} 
	Thus the conclusion is true.

\end{proof}




\subsection{Higher order estimates}\label{Higher order estimates}
We use the same terminology from \cite{arxiv:1609.03111}.
We consider asymptotic for cscK cone metrics in the unit ball $B_1\subset B^\ast:=\mathbb C \times \mathbb C^{n-1}$ centred at the origin,
\begin{align}\label{eqn:main}
\left\{
\begin{aligned}
\det(\vphi_{i\bar j})&=\frac{e^{K}}{|z_1|^{2-2\b}},\\
\tri_\vphi K&=S.
\end{aligned}
\right.
\end{align}
Assume that  
\begin{itemize}
	\item $S$ is a smooth function;
	\item $\varphi$ is in $C^{2,\a,\b}$ space , then from the second equation, and $K$ is also in $C^{2,\a,\b}$ space;
	\item there is some constant $c>1$ such that
	\begin{align}\label{metricequivalent}
	\frac{1}{c}\omega_{cone}\leq \sqrt{-1}\partial\bar{\partial} \varphi \leq c\omega_{cone}.
	\end{align}
\end{itemize}

We also consider the twisted cscK cone metrics with $$S=\gamma(\vphi_{i\bar j})=\underline S_\b+(1-t)(\frac{\om_0^n}{\om_\vphi^n}-1).$$
We put the necessary change of the proof of this case in the remarks after each subsections for readers' convenience.

Given a point $$Z_0=(r_0,\theta_0,\xi_0)\in B_1\setminus \mathbf 0,$$ we consider a neighbourhood $\Omega$ of $Z_0$ which is contained in the regular part $M$. Note that $\Omega$ does not contain $\mathbf 0$.

\begin{defn}[Polar coordinates]
We use the polar coordinates $(\rho,\theta,\xi)$ in $B_1$, defined by
\begin{align*}
\rho=\abs{z_1}^\beta, \quad z_1= \abs{z_1}e^{i\theta}, \quad \xi=(z_2,\cdots,z_n).
\end{align*} 
\end{defn}

With this terminology in place, our result on the higher order estimates of the cscK cone metric and $J$-twisted cscK cone metric can be stated as follows.
\begin{thm}\label{thm:interior}
	Let $\varphi$ be a $C^{2,\a,\b}(B_1)$ solution of \eqref{eqn:main}. Then given any $k_1,k_2,k_3 \in \mathbb N\cup \set{0}$, there exists a constant $C(k_1,k_2,k_3)$ such that
	\begin{align}\label{eqn:interior}
	\abs{(\rho \partial_\rho)^{k_1} (\partial_\theta)^{k_2} (\nabla_\xi)^{k_3} \varphi}\leq C (k_1,k_2,k_3),, 
	\end{align}
	for any $\rho\in (0,1/2)$ and $\abs{\xi}<1/2$.
\end{thm}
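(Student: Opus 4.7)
The plan is to pass to the branched-cover coordinate $v^1=(z^1)^\beta$, with $\xi=(z^2,\dots,z^n)$ unchanged, in which the model cone metric $\omega_{\mathrm{cone}}$ becomes standard Euclidean and the factor $|z^1|^{2-2\beta}$ on the right-hand side of \eqref{eqn:main} is exactly absorbed by the Jacobian of the change of variables. The cscK system then rewrites as the smooth coupled elliptic system
\[
\det(\varphi^{v}_{i\bar j})=\beta^{-2}e^{K},\qquad \Delta_\varphi K=S,
\]
on a punctured Euclidean ball in $\mathbb{C}\times\mathbb{C}^{n-1}$, and the three distinguished vector fields transform to
\[
\rho\partial_\rho=v^1\partial_{v^1}+\bar v^1\partial_{\bar v^1},\quad \partial_\theta=i\beta^{-1}\bigl(v^1\partial_{v^1}-\bar v^1\partial_{\bar v^1}\bigr),\quad \nabla_\xi=\partial_\xi,
\]
all smooth away from the divisor $\{v^1=0\}$. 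The first two are the real and imaginary parts of the generator of the $\mathbb{C}^{*}$-action on the $v^1$-factor, while $\partial_\xi$ is a pure translation in the regular directions; together they commute with every holomorphic partial derivative up to an explicit first-order correction supplied by $[\rho\partial_\rho,\partial_{v^1}]=-\partial_{v^1}$ (and analogously for $\partial_\theta$), while $[\partial_\xi,\partial_{v^1}]=0$.

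I proceed by induction on the total order $k=k_1+k_2+k_3$. The base case $k=0$ is the hypothesis that $\varphi$ is bounded, and the a priori $C^{2,\alpha}$ bound on $\varphi$ in the $v$-coordinates is encoded in Proposition \ref{lem:goodmetric}. For the inductive step I apply one further good vector field $X\in\{\rho\partial_\rho,\partial_\theta,\nabla_\xi\}$ to the system. Using the commutator identities above, $X\varphi$ satisfies a linear elliptic equation obtained from differentiating the Monge-Amp\`ere equation,
\[
\Delta_\varphi(X\varphi)=XK+R_1,
\]
where $R_1$ is a first-order polynomial expression in $\varphi_{i\bar j}$ and $g^{i\bar j}_\varphi$ that is controlled by the inductive hypothesis. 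Likewise, differentiating $\Delta_\varphi K=S$ yields
\[
\Delta_\varphi(XK)=XS+R_2,
\]
with $R_2$ built from $X$-derivatives of $g^{i\bar j}_\varphi$ together with Hessian-components of $K$ that are bounded by induction. This coupled linear elliptic system has $C^{0,\alpha,\beta}$ coefficients and right-hand side, so by the weighted linear theory for the Lichnerowicz operator developed in Section \ref{Linear theory for Lichnerowicz operator} it has a unique solution $(X\varphi,XK)\in C^{2,\alpha,\beta}$; in particular $X\varphi$ is uniformly bounded, closing the induction.

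The main obstacle is precisely the weighted linear theory underpinning the inductive step: one needs Schauder-type estimates for the family of linear elliptic operators obtained by iteratively commuting the Lichnerowicz operator with $\rho\partial_\rho$, $\partial_\theta$ and $\nabla_\xi$, all within the weighted H\"older scale $C^{k,\alpha,\beta}$. The condition $\alpha\beta<1-\beta$ is essential here, as it ensures that the commutator corrections remain in $C^{0,\alpha,\beta}$ and that the solvability theorem of Section \ref{Linear theory for Lichnerowicz operator} applies without loss of regularity. An equivalent route, in the spirit of the companion paper \cite{arxiv:1609.03111}, is to perform the cone-preserving dilation $v^1\mapsto \lambda v^1$ around each base point at distance $r_0=\rho(Z_0)$ from the divisor: under this dilation the vector fields $\rho\partial_\rho$ and $\partial_\theta$ are homogeneous of degree zero, so the uniform interior Schauder estimate on the rescaled unit-scale ball translates directly into the claimed bound \eqref{eqn:interior} in the tangential and angular directions, while the pure $\nabla_\xi$ derivatives are handled by the differentiation argument above.
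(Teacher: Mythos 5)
The overall framework you outline — lift to the branched cover $v^1=(z^1)^\beta$, observe that $\rho\partial_\rho$, $\partial_\theta$, and $\nabla_\xi$ are the degree-zero vector fields adapted to the cone, and control them by uniform interior Schauder estimates after the cone-preserving dilation by $\rho_0$ — is indeed the strategy of the paper, and you mention it explicitly as an ``equivalent route.''  However, that is the \emph{only} correct route here, and the argument you present as primary does not close.

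The main gap is the appeal to the linear theory of Section~\ref{Linear theory for Lichnerowicz operator}.  That section proves a \emph{global} Fredholm alternative for the Lichnerowicz operator on the compact manifold $X$, not an interior Schauder estimate on a punctured ball, so it does not give what you need; and, more seriously, the arguments of Section~\ref{Linear theory for Lichnerowicz operator} themselves invoke the regularity of cscK cone metrics established in Section~\ref{Asymptotic behaviours} (for instance through the background cscK metric and its Christoffel asymptotics).  Using it to prove \thmref{thm:interior} would be circular.  The correct replacement is what the paper actually does: rewrite the equations for the rescaled quantities $g_{i\bar j}=\tilde\varphi_{\tilde V,i\bar j}$ and $K$ in the $\tilde v$-coordinates, and run a bootstrapping interior Schauder argument on shrinking balls $B_{c_\beta-\eta_k}$ with constants manifestly independent of $\rho_0$ (Propositions~\ref{lem:goodmetric} and~\ref{lem:goodtangent}); the vector fields $\rho\partial_\rho$ and $\partial_\theta$ become honest unit-scale partial derivatives after the dilation, so no separate commutator analysis is needed for them.

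A second, related gap is the inductive step by one vector field $X\in\{\rho\partial_\rho,\partial_\theta\}$.  You assert that the remainder terms $R_1$, $R_2$ produced by commuting $X$ with the coefficients of $\Delta_\varphi$ are ``controlled by the inductive hypothesis,'' but this is exactly where the weight bookkeeping is nontrivial: commuting $\rho\partial_\rho$ or $\partial_\theta$ with $\partial_{z^1}\partial_{\bar z^1}$ in the original coordinates introduces factors of $|z^1|^{-1}$ whose cancellation you do not verify, and the constant in the Schauder estimate for the frozen-coefficient operator must be shown to be independent of the distance to the divisor.  In the paper this is delivered by the scaling $v\mapsto\tilde v=(v-v_0)/\rho_0$ together with the explicit $\rho_0$-weighted estimates of Proposition~\ref{lem:goodtangent} (in particular $\norm{\nabla_{\tilde V,\xi}^l g_{i\bar j}}_{C^{k,\alpha}_{\tilde V}}\leq C(k,l)\rho_0^l$), which is the heart of the proof and is absent from your write-up.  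To repair the proposal you would need either to carry out that rescaled coefficient analysis, or to give an honest $b$-calculus argument showing that the $R_i$ belong to the right weighted H\"older space at each step of the induction — neither of which can be outsourced to the global Lichnerowicz theory.

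Finally, two small imprecisions worth flagging: after the coordinate change it is $\omega_{\mathrm{cone}}$, not $\omega_\varphi$, that becomes Euclidean (the latter is only uniformly equivalent to it); and the second equation in the $v$-coordinates should carry the metric $g_\varphi$, so the pair of PDEs is the coupled system \eqref{1stequationscale}, \eqref{2ndequationscale}, not a single linear equation to which the Lichnerowicz solvability theorem would apply.
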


We recall the scaling coordinate systems. 


\begin{defn}[Lifted holomorphic coordinates]\label{Lifted holomorphic coordinates}	For points in $\Omega$, we write
	\begin{align*}
 v_1:=z_1^\beta
	\end{align*}
and define for $z\in \Omega$,	\begin{align*}
	LH: z\mapsto v=(v_1,\xi)=(v_1,\cdots,v_n)\in \mathbb C^n.
	\end{align*} Then the centre of the domain $Z_0$ becomes $$LH(z_0)=v_0=(\rho_0 e^{i\beta\theta_0}, \xi_0)$$ and the imagine $LH(\Omega)$ is contained in $$B_{c_\beta\rho_0}(v_0)\subset \mathbb C^n\setminus\mathbf 0.$$

We denote the partial derivatives by $$\varphi_{V,i},\quad \varphi_{V,\bar{j}},\quad \varphi_{V, i\bar{j}},\quad etc$$ and tangent derivatives by \begin{align*}
\nabla_{V,\xi}\in \{ \partial_{v_i},\quad \partial_{\bar{v}_i},\quad \forall i=2,\cdots,n\}.
\end{align*}
	\end{defn}
	
	\begin{defn}[Rescaled lifted holomorphic coordinates]\label{Rescaled lifted holomorphic coordinates}
 Now we define the rescaled lifted holomorphic coordinates $\tilde{v}$ by 
	\begin{align*}
	SLH: v\mapsto \tilde{v}=\frac{v-v_0}{\rho_0}.
	\end{align*}
	Then after scaling by $SLH$, the domain $\Omega$ is contained in $$B_{c_\beta}:=B_{c_\beta}(0)\subset \mathbb C^n$$ for a universal constant $c_\b$, which is a ball in the $\tilde v$-coordinates.
	
	 We now denote the partial derivatives by $$\varphi_{\tilde{V},i},\quad \varphi_{\tilde{V},\bar{j}},\quad \varphi_{\tilde{V},i\bar{j}},\quad etc$$ and the tangent derivatives, as before, by $$\nabla_{\tilde{V},\xi}.$$
\end{defn}

The rescaled lifted holomorphic coordinates are the right coordinates to carry on the interior Schauder estimates and make sure the constants appearing from the estimates are independent of the position of the point $Z_0$. 

The H\"older functions are defined in the usual sense with respect to the distance given by the $v$-coordinates. The norm $$\norm{\cdot}_{C^{k,\alpha}_V(B_{c_\beta \rho_0}(v_0))}$$ is the usual H\"older norm. Here we use the lower subscript to emphasise the use of the $V$-coordinates around $v_0$. Similar convention hold for $C^{k,\alpha}_{\tilde{V}}$.

We always require the constant $C(k)$ increases on $k$ and may be different from line to line.

%
\subsubsection{Fundamental estimates: startup}
From Lemma 2.3 in \cite{arxiv:1609.03111}, the following estimates hold by coordinate transformation, since $\vphi$ and $K$ are both in $C^{2,\a,\b}(B_1)$.
\begin{lem} \label{2abinrealcord}
	Suppose $u\in C^{2,\a,\b}(B_1)$. 
There is a constant $C>0$ independent of $Z_0$ and $u$ such that
	\begin{align*}
	\norm{u_{V}}_{C^{0,\a}_V(B_{c_\beta \rho_0}(v_0))}+
	\norm{u_{V,i}}_{C^{0,\a}_V(B_{c_\beta \rho_0}(v_0))}+\norm{u_{V,i\bar{j}}}_{C^{0,\a}_V(B_{c_\beta \rho_0}(v_0))}\leq C \norm{u}_{C^{2,\a,\b}(B_1)}.
	\end{align*}
\end{lem}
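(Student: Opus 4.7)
The approach is a direct biholomorphic change-of-variables computation --- essentially the content of Lemma 2.3 in \cite{arxiv:1609.03111}. On the image $LH(\Omega) \subset B_{c_\beta \rho_0}(v_0)$, the map $v_1 = z_1^\beta$ is a local biholomorphism staying away from the divisor, and the cone metric in the $z_1$-direction pulls back to the Euclidean metric $|dv_1|^2$. Choosing the universal constant $c_\beta$ small enough, one ensures both that the Euclidean distance $|v - v'|$ on $B_{c_\beta \rho_0}(v_0)$ is comparable, up to a universal constant, to the intrinsic cone distance in $z$-coordinates, and that $|z_1| \sim \rho_0^{1/\beta}$ uniformly throughout the ball.

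First I would record the transformation rule $\partial_{v_1} = \tfrac{1}{\beta} z_1^{1-\beta} \partial_{z_1}$, which gives
\begin{gather*}
u_{V,1} = \tfrac{1}{\beta} z_1^{1-\beta} u_{z_1}, \quad u_{V,1\bar{1}} = \tfrac{1}{\beta^2} |z_1|^{2-2\beta} u_{z_1 \bar z_1}, \\
u_{V,1\bar{j}} = \tfrac{1}{\beta} z_1^{1-\beta} u_{z_1 \bar z_j}, \quad u_{V,i\bar{j}} = u_{z_i \bar z_j} \quad (i,j \geq 2),
\end{gather*}
together with the analogous identities involving $\bar v_1$. Each right-hand side is precisely a weighted quantity which, by the very definition of $C^{2,\alpha,\beta}(B_1)$ from Section~2 of \cite{MR3405866}, is bounded in $L^\infty$ by $\|u\|_{C^{2,\alpha,\beta}(B_1)}$; this yields the sup-norm parts of the inequality.

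For the H\"older seminorms, I would split each difference using the Leibniz rule, for example
\begin{gather*}
u_{V,1}(v) - u_{V,1}(v') = \tfrac{1}{\beta} \bigl[ (z_1^{1-\beta})(v) \bigl( u_{z_1}(v) - u_{z_1}(v') \bigr) + \bigl( (z_1^{1-\beta})(v) - (z_1^{1-\beta})(v') \bigr) u_{z_1}(v') \bigr],
\end{gather*}
bounding the first bracket using the weighted H\"older seminorm of $u_{z_1}$ that is built into $\|u\|_{C^{2,\alpha,\beta}}$, and the second using that $z_1^{1-\beta}$ is smooth on $B_{c_\beta\rho_0}(v_0)$ with $C^1_V$-norm uniformly bounded (thanks to $|z_1| \sim \rho_0^{1/\beta}$ and the distance comparability noted above). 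The analogous splittings handle the mixed terms $u_{V,i\bar j}$ with the appropriate power-of-$z_1$ weights.

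The main (and essentially only) obstacle is the bookkeeping: one must verify that every weighted H\"older seminorm appearing in the definition of $C^{2,\alpha,\beta}$ matches an unweighted $C^{0,\alpha}_V$-seminorm in $V$-coordinates with a constant independent of $Z_0$. This works precisely because the scaling is absorbed into the choice of radius $c_\beta \rho_0$, so all factors of $\rho_0$ arising from the Jacobian and from the weight functions cancel uniformly, and the singular direction $z_1$ never reaches zero on the ball.
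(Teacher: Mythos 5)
Your overall route---an explicit change-of-variables computation under $v_1=z_1^{\beta}$---is exactly what the paper intends: the paper offers no proof of this lemma beyond citing Lemma 2.3 of \cite{arxiv:1609.03111} and the phrase ``by coordinate transformation''. However, there is a genuine gap in your treatment of the weight factors. The assertion that $z_1^{1-\beta}$ has uniformly bounded $C^1_V$-norm on $B_{c_\beta\rho_0}(v_0)$ is false: as a function of $v_1$ it equals $v_1^{(1-\beta)/\beta}$, whose derivative has modulus comparable to $\rho_0^{(1-2\beta)/\beta}$, which is unbounded as $\rho_0\to 0$ whenever $\beta>1/2$ (and the paper works with all $\beta\leq 1$). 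Tracking the powers of $\rho_0$, the weight-variation term in your Leibniz splitting is bounded by $[\,z_1^{1-\beta}\,]_{C^{0,\alpha}_V}\cdot\sup_B|u_{z_1}| \lesssim \rho_0^{-\alpha}\sup_B\bigl(|z_1|^{1-\beta}|u_{z_1}|\bigr)$, so the naive estimate produces a constant of order $\rho_0^{-\alpha}$ rather than one independent of $Z_0$. The same problem appears if one instead splits off only the unimodular phase $e^{i(1-\beta)\arg z_1}$, whose $C^{0,\alpha}_V$-seminorm on the ball is again of order $\rho_0^{-\alpha}$.

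The missing ingredient is the vanishing condition built into Donaldson's definition of these spaces (Section 2 of \cite{MR3405866}): for $u\in C^{2,\alpha,\beta}$ the components $|z_1|^{1-\beta}\partial_{z_1}u$ and $|z_1|^{1-\beta}\partial_{z_1}\partial_{\bar z_j}u$ ($j\geq 2$) tend to zero along $D$. Combined with the global H\"older seminorm on $B_1$ this yields the quantitative decay $|z_1|^{1-\beta}|\partial_{z_1}u|\leq C\|u\|_{C^{2,\alpha,\beta}}\, d(\cdot,D)^{\alpha}\sim C\|u\|\rho_0^{\alpha}$ on the ball, and this factor $\rho_0^{\alpha}$ exactly cancels the $\rho_0^{-\alpha}$ above; the same applies to the mixed terms $u_{V,1\bar j}$. (For $u_{V,1\bar 1}$ no splitting is needed at all, since $\partial_{v_1}\partial_{\bar v_1}=\beta^{-2}|z_1|^{2-2\beta}\partial_{z_1}\partial_{\bar z_1}$ carries no phase and is precisely the quantity whose $C^{0,\alpha,\beta}$ bound is part of the definition.) With this decay inserted your argument closes; without it the claimed uniformity in $Z_0$ is not established.
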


All tangent derivatives of $\varphi$ and $K$ are bounded in $C^{2,\a,\b}$ space, because along tangent direction, no singularities occur.
\begin{lem} \label{Tangent derivatives}
	For $l=0,1,2,\cdots$, there exists a constant $C(l)$ such that
	\begin{align}\label{eqn:Xigc0}
	&\norm{ (\nabla_{V,\xi}^l \vphi)_{V,i\bar{j}}}_{C_V^\alpha(B_{c_\beta \rho_0}(v_0))}\leq C(l);\\
	&\norm{ (\nabla_{V,\xi}^l K)_{V,i\bar{j}}}_{C_V^\alpha(B_{c_\beta \rho_0}(v_0))}\leq C(l).
	\end{align}
\end{lem}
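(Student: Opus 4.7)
The plan is to observe that in the lifted $V$-coordinates the cscK system \eqref{eqn:main} becomes a standard, non-singular elliptic system, and then to bootstrap by induction on $l$, exploiting that the tangential operators $\partial_{v_i},\partial_{\bar v_i}$ for $i\geq 2$ commute with all $v$-partial derivatives and cannot see the singularity along $\{z_1=0\}$.

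First I would verify the reduction: since $v_1=z_1^\beta$, the Jacobian $|\partial v_1/\partial z_1|^2=\beta^2|z_1|^{2\beta-2}$ absorbs the singular factor $|z_1|^{2-2\beta}$, and \eqref{eqn:main} becomes the smooth system
\begin{align*}
\log\det(\vphi_{V,i\bar j})=K-2\log\beta,\qquad \vphi^{V,i\bar j}K_{V,i\bar j}=S
\end{align*}
on $B_{c_\beta\rho_0}(v_0)\subset\mathbb C^n$. The base case $l=0$ is Lemma~\ref{2abinrealcord}, which in particular furnishes $\vphi^{V,i\bar j}\in C^{0,\alpha}_V$, i.e.\ the uniformly elliptic, H\"older-continuous coefficients needed for Schauder.

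For the inductive step, assume both bounds hold at all orders $\leq l-1$; in particular $\nabla_{V,\xi}^l\vphi$ and $\nabla_{V,\xi}^lK$ already lie in $C^{1,\alpha}_V$. Applying $\nabla_{V,\xi}^l$ to the first equation and expanding $\log\det$ via Fa\`a di Bruno yields
\begin{align*}
\vphi^{V,i\bar j}(\nabla_{V,\xi}^l\vphi)_{V,i\bar j}=\nabla_{V,\xi}^lK+P_l,
\end{align*}
where $P_l$ is a universal polynomial in $\nabla_{V,\xi}^m\vphi_{V,p\bar q}$ of total tangential order $\leq l-1$; by the inductive hypothesis $P_l\in C^{0,\alpha}_V$, so the right-hand side lies in $C^{0,\alpha}_V$. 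Interior Schauder, applied in the rescaled $\tilde V$-coordinates where the domain is the fixed unit ball $B_{c_\beta}$ and the equation is scale-invariant up to an inessential $\rho_0^2$ factor on $S$, then gives $(\nabla_{V,\xi}^l\vphi)_{V,i\bar j}\in C^{0,\alpha}_V$ with constant depending only on $l$.

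Next I would apply $\nabla_{V,\xi}^l$ to the second equation and expand by Leibniz:
\begin{align*}
\vphi^{V,i\bar j}(\nabla_{V,\xi}^lK)_{V,i\bar j}=\nabla_{V,\xi}^lS-\sum_{k=1}^l\binom{l}{k}(\nabla_{V,\xi}^k\vphi^{V,i\bar j})(\nabla_{V,\xi}^{l-k}K)_{V,i\bar j}.
\end{align*}
In the twisted case $S=\gamma(\vphi_{V,p\bar q})$, the term $\nabla_{V,\xi}^lS$ is itself a polynomial in tangential derivatives of $\vphi_{V,p\bar q}$ of order $\leq l$, already bounded in $C^{0,\alpha}_V$ by the step just completed. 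The worst term in the sum, $k=l$, pairs $\nabla_{V,\xi}^l\vphi^{V,i\bar j}$ with the base-case $K_{V,i\bar j}$; all intermediate terms are governed by the inductive hypothesis. A second Schauder application closes the induction.

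The main obstacle is combinatorial rather than analytic: keeping the Fa\`a di Bruno and Leibniz bookkeeping honest so that in each linearized equation the top-order term is precisely $\vphi^{V,i\bar j}(\cdot)_{V,i\bar j}$ with all remainders strictly lower in tangential order. Uniformity in $\rho_0$ is handled cleanly by the $SLH$-rescaling to the fixed unit ball, where the Schauder constants depend only on $l$.
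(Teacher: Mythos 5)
Your plan re-derives the lemma by running a Schauder bootstrap in the rescaled $\tilde V$-coordinates, but this is precisely the content of the \emph{later} Propositions~\ref{lem:goodmetric} and~\ref{lem:goodtangent}, and it does not in fact recover the statement of Lemma~\ref{Tangent derivatives}. The lemma asserts $C^\alpha_V$-bounds on $B_{c_\beta\rho_0}(v_0)$ with a constant uniform in $\rho_0$ (this uniformity is essential: equation~\eqref{eqn:xigc0} reads off a $\rho_0^l$ weight from it when passing to $\tilde V$). A Schauder estimate carried out in the $\tilde V$-coordinates controls the $C^{0,\alpha}_{\tilde V}$-seminorm on a fixed ball, and converting back, $[f]_{C^{0,\alpha}_V}=\rho_0^{-\alpha}[\,\tilde f\,]_{C^{0,\alpha}_{\tilde V}}$, so a uniform $\tilde V$-bound only gives a $V$-bound that blows up like $\rho_0^{-\alpha}$. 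Similarly, the $C^0$ part of the Schauder estimate for $\nabla_{\tilde V,\xi}^l\tilde\vphi$ is dominated by $\norm{\nabla_{\tilde V,\xi}^l\tilde\vphi}_{C^0}\sim\rho_0^{l-2}$, which again does not reduce to a $\rho_0$-independent $V$-coordinate bound. In short, the $\tilde V$-bootstrap yields the \emph{weaker} statement $\norm{(\nabla_{\tilde V,\xi})^lg_{i\bar j}}_{C^{k,\alpha}_{\tilde V}}\leq C\rho_0^l$ of Proposition~\ref{lem:goodtangent}, not Lemma~\ref{Tangent derivatives}.

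The paper's intended route for Lemma~\ref{Tangent derivatives} is different and logically prior to the $\tilde V$-analysis: one first shows in the original $z$-coordinates that the tangential derivatives $\nabla_\xi^l\vphi$ and $\nabla_\xi^lK$ all belong to $C^{2,\alpha,\beta}(B_{3/4})$, because differentiating the cscK system along $\xi$ (directions parallel to $D$) produces no new singular factors and the cone Schauder theory closes the bootstrap on a fixed $z$-ball with ample room to shrink. Then one applies Lemma~\ref{2abinrealcord} once to each $u=\nabla_\xi^l\vphi$, $u=\nabla_\xi^lK$; crucially that lemma already provides the $V$-coordinate $C^{0,\alpha}_V$-bound with constant \emph{independent of $Z_0$}. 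This is what the remark ``along tangent direction, no singularities occur'' is standing in for. Your write-up also has a smaller ordering wrinkle: at stage $l$ the right-hand side $\nabla_{V,\xi}^lK+P_l$ contains the order-$l$ term $\nabla_{V,\xi}^lK$, which is not covered by the inductive hypothesis at orders $\leq l-1$ until you have first closed the Schauder step on the second equation at order $l-1$ to promote $\nabla_{V,\xi}^{l-1}K$ to $C^{2,\alpha}$, then solve the first equation for $\vphi$ at order $l$, and only then the second for $K$ at order $l$. That chain can be repaired, but the coordinate-uniformity issue above is the real obstruction to your route.
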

%

%
\subsubsection{Rescaled estimates on $g$ and $K$}
We rescale the K\"ahler potential, 
\begin{align}\label{eqn:setting}
\tilde{\varphi}(\tilde{v})=\rho_0^{-2} \varphi(v_0+\tilde{v}\rho_0). 
\end{align}
Then the rescaled metric $$g_{i\bar{j}}:=\tilde{\varphi}_{\tilde{V},i\bar{j}}, \quad \text{for} \, i,j=1,\cdots,n$$
is equivalent to the standard Euclidean metric, and also the $C_{\tilde{V}}^{0,\alpha}(B_{c_\beta})$ norm is bounded,
\begin{align}\label{eqn:gcalpha}
\norm{g_{i\bar{j}}}_{C_{\tilde{V}}^{0,\alpha}(B_{c_\beta})}\leq C
\end{align}

We will prove the higher order interior estimates of both $g_{i\bar j}$ and $K$. We denote the strictly increasing sequence $\eta_k$ for $k=0,1,2,\cdots$ such that $\lim_{k\rightarrow\infty}\eta_k=0.25 c_\beta$. This is a technical arrangement to shrink the radius of balls in each interior estimates.

Recall the first equation in \eqref{eqn:main}, i.e. $$\det(\vphi_{i\bar j})=\frac{e^{K}}{|z_1|^{2-2\b}}.$$ Rewriting it in the rescaled lifted holomorphic coordinates $\tilde{v}$, we have
	\begin{align}\label{1stequationscale}
	\log \det(\tilde \vphi_{\tilde V, i\bar j})=K.
	\end{align} 
	Taking $\partial_{\tilde{v}_i}\partial_{\bar{\tilde{v}}_j}$, we get
	\begin{align}\label{eqn:gij}
	\triangle_g g_{i\bar{j}} - g^{k\bar{m}} g^{n\bar{l}} \pfrac{g_{i\bar{m}}}{\tilde{v}_n} \pfrac{g_{k\bar{j}}}{\bar{\tilde{v}}_{{l}}} =  K_{\tilde{V},i\bar{j}},
	\end{align}
	where $\triangle_g = g^{i\bar{j}}\frac{\partial^2}{\partial \tilde{v}_i \partial \bar{\tilde{v}}_j}$.

\begin{prop}\label{lem:goodmetric}
	There are constants $C(k)$ such that for $k=0,1,2,\cdots$,
	\begin{align*}
	\norm{g_{i\bar{j}}}_{C^{k,\a}_{\tilde{V}}(B_{3c_\beta/4})} \leq C(k);\quad
	\norm{K}_{C^{k,\a}_{\tilde{V}}(B_{3c_\beta/4})}\leq C(k).
	\end{align*} 
\end{prop}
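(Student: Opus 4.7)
The plan is to bootstrap the coupled system on the unit-scale ball $B_{c_\beta}$ in the rescaled coordinates $\tilde v$, where $g$ is uniformly equivalent to Euclidean by \eqref{metricequivalent} and lies in $C^{0,\a}_{\tilde V}$ by \eqref{eqn:gcalpha}, while $K$ is uniformly $C^{2,\a}_{\tilde V}$-bounded by \lemref{2abinrealcord}. The two equations in play are the complex Monge-Amp\`ere equation $\log\det(g_{i\bar j}) = K$ coming from \eqref{1stequationscale}, together with the linear equation $g^{i\bar j} K_{\tilde V,i\bar j} = \rho_0^2 S$ obtained by rescaling the second line of \eqref{eqn:main}; crucially, $\rho_0^2 S$ is smooth with $C^{k,\a}$-bounds independent of $\rho_0$.

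For the base case I would apply interior regularity for the complex Monge-Amp\`ere equation $\log\det \tilde\vphi_{i\bar j} = K$: uniform ellipticity together with $K \in C^{2,\a}$ gives $\tilde\vphi \in C^{4,\a}_{\tilde V}$, hence $g \in C^{2,\a}_{\tilde V}$, on a slightly smaller ball, with constants depending only on the ellipticity bound. Assuming inductively that $g, K \in C^{k,\a}_{\tilde V}(B_{\eta_k})$ with $k \ge 2$ and uniform bounds, I would first view the second equation as linear in $K$ with coefficients $g^{i\bar j} \in C^{k,\a}$ and smooth right hand side, applying interior Schauder to upgrade $K$ to $C^{k+2,\a}_{\tilde V}$ on a slightly smaller ball. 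Then I would view \eqref{eqn:gij} componentwise as the linear equation
\begin{align*}
g^{a\bar b}\partial_a\partial_{\bar b} g_{i\bar j} = K_{\tilde V, i\bar j} + g^{a\bar m}g^{n\bar b}\,\partial_n g_{i\bar m}\,\partial_{\bar b} g_{a\bar j},
\end{align*}
whose coefficients lie in $C^{k,\a}$ and whose right hand side lies in $C^{k-1,\a}$ (the limiting factor being the quadratic term $\partial g\cdot\partial g\in C^{k-1,\a}$, while $K_{\tilde V, i\bar j} \in C^{k,\a}$ is strictly better). Interior Schauder then yields $g_{i\bar j} \in C^{k+1,\a}_{\tilde V}(B_{\eta_{k+1}})$. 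Alternating these two Schauder estimates along the increasing radii $\eta_k \nearrow c_\beta/4$ produces the claim for all $k$.

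The main obstacle is uniformity in $\rho_0$, which is exactly what the rescaling delivers: the domain has unit size in $\tilde v$, $g$ is uniformly elliptic by \eqref{metricequivalent}, and the right hand side $\rho_0^2 S$ is in fact small. Consequently every Schauder constant depends only on the ellipticity bound and $k$, never on the base point $Z_0$. In the $J$-twisted case, where $S$ is replaced by $\gamma(g_{i\bar j})$, no new difficulty arises, since $\gamma$ is a smooth function of $g$ and therefore inherits whatever regularity has already been established for $g$ at each stage of the iteration.
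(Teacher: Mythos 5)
Your proposal is correct and takes essentially the same approach as the paper: both bootstrap the coupled system --- the differentiated equation \eqref{eqn:gij} for $g$ and the rescaled second equation \eqref{2ndequationscale} for $K$ --- by alternating interior Schauder estimates on the unit-scale ball $B_{c_\beta}$ in the $\tilde{v}$-coordinates, with $\rho_0$-independence of the constants supplied by the rescaling and the uniform ellipticity bound \eqref{metricequivalent}. The only cosmetic difference is the base step: the paper first extracts a $C^{1,\alpha'}_{\tilde{V}}$ interior estimate for $g$ (with a possibly smaller exponent $\alpha'$) from the appendix of \cite{arxiv:1609.03111} and then upgrades to $C^{2,\alpha}_{\tilde{V}}$ via classical Schauder, whereas you invoke interior Monge--Amp\`ere regularity in one stroke to reach $g\in C^{2,\alpha}_{\tilde{V}}$ directly.
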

\begin{proof}
		From Lemma \ref{2abinrealcord}, the RHS of \eqref{eqn:gij} is bounded by
	\begin{align*}
	\norm{K_{\tilde{V},i\bar{j}}}_{C^{0,\a}_{\tilde V}(B_{c_\beta})} \leq C \norm{K}_{C^{2,\a,\b}(B_1)}\cdot \rho^{2+\a}_0.
	\end{align*}	
	
Then using the same technique in \cite{arxiv:1609.03111} (see the appendix therein) to obtain the interior estimate of \eqref{eqn:gij}, we obtain that there exists some $\alpha'>0$ such that
	\begin{align}\label{c1alpha}
	\norm{g_{i\bar{j}}}_{C_{\tilde{V}}^{1,\alpha'}(B_{c_\beta-\eta_1})}\leq C(1)
	\end{align}
	for some constant $C(1)$ depending on $\eta_1$, $\alpha'$, $c$ in \eqref{metricequivalent} and the constant in \eqref{eqn:gcalpha}.
	
	Applying the classical Schauder estimate to \eqref{eqn:gij}, we get that 
	there is a constant $C(2)$ independent of $Z_0$ such that
	\begin{align}\label{c2alpha}
	\norm{g_{i\bar{j}}}_{C^{2,\a}_{\tilde{V}}(B_{c_\beta-\eta_2})} \leq C(2).
	\end{align}
	
	Under the rescaled lifted holomorphic coordinates $\tilde{v}$, the second equation in \eqref{eqn:main} becomes 
	\begin{align}\label{2ndequationscale}
	S=\tri_g K=g^{i\bar{j}}K_{\tilde{V},i\bar{j}}.
	\end{align}
	Since $S$ is a smooth function, 
	\begin{align}\label{eqn:goodh}
	\norm{S}_{C_{\tilde{V}}^{k,\a}(B_{c_\beta})}\leq C(k), \quad \forall k\geq 0.
	\end{align}
	Then applying the Schauder estimate to \eqref{2ndequationscale} with \eqref{c2alpha}, we have the 4th order estimates of $K$, 
	\begin{align*}
	\norm{K}_{C^{4,\alpha}_{\tilde{V}}(B_{c_\beta-\eta_3})}\leq C(3).
	\end{align*}
	
	Returning to \eqref{eqn:gij}, the 3rd and also the 4th order estimates of $g_{i\bar{j}}$ are also obtained by bootstrapping method,
	\begin{align}\label{c4alpha}
	\norm{g_{i\bar{j}}}_{C^{4,\a}_{\tilde{V}}(B_{c_\beta-\eta_4})} \leq C(4).
	\end{align} 
	Repeating the argument above, we arrive at the estimates of $g_{i\bar{j}}$ and $K$.
\end{proof}

\begin{rem}(The twisted case $S=\gamma(\vphi_{i\bar j})$.)
	We only need to change \eqref{eqn:goodh} to be
	\begin{align*}
	\norm{S}_{C_{\tilde{V}}^{2,\a}(B_{c_\beta})}\leq C(2),
	\end{align*}
	that follows from \eqref{c2alpha}.
\end{rem}
%
\subsubsection{Refined rescaled tangent estimates on $g$ and $K$}
We rewrite Lemma \ref{Tangent derivatives} in the rescaled lifted holomorphic coordinates. The tangent estimates on metric $g_{i\bar j}$ are rescaled as 
\begin{align}\label{eqn:xigc0}
\norm{ \nabla_{\tilde{V},\xi}^l g_{i\bar{j}}}_{C_{\tilde{V}}^{0,\alpha}(B_{c_\beta})}\leq C(l) \cdot \rho_0^{l}, \forall l=0,1,2,\cdots.
\end{align}
We will further improve the tangent estimates from $C_{\tilde{V}}^{0,\alpha}$ to $C^{k,\a}_{\tilde{V}}$ in shrinking balls.

We take tangent derivatives $\nabla_{\tilde{V},\xi}$ of \eqref{eqn:gij} and \eqref{2ndequationscale} with respect to the rescaled coordinates, 
	\begin{align}
	\label{eqn:extra}
	& \triangle_g (\nabla_{\tilde{V},\xi} g_{i\bar{j}})-(\nabla_{\tilde{V},\xi} K)_{\tilde{V},i\bar{j}}\\ &= G_1(g,\nabla_{\tilde{V}} g, \nabla^2_{\tilde{V}} g) \# \nabla_{\tilde{V},\xi} g + G_2(g,\nabla_{\tilde{V}} g) \# \nabla_{\tilde{V}} (\nabla_{\tilde{V},\xi} g).\nonumber\\
	\label{2ndequationscaletangent}
	&g^{i\bar{j}}(\nabla_{\tilde{V},\xi} K)_{\tilde{V},i\bar{j}}=\nabla_{\tilde{V},\xi} S-\nabla_{\tilde{V},\xi} g^{i\bar{j}}K_{\tilde{V},i\bar{j}},
	\end{align}

In order to obtain the interior higher order estimates, it remains to examine the coefficients and the non-homogeneous terms. 

\begin{prop}\label{lem:goodtangent}For any $k,l =0,1,2,\cdots$, there are constants $C(k,l)$ such that
	\begin{align}\label{eqn:goodeta}
	&\norm{ (\nabla_{\tilde{V},\xi})^l g_{i\bar{j}}}_{C^{k,\a}_{\tilde{V}}(B_{3c_\beta/4-\eta_l})} \leq C(k,l)\rho_0^l,\\
	\label{eqn:goodetaK}&\norm{ (\nabla_{\tilde{V},\xi})^l K}_{C^{k,\a}_{\tilde{V}}(B_{3c_\beta/4-\eta_l})} \leq C(k,l)\rho_0^l.
	\end{align}
\end{prop}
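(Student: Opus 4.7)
The plan is to establish both \eqref{eqn:goodeta} and \eqref{eqn:goodetaK} simultaneously by induction on the tangent order $l$ and, for each fixed $l$, to bootstrap the spatial H\"older order $k$ by alternating interior Schauder estimates on the two equations \eqref{eqn:extra} and \eqref{2ndequationscaletangent} (with their higher-tangent-order analogues).

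For the base $l=0$, the assertion is exactly Proposition \ref{lem:goodmetric}. For $l\geq 1$, the case $k=0$ is the rescaled tangent bound \eqref{eqn:xigc0} (together with its analogue for $K$ coming from Lemma \ref{Tangent derivatives}). The crucial factor $\rho_0^l$ in every statement reflects the relation $\nabla_{\tilde V,\xi}=\rho_0\nabla_{V,\xi}$: each tangent derivative in the rescaled coordinates picks up exactly one power of $\rho_0$ when compared with the lifted coordinates, and derivatives in $V$-coordinates are bounded independently of $\rho_0$ by Lemma \ref{Tangent derivatives}.

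For the inductive step, suppose \eqref{eqn:goodeta} and \eqref{eqn:goodetaK} hold for tangent orders $\leq l-1$. Differentiating \eqref{eqn:gij} tangentially $l$ times produces
\begin{align*}
\triangle_g(\nabla_{\tilde V,\xi}^l g_{i\bar j})=(\nabla_{\tilde V,\xi}^l K)_{\tilde V,i\bar j}+F_l,
\end{align*}
where $F_l$ is a polynomial expression in $g$, $g^{-1}$, their $\nabla_{\tilde V}$-derivatives, and $\nabla_{\tilde V,\xi}^j g$ for $j<l$. By the Leibniz rule, every term in $F_l$ has \emph{total} tangent order exactly $l$, so the inductive hypothesis together with Proposition \ref{lem:goodmetric} yields $\|F_l\|_{C^{k,\a}_{\tilde V}(B_{3c_\beta/4-\eta_{l-1}})}\leq C(k,l)\rho_0^l$. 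Similarly, differentiating \eqref{2ndequationscale} $l$ times gives
\begin{align*}
g^{i\bar j}(\nabla_{\tilde V,\xi}^l K)_{\tilde V,i\bar j}=\nabla_{\tilde V,\xi}^l S-G_l,
\end{align*}
with $G_l$ having the same structure as $F_l$, and where smoothness of $S$ provides $\|\nabla_{\tilde V,\xi}^l S\|_{C^{k,\a}_{\tilde V}}\leq C(k,l)\rho_0^l$. (In the twisted case $S=\gamma(\vphi_{i\bar j})$, the same bound follows from Proposition \ref{lem:goodmetric} and the inductive hypothesis.)

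Now apply the interior Schauder estimate to the second equation, using the $C^{0,\a}_{\tilde V}$ bound on $\nabla_{\tilde V,\xi}^l K$ from \eqref{eqn:xigc0} as the starting point, to obtain a $C^{2,\a}_{\tilde V}$ bound of size $C(l)\rho_0^l$ on a slightly smaller ball. Feeding this into the equation for $\nabla_{\tilde V,\xi}^l g_{i\bar j}$ and applying Schauder again lifts $\nabla_{\tilde V,\xi}^l g_{i\bar j}$ to $C^{2,\a}_{\tilde V}$ at the cost of further shrinking; iterating this back-and-forth $k$ times yields the desired $C^{k,\a}_{\tilde V}(B_{3c_\beta/4-\eta_l})$ estimate. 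The main bookkeeping obstacle is ensuring that the $\rho_0^l$ scaling survives every nonlinear combination in $F_l$ and $G_l$, which it does because the tangent-order-$l$ condition is preserved under all operations appearing in the differentiated equations.
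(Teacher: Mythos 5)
Your proposal is correct and follows essentially the same route as the paper: induction on the tangent order $l$, with alternating interior Schauder estimates first on the differentiated scalar-curvature equation to bound $\nabla_{\tilde V,\xi}^l K$ and then on the differentiated Monge--Amp\`ere equation to bound $\nabla_{\tilde V,\xi}^l g$, the factor $\rho_0^l$ coming from $\nabla_{\tilde V,\xi}=\rho_0\nabla_{V,\xi}$ together with Lemma~\ref{Tangent derivatives}. One small caveat in your bookkeeping: the right-hand side $F_l$ is not purely of tangent order $<l$ --- it also contains terms like $G_2(g,\nabla_{\tilde V}g)\#\nabla_{\tilde V}(\nabla_{\tilde V,\xi}^l g)$ of full tangent order $l$, which must be absorbed as bounded first-order coefficients into the elliptic operator for $\nabla_{\tilde V,\xi}^l g$ rather than estimated via the inductive hypothesis; once this is done, the Schauder estimate closes exactly as in the paper's \eqref{eqn:schauder}.
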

\begin{proof}
	The proof is by induction. We first prove the lemma with $l=1$. 	Applying the Schauder estimate to \eqref{2ndequationscaletangent}, we derive the tangent estimates on $\nabla_{\tilde{V},\xi} K$, i.e.
	\begin{align}\label{eqn:schauderK}
	\norm{\nabla_{\tilde{V},\xi} K}_{C^{k+2,\alpha}_{\tilde{V}}(B_{(3c_\beta/4-\eta_1)})}\leq C \left(  \norm{\nabla_{\tilde{V},\xi} K}_{C^{0}(B_{c_\beta})} + \norm{\nabla_{\tilde{V},\xi} S}_{C^{k,\alpha}_{\tilde{V}}(B_{c_\beta})}\right).
	\end{align}
	Since $S$ is smooth, 
	\begin{align}\label{eqn:h1}
	\norm{\nabla_{\tilde{V},\xi} S}_{C^{k,\alpha}_{\tilde{V}}(B_{c_\beta})}\leq C\rho_0.
	\end{align}
	From \lemref{2abinrealcord}, 
	\begin{align}\label{eqn:K1}
	\norm{\nabla_{\tilde{V},\xi} K}_{C^{0}_{\tilde{V}}(B_{c_\beta})}\leq C\rho_0.
	\end{align}
	Putting them into the RHS. of \eqref{eqn:schauderK}, we obtain the second inequity \eqref{eqn:goodetaK} with $l=1$ in the conclusion,
	\begin{align}\label{eqn:schauderK}
	\norm{ \nabla_{\tilde{V},\xi} K}_{C^{k+2,\a}_{\tilde{V}}(B_{3c_\beta/4-\eta_1})} \leq C(k,1)\rho_0,
	\end{align}

	We apply the Schauder estimate  to \eqref{eqn:extra} with coefficients bounded from Proposition \ref{lem:goodmetric} to obtain 
	\begin{align}\label{eqn:schauder}
	\norm{\nabla_{\tilde{V},\xi} g}_{C^{k+2,\alpha}_{\tilde{V}}(B_{(3c_\beta/4-\eta_1)})}\leq C \left(  \norm{\nabla_{\tilde{V},\xi} g}_{C^{0}(B_{c_\beta})} + \norm{\nabla_{\tilde{V},\xi} K}_{C^{k+2,\alpha}_{\tilde{V}}(B_{c_\beta})}\right).
	\end{align}
	The RHS. of \eqref{eqn:schauder} is bounded, because of \eqref{eqn:xigc0} and \eqref{eqn:schauderK}, we prove the first inequality \eqref{eqn:goodeta} with $l=1$, i.e.
	\begin{align}
	\norm{ \nabla_{\tilde{V},\xi} g_{i\bar{j}}}_{C^{k+2,\a}_{\tilde{V}}(B_{3c_\beta/4-\eta_1})} \leq C(k,l)\rho_0.
	\end{align}
	
	If \eqref{eqn:goodeta} holds for $l=l_0$, we take $l_0$ times tangent derivatives on both $g$-equation \eqref{eqn:extra} and 
	$K$-equation \eqref{2ndequationscaletangent}, and the argument is similar as above. We leave it to interested readers.
\end{proof}

\begin{rem}
	We notice that \eqref{eqn:h1} follows from \eqref{eqn:xigc0}.
\end{rem}
\subsubsection{Final step of the proof of higher order estimates (\thmref{thm:interior})}
Now we are ready to the final step to prove the higher order interior estimates for the cscK cone potential $\vphi$. We will show the proof by induction on $k_3$ in the estimate \eqref{eqn:interior}.

Recall the definition of $\tilde\vphi$, i.e. $\tilde{\varphi}(\tilde{v})=\rho_0^{-2} \varphi(v_0+\tilde{v}\rho_0)$. Since $\vphi\in C^{2,\a,\b}$ (\lemref{2abinrealcord}), we see
\begin{align}
\norm{\tilde{\varphi}(\tilde{v})}_{C^0(B_{c_\beta})}\leq C \rho_0^{-2}
\label{eqn:tildeWC0}
\end{align}
and the $C^0$ norm of the first order derivative of $\tilde{\varphi}$ becomes
\begin{align}
\norm{\nabla_{\tilde{V}} \tilde{\varphi}(\tilde{v})}_{C^0(B_{c_\beta})}\leq C \rho_0^{-1}.
\label{eqn:tildeWC1}
\end{align}
%
\subsubsection{$k_3=0$}
We take derivative $\nabla_{\tilde V}$ on the rescaled equation \eqref{1stequationscale} of $\tilde\vphi$ to have, 
\begin{align}\label{eqn:dwpsi}
\triangle_g (\nabla_{\tilde{V}} \tilde{\varphi}) = \nabla_{\tilde{V}} K.
\end{align}
The coefficients and right hand side are estimated in Proposition \ref{lem:goodmetric}, as 
\begin{align*}
\norm{g_{i\bar{j}}}_{C^{k,\a}_{\tilde{V}}(B_{3c_\beta/4})} \leq C(k);\quad
\norm{K}_{C^{k,\a}_{\tilde{V}}(B_{3c_\beta/4})}\leq C(k).
\end{align*} 
So the Schauder estimate applied to \eqref{eqn:dwpsi} gives for any $k\geq 0$,
\begin{align}\label{eqn:smallk3}
\norm{\nabla_{\tilde{V}}\tilde{\varphi}}_{C^{k+2,\alpha}_{\tilde{V}}(B_{c_\beta/4})}
&\leq C \left( \norm{\nabla_{\tilde{V}}\tilde{\varphi}}_{C^0(B_{c_\beta/2})} + \norm{\nabla_{\tilde{V}} K}_{C^{k,\alpha}_{\tilde{V}}(B_{c_\beta/2})} \right) \\
&\leq C \rho_0^{-1},\nonumber
\end{align}
which implies that $$\norm{\tilde{\varphi}}_{C^{k+3,\alpha}_{\tilde{V}}(B_{c_\beta/4})} \leq C \rho_0^{-1}.$$ Rescaled back to $\vphi$, this inequality above becomes
\begin{align}\label{eqn:k30}
\norm{\varphi}_{C^{k}_{\tilde{V}}(B_{c_\beta/4})}\leq C , \quad \forall k\geq 3.
\end{align}
Thus \thmref{thm:interior} is proved with $k_3=0$, combined with the lower order estimates in \lemref{2abinrealcord}.

%

\subsubsection{$k_3=1$}
Rewrite \eqref{eqn:smallk3} in $\vphi$, it reads
\begin{align*}
\norm{\nabla_{V}\varphi}_{C^{k+2,\alpha}_{\tilde{V}}(B_{c_\beta/4})} \leq C , \quad \forall k\geq 0.
\end{align*} Particularly, 
\begin{align*}\norm{\nabla_{V,\xi}\varphi}_{C^{k+2,\alpha}_{\tilde{V}}(B_{c_\beta/4})} \leq C , \quad \forall k\geq 0.
\end{align*} The lower order estimates, $\norm{\nabla_{V,\xi}\varphi}_{C^{0,\alpha}_{\tilde{V}}(B_{c_\beta/4})} $ and $\norm{\nabla_{V,\xi}\varphi}_{C^{1,\alpha}_{\tilde{V}}(B_{c_\beta/4})} $, follow from \lemref{2abinrealcord}. So we have proved \thmref{thm:interior} with $k_3=1$.

%

\subsubsection{$k_3\geq2$}

From the boundedness of the tangential derivatives of $\varphi$ and the definition of $\tilde\vphi$:
\begin{align}\label{eqn:k32}
\norm{(\nabla_{\tilde{V},\xi})^{k_3} \tilde{\varphi}}_{C^0(B_{c_\beta})}\leq C \rho_0^{k_3-2}.
\end{align}
By Lemma 2.1 in \cite{arxiv:1609.03111}, the proof of Theorem \ref{thm:interior} for the case $k_3>1$ is reduced to the claim that
\begin{align}\label{eqn:weclaim}
\norm{(\nabla_{\tilde{V},\xi})^{k_3} \tilde{\varphi}}_{C^{l,\alpha}_{\tilde{V}}(B_{c_\beta/4})}\leq C(l) \rho_0^{k_3-2}, \quad \text{for} \quad \forall  l\geq0.
\end{align}


The rest of this section is devoted to the proof of \eqref{eqn:weclaim}, which is an induction on $k_3$. For $k_3=2$, we take one more $\nabla_{\tilde{V},\xi}$ of the equation from \eqref{eqn:dwpsi},
\begin{align*}
\triangle_g (\nabla_{\tilde{V},\xi} \tilde{\varphi}) = \nabla_{\tilde{V},\xi} K
\end{align*}
to get
\begin{align}\label{eqn:2k3}
\triangle_{g} (\nabla_{\tilde{V},\xi}^2 \tilde{\varphi}) = (\nabla_{\tilde{V},\xi} \tilde{\varphi})_{\tilde{V},i\bar{j}} \# \nabla_{\tilde{V},\xi} g^{i\bar{j}} + \nabla^2_{\tilde{V},\xi} K.
\end{align}

In order to apply the Schauder estimate, we check that: 
\begin{itemize}
	\item Proposition \ref {lem:goodmetric} tells us the coefficient $g$ and the term $K$ are both in $C^{l,\a}_{\tilde{V}}(B_{3c_\beta/4})$ for $l=1,2,\cdots$;
	\item \eqref{eqn:k32} gives us that the $C^0(B_{c_\beta})$ norm of $\nabla^2_{\tilde{V},\xi} \tilde{\varphi}$ is bounded by a constant independent of $\rho_0$;
	\item the non-homogeneous term is (by switching the order of derivatives) 
	\begin{align*}
	(\nabla_{\tilde{V},\xi} \tilde{\varphi})_{\tilde{V},i\bar{j}} \# \nabla_{\tilde{V},\xi} g^{i\bar{j}} + \nabla^2_{\tilde{V},\xi} K= \nabla_{\tilde{V},\xi} g_{i\bar{j}} \# \nabla_{\tilde{V},\xi} g^{i\bar{j}}+ \nabla^2_{\tilde{V},\xi} K,
	\end{align*}
	where the first term is estimated by Proposition \ref{lem:goodtangent},  for $l=1,2,\cdots$,
	\begin{align}\label{eqn:non1}
	\norm{\nabla_{\tilde{V},\xi} g_{i\bar{j}} \# \nabla_{\tilde{V},\xi} g^{i\bar{j}}}_{C^{l,\alpha}_{\tilde{V}}(B_{c_\beta/2})} \leq C \rho_0^2 \leq C
	\end{align}
	and the second term by Proposition \ref{lem:goodmetric} (applying to $\nabla^2_\xi K$)
	\begin{align}\label{eqn:non2}
	\norm{\nabla^2_{\tilde{V},\xi} K}_{C^{l,\alpha}_{\tilde{V}}(B_{c_\beta/2})} \leq C \rho_0^2 \leq C.
	\end{align}
	
\end{itemize}

Applying the Schauder estimate to \eqref{eqn:2k3}, we conclude the proof of \eqref{eqn:weclaim} for $k_3=2$. We repeat this argument to see that \eqref{eqn:weclaim} for any $k_3$ holds. Hence Theorem \ref{thm:interior} is completely proved.




\section{Linear theory of Lichnerowicz operators}\label{Linear theory for Lichnerowicz operator}
We now denote the cscK cone metric with $C^{2,\a,\b}$ cscK potential $\vphi$ by 
\begin{align*}
\om:=\om_{cscK}=\om_D+i\p\bar\p\vphi.
\end{align*}
Recall the cscK cone equations \eqref{P} and \eqref{2nd equ smooth} in Section \ref{cscKcone}, with $P \in C^{2,\a,\b}$, 
\begin{align*}
\left\{
\begin{aligned}
\tri_{\om} P&=\Tr_{\om}\theta-\underline S_\b,\\
\frac{\om^n}{\om_0^n}&=\frac{e^{P+h_0}}{|s|_h^{2-2\b}}.
\end{aligned}
\right.
\end{align*}

Let $Q$ be the variation of $P$ and $u$ be the variation of $\vphi$. 
The linearised equation of the cscK cone equation at the cscK cone metric $\om$ on functions $u$ and $Q$ reads
\begin{align}\label{linearizedcsck}
\left\{
\begin{aligned}
\tri_{\om} Q&=-u^{i\bar j}T_{i\bar j},\\
\tri_\om u&=Q,
\end{aligned}
\right.
\end{align}
or as a fourth order Lichnerowicz operator, 
\begin{align*}
{{\mathbb{L}\mathrm{ic}}}_{\om}(u)=\tri^2_\om u+u^{i\bar j}T_{i\bar j}.
\end{align*}

We introduce the following H\"older spaces $C_\mathbf{w}^{4,\a,\b}(\om)$ to be our solution space for \eqref{linearizedcsck}.
\begin{defn}[H\"older spaces $C_\mathbf{w}^{4,\a,\b}(\om)$] \label{cw4ab}
	We define a "weak" fourth order H\"older space with respect to a given K\"ahler cone metric $\om$,
	\begin{align}
	C_\mathbf{w}^{4,\a,\b}(\om)=\{u\in C^{2,\a,\b}\vert \tri_{\om} u \in C^{2,\a,\b}\}.
	\end{align}

\end{defn}
	We also use an appropriate normalisation condition later.
In this definition, not all 4th order derivatives of $u$ are $C^{0,\a,\b}$.
Apparently, if the reference K\"ahler cone $\om$ is classically $C^{2,\a}$ outside the divisor, on the regular part $M$, then the function in this space is $C^{4,\a}$ in the usual sense, according to the interior Schauder estimate.

We define the spaces $C^{4,\a,\b}(\om)$ in Section 2 in \cite{arxiv:1511.02410} and Section 2 in \cite{arxiv:1703.06312}, the function in which has all their 4th order derivatives in $C^{0,\a,\b}$. The idea is firstly to define a local 4th order H\"older space with respect to the flat cone metric $\om_{\b}$ near the cone points. Then we glue the local 4th order H\"older spaces together to the global spaces $C^{4,\a,\b}(\om)$ with respect to a background K\"ahler cone metric $\om$ as Proposition 2.2 in \cite{arxiv:1703.06312}. The background metric $\om$ is required to have nice geometric properties; and with such nice $\om$, we could prove $C_\mathbf{w}^{4,\a,\b}(\om)$ coincides with $C^{4,\a,\b}(\om)$.
To realise such construction, the background metric $\om$ could be chosen to be the model cone metric $\om_D$ as Proposition 5.5 in \cite{arxiv:1511.02410}, when the cone angle and the H\"older exponent satisfy the half angle condition
\begin{align}\label{anglerestriction}
0<\b<\frac{1}{2};\quad \a\b<1-2\b.
\end{align} 
The advantage of the Definition \ref{linearizedcsck} is that we don't need geometric conditions of the background metric $\om$.

In this section, we prove the Fredholm alternative theorem in $C_\mathbf{w}^{4,\a,\b}(\om)$ when $\frac{1}{2}<\b\leq1$.
\begin{thm}[Linear theory; $0<\b\leq1$]\label{Fredholm}
	Let $X$ be a compact K\"ahler manifold, $D\subset X$ a smooth divisor, $\omega$ a cscK cone metric with $C^{2,\a,\b}$ potential such that the cone angle $2\pi\beta$  and the H\"older exponent $\alpha$ satisfy 
	\begin{align*} 
	0<\b\leq1;\quad \a\b<1-\b.
	\end{align*} Assume that $f\in C^{0,\a,\b}$ with normalisation condition $\int_X f\om^n =0$. Then one of the following holds:
	\begin{itemize}
		\item Either the Lichnerowicz equation ${{\mathbb{L}\mathrm{ic}}}_{\om}(u)=f$ has a unique $C_\mathbf{w}^{4,\a,\b}(\om)$ solution.
		\item Or the kernel of ${{\mathbb{L}\mathrm{ic}}}_{\om}(u)$ generates a holomorphic vector field tangent to $D$.
	\end{itemize}
\end{thm}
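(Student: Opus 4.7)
The plan is to follow the two-step strategy indicated immediately after the theorem: first solve a perturbed bi-Laplacian equation in $C_{\mathbf{w}}^{4,\a,\b}(\om)$, then deform to the full perturbed Lichnerowicz operator by a continuity method, and finally pass from the perturbed operator to ${\mathbb{L}\mathrm{ic}}_\om$ via the standard Fredholm alternative.

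For the bi-Laplacian step I would develop a weighted $L^2$-theory on the regular part $M$ based on the Dirichlet form
\begin{align*}
B_\lambda(u,v) = \int_M (\tri_\om u)(\tri_\om v)\, \om^n + \lambda \int_M uv\, \om^n,
\end{align*}
defined on the completion of smooth compactly supported functions in the norm $\|u\|_{L^2(\om)}^2 + \|\tri_\om u\|_{L^2(\om)}^2$; for $\lambda>0$ Lax--Milgram yields a unique weak solution of $\tri_\om^2 u + \lambda u = f$. Regularity then proceeds by a two-stage Schauder bootstrap based on the second-order linear theory for K\"ahler cone metrics developed in previous work: setting $w=\tri_\om u$, the equation becomes $\tri_\om w = f - \lambda u$, so once $u\in C^{0,\a,\b}$ is obtained by Moser iteration from the $L^2$ solution, the cone Schauder estimate gives $w\in C^{2,\a,\b}$, and a second application gives $u\in C^{2,\a,\b}$; this places $u$ in $C_{\mathbf{w}}^{4,\a,\b}(\om)$ by Definition~\ref{cw4ab}.

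For the continuity step, consider
\begin{align*}
L_t(u) := \tri_\om^2 u + t\, u^{i\bar j} T_{i\bar j} + \lambda u, \quad t\in[0,1],
\end{align*}
with $T$ the $C^{0,\a,\b}$ tensor from \lemref{Ttensor}. At $t=0$ this is the perturbed bi-Laplacian just solved. Openness of the set of $t$ for which $L_t$ is invertible follows from the Fredholm property, since $L_t-L_0$ is a compact lower-order perturbation on $C_{\mathbf{w}}^{4,\a,\b}(\om)$. Closedness reduces to uniform a priori $C_{\mathbf{w}}^{4,\a,\b}$ estimates along the path: from $L_t u = f$ one reads $\tri_\om^2 u = f - \lambda u - t\, u^{i\bar j} T_{i\bar j}$, and since $u\in C^{2,\a,\b}$ together with $T\in C^{0,\a,\b}$ gives $u^{i\bar j} T_{i\bar j} \in C^{0,\a,\b}$, the same two-stage Schauder bootstrap produces a bound on $\|\tri_\om u\|_{C^{2,\a,\b}}$ uniform in $t$, provided $\|u\|_{C^0}$ is controlled via $\lambda$-coercivity.

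The main obstacle will be to make this bootstrap actually close in the \emph{weaker} space $C_{\mathbf{w}}^{4,\a,\b}$: the previous treatment \cite{arxiv:1703.06312} used $\b<\tfrac{1}{2}$, where $C^{4,\a,\b}$ controls all individual fourth derivatives, but for $\b\in[\tfrac{1}{2},1]$ the radial direction loses regularity (quantified by \corref{dmetric} and \corref{connection}), so one must manipulate only the scalar quantity $\tri_\om u$ throughout and never split it into components. Once invertibility of ${\mathbb{L}\mathrm{ic}}_\om+\lambda$ is established for some $\lambda\gg 0$, its inverse is compact on $L^2(\om)$ via a Rellich-type embedding and the Fredholm alternative for ${\mathbb{L}\mathrm{ic}}_\om$ follows, together with the normalisation $\int_X f\om^n=0$ killing the constant mode. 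For the kernel alternative, if ${\mathbb{L}\mathrm{ic}}_\om u=0$ with $u\in C_{\mathbf{w}}^{4,\a,\b}$, an integration-by-parts argument on $X$ with a tubular neighbourhood of $D$ removed, combined with the vanishing of the boundary terms (justified by the $C^{2,\a,\b}$ regularity of $u$ and the Christoffel asymptotics from \corref{connection}), yields $\bar\p\nabla^{1,0}u=0$ distributionally on $M$; hence $\nabla^{1,0}u$ is a holomorphic vector field, and its tangency to $D$ is forced by the vanishing of the normal component of the gradient on $D$ coming from the regularity class.
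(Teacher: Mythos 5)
Your strategy is the same two-step one the paper uses (perturbed bi-Laplacian $\to$ continuity method $\to$ Fredholm alternative), but your realisation differs in a substantive way and it has one genuine gap.

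On the bi-Laplacian step you work with the Dirichlet form $\int(\tri_\om u)(\tri_\om v)\om^n+\lambda\int uv\,\om^n$ for $\tri_\om^2u+\lambda u=f$, whereas the paper uses $\mathcal{B}^K(u,\eta)=\int_M\bigl[(i\p\bar\p u,i\p\bar\p\eta)_\om+K(\p u,\p\eta)_\om\bigr]\om^n$ for the second-order perturbation $(\tri_\om-K)\tri_\om u=f$. Both perturbations make Lax--Milgram apply, but the paper's form directly controls all mixed Hessian components and so lands in the space $H^{2,\b}_{\mathbf{w},0}(\om)$ by construction; your form only controls $\|\tri_\om u\|_{L^2}$ and would need the separate global $L^2$-estimate (\lemref{l2pureestimate}) to recover $\|u\|_{H^{2,\b}_{\mathbf{w}}}$. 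You will also need that estimate anyway for closedness, so the two routes are morally equivalent, but the paper's integrates the weak solution and Hessian control in one step, and its regularity argument is cleaner: rather than running Moser iteration on the weak solution, it solves $(\tri_\om-K)v=f$ by the second-order cone Schauder theory directly and then identifies $v=\tri_\om u$ through the weak formulation. Your closedness estimate along the continuity path is structurally the same as \thmref{closedness}: two Schauder applications reducing to an $L^2$-bound, then coercivity (the paper encodes the size condition as $K>1+\|T\|_{L^\infty}+2C_P$, where your $\lambda$ would have to be similarly large, and the handling of $\int u^{i\bar j}T_{i\bar j}u\,\om^n$ by the cutoff integration-by-parts \lemref{IbPRic} is indispensable because $T$ is only $C^{0,\a,\b}$).

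The gap is in the kernel alternative. You justify the vanishing of boundary terms by "the $C^{2,\a,\b}$ regularity of $u$ and the Christoffel asymptotics from \corref{connection}," but the boundary integrand contains $\nabla^g_1\nabla^g_1 u=\p_1\p_1u-\Gamma^j_{11}\p_j u$, and $C^{2,\a,\b}$ regularity only controls the mixed second derivatives $\p_a\p_{\bar b}u$ and the first derivatives, \emph{not} the pure holomorphic second derivative $\p_1\p_1u$; \corref{connection} only handles the $\Gamma^j_{11}\p_j u$ piece. The control of $\p_1\p_1u$ is exactly where the paper needs Proposition~\ref{nnu}: one has to exploit $u\in C^{4,\a,\b}_{\mathbf{w}}(\om)$ (i.e.\ $\tri_\om u\in C^{2,\a,\b}$) together with the rescaled interior Schauder estimate relative to the cscK cone metric (Proposition~\ref{lem:goodmetric}) to obtain $|\nabla^g\nabla^g u|_g=O(r^{\a\b-\b})$, and this growth rate is precisely what makes the cutoff term $I_\eps$ in \lemref{hvfonm} vanish. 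Without that asymptotic, the integration by parts giving $\int_M|\nabla^g\nabla^g u|_g^2\,\om^n=0$ is not justified for $\b\geq\frac{1}{2}$, which is the whole range the theorem is extending to. Your concluding tangency argument is fine: $g^{1\bar j}u_{\bar j}=O(|z^1|^{1-\b})\to 0$ on $D$ as in the last lemma of Section~\ref{Reductivity of automorphism group}.
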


With half angle restriction \eqref{anglerestriction}, 
the Fredholm alternative theorem above is proved in \cite{arxiv:1703.06312} in the better space $C^{4,\a,\b}(\om)$. 

In Section \ref{Asymptotics of functions}, we prove an asymptotic of functions in $C_\mathbf{w}^{4,\a,\b}(\om)$ for further use.

The idea of the proof was carried out in \cite{arxiv:1703.06312} when the cone angle is less than half. When angle is larger than half, we use the weaker space $C_\mathbf{w}^{4,\a,\b}(\om)$. The new observation is that we could prove appropriate a priori estimates for partial fourth order derivatives, nut not the full control of all 4th order derivatives.

We first consider the $K$-bi-Laplacian operator $$(\tri_{\om}-K) \tri_\om u$$ and prove the existence and uniqueness of the weak solution in the Sobolev space 
$H_{\mathbf{w},0}^{2,\b}(\om)$. We next improve the regularity of the weak solution to H\"older spaces $C_\mathbf{w}^{4,\a,\b}(\om)$, using the 2nd Schauder estimate. These two steps will be done in Section \ref{Bi-Laplacian equations: existence and regularity}. 

Then we define a continuity path 
\begin{align*}
L_t^Ku
:=(\tri_{\om}-K) \tri_\om u+t  u^{i\bar j}T_{i\bar j}(\om)
\end{align*} connecting the $K$-bi-Laplacian operator (t=0) and the $K$-Lichnerowicz operator (t=1) \begin{align*}
{{\mathbb{L}\mathrm{ic}}}_{\om}(u)-K\tri_\om u,
\end{align*} and prove uniform a priori $C_\mathbf{w}^{4,\a,\b}(\om)$ estimates along the path in Section \ref{Continuity method}. 

At last, the theorem follows from the functional analysis theorems for the usual continuity method.

The difficulties come from the H\"older space $C_\mathbf{w}^{4,\a,\b}(\om)$, which is the most natural one for the solution to stay, but the function $u$ in this space have no a priori boundedness on the pure 2nd derivatives, $\p_1\p_1 u$. That causes the failure of the Ricci identity of $u$, i.e. 
\begin{align*}
\int_M |\p\p u|^2\om^n=\int_M |\p\bar \p u|^2\om^n-\int_MRic(\p u, \p u)\om^n.
\end{align*} We overcome his problem by using the RHS. of this identity to define a bilinear form. Fortunately, it also leads to a weak Sobolev spaces $H_{\mathbf{w},0}^{2,\b}(\om)$ and a global $L^2$-estimates. Then the 2nd Schauder estimate and compactness of the 2nd H\"older space $C^{2,\a,\b}$ are applied to obtain the $C_\mathbf{w}^{4,\a,\b}(\om)$ estimate.

We close this section by showing that our Fredholm alternative for Lichnerowicz operator, i.e. Theorem \ref{Fredholm} is reduced to \thmref{K-Lichnerowicz equation}.
\begin{proof}[Proof of Theorem \ref{Fredholm}]
	Due to \thmref{K-Lichnerowicz equation}, the $K$-Lichnerowicz operator \begin{align*}{{\mathbb{L}\mathrm{ic}}}^K_{\omega}(u)={{\mathbb{L}\mathrm{ic}}}_{\omega}(u) -K \tri_\om u
	\end{align*} is invertible and the inverse map \begin{align*}({{\mathbb{L}\mathrm{ic}}}^K_{\om})^{-1}: C_\mathbf{w}^{4,\a,\b}(\om)\rightarrow C_\mathbf{w}^{4,\a,\b}(\om)
\end{align*} is compact. 
	Now, we consider the Lichnerowicz equation, i.e
	\begin{align*}
	{{\mathbb{L}\mathrm{ic}}}_{\omega}(u)={{\mathbb{L}\mathrm{ic}}}_\om^{K}(u)+K \tri_\om u=f.
	\end{align*}
	Actually, this is equivalent, after taking $({{\mathbb{L}\mathrm{ic}}}^K_{\om})^{-1}$, 
	\begin{align*}
	u+K  ({{\mathbb{L}\mathrm{ic}}}_\om^{K})^{-1} \tri_\om u= ({{\mathbb{L}\mathrm{ic}}}_\om^{K})^{-1}f.
	\end{align*}
	According to \thmref{K-Lichnerowicz equation}, the mapping  \begin{align*}\mathfrak{T}:=-K  ({{\mathbb{L}\mathrm{ic}}}^K_{\om})^{-1}\tri_\om : C_\mathbf{w}^{4,\a,\b}(\om)\rightarrow C_\mathbf{w}^{4,\a,\b}(\om)
	\end{align*} is compact, since \begin{align*}\tri_\om : C_\mathbf{w}^{4,\a,\b}(\om)\rightarrow C^{2,\a,\b}(\om)
	\end{align*} and the identity map $$C^{2,\a,\b}(\om)\rightarrow C^{0,\a,\b}(\om)$$ is compact. Thus Theorem \ref{Fredholm} follow from the Fredholm alternative from functional analysis theory.
\end{proof}
\subsection{Bi-Laplacian equations: existence and regularity}\label{Bi-Laplacian equations: existence and regularity}
We will prove solutions to the following $K$-bi-Laplacian equation 
in $C_\mathbf{w}^{4,\a,\b}(\om)$,
\begin{align}\label{KbiLaplacian equation}
(\tri_{\om}-K) \tri_\om u=f.
\end{align}
The positive constant $K$ will be determined later in \propref{weakersolution}.
\begin{thm}[$K$-bi-Laplacian equation]\label{K-bi-Laplacian}
	Assume that $\om=\om_D+i\p\bar\p\vphi$ is a K\"ahler cone metric with $\vphi\in C^{2,\a,\b}$, $f\in C^{0,\a,\b}$, $0<\b\leq 1$ and the H\"older exponent satisfies $\a\b<1-\b$. Suppose that $K> C_P+1$. Then there exists unique $C_\mathbf{w}^{4,\a,\b}(\om)$ solution to equation \eqref{KbiLaplacian equation}. 
\end{thm}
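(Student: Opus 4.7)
The plan is to follow the two-step program outlined in the introduction to this section: first produce a weak solution in a Sobolev space $H_{\mathbf w,0}^{2,\b}(\om)$ adapted to the cone geometry, then bootstrap to $C_{\mathbf w}^{4,\a,\b}(\om)$ regularity via the second order Schauder theory for K\"ahler cone metrics. I would set up the function space by completing smooth functions on $M$ (with mean zero with respect to $\om^n$) under a norm whose top order piece is essentially $\Vert \tri_\om u\Vert_{L^2}$ together with $\Vert\p u\Vert_{L^2}$. The point, emphasised by the author, is that in the cone regime one has no control on the pure second derivatives $\p\p u$, so the Ricci identity $\int |\p\p u|^2 \om^n = \int |\p\bar\p u|^2 \om^n - \int \mathrm{Ric}(\p u,\bar\p u)\om^n$ cannot be used to pass from $\tri_\om u$ to a full $H^2$-norm. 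Instead, the right-hand side of this identity is available as a bilinear expression because $\mathrm{Ric}(\om) = T + 2\pi(1-\b)[D]$ with $T\in C^{0,\a,\b}$ by Lemma \ref{Ttensor}; this is precisely what the weak formulation needs.

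Next, I would introduce the natural bilinear form associated to the operator $(\tri_\om - K)\tri_\om$ by formally integrating twice against a test function $v$:
\begin{align*}
B(u,v) = \int_X \tri_\om u \cdot \overline{\tri_\om v}\, \om^n + K\int_X \langle \p u, \p v\rangle_\om\, \om^n\,.
\end{align*}
Continuity of $B$ on $H_{\mathbf w,0}^{2,\b}(\om)$ is immediate from Cauchy--Schwarz. For coercivity I would combine the Poincar\'e inequality on $(X,\om)$, giving $\Vert u\Vert_{L^2}^2 \leq C_P \Vert \p u\Vert_{L^2}^2$ on the mean-zero subspace, with the standing hypothesis $K > C_P + 1$, which forces $B(u,u)$ to dominate the full norm of $H_{\mathbf w,0}^{2,\b}(\om)$. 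The Lax--Milgram theorem then yields a unique weak solution $u\in H_{\mathbf w,0}^{2,\b}(\om)$ of $B(u,v) = \int_X f\, \bar v\, \om^n$ for every test function $v$, and hence a distributional solution of the fourth order equation \eqref{KbiLaplacian equation}.

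For the second step I would set $v := \tri_\om u$ and reinterpret the fourth order equation as the pair of second order equations $(\tri_\om - K) v = f$ and $\tri_\om u = v$. The first is a second order elliptic equation with $C^{0,\a,\b}$ right-hand side whose leading operator has cone-type coefficients coming from the $C^{2,\a,\b}$ metric $\om$; the second order Schauder theory for K\"ahler cone metrics, already used in \cite{MR3488129} and the author's earlier works \cite{arxiv:1511.02410,arxiv:1703.06312}, then promotes the weak solution $v$ first to $L^\infty$ via Moser iteration and then to $v\in C^{2,\a,\b}$. Applying the same Schauder theory to $\tri_\om u = v\in C^{0,\a,\b}$ gives $u\in C^{2,\a,\b}$, and the identity $\tri_\om u = v \in C^{2,\a,\b}$ together with Definition \ref{cw4ab} immediately places $u$ in $C_{\mathbf w}^{4,\a,\b}(\om)$. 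Uniqueness of the solution in this class follows from uniqueness in the weak class, which in turn follows from coercivity of $B$.

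The main obstacle is setting up the weak space $H_{\mathbf w,0}^{2,\b}(\om)$ carefully enough that the formal bilinear form $B$ is well defined, density of smooth test functions holds up to the divisor, and weak solutions of $B(u,\cdot) = \int f\cdot\,\om^n$ are genuine distributional solutions of \eqref{KbiLaplacian equation}. This is where the H\"older exponent restriction $\a\b < 1-\b$ matters: it ensures integrability of the terms one picks up when integrating by parts against a cone metric, so that no unexpected boundary contribution along $D$ is generated. Once this foundational $L^2$-theory for fourth order cone-coefficient operators is in place, the rest of the argument is a routine coercivity-plus-regularity package.
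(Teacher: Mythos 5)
Your proof is essentially correct and follows the same two-stage outline as the paper (Lax--Milgram for a weak solution, then second-order Schauder bootstrapping), but you choose a genuinely different bilinear form, which is worth pointing out. You formally integrate by parts twice, arriving at
\begin{align*}
B(u,v)=\int_M \tri_\om u\,\tri_\om v\,\om^n+K\int_M(\p u,\p v)_\om\,\om^n,
\end{align*}
so the top-order piece controlled by $B$ is only $\Vert\tri_\om u\Vert_{L^2(\om)}$. The paper instead integrates by parts once and works with $\mathcal{B}^K(u,\eta)=\int_M[(i\p\bar\p u,i\p\bar\p\eta)_\om+K(\p u,\p\eta)_\om]\om^n$, whose top-order piece is the full mixed Hessian $\sum_{a,b}\Vert\p_a\p_{\bar b}u\Vert_{L^2(\om)}$, i.e.\ the semi-norm of $H^{2,\b}_{\mathbf w}(\om)$ as defined in the paper. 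These two bilinear forms agree on smooth, compactly supported functions on $M$ but need not agree on the cone Sobolev space, since passing between $\int|\p\bar\p u|^2\om^n$ and $\int|\tri_\om u|^2\om^n$ requires an integration by parts that could in principle leak a contribution at $D$; by Cauchy--Schwarz one only has $\int|\tri_\om u|^2\om^n\leq n\int|\p\bar\p u|^2\om^n$. Your form is therefore \emph{weaker}, and correspondingly your Hilbert space is a priori larger than the paper's $H^{2,\b}_{\mathbf w,0}(\om)$; your statement that $B(u,u)$ dominates the full norm of $H^{2,\b}_{\mathbf w,0}(\om)$ is not accurate as written, since $B$ does not control the individual mixed second derivatives, only their trace. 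For the present theorem this is harmless --- the coercivity and Lax--Milgram arguments run verbatim, and your derivation that $v:=\tri_\om u$ weakly solves $(\tri_\om-K)v=f$ is a clean shortcut (the paper has to perform the extra integration by parts from $(i\p\bar\p u,i\p\bar\p\eta)_\om$ to $\tri_\om u\,\tri_\om\eta$ inside Proposition \ref{Schauderweakomega}). What the paper's stronger form buys is the full-Hessian $L^2$ control of $H^{2,\b}_{\mathbf w}$, which is not needed here but is exactly what is invoked (together with Lemma \ref{l2pureestimate}) in the continuity path estimate of Theorem \ref{closedness} for the $K$-Lichnerowicz operator, where the term $\int u^i T_{i\bar j}u^{\bar j}\om^n$ must be absorbed. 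One further stylistic deviation: rather than bootstrapping $v=\tri_\om u\in L^2$ up to $C^{2,\a,\b}$ via Moser iteration and Schauder, the paper solves $(\tri_\om-K)v=f$ for a $C^{2,\a,\b}$ function $v$ directly by the second-order linear theory of \cite{MR2975584} and then identifies $v=\tri_\om u$ by a uniqueness argument, which is a little more economical.
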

\begin{proof}
	The proof is divided into the existence part (Proposition \ref{weakersolution}) and the regularity part (Proposition \ref{Schauderweakomega}).
\end{proof}

We first need to define the proper notion of week solution for equation \eqref{KbiLaplacian equation}.
Given a K\"ahler cone metric, its volume element is an $L^p$ function (for some $p\geq 1$) with respect to a smooth metric and gives rise to a measure $\om^n$ on $M$. We will use the following Sobolev spaces and their embedding theorems with respect to $\om$.
\begin{defn}[Sobolev spaces $W^{1,p,\b}(\om)$] 
	For a K\"ahler cone metric $\omega$,  the Sobolev spaces $W^{1,p,\b}(\om)$ for $p\geq 1$
	are defined with respect to $\om$. The $W^{1,p,\b}(\om)$ norm is 
	\begin{align*}
	\Vert u \Vert_{W^{1,p,\b}(\om)}&=\left(\int_M \vert u \vert^p + |\p u|_\om^p\om^n\right)^{1/p}.
	\end{align*}
\end{defn}

\begin{defn}
	We define the Sobolev space $H^{1,\b}:=W^{1,2,\b}(\om)$ and $H_{0}^{1,\b}=\{u\in H^{1,\b}\vert \int_M u \om^n=0\}$. The Sobolev norm remains the same. 
\end{defn}

\begin{lem}[Sobolev inequality]\label{w1pbSobolev imbedding theorem}
	Assume that $u\in W^{1,p,\b}(\om)$. If $p< 2n$, then there exists a constant $C$ such that
	\begin{align*}
	\Vert u\Vert_{L^{q}(\om)}\leq C\Vert u\Vert_{W^{1,p,\b}(\om)},
	\end{align*}for any $q\leq \frac{2np}{2n-p}$.
\end{lem}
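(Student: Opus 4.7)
The plan is to reduce this weighted Sobolev embedding to the classical Euclidean one via quasi-isometry and an explicit change of coordinates in cone charts. Since $\om = \om_D + i\p\bar\p\vphi$ with $\vphi\in C^{2,\a,\b}$ is a K\"ahler cone metric, quasi-isometry to the model metric gives a constant $C_0>0$ such that $C_0^{-1}\om_D\leq\om\leq C_0\om_D$ pointwise on $M$. Consequently the norms $\Vert\cdot\Vert_{L^q(\om)}$, $\Vert\cdot\Vert_{W^{1,p,\b}(\om)}$ are equivalent to their counterparts with respect to $\om_D$, and it suffices to prove the inequality for $\om_D$.

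Next I would cover $X$ by finitely many coordinate charts $\{U_\alpha\}$: charts disjoint from $D$ on which $\om_D$ is a smooth Riemannian metric, and cone charts $\{V_\alpha\}$ near $D$ on which $\om_D$ is quasi-isometric to the flat cone metric $\om_{cone}$ of \eqref{flat cone}. Fixing a partition of unity $\{\rho_\alpha\}$ subordinate to this cover, one decomposes $u=\sum\rho_\alpha u$ and estimates each piece separately. On charts away from $D$ the statement is the classical Sobolev inequality on a Euclidean ball in $\R^{2n}$. The only nontrivial contributions come from the cone charts.

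On a cone chart $V_\alpha$, use polar coordinates $z^1=re^{i\theta}$ with $\theta\in[0,2\pi)$, and define the map
\[
\Phi:(r,\theta,z^2,\ldots,z^n)\mapsto(r\cos(\b\theta),r\sin(\b\theta),z^2,\ldots,z^n),
\]
which identifies the cone chart with a wedge of angle $2\pi\b$ inside $\R^2\times\C^{n-1}\cong\R^{2n}$. A direct computation shows that $\Phi^{\ast}(\om_{Euc})=\om_{cone}$ as Riemannian metrics, so that both the volume element $\om_{cone}^n$ and the norm $|\p u|_{\om_{cone}}$ pull back to their standard Euclidean analogues. Thus $\rho_\alpha u\circ\Phi^{-1}$ lies in $W^{1,p}$ of an open subset of $\R^{2n}$ with norm equivalent to $\Vert\rho_\alpha u\Vert_{W^{1,p,\b}(\om_{cone};V_\alpha)}$, and the classical Sobolev embedding $W^{1,p}(\R^{2n})\hookrightarrow L^q(\R^{2n})$ for $q\leq\tfrac{2np}{2n-p}$ applies verbatim. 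Summing over charts and invoking the quasi-isometry $\om\sim\om_D$ yields the stated global inequality.

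The step most prone to technical care is the verification that $\Phi$ is a genuine bi-Lipschitz identification of metric-measure spaces (not a diffeomorphism of complex manifolds, which it is not): one must check that the wedge image together with its identified boundary angles produces a Lipschitz domain in $\R^{2n}$ to which the classical Sobolev inequality applies, and that truncation/partition of unity interacts correctly with the weighted gradient. These are local checks that do not require any cscK hypothesis on $\om$; the argument is purely metric and uses only the quasi-isometric comparison with $\om_{cone}$.
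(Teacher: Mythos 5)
The paper states this lemma without proof, so there is nothing to compare against line by line; your strategy — quasi-isometric reduction to $\om_D$, partition of unity, and a change of variables flattening the cone charts — is the standard and correct route. However, the explicit map you write down is wrong, and the error sits exactly at the step your whole argument hinges on. Your $\Phi$ sends $(r,\theta,\xi)$ to $(r\cos(\b\theta),r\sin(\b\theta),\xi)$, i.e.\ it rescales the angle but keeps the radius. Computing in polar coordinates, $\om_{cone}$ restricted to the $z^1$-plane is $\b^2 r^{2\b-2}(dr^2+r^2d\theta^2)$, while the pullback of the Euclidean metric under your $\Phi$ is $dr^2+\b^2 r^2 d\theta^2$. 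The ratio of the radial coefficients is $\b^2 r^{2\b-2}$, which blows up as $r\to 0$ for $\b<1$, so $\Phi^{\ast}(\om_{Euc})\neq\om_{cone}$ and the two are not even quasi-isometric near the divisor. The claim "a direct computation shows $\Phi^{\ast}(\om_{Euc})=\om_{cone}$" is therefore false as stated, and the transfer of $W^{1,p}$ and $L^q$ norms to the wedge fails.

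The fix is to use the paper's own lifted coordinate $w=(z^1)^{\b}$, i.e.\ $(r,\theta)\mapsto (r^{\b}\cos(\b\theta),\,r^{\b}\sin(\b\theta))$; then $|dw|^2=\b^2|z^1|^{2\b-2}|dz^1|^2$ and the identity you want does hold exactly. With that correction the rest of your outline goes through: the image is a wedge of angle $2\pi\b<2\pi$ times $\C^{n-1}$, which is a Lipschitz domain, so the classical $W^{1,p}\hookrightarrow L^{q}$ embedding for $q\leq \tfrac{2np}{2n-p}$ applies on each piece; the partition-of-unity terms $u\,\p\rho_\alpha$ are controlled because $|\p\rho_\alpha|_{\om}$ is bounded ($\om$ dominates a smooth metric); and one should add a sentence noting that the codimension-two edge $\{z^1=0\}$ is removable for $W^{1,p}$, so the transported function genuinely lies in the Sobolev space of the wedge. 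None of this needs the cscK hypothesis, as you correctly observe.
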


\begin{lem}[Poincar\'e inequality]\label{Poincareinequality}
	There is a constant $C_P$ such that for any $u\in H_{0}^{1,\b}=\{u\in H^{1,\b}\vert \int_M u \om^n=0\}$,
	\begin{align*}
	\Vert u\Vert_{L^2(\om)} \leq C_P \Vert \p u\Vert_{L^2(\om)}. 
	\end{align*}
\end{lem}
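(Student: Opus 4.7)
The plan is to argue by contradiction using the Sobolev embedding of Lemma \ref{w1pbSobolev imbedding theorem}. Suppose the Poincar\'e inequality fails. Then one can produce a sequence $u_k \in H^{1,\beta}_{0}$ with $\int_M u_k \om^n = 0$, $\Vert u_k \Vert_{L^2(\om)} = 1$, and $\Vert \p u_k \Vert_{L^2(\om)} \to 0$. In particular $\{u_k\}$ is uniformly bounded in $H^{1,\beta}$, and a subsequence (not relabelled) converges weakly in $H^{1,\beta}$ to some limit $u_\infty$.

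The core of the argument is to upgrade this weak convergence to strong convergence in $L^2(\om)$ via a Rellich--Kondrachov compactness argument adapted to the K\"ahler cone metric $\om$. Lemma \ref{w1pbSobolev imbedding theorem} applied with $p = 2$ (so $p < 2n$ automatically for $n \geq 2$, and the one-dimensional case is handled directly using $p < 2$) gives the continuous embedding $H^{1,\beta} \hookrightarrow L^q(\om)$ for some $q > 2$. Covering $X$ by finitely many cone charts and ordinary coordinate charts, working in each cone chart against the flat model $\om_{cone}$ (quasi-isometric to $\om$), and combining the above $L^q$-bound with an elementary truncation/mollification argument away from the divisor, one obtains the compact embedding $H^{1,\beta} \hookrightarrow L^2(\om)$. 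Consequently, up to a further subsequence, $u_k \to u_\infty$ strongly in $L^2(\om)$, so $\Vert u_\infty \Vert_{L^2(\om)} = 1$ and $\int_M u_\infty \om^n = 0$.

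To conclude, lower semicontinuity of the norm under weak convergence gives
\begin{align*}
\Vert \p u_\infty \Vert_{L^2(\om)} \leq \liminf_{k \to \infty} \Vert \p u_k \Vert_{L^2(\om)} = 0,
\end{align*}
hence $\p u_\infty = 0$ in the distributional sense on $M$. Since $D$ has real codimension two in $X$, the regular part $M = X \setminus D$ is connected, so $u_\infty$ must be (almost everywhere equal to) a constant on $M$. The normalisation $\int_M u_\infty \om^n = 0$ then forces $u_\infty \equiv 0$, contradicting $\Vert u_\infty \Vert_{L^2(\om)} = 1$.

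The main technical obstacle is the compact embedding $H^{1,\beta} \hookrightarrow L^2(\om)$. Away from $D$ the metric $\om$ is a smooth K\"ahler metric and classical Rellich--Kondrachov applies, so the only issue is a tubular neighbourhood of the divisor. There, pulling back by the branched cover $w \mapsto w^{1/\beta}$ converts the flat cone factor to a standard Euclidean factor, reducing the compactness statement in a cone chart to the classical one on a Euclidean ball (one must verify the pullback controls the measure $\om^n$, which is straightforward since $\om$ and $\om_{cone}$ are quasi-isometric and the pullback of $\om_{cone}$ is the flat Euclidean metric up to a bounded factor). Alternatively, interpolating the uniform $L^q$-bound ($q > 2$) from the Sobolev embedding with a sharper tightness bound near $D$ suffices.
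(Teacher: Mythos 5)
The paper states this Poincar\'e inequality without proof, so there is no argument in the source to compare yours against; it is invoked as a standard tool for K\"ahler cone metrics, much like the Sobolev inequality of Lemma~\ref{w1pbSobolev imbedding theorem}. Your compactness/contradiction scheme is the canonical way to supply a proof, and the structure is sound: normalise a hypothetical counterexample sequence, extract a weak $H^{1,\b}$-limit $u_\infty$, upgrade to strong $L^2(\om)$ convergence via a compact embedding, and then use weak lower semicontinuity together with connectedness of $M$ to force $u_\infty$ to be a nonzero constant with zero mean.

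The only point worth tightening is the phrase \emph{``pulling back by the branched cover $w\mapsto w^{1/\beta}$''}. For the full range $\b\in(0,1]$ allowed here, and in particular for irrational $\b$, this is not a well-defined finite branched cover of a punctured disk. The precise statement is the sector-by-sector coordinate change $v^1=(z^1)^\b$ — exactly the lifted holomorphic coordinate $LH$ the paper introduces in Definition~\ref{Lifted holomorphic coordinates} — which carries $\b^2|z^1|^{2(\b-1)}|dz^1|^2$ to the Euclidean $|dv^1|^2$ on each angular sector, the total angle becoming $2\pi\b$; compactness of $H^1\hookrightarrow L^2$ on a flat Euclidean cone of angle at most $2\pi$ is then classical. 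Alternatively, one can bypass the coordinate change entirely and argue tightness directly, as you sketch at the end: Lemma~\ref{w1pbSobolev imbedding theorem} (applied with $p=2$ when $n\geq 2$, and with any $p\in(1,2)$ when $n=1$, using $W^{1,2,\b}\subset W^{1,p,\b}$ by finite volume) gives a uniform $L^q(\om)$-bound with $q>2$, and since $\Vol_\om$ of a tubular neighbourhood of $D$ tends to zero, H\"older yields $\int_{D_\eps}|u_k|^2\om^n\to 0$ uniformly in $k$; combining this with classical Rellich--Kondrachov on compact subsets of $M$ gives the compact embedding $H^{1,\b}(\om)\hookrightarrow L^2(\om)$. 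Your remarks on the quasi-isometry $\om\simeq\om_{cone}$ controlling the measure and on $u_\infty$ being constant on the connected set $M$ are both correct. Subject to rephrasing the branched-cover step as above, the proof is complete.
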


\begin{defn}[Sobolev spaces $H^{2,\b}_{\mathbf{w} }(\om)$] 
We define 2nd Sobolev space $H^{2,\b}_{\mathbf{w} }$ with semi-norm $$[ u ]_{H^{2,\b}_{\mathbf{w} }(\om)}=\sum_{1\leq a,b\leq n}\Vert \p_a\p_{\bar b}u\Vert_{L^2(\om)}$$ and norm $$|| u ||_{H^{2,\b}_{\mathbf{w} }(\om)}=|| u ||_{H^{1,\b}(\om)}+[ u ]_{H^{2,\b}_{\mathbf{w} }(\om)}.$$ 
\end{defn} 


We define the bilinear form on $H_{\mathbf{w} ,0}^{2,\b}(\om)$ by
\begin{align*}
\mathcal{B}^K(u,\eta):=\int_M[ (i\p\bar\p u ,i\p\bar\p\eta)_\om+K (\p u, \p \eta)_\om]\om^n
\end{align*}
for all $u,\eta\in H_{\mathbf{w} ,0}^{2,\b}(\om)$, 
and define $u$ to be the $H_{\mathbf{w} ,0}^{2,\b}(\om)$-weak solution of the $K$-bi-Laplacian equation \eqref{KbiLaplacian equation}, if it satisfies the following identity for all $\eta\in H_{\mathbf{w} ,0}^{2,\b}(\om)$,
\begin{align*}
\mathcal{B}^K(u,\eta)=\int_M f\eta\om^n.
\end{align*}

\begin{prop}[$H_{\mathbf{w},0}^{2,\b}(\om)$-weak solution]\label{weakersolution}
	Assume that $\om$ is a K\"ahler cone metric. Suppose that $K> C_P+1$ and $f$ is in the dual space $ (H_{\mathbf{w},0}^{2,\b}(\om))^\ast$.
	Then the $K$-bi-Laplacian equation \eqref{KbiLaplacian equation} has a unique weak solution $u\in H_{\mathbf{w},0}^{2,\b}(\om)$.
\end{prop}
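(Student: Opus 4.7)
The plan is to deduce existence and uniqueness from the Lax--Milgram theorem applied to the bilinear form $\mathcal{B}^K$ on the Hilbert space $H_{\mathbf{w},0}^{2,\b}(\om)$. First I would equip $H_{\mathbf{w}}^{2,\b}(\om)$ with the natural inner product
\[
(u,v)=\int_M\bigl[u v+(\p u,\p v)_\om+(i\p\bar\p u,i\p\bar\p v)_\om\bigr]\om^n,
\]
and verify that the induced norm is equivalent to the one fixed in the definition: the seminorm $\sum_{a,b}\|\p_a\p_{\bar b}u\|_{L^2}$ and the Hilbert seminorm $(\sum_{a,b}\|\p_a\p_{\bar b}u\|_{L^2}^2)^{1/2}$ differ by at most a dimensional factor via Cauchy--Schwarz. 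Completeness of $H_{\mathbf{w},0}^{2,\b}(\om)$ would then be obtained by the usual argument: a Cauchy sequence converges in $H^{1,\b}(\om)$ and its mixed second derivatives converge in $L^2(\om)$, the limits agree distributionally, and the normalization $\int u\,\om^n=0$ passes to the $L^2$-limit.

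With the Hilbert structure in place, boundedness of $\mathcal{B}^K$ on $H_{\mathbf{w},0}^{2,\b}(\om)\times H_{\mathbf{w},0}^{2,\b}(\om)$ is immediate from Cauchy--Schwarz in $L^2(\om)$. The essential step is coercivity: pointwise in a local $g$-orthonormal frame one has $|i\p\bar\p u|_\om^2=\sum_{a,b}|u_{a\bar b}|^2$, so that $\int_M|i\p\bar\p u|_\om^2\om^n$ is comparable, up to a dimensional constant, to the square of the seminorm $[u]_{H_{\mathbf{w}}^{2,\b}}$. Combining with the Poincar\'e inequality (\lemref{Poincareinequality}) applied to $u\in H_0^{1,\b}(\om)$ and choosing $K$ as in the hypothesis, one absorbs $\|u\|_{L^2}^2$ into $K\|\p u\|_{L^2}^2$ and concludes
\[
\mathcal{B}^K(u,u)=\int_M|i\p\bar\p u|_\om^2\om^n+K\|\p u\|_{L^2}^2\ge c\,\|u\|_{H_{\mathbf{w}}^{2,\b}(\om)}^2.
\]
Since $f\in(H_{\mathbf{w},0}^{2,\b}(\om))^\ast$, the map $\eta\mapsto\langle f,\eta\rangle$ is a bounded linear functional on $H_{\mathbf{w},0}^{2,\b}(\om)$, and Lax--Milgram yields a unique $u$ with $\mathcal{B}^K(u,\eta)=\langle f,\eta\rangle$ for every test function $\eta$. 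Uniqueness is already contained in coercivity: $\mathcal{B}^K(u,u)=0$ forces $\|u\|_{H_{\mathbf{w}}^{2,\b}}=0$.

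The main obstacle I anticipate is the careful accounting behind the coercivity constant. The Poincar\'e inequality provides $\|u\|_{L^2}^2\le C_P^2\|\p u\|_{L^2}^2$, so the balance between the $\int|i\p\bar\p u|^2$ term, which controls the second-order seminorm, and the $K\|\p u\|^2$ term, which must control $\|u\|_{L^2}^2+\|\p u\|_{L^2}^2$, has to be made precise in order to justify the threshold on $K$ stated in the hypothesis. A secondary technical point is verifying that the pointwise comparison $|i\p\bar\p u|_\om^2\asymp\sum_{a,b}|u_{a\bar b}|^2$ is truly independent of the singular behaviour of $\om$ along $D$; this reduces to an algebraic fact in a $g$-orthonormal frame, so the cone singularities enter only through the measure $\om^n$ and not through the comparison constants. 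The rest of the argument is formal.
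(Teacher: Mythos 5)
Your proposal is correct and follows essentially the same route as the paper: boundedness of $\mathcal{B}^K$ by Cauchy--Schwarz, coercivity on $H_{\mathbf{w},0}^{2,\b}(\om)$ by absorbing the lower-order terms via the Poincar\'e inequality under the stated threshold on $K$, and then Lax--Milgram. The extra care you take with the Hilbert-space structure and the frame-independence of $|i\p\bar\p u|_\om^2$ is sound but not a different argument.
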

\begin{proof}
	The boundedness of $\mathcal{B}^K$ follows from its definition by using Cauchy-Schwartz inequality,
	\begin{align*}
	\mathcal{B}^K(u,\eta)
	&\leq \Vert u\Vert_{H^{2,\b}_{\mathbf{w} }(\om)}\Vert \eta\Vert_{H^{2,\b}_{\mathbf{w} }(\om)}
	+K \Vert u\Vert_{H^{1,\b}(\om)}\Vert \eta\Vert_{H^{1,\b}(\om)}.
	\end{align*}
	Then we prove the coercivity. The definition of the bilinear form $\mathcal{B}^K$ implies that
	\begin{align*}
	\Vert u\Vert ^2_{H_{\mathbf{w} }^{2,\b}(\om)}
	&	= \int_M(|\p\bar\p u|^2+|\p u|_\om^2+|u|^2)\om^n\\
	&	=\mathcal{B}^K(u,u)+\int_M[(1-K)|\p u|_\om^2+|u|^2]\om^n.
	\end{align*}
	Then, with Poincar\'e inequality (\lemref{Poincareinequality}), the RHS above is controlled by
	\begin{align*}
	\mathcal{B}^K(u,u)+\int_M[(1-K+C_P)|\p u|_\om^2]\om^n.
	\end{align*}
	Thus choosing $K>C_P+1$, we have
	\begin{align*}
	\Vert u\Vert ^2_{H^{2,\b}_{\mathbf{w}}(\om)}\leq \mathcal{B}^K(u,u).
	\end{align*}
	Thus the Lax-Milgram theorem applies and there is a unique weak solution $u\in H_{\mathbf{w},0}^{2,\b}$ to equation \eqref{KbiLaplacian equation}. 
\end{proof}

We now improve the regularity of the $H_{\mathbf{w},0}^{2,\b}(\om)$-weak solution $u$.
\begin{prop}[Schauder estimate]\label{Schauderweakomega}
	Assume that $\om=\om_D+i\p\bar\p\vphi$ is a K\"ahler cone metric with $\vphi\in C^{2,\a,\b}$, $f\in C^{0,\a,\b}$ and the H\"older exponent satisfies $\a\b<1-\b$. Then the weak solution $u\in H^{2,\b}_{\mathbf{w},0}$ to equation \eqref{KbiLaplacian equation} is actually $C_\mathbf{w}^{4,\a,\b}(\om)$. 
\end{prop}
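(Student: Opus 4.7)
The plan is to decouple the fourth-order equation into a cascade of two second-order equations and bootstrap regularity along the cascade. Set $v := \tri_\om u$, which lies in $L^2(\om)$ by the $H^{2,\b}_{\mathbf{w},0}$ hypothesis. The weak formulation
\[
\int_M (i\p\bar\p u,i\p\bar\p\eta)_\om \om^n + K\int_M (\p u,\p\eta)_\om\,\om^n = \int_M f\eta\,\om^n
\]
for all test functions $\eta \in H^{2,\b}_{\mathbf{w},0}$ yields, after integrating by parts on the first term, that $v$ is a distributional solution of the second-order equation $\tri_\om v - Kv = f$ on $M$. The first stage is to show $v \in C^{2,\a,\b}$; the second stage is then to use the Poisson equation $\tri_\om u = v$ to conclude $u \in C^{2,\a,\b}$, which by Definition \ref{cw4ab} gives exactly $u \in C^{4,\a,\b}_{\mathbf{w}}(\om)$.

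First I would upgrade $v$ from $L^2$ to $L^\infty$ by Moser iteration applied to $\tri_\om v - Kv = f$, using the Sobolev inequality (\lemref{w1pbSobolev imbedding theorem}) valid for the cone metric $\om$ and the fact that $f \in C^{0,\a,\b}\subset L^\infty$. Next, invoking a weak Harnack / De Giorgi–Nash–Moser type estimate for the cone Laplacian (developed for K\"ahler cone metrics in \cite{arxiv:1511.02410} and \cite{MR3488129}), I would obtain $v \in C^{0,\a'',\b}$ for some small $\a''>0$, using only the fact that $\om$ is quasi-isometric to $\om_{cone}$. Since the RHS $f + Kv$ is now in $C^{0,\a''',\b}$ for some $\a'''\leq\min\{\a,\a''\}$, the second-order Schauder estimate for $\tri_\om$ with $C^{2,\a,\b}$ potential (from \cite{MR3488129}, and as used throughout Section \ref{Linear theory for Lichnerowicz operator}) applied to $\tri_\om v = f + Kv$ gives $v \in C^{2,\a''',\b}$. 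Iterating the Schauder step once more, using the full H\"older exponent $\a$ available from $f$, yields $v \in C^{2,\a,\b}$.

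For the second stage, the Poisson equation $\tri_\om u = v$ now has RHS in $C^{2,\a,\b}\subset C^{0,\a,\b}$. One more application of the second-order Schauder estimate promotes $u$ from $H^{2,\b}_{\mathbf{w},0}$ to $C^{2,\a,\b}$. Combined with $\tri_\om u = v \in C^{2,\a,\b}$, this is exactly the content of $u \in C^{4,\a,\b}_{\mathbf{w}}(\om)$.

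The main obstacle is the first stage: ensuring that Moser iteration and the De Giorgi–Nash–Moser H\"older estimate go through for the cone Laplacian $\tri_\om$ with only $C^{2,\a,\b}$ potential and cone angle in the full range $(0,1]$. Crucially, we do \emph{not} need pointwise control of the pure derivatives $\p_1\p_1 u$ (which would be needed for the stronger space $C^{4,\a,\b}(\om)$ and is genuinely unavailable for $\b > 1/2$); the weak space $C^{4,\a,\b}_{\mathbf w}(\om)$ requires control only of the trace $\tri_\om u$, which is what the above cascade delivers. The angle condition $\a\b < 1-\b$ enters to guarantee that the weight $|s|^{2\b-2}$ in the volume form and the H\"older continuity of the metric coefficients are compatible in the Schauder step, exactly as in the proof of the analogous regularity statement in \cite{arxiv:1703.06312}.
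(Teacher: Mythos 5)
Your decoupling into two second-order problems is the same skeleton as the paper's proof, and your second stage (Schauder for $\tri_\om u=v$ once $v\in C^{2,\a,\b}$) is exactly what the paper does. But the first stage has a genuine gap. You set $v:=\tri_\om u\in L^2(\om)$ and observe that $v$ satisfies $\int_M v(\tri_\om-K)\eta\,\om^n=\int_M f\eta\,\om^n$ for smooth $\eta$ --- a \emph{very weak} (duality-against-test-functions) formulation. Moser iteration and the De Giorgi--Nash--Moser H\"older estimate are machines for $H^1_{loc}$-weak solutions: both proceed by testing the equation against truncated powers of $v$ times cut-offs, which requires $\nabla v\in L^2_{loc}$. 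Nothing in $u\in H^{2,\b}_{\mathbf{w},0}$ controls $\p v=\p\bigl(g^{a\bar b}u_{a\bar b}\bigr)$, which is a third derivative of $u$, so you do not know $v\in H^1$ and the bootstrap cannot start. On the regular part this could be rescued by interior elliptic regularity, but near $D$ the cone degeneracy blocks that; indeed the entire point of $H^{2,\b}_{\mathbf w}$ and $C^{4,\a,\b}_{\mathbf w}$ is that these uncontrolled derivatives are unavailable.

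The paper avoids the bootstrap entirely. It first invokes the second-order cone Schauder theory (for $\tri_\om$ with $C^{2,\a,\b}$ potential and $C^{0,\a,\b}$ data) to \emph{solve} $(\tri_\om-K)w=f$ directly, producing $w\in C^{2,\a,\b}$ with no iteration. Then the same integration by parts you used shows that both $\tri_\om u$ and $w$ satisfy the identical distributional identity against smooth mean-zero $\eta$, so uniqueness forces $\tri_\om u=w\in C^{2,\a,\b}$. If you want to keep a purely bootstrap flavor, the Moser/De Giorgi step should be replaced by a Stampacchia-type duality argument (pair $v$ against solutions of the adjoint problem with $L^q$ data to lift $L^2\to L^p\to L^\infty$), but that is the existence-plus-uniqueness argument in disguise, and the paper's version is shorter.
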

\begin{proof}
	Applied to equation $(\tri_{\om}-K ) v=f$, the 2nd order linear elliptic theory \cite{MR2975584} tells us that this equation has a unique $C^{2,\a,\b}$ solution $v$, and also
	\begin{align}\label{Schauderweakomegav}
	\int_Mv (\tri_\om -K )\eta\om^n=\int_Mf\eta \om^n.
	\end{align} with $\eta$ is a smooth function with vanishing average. We claim that: this unique solution $v$ has to be $\tri_\om u$ in Proposition \ref{weakersolution}.
	We then prove this claim. Choosing $\eta$ as above, we have from the definition of the $ H^{2,\b}_{\mathbf{w},0}$ weak solution,
	$$\int_M[ (i\p\bar\p u ,i\p\bar\p\eta)_\om+K (\p u, \p \eta)_\om]\om^n=\int_Mf\eta \om^n.$$
	Since $\eta$ is smooth, we are able to use the integration by parts to the left hand side
	\begin{align*}
	&\int_M (i\p\bar\p u ,i\p\bar\p\eta)_\om\om^n=-\int_M u_{i} \eta_{\bar i j \bar j}\om^n=\int_M \tri_\om u \tri_\om \eta\om^n;\\
	&\int_M (\p u, \p \eta)_\om\om^n=-\int_M \tri_\om u \eta\om^n,
	\end{align*} and then obtain 
	\begin{align*}
	\int_M\tri_\om u (\tri_\om -K )\eta \om^n=\int_Mf\eta \om^n.
	\end{align*}
	Comparing with \eqref{Schauderweakomegav} and using the uniqueness of $v$, we see that $\tri_\om u$ is the same to $v$, and thus in $C^{2,\a,\b}$. Applying linear theory again, we obtain that $u\in C^{2,\a,\b}$.
\end{proof}


\subsection{Fredholm alternative for Lichnerowicz operators}\label{Continuity method}
In order to prove the Fredholm alternative \thmref{Fredholm}, it suffices to consider the $K$-Lichnerowicz equation, \begin{align}\label{K-Lic}
{{\mathbb{L}\mathrm{ic}}}^K_{\omega}(u)={{\mathbb{L}\mathrm{ic}}}_{\omega}(u) -K \tri_\om u=f.
\end{align}
\begin{thm}[$K$-Lichnerowicz equation]\label{K-Lichnerowicz equation}
	Assume that $\om=\om_D+i\p\bar\p\vphi$ is a K\"ahler cone metric with $\vphi\in C^{2,\a,\b}$, $f\in C^{0,\a,\b}$, $0<\b\leq 1$ and the H\"older exponent satisfies $\a\b<1-\b$. Suppose that $K>1+ \Vert T\Vert_{L^\infty}+2C_P$. Then there exists unique $C_\mathbf{w}^{4,\a,\b}(\om)$ solution to equation \eqref{K-Lic}. 
\end{thm}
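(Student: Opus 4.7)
I would prove the theorem by the continuity method along the family
$$L_t^K u := (\triangle_{\omega}-K)\triangle_{\omega} u + t\,u^{i\bar{j}} T_{i\bar{j}}, \qquad 0\le t\le 1,$$
interpolating between the $K$-bi-Laplacian at $t=0$ (solved by \thmref{K-bi-Laplacian}) and the $K$-Lichnerowicz operator at $t=1$. Let
$$\mathcal{T}:=\{t\in[0,1]\,:\, L_t^K : C^{4,\alpha,\beta}_{\mathbf{w}}(\omega)\to C^{0,\alpha,\beta}\text{ is bijective on zero-mean functions}\}.$$
Then $0\in\mathcal{T}$, and it suffices to show $\mathcal{T}$ is open and closed in $[0,1]$. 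Openness follows from a standard perturbation argument: the map $u\mapsto u^{i\bar j}T_{i\bar j}$ is bounded $C^{4,\alpha,\beta}_{\mathbf{w}}(\omega)\to C^{0,\alpha,\beta}$ since $T\in C^{0,\alpha,\beta}$ by \lemref{Ttensor}, so for $|\tau|$ small the perturbation $L^K_{t_0+\tau}=L^K_{t_0}+\tau\,u^{i\bar j}T_{i\bar j}$ remains invertible.

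\textbf{Closedness via a three-step bootstrap.} Uniform a priori $C^{4,\alpha,\beta}_{\mathbf{w}}$ estimates are established as follows. First, test $L^K_t u=f$ against $u$ and integrate by parts; this is legitimate since $u\in C^{4,\alpha,\beta}_{\mathbf{w}}$ forces $\triangle_{\omega}u\in C^{2,\alpha,\beta}$, and the flux through a tubular neighbourhood of $D$ vanishes in the limit by a weighted cutoff as in the proof of \lemref{Ttensor}. The resulting energy identity is
$$\mathcal{B}^K(u,u) + t\int_M u\cdot u^{i\bar j}T_{i\bar j}\,\omega^n = \int_M fu\,\omega^n.$$
Bounding $\|u\|_{L^2}\le C_P\|\partial u\|_{L^2}$ by \lemref{Poincareinequality}, applying Young's inequality on the $T$-term, and exploiting the hypothesis $K>1+\|T\|_\infty+2C_P$, the indefinite contribution is absorbed and one obtains $\|u\|_{H^{2,\beta}_{\mathbf{w}}(\omega)}\le C\|f\|_{L^2}$. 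Second, set $v:=\triangle_{\omega}u$; then the equation becomes the second-order $(\triangle_{\omega}-K)v=f-t\,u^{i\bar j}T_{i\bar j}$, whose right-hand side is in $L^p$ for some $p>n$ via \lemref{w1pbSobolev imbedding theorem} together with a Moser iteration yielding $u\in L^\infty$; the $W^{2,p}$ and Schauder theory for the second-order K\"ahler-cone Laplacian of \cite{MR2975584,MR3488129} places $v$ in $C^{0,\alpha,\beta}$, and a further Schauder application to $\triangle_{\omega}u=v$ yields a uniform $\|u\|_{C^{2,\alpha,\beta}}$ bound. Third, $u^{i\bar j}T_{i\bar j}$ is then controlled in $C^{0,\alpha,\beta}$, so \thmref{K-bi-Laplacian} applied to $(\triangle_\omega-K)\triangle_\omega u=f-t\,u^{i\bar j}T_{i\bar j}$ delivers a uniform $C^{4,\alpha,\beta}_{\mathbf{w}}$ bound; an Arzel\`a--Ascoli extraction produces the limit solution and closes $\mathcal{T}$.

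\textbf{Main obstacle.} The subtle step is the first one. Because $C^{4,\alpha,\beta}_{\mathbf{w}}(\omega)$ does not control the pure second derivatives $\partial_1\partial_1 u$, the Bochner identity $\int|\partial\partial u|^2=\int|\partial\bar\partial u|^2-\int\Ric(\partial u,\partial u)$ is unavailable, and one must work directly with the bilinear form $\mathcal{B}^K$ rather than with $\int u\,\triangle_\omega^2 u\,\omega^n$. The crux is therefore to verify that the integration by parts identifying $\mathcal{B}^K(u,u)$ with the energy of $L^K_0 u$ tested against $u$ holds in the singular setting; given the regularity $u,\triangle_\omega u\in C^{2,\alpha,\beta}$, this reduces to showing that the boundary flux through a shrinking tubular neighbourhood of $D$ decays to zero, which is the weighted-cutoff mechanism already exploited in \propref{weakersolution}. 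Once this coercivity is in place, triviality of the kernel at every $t$ (and hence bijectivity, by self-adjointness) is automatic from the same absorption inequality, and the remaining bootstrap is routine.
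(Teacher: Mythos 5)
Your overall architecture is the same as the paper's: a continuity path $L_t^K$ from the $K$-bi-Laplacian to the $K$-Lichnerowicz operator, openness by perturbation, and closedness via a priori $C^{4,\alpha,\beta}_{\mathbf{w}}$ estimates anchored by the coercivity coming from $K>1+\|T\|_\infty+2C_P$. Two points, however, need repair.

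First, a notational but conceptually important slip: testing $L_t^K u=f$ against $u$ does \emph{not} produce $\mathcal{B}^K(u,u)+t\int u\,u^{i\bar j}T_{i\bar j}\,\omega^n$, because $\mathcal{B}^K$ carries the full mixed Hessian $|\partial\bar\partial u|^2$, whereas $\int u\,\triangle_\omega^2 u\,\omega^n=\int(\triangle_\omega u)^2\omega^n$ only. These differ by a Ricci-type correction, which is precisely what cannot be invoked here since $\partial\partial u$ is uncontrolled. The correct identity is the paper's \lemref{BKt}, with $|\triangle_\omega u|^2$ in place of $|\partial\bar\partial u|^2$, and passing from $\|\triangle_\omega u\|_{L^2}$ to the full $H^{2,\beta}_{\mathbf{w}}$ norm requires the separate, Bochner-free global $L^2$ elliptic estimate of \lemref{l2pureestimate}. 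Your ``Main obstacle'' paragraph in fact gets this backwards: in the closedness step one works with $\mathcal{B}^K_t(u,u)=\int u\,L_t^Ku\,\omega^n$, not with $\mathcal{B}^K(u,u)$.

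Second, and this is the genuine gap, your step two does not close. To apply $W^{2,p}$ or Schauder theory to $(\triangle_\omega-K)v=f-t\,u^{i\bar j}T_{i\bar j}$ you need the right-hand side in $L^p$ for $p$ beyond the Sobolev exponent, or in $C^{0,\alpha,\beta}$. From step one you know only $\partial\bar\partial u\in L^2$, hence $u^{i\bar j}T_{i\bar j}\in L^2$. Invoking ``Moser iteration yielding $u\in L^\infty$'' is beside the point: $u\in L^\infty$ gives no control whatsoever on the second derivatives $u^{i\bar j}$ appearing in the source. Making this work would require a full $W^{2,p}$ elliptic theory for the K\"ahler cone Laplacian with increasing $p$, which is neither developed in the paper nor obviously available for general cone angles. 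The paper sidesteps this entirely: it applies the Schauder estimate twice, once to $\triangle_\omega^2 u=L_t^Ku-t\,u^{i\bar j}T_{i\bar j}+K\triangle_\omega u$ (treating $\triangle_\omega u$ as the unknown) and once to $\triangle_\omega u$, which produces the pesky intermediate term $|\triangle_\omega u|_{C^{0,\alpha,\beta}}$ on the right-hand side; it then absorbs that term using the H\"older interpolation inequality \lemref{Holderinterpolation}, $|\triangle_\omega u|_{C^{0,\alpha,\beta}}\le\epsilon|\triangle_\omega u|_{C^{2,\alpha,\beta}}+C(\epsilon)\|u\|_{L^2}$, which follows from compactness of the embedding $C^{2,\alpha,\beta}\hookrightarrow C^{0,\alpha,\beta}$ and needs no $L^p$ machinery. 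You should replace your bootstrap with this interpolation step.
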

\begin{proof}
We define the continuity path
\begin{align}\label{continuitypath}
L_t^Ku
:=(\tri_{\om}-K) \tri_\om u+t  u^{i\bar j}T_{i\bar j}(\om)
\end{align}
with \begin{align*}L_t^K:  C_\mathbf{w}^{4,\a,\b}(\om)\rightarrow C^{0,\a,\b}
\end{align*} for any $0\leq t\leq 1$.

When $t=0$, we already solved 
\begin{align*}
(\tri_{\om}-K) \tri_\om u=f
\end{align*}
for any $f\in C^{0,\a,\b}$ and obtain a unique solution $u\in C_\mathbf{w}^{4,\a,\b}(\om)$ thanks to \thmref{K-bi-Laplacian}. 

In order to use the continuity method in the H\"older space $C_\mathbf{w}^{4,\a,\b}(\om)$ to prove the existence and uniqueness of $C_\mathbf{w}^{4,\a,\b}(\om)$ solution for the $K$-Lichnerowicz equation, it is sufficient to obtain the a priori estimates (\thmref{closedness}). We will prove these estimates in the rest of this section.
\end{proof}

We will need the following lemmas to do integration by parts.
\begin{lem}\label{IbPRic}
	Assume that $u\in C^{2,\a,\b}$, the tensor $T$ is bounded and $\nabla T=0$ on $M$. Then it holds
	\begin{align*}
	\int_M u^{i\bar j}T_{i\bar j}(\om)u \om^n
	=-\int_M u^{i}T_{i\bar j}(\om)u^{\bar j} \om^n.
	\end{align*}
\end{lem}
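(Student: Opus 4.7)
The identity is formally the Hermitian integration-by-parts pairing $\langle i\p\bar\p u, T\rangle_{\om}$ against $u$, which on a compact manifold with smooth background would follow from $\nabla T=0$ and the divergence theorem. Here the only obstacle is the singularity of $\om$ along $D$: a bare integration by parts on $M=X\setminus D$ might pick up an uncontrolled boundary contribution at $D$, where $u$ is only $C^{2,\a,\b}$ rather than smooth. The plan is therefore to localise by a smooth cut-off $\chi_\eps$ supported in $M$, apply the divergence theorem in the regular region, and show the cut-off error tends to zero.

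\textbf{Choice of cut-off.} Because $D$ has complex codimension one and, in a cone chart, $g^{1\bar 1}$ of the cone metric blows up like $|z^1|^{2-2\b}$, the key quantity to control is $\int_M|\p\chi_\eps|^2_{\om}\om^n$. I would take $\chi_\eps$ to be a \emph{logarithmic} cut-off, e.g.
\[
\chi_\eps(z) := \phi\!\left(\frac{\log(1/|s|_h^2)}{\log(1/\eps)}\right),
\]
where $\phi\in C^\infty(\mathbb R)$ satisfies $\phi=1$ on $(-\infty,1/2]$ and $\phi=0$ on $[1,+\infty)$. Then $\chi_\eps=1$ outside an $\eps^{1/2}$-neighbourhood of $D$, vanishes inside an $\eps$-neighbourhood, and a direct chart computation (using $|\phi'|\leq C/\log(1/\eps)$) gives $|\p\chi_\eps|^2_\om\leq C|z^1|^{-2\b}/\log(1/\eps)^2$ on its annular support. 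Integrated against $\om^n\sim|z^1|^{2\b-2}\,dV_{\mathrm{Euc}}$, this yields
\[
\int_M|\p\chi_\eps|^2_\om\,\om^n \;\leq\; \frac{C}{\log(1/\eps)}\;\xrightarrow{\eps\to 0}\;0.
\]

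\textbf{Divergence step and passage to the limit.} The $(1,0)$-vector field $V^i:=\chi_\eps u\,T^{i\bar j}u_{\bar j}$ is $C^1$ with compact support in $M$, so $\int_M\nabla_i V^i\,\om^n=0$. Expanding this and invoking $\nabla T=0$ (the computation in fact only uses $\nabla_i T^{i\bar j}=0$) produces
\[
\int_M\chi_\eps\bigl[u^{i\bar j}T_{i\bar j}\,u + u^iT_{i\bar j}u^{\bar j}\bigr]\om^n \;=\; -\int_M\chi_{\eps,i}\,T^{i\bar j}\,u\,u_{\bar j}\,\om^n.
\]
Cauchy--Schwarz bounds the right-hand side by $C\|T\|_\infty\|u\|_\infty\bigl(\int|\p\chi_\eps|^2_\om\om^n\bigr)^{1/2}\bigl(\int|\p u|^2_\om\om^n\bigr)^{1/2}$, which tends to $0$ since $|\p u|_\om$ is bounded (hence in $L^2(\om^n)$) for $u\in C^{2,\a,\b}$ and by the cut-off estimate above. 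The left-hand integrand is in $L^\infty(M,\om^n)$ for the same reasons, so dominated convergence replaces $\chi_\eps$ by $1$ in the limit and delivers the claimed identity.

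\textbf{Main obstacle.} The technical heart is the logarithmic cut-off: polynomial cut-offs in $|s|_h$ leave $\int|\p\chi_\eps|^2_\om\om^n$ merely uniformly bounded and the error term would not close. Once the right $\chi_\eps$ is chosen, the rest is index bookkeeping and a routine dominated-convergence argument.
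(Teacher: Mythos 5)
Your argument is the same in structure as the paper's: both introduce a cut-off $\chi_\eps$ supported away from $D$, integrate by parts to move the $\bar\p$ off $u$, and finish by dominated convergence; and the cut-off the paper uses (described by $|\nabla\chi_\eps| = \eps\cdot O(r^{-1})$ and cited to an earlier paper) is exactly of the logarithmic type you construct, so your observation that a linear cut-off would not close the estimate is consistent with what the paper does, just made explicit. The one genuine difference is how the error term $\int \chi_{\eps,i}\,T^{i\bar j}\,u\,u_{\bar j}\,\om^n$ is killed: you bound it by Cauchy--Schwarz against the cone Dirichlet energy $\int|\p\chi_\eps|^2_\om\om^n\to 0$, which is cleaner and component-free; the paper instead checks the decay rate of the integrand component by component (e.g.\ $u^1T_{1\bar 1}\chi_\eps^{\bar 1}=\eps\cdot O(r^{-\b})$, etc.) and integrates against $\om^n\sim r^{2\b-2}$. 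Your route buys a shorter and more robust closing step, at the cost of having to make the Dirichlet energy of the cut-off genuinely small rather than just integrable; both work under the hypotheses. You also correctly observe that the hypothesis $\nabla T=0$ is only used through the divergence $\nabla_i T^{i\bar j}=0$, which is a mild but accurate refinement of the stated assumption.
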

\begin{proof}
	We apply the cutoff function $\chi_\eps$ which has been fully discussed in \cite{arxiv:1511.02410}. Then the argument is essentially Lemma 4.10 in \cite{arxiv:1603.01743}. By dominated convergence theorem, we have
	\begin{align*}
	\lim_{\eps\rightarrow 0}\int_M u^{i\bar j}T_{i\bar j}(\om)u\chi_\eps \om^n
	=\int_M u^{i\bar j}T_{i\bar j}(\om)u \om^n.
	\end{align*}
	On the other hand, using $\nabla T=0$ on $M$,
	\begin{align*}
	\int_M u^{i\bar j}T_{i\bar j}(\om)u\chi_\eps \om^n
	&=-\int_M u^{i}T_{i\bar j}(\om)(u \chi_\eps)^{\bar j} \om^n\\
	&=-\int_M u^{i}T_{i\bar j}(\om)u^{\bar j} \chi_\eps \om^n
	-\int_M u^{i}T_{i\bar j}(\om)\chi_\eps^{\bar j} u \om^n
	\end{align*}
	The first term converges under the assumption on $u$ and $T$.
	The second term also converges, since for $2\leq i,j\leq n$,  
	\begin{align*}
	&u^{1}T_{1\bar 1}(\om)\chi_\eps^{\bar 1}=\eps. O(r^{-\b}),\quad u^{1}T_{1\bar j}(\om)\chi_\eps^{\bar j}=\eps. O(1),\\
	&u^{i}T_{i\bar 1}(\om)\chi_\eps^{\bar 1}=\eps. O(r^{-\b}),\quad u^{i}T_{i\bar j}(\om)\chi_\eps^{\bar j}=\eps. O(1).
	\end{align*}
\end{proof}

\begin{lem}\label{BKt}
Assume that $u\in C_\mathbf{w}^{4,\a,\b}(\om)$. Then the identity holds
\begin{align*}
\mathcal{B}^K_t(u,u):=\int_M u L_t^K  u \om^n=\int_M[|\tri_\om u|^2- t u^{i}T_{i\bar j}(\om)u^{\bar j} +K|\p u|_\om^2]\om^n.
\end{align*}
\end{lem}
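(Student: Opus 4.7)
The plan is to reduce the identity to classical integration by parts on the regular part $M$, justified via the same cutoff function $\chi_\eps$ already exploited in \lemref{IbPRic}. First I would expand
\begin{align*}
\int_M u\, L_t^K u\, \om^n
= \int_M u\, \tri_\om(\tri_\om u)\, \om^n
- K\!\int_M u\, \tri_\om u\, \om^n
+ t\!\int_M u\, u^{i\bar j} T_{i\bar j}(\om)\, \om^n,
\end{align*}
and handle the three terms in turn. The third term is precisely the one covered by \lemref{IbPRic}, which gives $-t\int_M u^{i} T_{i\bar j}(\om) u^{\bar j}\, \om^n$. The second term is a standard Dirichlet identity: inserting $\chi_\eps$, one integration by parts gives
\begin{align*}
-\int_M u\, \tri_\om u \cdot \chi_\eps\, \om^n
= \int_M |\p u|_\om^2 \chi_\eps\, \om^n
+ \int_M u\, (\p u, \p \chi_\eps)_\om\, \om^n,
\end{align*}
and the $\p\chi_\eps$-term is $O(\eps)$ since $u \in C^{2,\a,\b}$ forces $u$ and $|\p u|_\om$ bounded while $|\p\chi_\eps|_\om\, \om^n$ is supported in a shrinking tubular neighbourhood of $D$ with total mass $O(\eps)$ (the same computation as in \lemref{IbPRic}); letting $\eps \to 0$ converts this to $K\int_M |\p u|_\om^2\, \om^n$.

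The first term is the only delicate piece. Setting $v := \tri_\om u$, the hypothesis $u \in C_\mathbf{w}^{4,\a,\b}(\om)$ gives $v \in C^{2,\a,\b}$, so both $u$ and $v$ are bounded and $|\p u|_\om$, $|\p v|_\om$ are controlled by their $C^{2,\a,\b}$ seminorms with the usual cone-type blow-ups in the singular direction. I would insert $\chi_\eps$ and integrate by parts once to produce
\begin{align*}
\int_M u\, \tri_\om v \cdot \chi_\eps\, \om^n
= -\int_M (\p u, \p v)_\om \chi_\eps\, \om^n
- \int_M v\, (\p u, \p \chi_\eps)_\om\, \om^n,
\end{align*}
and symmetrically
\begin{align*}
\int_M v^2\, \chi_\eps\, \om^n
= -\int_M (\p u, \p v)_\om \chi_\eps\, \om^n
- \int_M v\, (\p v, \p \chi_\eps)_\om\, \om^n.
\end{align*}
Subtracting, the inner $(\p u, \p v)_\om$-terms cancel and only boundary terms involving $\p\chi_\eps$ remain; each such term is again $O(\eps)$ by the same cone-chart estimate as before, so passing to the limit yields $\int_M u\, \tri_\om(\tri_\om u)\, \om^n = \int_M |\tri_\om u|^2\, \om^n$. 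Adding the three contributions gives the claimed identity for $\mathcal{B}^K_t(u,u)$.

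The main obstacle is controlling the boundary terms carrying $\p(\tri_\om u)$ near $D$, since $|\p v|_\om$ may blow up like $r^{\a-1}$ along the singular coordinate. However $v$ itself is bounded, and it is the factor $v \cdot \p \chi_\eps$ (not $\p v$) that appears in the surviving boundary term of the bi-Laplacian IbP; combined with the smallness of the support of $\p\chi_\eps$ and the volume-form weight, this contribution is $\eps \cdot O(1)$. So no technical input beyond the cutoff argument of \lemref{IbPRic} is required, and the proof is essentially a careful bookkeeping of two successive integrations by parts against $\chi_\eps$ followed by $\eps \to 0$.
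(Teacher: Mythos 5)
Your proposal is correct and follows essentially the same route as the paper's proof: expand $L_t^K u$ into its three terms, absorb the $T$-term via \lemref{IbPRic}, and justify $\int_M u\,\tri^2_\om u\,\om^n=\int_M|\tri_\om u|^2\om^n$ and $\int_M u\,\tri_\om u\,\om^n=-\int_M|\p u|_\om^2\om^n$ by integration by parts, which is legitimate precisely because $u$ and $\tri_\om u$ both lie in $C^{2,\a,\b}$. One bookkeeping remark: in your first integration by parts the surviving cutoff term is $\int_M u\,(\p(\tri_\om u),\p\chi_\eps)_\om\,\om^n$ rather than $\int_M \tri_\om u\,(\p u,\p\chi_\eps)_\om\,\om^n$, but this is harmless because $\tri_\om u\in C^{2,\a,\b}$ forces $|\p(\tri_\om u)|_\om$ to be bounded in the cone metric, so the $r^{\a-1}$ blow-up you worry about does not occur and the term is still $O(\eps)$.
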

\begin{proof}
We multiply $L_t^K  u $ with $u$ and integrate over the manifold $M$, 
\begin{align*}
\int_M u L_t^K  u \om^n
&=\int_Mu [(\tri_{\om}-K) \tri_\om u+t  u^{i\bar j}T_{i\bar j}(\om)]\om^n.
\end{align*}Since both $u$ and $\tri_\om u$ are $C^{2,\a,\b}(\om)$, we have
$$\int_Mu\tri^2_{\om}u\om^n=\int_M|\tri_{\om}u|^2\om^n$$ and $$\int_Mu \tri_{\om} u\om^n=-\int_M|\p  u|_\om^2\om^n.$$ 
Thus the identity follows from \lemref{IbPRic}.
\end{proof}

We need a global $L^2$-estimate for the linear equation with K\"ahler cone coefficients.
\begin{lem}[Global $L^2$-estimate]\label{l2pureestimate} 
There exists a constant $C>0$ depending on $n,\om,M$ such that,
\begin{align*}
\Vert u\Vert ^2_{H_{\mathbf{w}}^{2,\b}(\om)}
\leq C (\Vert\tri_\om u\Vert_{L^2(\om)}+\Vert u\Vert ^2_{H_{\mathbf{w}}^{1,\b}(\om)}).
\end{align*}
\end{lem}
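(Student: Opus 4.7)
The plan is to prove the inequality first for smooth functions supported in the regular part $M$ via a Bochner-type identity, then extend by a cutoff density argument to all $u\in H^{2,\beta}_{\mathbf{w}}(\omega)$. For $u\in C^\infty_c(M)$, integrating by parts twice together with the Kähler commutation of covariant derivatives on scalars yields the identity
$$\int_M |\partial\bar\partial u|_\omega^2\,\omega^n \;=\; \int_M |\triangle_\omega u|^2\,\omega^n \;+\; \int_M R^{i\bar j}(\omega)\,u_i u_{\bar j}\,\omega^n.$$
The crucial structural point is that only the mixed Hessian $|\partial\bar\partial u|^2$ appears on the left—no pure $|\partial\partial u|^2$ term—because $u$ is a scalar and the Kähler commutation $u_{i\bar j k} = u_{ki\bar j}$ contracts into a Ricci term rather than producing second pure derivatives. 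By \lemref{Ttensor}, the smooth part of $\mathrm{Ric}(\omega)$ on $M$ is the bounded $C^{0,\alpha,\beta}$ tensor $T$, so $|R^{i\bar j}u_i u_{\bar j}|\leq C|\partial u|_\omega^2$ pointwise on $M$. Combined with Cauchy--Schwarz on the finite sum defining the seminorm, $[u]_{H^{2,\beta}_{\mathbf{w}}(\omega)}^2\leq n^2\int_M|\partial\bar\partial u|_\omega^2\,\omega^n$, this gives the claimed estimate modulo the density step.

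To pass from $C^\infty_c(M)$ to $u\in H^{2,\beta}_{\mathbf{w}}(\omega)$, I will multiply by the cutoff $\chi_\epsilon$ supported outside the $r_0$-tubular neighbourhood of $D$, exactly as in \lemref{IbPRic} and the proof of \propref{Liouville theorem}, and then mollify on the smooth part. The product rule produces cross terms $(\partial\chi_\epsilon)(\bar\partial u)$ and $(\partial\bar\partial\chi_\epsilon)\,u$; using $|\nabla\chi_\epsilon|_\omega = \epsilon\cdot O(r^{-1})$ together with integrability of the cone volume $\omega^n = O(r^{2\beta-2})$ near $D$, both tend to zero in $L^2(\omega)$ as $\epsilon\to 0$, while $\chi_\epsilon u_{i\bar j}\to u_{i\bar j}$ in $L^2(\omega)$ by dominated convergence. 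Since the Bochner identity involves only the $L^2$-quantities $u_{i\bar j}$, $\triangle_\omega u$, and $u_i$, it is stable under this convergence and the inequality passes to the limit.

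The principal subtlety, and in fact the obstacle that makes the lemma nontrivial in the cone setting, is precisely the asymmetry noted above: functions in $H^{2,\beta}_{\mathbf{w}}(\omega)$ have the mixed derivatives $u_{i\bar j}$ in $L^2$ but not the pure holomorphic derivatives $u_{ij}$, a loss of regularity along the singular direction that was the very motivation for the weak space in Definition \ref{cw4ab}. A more symmetric version of the Bochner identity written in terms of the full Hessian $|\nabla^2 u|^2 = |\partial\partial u|^2 + |\partial\bar\partial u|^2$ would require separately estimating $\int|\partial\partial u|^2$, which is simply not available in this space; it is only the specific form involving just the mixed Hessian that makes the $L^2$-estimate compatible with the weakened fourth-order space, and hence with the continuity method of \thmref{K-Lichnerowicz equation}. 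No extra geometric assumption on $\omega$ beyond the boundedness of $T$ is needed.
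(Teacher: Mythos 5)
Your plan takes a genuinely different route from the paper, which localizes with a partition of unity, freezes the leading coefficients to $\omega_{\mathrm{cone}}$ in each cone chart (so $|\omega-\omega_{\mathrm{cone}}|_{L^\infty}$ is small), and invokes the local $L^2$-estimate for the model cone Laplacian, absorbing the perturbation term; no integration by parts near $D$ is ever performed. Your global Bochner route could in principle be shorter, but as written it has two genuine problems.

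\textbf{The Bochner identity you quote is wrong.} On a K\"ahler manifold, for smooth compactly supported $u$, the commutator $[\nabla_i,\nabla_l]$ vanishes on $(0,1)$-forms (the curvature has no $(2,0)$-block), so two successive holomorphic integrations by parts in $\int g^{i\bar k}g^{l\bar j}u_{i\bar j}u_{l\bar k}\,\omega^n$ produce \emph{no} curvature term, and the clean identity is
\begin{align*}
\int_M |\partial\bar\partial u|_\omega^2\,\omega^n = \int_M |\triangle_\omega u|^2\,\omega^n.
\end{align*}
The Ricci contribution $\int R^{i\bar j}u_iu_{\bar j}\,\omega^n$ appears in the \emph{companion} identity for the pure Hessian $\int|\partial\partial u|_\omega^2\,\omega^n=\int|\triangle_\omega u|^2\,\omega^n-\int R^{i\bar j}u_iu_{\bar j}\,\omega^n$; compare the computation inside \lemref{13term}, where the commutation $\nabla_k\nabla_{\bar l}\nabla_i u=\nabla_{\bar l}\nabla_k\nabla_i u-R^p{}_{k\bar l i}\nabla_p u$ is what produces Ricci, and after the remaining integration by parts it is $\int|\partial\partial u|^2_\omega$ that emerges, not the mixed Hessian. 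This matters for your argument, because you lean on boundedness of $T$ and \lemref{Ttensor} to control a term that should not be there; the correct identity neither needs nor tolerates a Ricci hypothesis, which is actually in your favor, since the lemma is stated for a general K\"ahler cone metric $\omega$ with $C^{2,\alpha,\beta}$ potential, for which $\mathrm{Ric}(\omega)$ is a fourth derivative of the potential and has no a priori bound.

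\textbf{The density step does not close.} You propose to pass from $C^\infty_c(M)$ to $H^{2,\beta}_{\mathbf{w}}(\omega)$ by replacing $u$ with $\chi_\varepsilon u$ and letting $\varepsilon\to 0$, claiming the error terms go to zero in $L^2(\omega)$. For the first-order cross term $\partial\chi_\varepsilon\otimes\bar\partial u$ this is fine (at least when $|\partial u|_\omega$ is bounded), but the term $u\,\partial\bar\partial\chi_\varepsilon$ is the obstruction: with the logarithmic cutoff one has $|\partial_1\partial_{\bar 1}\chi_\varepsilon|\sim\varepsilon\, r^{-2}$, and a computation against $\omega^n\sim r^{2\beta-2}r\,dr$ gives $\int_{r_0}^1|u\,\partial\bar\partial\chi_\varepsilon|^2\omega^n\gtrsim\varepsilon^2 r_0^{2\beta-4}$, which \emph{diverges} as $r_0\to 0$ (recall $\varepsilon\sim 1/\log(1/r_0)$). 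So $\chi_\varepsilon u\not\to u$ in $H^{2,\beta}_{\mathbf{w}}(\omega)$. The argument can be salvaged, but only by keeping $\chi_\varepsilon$ inside the integral and integrating by parts so that $\chi_\varepsilon$ is differentiated at most once (two IBPs, each producing a boundary term with only $\partial\chi_\varepsilon$), and this in turn requires $u$ to be locally $C^3$ on $M$ to make sense of the intermediate third-order derivatives $\nabla_i u_{l\bar k}$. That is available in the paper's application (the functions along the continuity path lie in $C^{4,\alpha,\beta}_{\mathbf{w}}(\omega)$ and are smooth on $M$ by interior regularity), but it is strictly stronger than $u\in H^{2,\beta}_{\mathbf{w}}(\omega)$, which is all the lemma assumes. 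The paper's frozen-coefficient local argument sidesteps both the divergent cutoff term and the need for local smoothness, which is why it is the robust route in the cone setting.
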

\begin{proof}
	This is a similar to Section 3.2 in \cite{arxiv:1703.06312}, by patching local estimates together. The difference is that we did not need to control $\p_1\p_j u$, so we do no require bound on the Christoffel symbols of $\om$. 
	
	We let $\{U_i,\psi_i;1\leq i\leq N\}$ be the finite cove of $M$ and $\rho_i$ be the corresponding partition of unity, and  supported on $\mathtt{B}_1\subset \mathtt{B}_{3}\subset U_i$ for each $i$. In the cone charts, where the charts have nonempty intersection with the divisor $D$, we choose $\psi_i$ such that $|\om-\om_{cone}|_{L^\infty(\mathtt{B}_3)}$ is sufficiently small, which could be done since $\om$ is $C^{0,\a,\b}$. Then freezing the leading coefficients and applying the local $L^2$-estimate to the cut-off equation 
	\begin{align*}
	&\tri_{\om_{cone}} (\rho_i u)\\
	&=(\tri_{\om_{cone}} (\rho_i u)-\tri_{\om} (\rho_i u))+\tri_\om \rho_i u
	+ \rho_i \tri_\om u
	+2(\p \rho_i,\p u)_{\om}:=f
	\end{align*} to obtain the estimate on $\rho_i u$,
	\begin{align*}
	[\rho_i u]_{H_{\mathbf{w}}^{2,\b}(\mathtt{B}_1;\om_{cone})}
	&\leq C\Vert f \Vert_{L^{2}(\mathtt{B}_2;\om_{cone})}
	\end{align*}
	Then the conclusion follows from adding $\sum_i\rho_i u$ and Cauchy-Schwartz inequality.
\end{proof}

We also need a H\"older interpolation inequality, which follows from the compactness of $C^{2,\a,\b}$ to $C^{0,\a,\b}$.
\begin{lem}[H\"older interpolation inequality]\label{Holderinterpolation} 
	For all $u\in C^{2,\a,\b}$ and any $\eps>0$, there exists a constant $C(\eps)$ such that
\begin{align*}
|u|_{C^{0,\a,\b}}\leq \eps |u|_{C^{2,\a,\b}}+C(\eps) ||u||_{L^2(\om)}.
\end{align*}
\end{lem}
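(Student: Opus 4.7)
The plan is a standard Ehrling-type contradiction argument, using exactly the compact embedding $C^{2,\a,\b}\hookrightarrow C^{0,\a,\b}$ flagged in the author's hint. First, I would negate the conclusion: if it fails, there exist $\eps_0>0$ and a sequence $u_k\in C^{2,\a,\b}$ with $|u_k|_{C^{0,\a,\b}}>\eps_0|u_k|_{C^{2,\a,\b}}+k\|u_k\|_{L^2(\om)}$. After the normalisation $v_k:=u_k/|u_k|_{C^{0,\a,\b}}$, one obtains simultaneously
\[
|v_k|_{C^{0,\a,\b}}=1,\qquad |v_k|_{C^{2,\a,\b}}<\eps_0^{-1},\qquad \|v_k\|_{L^2(\om)}<k^{-1}\to 0.
\]

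Next I would extract a subsequence converging in $C^{0,\a,\b}$. The uniform $C^{2,\a,\b}$-bound provides a uniform Lipschitz-type bound for $v_k$ with respect to the cone distance, so Arzel\`a--Ascoli on the (precompact) cone-metric space $M$ yields a subsequence $v_{k_j}\to v_\infty$ uniformly. One then upgrades this to convergence in $C^{0,\a,\b}$ by the usual split: for pairs $(x,y)$ of cone distance $\leq\delta$ one estimates the difference quotient by $C\delta^{1-\a}$ using the uniform Lipschitz bound, and for pairs with cone distance $\geq\delta$ one uses $\|v_{k_j}-v_\infty\|_{C^0}\to 0$ divided by the fixed $\delta^\a$. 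Letting $\delta\to 0$ after $j\to\infty$ gives $v_{k_j}\to v_\infty$ in $C^{0,\a,\b}$.

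Finally, combining $v_{k_j}\to v_\infty$ uniformly on $M$ (on which $\om^n$ is a finite positive measure) with $\|v_{k_j}\|_{L^2(\om)}\to 0$ forces $v_\infty\equiv 0$; but $|v_k|_{C^{0,\a,\b}}=1$ together with $C^{0,\a,\b}$-convergence forces $|v_\infty|_{C^{0,\a,\b}}=1$, a contradiction.

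The main obstacle is the compact embedding $C^{2,\a,\b}\hookrightarrow C^{0,\a,\b}$ in the cone-weighted setting, where the underlying H\"older seminorms are measured against a degenerate (cone) distance. The point is that, despite the degeneracy of the metric along $D$, the cone distance still realises $M$ as a precompact metric space and the $C^{2,\a,\b}$ norm controls a genuine Lipschitz modulus in this distance, so the classical Arzel\`a--Ascoli/H\"older interpolation argument transfers without change. Once this compactness is granted (which is already in use elsewhere in the paper; see e.g.\ the discussion preceding \lemref{1alphainterpolation}), the contradiction argument above closes immediately.
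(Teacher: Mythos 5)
Your argument is correct and is essentially the paper's proof: an Ehrling-type contradiction using the compact embedding $C^{2,\a,\b}\hookrightarrow C^{0,\a,\b}$. The only cosmetic difference is the normalisation — you normalise $|v_k|_{C^{0,\a,\b}}=1$, whereas the paper normalises $|u_k|_{C^{2,\a,\b}}=1$; your choice is arguably the cleaner one, since the paper's closing line (that the limit ``contradicts $|u_\infty|_{C^{2,\a,\b}}=1$'') is a slight misstatement (one only knows $|u_\infty|_{C^{0,\a,\b}}\geq\eps_0>0$ from the inequality, since the convergence is in $C^{0,\a,\b}$), while your version makes the contradiction $|v_\infty|_{C^{0,\a,\b}}=1$ versus $v_\infty\equiv 0$ immediate.
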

\begin{proof}
	We prove by contradiction method. Assume that there is $\eps_0$ and $u_k$ such that for any $k\rightarrow\infty$, we have 
	\begin{align*}
|u_k|_{C^{0,\a,\b}}>\eps_0 |u_k|_{C^{2,\a,\b}}+k ||u_k||_{L^2(\om)}.
\end{align*}
Normalise $u_k$ such that $|u_k|_{C^{2,\a,\b}}=1$. The compactness of $C^{2,\a,\b}$ to $C^{0,\a,\b}$ gives limit of $u_k$ to $u_\infty$ in $C^{0,\a,\b}$. the inequality above implies that $ ||u_\infty||_{L^2(\om)}=0$ which contradicts to the normalisation condition $|u_\infty|_{C^{2,\a,\b}}=1$.
\end{proof}

Now we are ready to prove the key a priori estimates.

\begin{thm}\label{closedness}
	Assume $\omega$ is a cscK cone metric, and the H\"older exponent satisfy $\a\b<1-\b$. Assume that $K>1+ \Vert T\Vert_{L^\infty}+2C_P$.
	There is constant $C_1$ such that for any $u\in C_\mathbf{w}^{4,\a,\b}(\om)$ along the continuity path \eqref{continuitypath} with $0\leq t\leq 1$, we have
	\begin{align*}
	|u|_{C_\mathbf{w}^{4,\a,\b}(\om)}\leq C_1 |L^K_t u|_{C^{0,\a,\b}}.
	\end{align*}
\end{thm}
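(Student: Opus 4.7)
The plan is to chain three ingredients in sequence: (i) a global $L^{2}$-estimate via coercivity of the bilinear form $\mathcal{B}_{t}^{K}$; (ii) a two-step Schauder bootstrap exploiting the factorisation $L_{t}^{K}=(\tri_{\om}-K)\tri_{\om}+tT$; and (iii) a H\"older interpolation to absorb the lower-order terms arising from Schauder. Normalise so that $\int_{M}u\,\om^{n}=0$ (subtracting a constant costs nothing on either side).

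\textbf{Stage 1 ($L^{2}$-coercivity).} Testing $L_{t}^{K}u=f$ against $u$ and invoking \lemref{BKt}, I get
\begin{align*}
\int_{M} u f\,\om^{n}=\|\tri_{\om}u\|_{L^{2}(\om)}^{2}-t\int_{M}u^{i}T_{i\bar j}u^{\bar j}\om^{n}+K\|\p u\|_{L^{2}(\om)}^{2}.
\end{align*}
Since $T$ is bounded (\lemref{Ttensor}), I control $|t\int u^{i}T_{i\bar j}u^{\bar j}|\leq\|T\|_{L^{\infty}}\|\p u\|_{L^{2}}^{2}$. The Poincar\'e inequality (\lemref{Poincareinequality}) combined with the assumption $K>1+\|T\|_{L^{\infty}}+2C_{P}$ then forces the right-hand side to dominate $\|\tri_{\om}u\|_{L^{2}}^{2}+\|u\|_{L^{2}}^{2}+\|\p u\|_{L^{2}}^{2}$. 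Cauchy--Schwarz on the left produces $\|u\|_{H^{1,\b}}+\|\tri_{\om}u\|_{L^{2}}\leq C\|f\|_{L^{2}}$, and \lemref{l2pureestimate} upgrades this to a full $H_{\mathbf{w}}^{2,\b}(\om)$-bound.

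\textbf{Stage 2 (Schauder bootstrap).} Set $v:=\tri_{\om}u$, so the equation factorises as $(\tri_{\om}-K)v=f-tu^{i\bar j}T_{i\bar j}$ and $\tri_{\om}u=v$. By \lemref{Ttensor} the tensor $T$ is $C^{0,\a,\b}$, and the cscK coefficients $g^{i\bar j}$ are $C^{0,\a,\b}$ so $|u^{i\bar j}T_{i\bar j}|_{C^{0,\a,\b}}\leq C|u|_{C^{2,\a,\b}}$. Applying Donaldson's $C^{2,\a,\b}$-Schauder theory for second-order cone-elliptic operators \cite{MR2975584} first to the equation for $v$ and then to the equation $\tri_{\om}u=v$ yields
\begin{align*}
|v|_{C^{2,\a,\b}}&\leq C\bigl(|f|_{C^{0,\a,\b}}+|u|_{C^{2,\a,\b}}+\|v\|_{L^{\infty}}\bigr),\\
|u|_{C^{2,\a,\b}}&\leq C\bigl(|v|_{C^{0,\a,\b}}+\|u\|_{L^{\infty}}\bigr).
\end{align*}
Summing and interpolating $|v|_{C^{0,\a,\b}}\leq\eps|v|_{C^{2,\a,\b}}+C(\eps)\|v\|_{L^{2}}$ and analogously for $\|u\|_{L^{\infty}}$, $\|v\|_{L^{\infty}}$ via \lemref{Holderinterpolation}, then choosing $\eps$ small enough to absorb the $C^{2,\a,\b}$-terms on the right, I arrive at
\begin{align*}
|u|_{C^{2,\a,\b}}+|\tri_{\om}u|_{C^{2,\a,\b}}\leq C\bigl(|f|_{C^{0,\a,\b}}+\|u\|_{L^{2}}+\|\tri_{\om}u\|_{L^{2}}\bigr).
\end{align*}
Plugging in the Stage 1 bound and $\|f\|_{L^{2}}\leq C|f|_{C^{0,\a,\b}}$ closes the estimate.

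\textbf{Main obstacle.} The real subtlety is that $C_{\mathbf{w}}^{4,\a,\b}(\om)$ gives no a priori control over the pure second derivatives $\p_{i}\p_{j}u$, so the Bochner/Ricci identity converting $\int|\p\bar\p u|^{2}$ to $\int|\p\p u|^{2}+\text{Ric}$-terms is unavailable. The Stage 1 coercivity survives only because \lemref{IbPRic} uses $\nabla T=0$ and the tangentiality of $T$ to $D$ to integrate by parts $\int u^{i\bar j}T_{i\bar j}u = -\int u^{i}T_{i\bar j}u^{\bar j}$, involving only first derivatives; symmetrically, $L_{t}^{K}$ itself never contracts against pure second derivatives, so the Schauder iteration only ever needs the mixed $C^{0,\a,\b}$-information already encoded in $C^{2,\a,\b}$. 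This tight interplay between the boundedness of $T$, the tangentiality condition, and the factorisation through $\tri_{\om}$ is precisely what makes $C_{\mathbf{w}}^{4,\a,\b}(\om)$, rather than the stronger $C^{4,\a,\b}(\om)$, the correct solution space in the large-angle regime $\frac12\leq\b\leq1$.
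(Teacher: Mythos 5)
Your proposal is correct and follows essentially the same route as the paper's proof: the paper likewise combines the coercivity of $\mathcal{B}^K_t$ (via \lemref{BKt}, \lemref{IbPRic}, Poincar\'e and the largeness of $K$) with the global $L^2$-estimate of \lemref{l2pureestimate} to bound $\Vert u\Vert_{L^2(\om)}$ by $|L^K_t u|_{C^{0,\a,\b}}$, and a two-step Schauder bootstrap on $\tri_\om^2 u$ plus the interpolation of \lemref{Holderinterpolation} to close the $C_\mathbf{w}^{4,\a,\b}(\om)$ estimate. The only difference is the order of the two stages, which is immaterial.
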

\begin{proof}
	Applying Schauder estimate to the equation
	\begin{align*}
	\tri^2_\om u=L_t^Ku- t  u^{i\bar j}T_{i\bar j}(\om)+K\tri_\om u,
	\end{align*}
	we have
	\begin{align*}
	|\tri_\om u|_{C^{2,\a,\b}} 
	\leq C_2 \left(|L^K_t u-t  u^{i\bar j}T_{i\bar j}(\om)+K\tri_\om u|_{C^{0,\a,\b}}+||\tri_\om u||_{L^{2}(\om)}\right).
	\end{align*}
	Note that $T$ is $C^{0,\a,\b}$, the RHS. is bounded by
	\begin{align*}
	C_3 \left(|L^K_t u|_{C^{0,\a,\b}}+|u|_{C^{2,\a,\b}}+|\tri_\om u|_{C^{0,\a,\b}}\right).
	\end{align*}
	Applying the Schauder estimate to the middle term, we get
	\begin{align*}
	|u|_{C^{2,\a,\b}} 
	\leq C_4 \left(|\tri_\om u|_{C^{0,\a,\b}}+||u||_{L^{2}(\om)}\right).
	\end{align*}
	Combine them together, we have
	\begin{align*}
	|u|_{C_\mathbf{w}^{4,\a,\b}(\om)}\leq C_5 \left(|L^K_t u|_{C^{0,\a,\b}}+|\tri_\om u|_{C^{0,\a,\b}}+||u||_{L^{2}(\om)}\right).
	\end{align*}
	
	We use the $\eps$-interpolation inequality of the H\"older spaces,  Lemma \ref{Holderinterpolation} with sufficiently small $\eps$, to the 2nd term on the right hand side,
	\begin{align*}
	|\tri_\om u|_{C^{0,\a,\b}}\leq \eps |\tri_\om u|_{C^{2,\a,\b}}+C(\eps) ||u||_{L^2(\om)}.
	\end{align*}
	Thus we have
	\begin{align}\label{path4ab}
	|u|_{C_\mathbf{w}^{4,\a,\b}(\om)}\leq C_6 \left(|L^K_t u|_{C^{0,\a,\b}}+||u||_{L^{2}(\om)}\right).
	\end{align}

We now estimate $\Vert u\Vert ^2_{H_{\mathbf{w}}^{2,\b}(\om)}$ to control $||u||_{L^{2}(\om)}$.
	We first use the $L^2$-estimate of the cone metrics to the standard linear operator $\tri_\om u$ (Lemma \ref{l2pureestimate}), i.e. there exists a constant $C_7>0$ such that,
	\begin{align*}
	\Vert u\Vert ^2_{H_{\mathbf{w}}^{2,\b}(\om)}
	\leq C_7 \int_M(|\tri_\om u|^2+|\nabla u|_\om^2+|u|^2)\om^n.
	\end{align*}
	With \lemref{BKt}, the RHS of previous inequality becomes,
	\begin{align*}
	&C_8\left(\mathcal{B}^K_t(u,u)+\int_M[ t u^{i}T_{i\bar j}(\om)u^{\bar j} +(1-K)|\nabla u|_\om^2+|u|^2]\om^n\right).
	\end{align*}
	Then we use the Cauchy-Schwartz inequality to $$\mathcal{B}^K_t(u,u)=\int_M u L_t^K  u \om^n\leq 	\Vert u\Vert^2_{L^2(\om)}+ \Vert L^K_t u\Vert_{L^2(\om)}^2,$$ apply the Poincar\'e inequality (Lemma \ref{Poincareinequality}) to the 4th term $\int_M|u|^2\om^n$,
	\begin{align*}
	\Vert u\Vert^2_{L^2(\om)} \leq C_P^2 \Vert \p u\Vert^2_{L^2(\om)}. 
	\end{align*} We set $$K_0=1-K+\Vert T(\om)\Vert_{L^\infty}+2C_P^2,$$ and obtain that
	\begin{align}\label{pureL2}
	\Vert u\Vert ^2_{H_{\mathbf{w}}^{2,\b}(\om))} &\leq C_9\left( \Vert L^K_t u\Vert_{L^2(\om)}^2+K_0\Vert\nabla u\Vert_{L^2(\om)}^2\right).
	\end{align}
	
	We further choose appropriate $K$ such that $K_0<0$, then
	\begin{align*}
	\Vert u\Vert^2_{L^2(\om)}\leq \Vert u\Vert ^2_{H_{\mathbf{w}}^{2,\b}(\om))} \leq C_9 \Vert L^K_t u\Vert_{L^2(\om)}^2\leq C_{10}|L^K_t u|_{C^{0,\a,\b}}^2.
	\end{align*}
	We put the previous inequality to the right hand side of \eqref{path4ab}, to conclude the required estimate, i.e.
	\begin{align*}
	|u|_{C_\mathbf{w}^{4,\a,\b}(\om)}\leq C_1 |L^K_t u|_{C^{0,\a,\b}}.
	\end{align*}
\end{proof}


\subsection{Asymptotic of functions in $C_\mathbf{w}^{4,\a,\b}(\om)$}\label{Asymptotics of functions}

We prove the growth rate of 
\begin{align}\label{nnuformula}
\nabla_k\nabla_i u=\frac{\partial^2}{\p z^k\p z^i} u-\Gamma^{j}_{ki}(g)\p_j u
\end{align} near the divisor. Here $\Gamma^{j}_{ki}(g)$ are the Christoffel symbols of the cscK cone metric $g$.

We use the notations in Section \ref{Asymptotic behaviours} under the rescaled lifted holomorphic coordinates $\tilde{v}$, see Definition \ref{Lifted holomorphic coordinates} and \ref{Rescaled lifted holomorphic coordinates}. Given a point $v_0$ and small neighbourhood $B_{c_\beta\rho_0}(v_0)$ outside the divisor. After rescaling $v$ to $\tilde v$, the neighbourhood becomes a fixed set $B_{c_\beta}(v_0)$.

\begin{prop}\label{nnu}
	Suppose $u\in C_\mathbf{w}^{4,\a,\b}(\om)$ and $\om$ is a cscK cone metric.
	Then
	\begin{align*}
	|\nabla^g\nabla^g u|_g=O(r_0^{\a\b-\b}).
	\end{align*}
	In which, $\nabla^g$ is the covariant differentiation w.r.t. the cone metric $g$. 
\end{prop}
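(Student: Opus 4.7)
The plan is to compute $(\nabla^g\nabla^g u)_{ij}=\p_{z^i}\p_{z^j}u-\Gamma^k_{ij}(g)\,\p_{z^k}u$ pointwise in a cone chart about a point $Z_0$ outside the divisor, splitting by whether each index is singular ($=1$) or tangent ($\geq 2$), and to close the estimate by combining the $C^{2,\a,\b}$-regularity of $u$ (implied by $u\in C_{\mathbf w}^{4,\a,\b}(\om)$) with the sharp cscK asymptotics for the Christoffel symbols supplied by Corollary~\ref{connection}. The target rate $r_0^{\a\b-\b}$ is exactly the weight $\rho_0^{\a-1}$ (with $\rho_0=|z^1|^\b$) that is already encoded in the pattern of Corollary~\ref{connection}, so the structure of the bound is aligned with the available estimates.

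First I would record the pointwise bounds derived from $u\in C^{2,\a,\b}$ in the lifted coordinate $V$: $u_{z^k z^l}=O(1)$ for $k,l\geq 2$, $u_{z^1 z^k}=O(|z^1|^{\b-1})$ for $k\geq 2$, and $u_{z^1 z^1}=O(|z^1|^{\b-2})$, together with the gradient bounds $u_{z^1}=O(|z^1|^{\b-1})$ and $u_{z^k}=O(1)$. Pairing these with the weighted Christoffel estimates from Corollary~\ref{connection}, the tangent-tangent, tangent-normal and normal-tangent blocks are routine: the products $\Gamma^k_{ij}(g)\,u_{z^k}$ each directly carry the cscK factor $\rho_0^{\a-1}$, and, after contracting with the dual-metric factors $g^{i\bar a}g^{j\bar b}$ (with cone weights $|z^1|^{2-2\b}$ in each $1$-slot and $O(1)$ in each tangent slot) in the tensor norm $|\cdot|_g$, each of these blocks contributes exactly $O(\rho_0^{\a-1})=O(r_0^{\a\b-\b})$.

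The delicate piece is the pure normal-normal block $(\nabla^g\nabla^g u)_{11}$, where the naive estimates on $\p^2_{z^1}u$ and on the cone part $(\b-1)(z^1)^{-1}\,u_{z^1}$ of $\Gamma^1_{11}(g)\,u_{z^1}$ are both $O(|z^1|^{\b-2})$ and individually exceed the target. The saving structural fact is that these leading singularities cancel exactly in the covariant combination: the $C^{2,\a,\b}$-identity $u^V_{v_1 v_1}=\b^{-2}[(1-\b)(z^1)^{1-2\b}u_{z^1}+(z^1)^{2-2\b}u_{z^1 z^1}]$ rearranges to $u_{z^1 z^1}=\b^2(z^1)^{2\b-2}u^V_{v_1 v_1}+\Gamma^1_{11}(\om_{cone})\,u_{z^1}$, making the cone Christoffel factor appear naturally; consequently $u_{z^1 z^1}-\Gamma^1_{11}(g)\,u_{z^1}$ reduces to $\b^2(z^1)^{2\b-2}u^V_{v_1 v_1}+(\Gamma^1_{11}(\om_{cone})-\Gamma^1_{11}(g))\,u_{z^1}$. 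By Corollary~\ref{connection} the cscK-specific deviation is of size $O(|z^1|^{\a\b-1})$, so after multiplying by $u_{z^1}=O(|z^1|^{\b-1})$ one gets $O(|z^1|^{\a\b+\b-2})$, and the tangent Christoffel contribution $\Gamma^k_{11}(g)\,u_{z^k}$ is bounded the same way. Contracting with $g^{1\bar 1}g^{1\bar 1}=O(|z^1|^{4-4\b})$ finally yields $|\nabla^g\nabla^g u|_g=O(|z^1|^{\a\b-\b})=O(r_0^{\a\b-\b})$. The main obstacle is precisely this cancellation: a term-by-term estimate in the $z$-chart only gives the weaker bound $O(\rho_0^{-1})$, and one has to recognise that the cone piece of $\Gamma^1_{11}(g)$ is exactly what is subtracted from $\p^2_{z^1}u$ in the covariant Hessian, so that only the cscK-controlled correction of order $\rho_0^{\a-1}$ survives.
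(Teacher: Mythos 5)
There is a genuine gap, and it sits exactly at the point you identify as ``the delicate piece''. Your reduction of the normal--normal block via the chain rule $v_1=z_1^{\beta}$ is correct and is in spirit what the paper's coordinate change accomplishes: the cone part $\Gamma^1_{11}(\om_{cone})u_{z^1}$ is absorbed, and what remains is $\beta^2(z^1)^{2\beta-2}u^V_{v_1v_1}$ plus terms controlled by Corollary~\ref{connection}. But you then need $u^V_{v_1v_1}=O(\rho_0^{\a-1})$, and this does \emph{not} follow from $u\in C^{2,\a,\b}$. That space controls $u$, $\p u$ and the mixed complex Hessian $\p\bar\p u$ only; Lemma~\ref{2abinrealcord} lists $u_V$, $u_{V,i}$, $u_{V,i\bar j}$ and deliberately omits the pure $(2,0)$ derivatives $u_{V,ij}$, and Section~\ref{Linear theory for Lichnerowicz operator} states explicitly that functions in these spaces ``have no a priori boundedness on the pure 2nd derivatives $\p_1\p_1u$''. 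The same objection applies to your startup bounds $u_{z^1z^1}=O(|z^1|^{\b-2})$ and $u_{z^1z^k}=O(|z^1|^{\b-1})$: these are unbarred second derivatives and are simply not part of the $C^{2,\a,\b}$ data. A telltale sign is that your argument never uses the actual hypothesis $u\in C_{\mathbf w}^{4,\a,\b}(\om)$, i.e.\ $\tri_\om u\in C^{2,\a,\b}$, beyond $u\in C^{2,\a,\b}$; if the proposition held at that level of generality it would bound the pure Hessian of an arbitrary $C^{2,\a,\b}$ function, which is false.

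The paper closes this gap with an elliptic estimate rather than a pointwise one: setting $f=\tri_{\om,\tilde V}u$ and subtracting a linear function $u_0$ so that $w=u-u_0=O(\rho_0^{\a+1})$, it applies the interior Schauder estimate to $\tri_{\om,\tilde V}w=f$ on the fixed ball $B_{c_\b}$ in the rescaled coordinates, using Proposition~\ref{lem:goodmetric} (which needs $\om$ cscK) for the $C^{0,\a}_{\tilde V}$ bound on the coefficients and the membership $\tri_\om u\in C^{2,\a,\b}$ for the right-hand side. Rescaling back yields $\p_{v^i}\p_{v^j}u=O(\rho_0^{\a-1})$ for \emph{all} $i,j$, after which the Christoffel contribution is handled by Corollary~\ref{connection} exactly as you do. If you insert that Schauder step to justify $u^V_{v_1v_1}=O(\rho_0^{\a-1})$ (and the analogous bounds for the other pure second derivatives), your computation goes through; without it, the central estimate is asserted rather than proved.
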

\begin{proof}

We denote $$\tri_{\om,\tilde V}:=g^{i\bar j}\frac{\p^2}{\p \tilde v^i\p \tilde v^{\bar j}}=\rho_0^2g^{i\bar j}\frac{\p^2}{\p  v^i\p v^{\bar j}}$$ the Laplacian with respect to $\om$ in the coordinate $\tilde v$.
We let $$f:=\tri_{\om,\tilde V} u,$$ and then modify $u$ by a polynomial function such that the modified functions and their gradient vanish at the point $\tilde v=0$, i.e. let 
\begin{align*}
u_0=\sum_{2\leq i\leq n}\frac{\p u(0)}{\p \tilde v^i} (\tilde v^i)
+\sum_{2\leq i\leq n}\frac{\p u(0)}{\p \tilde{v}^{\bar i}} (\tilde{v}^{\bar i})
\end{align*} and 
\begin{align*}
w:=u-u_0.
\end{align*}
Then with the definition $\rho_0=r_0^{\b}$, we have
\begin{align*}
w
=O(\rho_0^{\a+1}).
\end{align*}
Since $u_0$ is a linear function, we get $\tri_{\om,\tilde V}  u_0=0$. Then we have
\begin{align*}
\tri_{\om,\tilde V} w=f.
\end{align*}
The metric $\om$ is identified to the flat cone metric and $||g_{i\bar j}||_{C_{\tilde V}^{0,\a}(B_{c_\beta})}$ is bounded by Proposition \ref{lem:goodmetric}. 
We apply the interior Schauder estimate to this equation in $B_{c_\beta}$ and rescaling back (or see Theorem 4.6 in \cite{MR1814364}), we have for any $1\leq i,j\leq n$,
\begin{align*}
\frac{\p^2 w}{\p v^i\p  v^j}
=O(\rho_0^{\a-1}).
\end{align*}
So is $u$, because $u_0$ is a linear function.
Since $u\in C^{1,\a,\b}$ and $\om$ is a cscK cone metric, from \corref{connection},
\begin{align*}
|\Gamma^{j}_{ki}(g)\p_j u|_g=O(\rho_0^{\a-1}).
\end{align*}
The conclusion follows from $\rho_0=r_0^{\b}$.

\end{proof}


\section{The automorphism group is reductive, if a cscK cone metric exists}\label{Reductivity of automorphism group}


Let $Aut(X;D)$ be identity component of the group of holomorphic automorphisms of $X$ which fix the divisor $D$, $\mathfrak{h}(X;D)$ be the space of all holomorphic vector fields tangential to the divisor and $\mathfrak{h}'(X;D)$ be the complexification of a Lie algebra consisting of Killing vector fields of $X$ tangential to $D$.
\begin{thm}\label{preceisereductivity}Suppose $\om$ is a cscK cone metric. 
	Then there exists a one-to-one correspondence between $\mathfrak{h}'(X;D)$ and the kernel of ${\mathbb{L}\mathrm{ic}}_{\om}$. 
	
Precisely speaking, 
the Lie algebra $\mathfrak{h} (X;D)$ has a semidirect sum decomposition:
\begin{equation}
\mathfrak{h}(X;D) = \mathfrak{a}(X;D) \oplus \mathfrak{h'} (X;D),
\end{equation}
where $\mathfrak{a}(X;D)$ is the complex Lie subalgebra of $\mathfrak{h}(X;D)$ consisting of all parallel holomorphic vector fields tangential to $D$,
and $\mathfrak{h'}(X;D)$ is an ideal of $\mathfrak{h}(X;D)$ consisting of the image under $grad_g$ of the kernel of $\cD$ operator. The operator $grad_g$ is defined to be $grad_g(u)= \uparrow^{\omega}\bar\p u=g^{i\bar j}\frac{\p u}{\p z^{\bar j}}\frac{\p}{\p z^i}$.

Furthermore $\mathfrak{h}'(X;D)$ is the complexification of a Lie algebra consisting of Killing vector fields of $X$ tangential to $D$. 
In particular $\mathfrak{h}'(X;D)$ is reductive. 
Moreover, $\mathfrak{h}(X;D)$ is reductive.
\end{thm}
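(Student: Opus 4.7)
The plan is to adapt the classical Calabi--Matsushima--Lichnerowicz identification of $\ker {\mathbb{L}\mathrm{ic}}_\om$ with holomorphic potentials to the conical setting. Recall ${\mathbb{L}\mathrm{ic}}_\om = \cD^{\ast}\cD$ with $\cD u = \bar\partial(grad_g u)$, and a function $u$ satisfies $\cD u=0$ iff $grad_g(u)$ is a holomorphic vector field. The goal is therefore twofold: first, show that $\ker {\mathbb{L}\mathrm{ic}}_\om$ in the space $C_\mathbf{w}^{4,\a,\b}(\om)$ (which is the natural domain by \thmref{Fredholm}) coincides with $\ker \cD$; second, show that $grad_g u$ is tangential to $D$, so that the image lies in $\mathfrak{h}'(X;D)$.

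The heart of the first step is the integration-by-parts identity
\begin{equation*}
\int_M u \cdot {\mathbb{L}\mathrm{ic}}_\om(u)\, \om^n \;=\; \int_M |\cD u|^2 \, \om^n,
\end{equation*}
which I would prove using a family of cutoff functions $\chi_\eps$ vanishing in an $\eps$-tubular neighbourhood of $D$, as in Section \ref{Asymptotic behaviours} and \lemref{IbPRic}. The key analytic inputs are Proposition \ref{nnu}, which gives $|\nabla^g\nabla^g u|_g = O(r_0^{\a\b-\b})$ for $u \in C_\mathbf{w}^{4,\a,\b}(\om)$, together with the Christoffel-symbol asymptotics for cscK cone metrics from \corref{connection}. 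Since $\om^n$ behaves like $r^{2\b-2}\,dV$ near $D$, the integrand $|\cD u|^2 \om^n$ is dominated by $r^{2\a\b-2}\,dV$, which is integrable because $\a\b>0$; and each boundary term produced by $\nabla\chi_\eps$ (of order $\eps r^{-1}$) carries an additional positive weight forcing it to vanish as $\eps \to 0$. Once the identity holds, $u \in \ker{\mathbb{L}\mathrm{ic}}_\om$ implies $\cD u=0$, hence $grad_g u$ is a holomorphic vector field. The tangency to $D$ is then verified by examining $g^{1\bar 1}\partial_{\bar 1} u = O(|z^1|^{2-2\b})\cdot O(1)$ which tends to $0$ at $D$.

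For the decomposition, given $V \in \mathfrak{h}(X;D)$ I would produce a holomorphic potential $u$ such that $V - grad_g(u)$ is parallel, by solving a Poisson-type equation for $u$ on $M$ using the $L^2$-theory developed in Section \ref{Linear theory for Lichnerowicz operator}; the parallel remainder lies in $\mathfrak{a}(X;D)$ and the constructed $u$ lies in $\ker\cD$. This furnishes the semidirect decomposition $\mathfrak{h}(X;D) = \mathfrak{a}(X;D) \oplus \mathfrak{h}'(X;D)$. To recognise $\mathfrak{h}'(X;D)$ as the complexification of a real Killing algebra, I would decompose each $u \in \ker\cD$ into its real and imaginary parts; a standard computation shows that for a real $u \in \ker\cD$ the $(1,0)$-vector field $grad_g u$ has imaginary part whose flow preserves $\om$ (i.e.\ is Killing), and both real and imaginary parts are tangential to $D$ by the argument above. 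Reductivity of $\mathfrak{h}'(X;D)$ follows because it is the complexification of a compact real Lie algebra, and reductivity of $\mathfrak{h}(X;D)$ is then immediate from the decomposition since $\mathfrak{a}(X;D)$ is an abelian ideal of parallel fields.

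The main obstacle, as in the sub-half-angle case treated in \cite{arxiv:1603.01743}, is justifying the integration by parts when the pure second derivatives $\partial_1\partial_1 u$ of a $C_\mathbf{w}^{4,\a,\b}(\om)$-function are \emph{not} a priori bounded. The novelty over \cite{arxiv:1603.01743} is that now the cscK cone metric itself has Christoffel symbols blowing up along $D$ at the rate recorded in \corref{connection}, so every cutoff estimate must weigh the blow-up of $\Gamma^{k}_{ij}(g)$ against the controlled decay of $\nabla^g\nabla^g u$ from Proposition \ref{nnu}. Matching these weights so that the boundary contributions carry a strictly positive power of $r$ is the delicate technical step; once this is achieved, the remainder of the Calabi--Matsushima argument proceeds essentially as in the classical smooth setting.
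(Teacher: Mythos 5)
Your proposal follows essentially the same route as the paper: both reduce the reductivity statement to the integration-by-parts identity $\int_M u\,{\mathbb{L}\mathrm{ic}}_\om(u)\,\om^n = \int_M |\nabla^g\nabla^g u|_g^2\,\om^n$ (your $\int_M|\cD u|^2\om^n$ is the same quantity for real $u$), justified by the growth rate of Proposition \ref{nnu}, the Christoffel-symbol asymptotics of \corref{connection}, and cutoff functions near $D$, and then conclude via the classical Calabi--Matsushima decomposition. One small inaccuracy: for $u\in C^{2,\a,\b}$ with $\b<1$ the derivative $\partial_{\bar 1}u$ is $O(|z^1|^{\b-1})$ rather than $O(1)$, but since $g^{1\bar j}=O(|z^1|^{1-\b})$ the component $X^1=g^{1\bar j}\partial_{\bar j}u$ still tends to $0$ along $D$, so the tangency conclusion is unaffected.
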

\begin{proof}
	The proof follows the same line of Theorem 4.1 in \cite{arxiv:1603.01743}, with the help of the following lemmas. The new gradients are the use of the space $C_\mathbf{w}^{4,\a,\b}(\om)$, and obtaining the growth rate of $\nabla^g\nabla^g u$ which follows from the asymptotic of the cscK cone metrics in Section \ref{Asymptotic behaviours} and also the asymptotic of the functions in the space $C_\mathbf{w}^{4,\a,\b}(\om)$ in Section \ref{Asymptotics of functions}.
\end{proof}
\begin{lem}\label{13term}Suppose $u\in C_\mathbf{w}^{4,\a,\b}(\om)$ and $\om$ is a cscK cone metric. Then we have
\begin{align*}
\int_M u\cdot  {{\mathbb{L}\mathrm{ic}}}_{\om}(u) \om^n
=- \int_M g^{i\bar j}g^{k\bar l}\cdot \nabla_{\bar j}u \cdot \nabla_{\bar l}\nabla_{ k}\nabla_{ i}  u\cdot \om^n.
\end{align*} 
\end{lem}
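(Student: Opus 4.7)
The plan is to first establish the pointwise identity
\begin{equation*}
\mathrm{Lic}_\om(u)=g^{i\bar j}g^{k\bar l}\nabla_{\bar l}\nabla_{\bar j}\nabla_k\nabla_i u
\end{equation*}
on the regular part $M$, and then to justify the single integration by parts that converts the left-hand side of the lemma into the right-hand side. The pointwise identity is a standard K\"ahler commutator computation: reordering the four covariant derivatives on the right via the commutator of $\nabla_k$ and $\nabla_{\bar l}$ on $(1,0)$-forms produces $\tri_\om^2 u$ plus a trace of the Hessian against $Ric(\om)$. By \lemref{Ttensor}, $Ric(\om)=T$ on $M$, so this trace is exactly $u^{i\bar j}T_{i\bar j}$, recovering $\mathrm{Lic}_\om(u)$. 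Multiplying by $u$, integrating by parts once in the $\nabla_{\bar l}$ slot, and relabelling summation indices $\bar j\leftrightarrow\bar l$ and $i\leftrightarrow k$, formally yields the stated identity.

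To justify this integration by parts across the divisor, I insert the cutoff $\chi_\eps$ that vanishes in an $\eps$-tubular neighbourhood of $D$, with gradient controlled exactly as in the proof of \lemref{IbPRic}. The cutoff produces
\begin{equation*}
\int_M\chi_\eps u\cdot\mathrm{Lic}_\om(u)\,\om^n = -\int_M g^{i\bar j}g^{k\bar l}\chi_\eps\nabla_{\bar j}u\cdot\nabla_{\bar l}\nabla_k\nabla_i u\,\om^n + R_\eps,
\end{equation*}
where $R_\eps$ collects every term carrying at least one factor of $\nabla\chi_\eps$. The two bulk integrands pass to the limit $\eps\to 0$ by dominated convergence: the left-hand integrand is the product of $u\in C^{2,\a,\b}$ with $\tri_\om u\in C^{2,\a,\b}$ (by definition of $C^{4,\a,\b}_{\mathbf w}(\om)$) and $T\in C^{0,\a,\b}$ (by \lemref{Ttensor}), hence bounded; for the right-hand integrand I expand $\nabla_{\bar l}\nabla_k\nabla_i u$ via \eqref{nnuformula} into Christoffel symbols of $\om$ multiplied against second-order derivatives of $u$, and combine the boundedness of $|\nabla u|_\om$, the growth $|\nabla^g\nabla^g u|_g=O(r_0^{\a\b-\b})$ from \propref{nnu}, and the asymptotics of $\Gamma^\bullet_{\bullet\bullet}(\om)$ from \corref{connection} and \corref{dmetric}. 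Since $\a\b<1-\b$, the resulting dominating function is integrable against the cone volume.

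The main obstacle is to show $R_\eps\to 0$. Each term of $R_\eps$ is supported on the annular collar $\{\eps\leq r\leq 2\eps\}$ of $\om^n$-measure $O(\eps^2)$ and contains a factor $\nabla\chi_\eps$. The delicate case is the one in which the surviving derivative falls on a pure third derivative of $u$, for which no direct pointwise estimate is available. To handle it I would integrate by parts once more in a tangential (regular) direction along the divisor, trading this third derivative for $\nabla(\tri_\om u)$, which is controlled by the $C^{2,\a,\b}$ norm of $\tri_\om u$, plus a product of second-order derivatives of $u$ with Christoffel symbols, controlled by \propref{nnu} together with \corref{connection} and \corref{dmetric}. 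Coupling these pointwise bounds with the $O(\eps^2)$ measure estimate gives $R_\eps=O(\eps^\gamma)$ for some $\gamma>0$ determined by the angle condition $\a\b<1-\b$, completing the proof.
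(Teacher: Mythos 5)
Your route is genuinely different from the paper's in its organisation. You first rewrite ${{\mathbb{L}\mathrm{ic}}}_{\om}(u)$ pointwise on $M$ as the divergence-form fourth-order expression $g^{i\bar j}g^{k\bar l}\nabla_{\bar l}\nabla_{\bar j}\nabla_k\nabla_i u$ (valid since $S$ is constant and $u$ is $C^{4,\a}$ locally on $M$ by interior Schauder theory), and then perform a single cutoff integration by parts. The paper instead integrates by parts on the $\tri^2_\om u$ term \emph{first} --- which uses only that $u$ and $\tri_\om u$ are both $C^{2,\a,\b}$ --- obtaining $-\int_M g^{i\bar j}\nabla_{\bar j}u\,\nabla_i(\tri_\om u)\,\om^n$, and only then applies the Ricci commutation to $\nabla_i(\tri_\om u)$ pointwise; the term $\int_M u\,u^{i\bar j}T_{i\bar j}\,\om^n$ is handled separately by the cutoff argument of \lemref{IbPRic}, and the two Ricci contributions cancel. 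The paper's ordering buys a cleaner justification: after the first integration by parts every remaining manipulation is pointwise, so no third derivative of $u$ ever enters a cutoff error term.

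The weak step in your proposal is the treatment of $R_\eps$. The proposed fix --- integrating by parts once more in a tangential direction to ``trade'' the third derivative for $\nabla(\tri_\om u)$ --- is not the right mechanism: a tangential integration by parts does not effect that conversion. What does is the pointwise commutation identity, available precisely because the third covariant derivative only ever occurs contracted against the metric; in the paper's convention,
\begin{align*}
g^{k\bar l}\nabla_{\bar l}\nabla_{k}\nabla_{i}u=\nabla_i(\tri_\om u)+g^{p\bar q}R_{i\bar q}\nabla_p u,
\end{align*}
and both terms on the right are bounded since $\tri_\om u\in C^{2,\a,\b}$ and $Ric(\om)=T\in C^{0,\a,\b}$ by \lemref{Ttensor}. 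Once this is observed, the ``delicate case'' disappears: $R_\eps$ is bounded by $\sup_M|u|\cdot\int_M|\nabla\chi_\eps|_\om\,\om^n\to 0$, no second integration by parts is needed, and the bulk convergence of the right-hand integrand likewise follows without invoking the Christoffel asymptotics of \corref{connection} or the rate of \propref{nnu}. (Also, the collar $\{\eps\leq r\leq 2\eps\}$ has cone volume of order $\eps^{2\b}$, not $\eps^{2}$.) With these corrections your argument closes, but as written the decisive estimate is attributed to a step that would not perform it.
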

\begin{proof}

We will apply the integration by parts directly to the Lichnerowicz operator
\begin{align*}
	{{\mathbb{L}\mathrm{ic}}}_{\om}(u)=\tri^2_\om u+u^{i\bar j}T_{i\bar j}.
\end{align*}  

Firstly, since  $u, \tri_{\om} u\in C^{2,\a,\b}$, we have
\begin{align*}
\int_M u \cdot \tri^2_\om u\cdot \om^n=-\int_M g^{i\bar j}\nabla_{\bar j}u \nabla_{i}(\tri_{\om} u) \om^n.
\end{align*}
In the local normal coordinate on the regular part $M$, we use the K\"ahler condition of $g$ and the Ricci formula to prove
\begin{align*}
\nabla_{ i}(\tri_{\om} u)
&=g^{k\bar l}\nabla_{ i}\nabla_{k}\nabla_{ \bar l}  u
=g^{k\bar l}\nabla_{ k}\nabla_{i}\nabla_{ \bar l}  u
=g^{k\bar l}\nabla_{ k}\nabla_{\bar l}\nabla_{ i}  u\\
&=g^{k\bar l}[\nabla_{\bar l}\nabla_{ k}\nabla_{ i}  u-R^{p}_{ k \bar l i} \nabla_{p} u]
=g^{k\bar l}\nabla_{\bar l}\nabla_{ k}\nabla_{ i}  u-g^{p\bar q}R_{ i \bar q} \nabla_{p} u.
\end{align*}
Thus we have,
\begin{align*}
\int_M u \tri^2_\om u\om^n
=- \int_M[ g^{i\bar j}g^{k\bar l}\nabla_{\bar j}u \nabla_{\bar l}\nabla_{ k}\nabla_{ i}  u-R^{i\bar j}\nabla_{\bar j}u  \nabla_{i} u ]\om^n. 
\end{align*}
From \lemref{Ttensor} and \lemref{IbPRic}, we have
\begin{align*}
\int_M u^{i\bar j}T_{i\bar j}(\om)u \om^n
=-\int_M T_{i\bar j}(\om)\nabla_{\bar j}u  \nabla_{i} u \om^n
=-\int_MR_{i\bar j}(\om)\nabla_{\bar j}u  \nabla_{i} u \om^n.
\end{align*}
Combining these two identities together, we prove the lemma.
\end{proof}

We then continue from the previous lemma and applying the integration by parts. with the help of the asymptotic behaviour of functions in Proposition \ref{nnu}. 

\begin{lem}\label{hvfonm}Suppose $u\in C_\mathbf{w}^{4,\a,\b}(\om)$, $\om$ is a K\"ahler cone metric and $|\nabla_k\nabla_i u|_g=O(r^{-\kappa})$ with $-\kappa+\b>0$. Then we have
	\begin{align*}
	- \int_M g^{i\bar j}g^{k\bar l}\cdot \nabla_{\bar j}u \cdot \nabla_{\bar l}\nabla_{ k}\nabla_{ i}  u\cdot \om^n=\int_M |\nabla^g\nabla^g u|_g^2 \om^n.
	\end{align*}
	In which, $\nabla^g$ is the covariant differentiation w.r.t. the cone metric $g$. 
\end{lem}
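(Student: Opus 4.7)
The identity is a standard K\"ahler integration by parts: transfer one $\bar\p$-derivative from the third-order factor $\nabla_{\bar l}\nabla_k\nabla_i u$ onto the first-order factor $\nabla_{\bar j}u$. The only subtlety is that the ambient manifold is the incomplete regular part $M$, so the argument must be localised via a cut-off away from $D$ and the resulting error term must be shown to vanish.

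My plan is to re-use the cut-off family $\chi_\eps$ employed in \lemref{IbPRic}, which is supported away from an $\eps$-dependent tubular neighbourhood of $D$ and satisfies $|\nabla\chi_\eps|_g=\eps\cdot O(r^{-1})$. Setting
\begin{align*}
Y^{\bar l}\,:=\,g^{i\bar j}g^{k\bar l}\,\nabla_{\bar j}u\cdot \nabla_k\nabla_i u,
\end{align*}
the divergence theorem applied to the compactly supported $(0,1)$-vector field $\chi_\eps\,Y$ on $M$ yields
\begin{align*}
0=\int_M\chi_\eps\,\nabla_{\bar l}Y^{\bar l}\,\om^n+\int_M (\nabla_{\bar l}\chi_\eps)\,Y^{\bar l}\,\om^n.
\end{align*}
Expanding by the product rule and using the K\"ahler identity $\Gamma^{\bar p}_{k\bar l}=0$, the first integrand splits as
\begin{align*}
\nabla_{\bar l}Y^{\bar l}=g^{i\bar j}g^{k\bar l}\,\nabla_{\bar l}\nabla_{\bar j}u\cdot\nabla_k\nabla_i u\,+\,g^{i\bar j}g^{k\bar l}\,\nabla_{\bar j}u\cdot\nabla_{\bar l}\nabla_k\nabla_i u;
\end{align*}
since $u$ is real and $g$ is K\"ahler, the first summand equals the non-negative quantity $|\nabla^g\nabla^g u|_g^2$.

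Next I would show the error term $\int_M(\nabla_{\bar l}\chi_\eps)Y^{\bar l}\om^n$ vanishes as $\eps\to 0$. Using the uniform bound $|\nabla u|_g=O(1)$ coming from $u\in C^{2,\a,\b}$, the hypothesis $|\nabla\nabla u|_g=O(r^{-\kappa})$, the cone volume form $\om^n\sim r^{2\b-2}$ in a cone chart, and $|\nabla\chi_\eps|_g=\eps\cdot O(r^{-1})$, the absolute value of this integrand over an annular region $\{r\sim r_0\}$ around $D$ is bounded by $\eps\cdot O(r_0^{2\b-2-\kappa-1})\cdot r_0\,dr_0\,dA$; the integrability of $r^{2\b-2-\kappa}$ at $r=0$ is precisely the condition $-\kappa+\b>0$, and the prefactor $\eps$ then kills the limit. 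By the same exponent count, $\chi_\eps\,|\nabla^g\nabla^g u|_g^2$ has an integrable majorant of order $r^{-2\kappa+2\b-2}$, so monotone convergence identifies its limit with $\int_M|\nabla^g\nabla^g u|_g^2\om^n$.

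Combining these ingredients, the remaining cut-off integral built from the original LHS integrand must converge to $-\int_M|\nabla^g\nabla^g u|_g^2\om^n$; interpreting the unregularised LHS in the same principal-value sense that produced the integral in \lemref{13term} gives the claimed identity. The main obstacle, and the only real calculation, is the delicate balance between the singular growth $|\nabla\nabla u|_g=O(r^{-\kappa})$, the mild divergence $|\nabla\chi_\eps|_g\sim\eps/r$, and the degenerating volume form $r^{2\b-2}$: it is exactly the strict inequality $-\kappa+\b>0$, which in the cscK application is guaranteed by \propref{nnu} with $\kappa=\b-\a\b$, that both keeps the principal integral absolutely convergent and forces the cut-off error to vanish. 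Everything else is the standard K\"ahler IBP on the smooth manifold $M$.
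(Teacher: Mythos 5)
Your overall strategy matches the paper's: multiply by a cut-off $\chi_\eps$, apply divergence/integration-by-parts with respect to $g$, identify the sign-definite second-derivative term as $|\nabla^g\nabla^g u|_g^2$, and show the cut-off error vanishes by exponent counting; this is exactly what the paper does. However, your exponent bookkeeping for the error term contains a genuine slip. You carry over $|\nabla\chi_\eps|_g=\eps\cdot O(r^{-1})$ from \lemref{IbPRic}, but there this is the Euclidean coordinate bound; measured in the cone metric $g$ (as your formula requires, since the free index $\bar l$ is contracted with $g^{k\bar l}$), one has
\begin{align*}
|\nabla\chi_\eps|_g^2 = g^{1\bar 1}|\p_1\chi_\eps|^2\sim |z^1|^{2-2\b}\cdot\Bigl(\frac{\eps}{r}\Bigr)^2,\qquad\text{so}\qquad |\nabla\chi_\eps|_g=\eps\cdot O(r^{-\b}),
\end{align*}
which is what the paper uses. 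With your $\eps\cdot O(r^{-1})$, the radial density works out to $\eps\, r^{2\b-2-\kappa}$ after including the Jacobian $r\,dr$, and its integrability at $r=0$ forces $\kappa<2\b-1$ rather than $\kappa<\b$; this is \emph{strictly stronger} than the hypothesis $-\kappa+\b>0$ whenever $\b<1$, so your estimate would not close for the relevant range of angles. With the corrected cone-metric bound $\eps\,r^{-\b}$ the density becomes $\eps\,r^{\b-1-\kappa}$, whose integrability is \emph{precisely} $-\kappa+\b>0$. (Your final statement of the condition is correct, but your stated exponent $r^{2\b-2-\kappa}$ does not yield it.) A smaller point: once the hypothesis guarantees absolute convergence, the limit passes by dominated convergence; no principal-value interpretation of the left-hand side is needed.
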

\begin{proof}

We let $\chi_{\ep}$ be the smooth cut off function supported outside the $\rho$-tubular neighbourhood of the divisor with the properties that such that 
\begin{align*}
|\nabla\chi_{\ep}|=\eps \cdot O(r^{-1}).
\end{align*}
Multiplying the integrand in \eqref{13term} with the cut-off function $\chi_{\ep}$, we compute by integration by parts,
\begin{align*}
&\int_M g^{i\bar j}g^{k\bar l}\cdot \nabla_{\bar j}u \cdot \nabla_{\bar l}\nabla_{ k}\nabla_{ i}  u\cdot \chi_{\ep} \cdot \om^n\\
&=\int_M |\nabla^g\nabla^g u|_g^2  u\cdot \chi_{\ep} \cdot \om^n
+\int_M g^{i\bar j}g^{k\bar l}\cdot  \nabla_{\bar j}u \cdot\nabla_{ k}\nabla_{ i}  u\cdot \nabla_{\bar l} \chi_{\ep} \cdot \om^n.
\end{align*}

We denote 
\begin{align*}
I_\eps=\int_M g^{i\bar j}g^{k\bar l}\cdot  \nabla_{\bar j}u \cdot\nabla_{ k}\nabla_{ i}  u\cdot \nabla_{\bar l} \chi_{\ep} \cdot \om^n.
\end{align*}
We know from $u\in C_0^{1,\a,\b}$ and $g$ is $C^{0,\a,\b}$, that the growth rate of the following terms near the divisor,
\begin{align*}
&|\p_{z^1} u|_g=O(r^{\a\b}),\quad |\p_{z^i} u|_g=O(1),\quad\forall 2\leq i\leq n,\\
&| \nabla \chi_{\ep} |_g=\eps \cdot O(r^{-\b}),\quad \om^n=O(r^{2\b-2}).
\end{align*}
Then from the assumption the growth order the the integrand is $-\kappa+\b-2>-2$ and thus $I_\eps\rightarrow 0$, as $\eps\rightarrow 0$.
\end{proof}

\begin{cor}\label{hvfonmpt}Suppose $u\in C_\mathbf{w}^{4,\a,\b}(\om)$ is in the kernel of ${\mathbb{L}\mathrm{ic}}_{\om}$ and $\om$ is a cscK cone metric. Then we have
	\begin{align*}
	 |\nabla^g\nabla^g u|_g=0 \text{ on }M.
	\end{align*} 
\end{cor}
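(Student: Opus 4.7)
The plan is to chain together the two preceding lemmas with the asymptotic estimate on $\nabla\nabla u$ established in Section \ref{Asymptotics of functions}. Since $u$ lies in the kernel of $\mathbb{L}\mathrm{ic}_\om$, the left hand side of the identity in Lemma \ref{13term} vanishes:
\begin{align*}
\int_M u \cdot \mathbb{L}\mathrm{ic}_\om(u)\, \om^n = 0,
\end{align*}
so it suffices to convert $-\int_M g^{i\bar j}g^{k\bar l}\nabla_{\bar j}u\,\nabla_{\bar l}\nabla_k\nabla_i u\, \om^n$ into $\int_M |\nabla^g\nabla^g u|_g^2\, \om^n$ via integration by parts.

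The integration by parts step is precisely Lemma \ref{hvfonm}, but its hypothesis requires a pointwise bound of the form $|\nabla_k\nabla_i u|_g = O(r^{-\kappa})$ with $-\kappa + \beta > 0$, which is exactly where the asymptotic theory from Section \ref{Asymptotic behaviours} enters. I would invoke Proposition \ref{nnu}, which gives $|\nabla^g\nabla^g u|_g = O(r_0^{\alpha\beta - \beta})$ for any $u \in C_\mathbf{w}^{4,\a,\b}(\om)$ on a cscK cone background. Setting $\kappa = \beta - \alpha\beta$, the standing H\"older exponent condition $\alpha\beta < 1-\beta$ guarantees $\kappa > 0$, while the positivity condition reduces to $-\kappa + \beta = \alpha\beta > 0$, which holds automatically. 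Hence the hypothesis of Lemma \ref{hvfonm} is verified.

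Combining Lemma \ref{13term} (which uses the integration by parts against $T_{i\bar j}$ from Lemma \ref{IbPRic}, valid since $u \in C^{2,\a,\b}$ and $T$ is the $C^{0,\a,\b}$ tensor from Lemma \ref{Ttensor}) with Lemma \ref{hvfonm}, we conclude
\begin{align*}
0 = \int_M u \cdot \mathbb{L}\mathrm{ic}_\om(u)\, \om^n = \int_M |\nabla^g\nabla^g u|_g^2\, \om^n.
\end{align*}
Since the integrand is non-negative and continuous on the regular part $M$, this forces $|\nabla^g\nabla^g u|_g \equiv 0$ on $M$, completing the proof.

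The only subtlety I anticipate is making sure the rate $\kappa = \beta - \alpha\beta$ coming from Proposition \ref{nnu} is correctly the same $\kappa$ used throughout the paper, and that the cutoff-based integration by parts in Lemma \ref{hvfonm} genuinely applies in the full-angle range $0<\beta\le 1$; both are already accounted for in the earlier sections, so the corollary is indeed a direct consequence and not a place where new analysis is required.
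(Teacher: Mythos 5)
Your proposal is correct and follows exactly the paper's argument: the paper likewise verifies the hypothesis of Lemma \ref{hvfonm} via the rate $|\nabla^g\nabla^g u|_g=O(r^{\a\b-\b})$ from Proposition \ref{nnu} (so that $-\kappa+\b=\a\b>0$), and then combines Lemma \ref{13term} with Lemma \ref{hvfonm} to conclude $\int_M|\nabla^g\nabla^g u|_g^2\,\om^n=0$. No gap; your write-up is simply a more detailed version of the paper's two-line proof.
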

\begin{proof}
According to Proposition \ref{nnu} we have the growth rate of 
$|\nabla_k\nabla_i u|_g$ is 	\begin{align*}
-\kappa=\a\b-\b>-\a\b-\b.
\end{align*} 
Thus the conclusion follows from \lemref{13term} and \lemref{hvfonm}.
\end{proof}

Next we are going to prove that the kernel of ${\mathbb{L}\mathrm{ic}}_{\om}$ generate a holomorphic vector field on $X$ tangential to the divisor. 
\begin{lem}
Suppose $u\in C_\mathbf{w}^{4,\a,\b}(\om)$ is in the kernel of ${\mathbb{L}\mathrm{ic}}_{\om}$ and $\om$ is a cscK cone metric. Then the lifting of $u$ by the metric $\omega$ is a holomorphic vector field on $X$ tangential to $D$.
\end{lem}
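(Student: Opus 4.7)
The plan is to combine \corref{hvfonmpt}, which gives $\nabla^g\nabla^g u = 0$ on the regular part $M$, with an asymptotic analysis near $D$ to show that the gradient field $V := \mathrm{grad}_g u = g^{i\bar j} u_{\bar j}\,\p_{z^i}$ is holomorphic on $M$, extends holomorphically across $D$, and is tangent to $D$.

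First I would observe that the pointwise vanishing $\nabla^g\nabla^g u \equiv 0$ on $M$ is, in complex coordinates and for the K\"ahler metric $g$, exactly the equation $\p_{\bar\ell}(g^{i\bar j} u_{\bar j}) = 0$ for each $i$; hence each component $V^i = g^{i\bar j}u_{\bar j}$ is holomorphic on $M$, so $V$ is a holomorphic vector field on the regular part.

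Next, I would compute the asymptotic size of the components of $V$ in a cone chart $\{z^1,\dots,z^n\}$ centred at a point of $D$, using the regularity of cscK cone metrics obtained in Section \ref{Asymptotic behaviours} (in particular \corref{dmetric}) together with $u\in C^{2,\a,\b}$. The latter, by definition, means that the pull-back of $u$ to the branched cover is $C^{2,\a}$ in the coordinate $v^1=(z^1)^{\b}$, which yields $\p_{\bar z^1}u = O(|z^1|^{\b-1})$ and $\p_{\bar z^j}u = O(1)$ for $j\geq 2$; while the quasi-isometry with the flat cone metric $\om_{\mathrm{cone}}$ gives $g^{1\bar 1}=O(|z^1|^{2-2\b})$, $g^{j\bar k}=O(1)$ for $j,k\geq 2$, and off-diagonal terms $g^{1\bar j}, g^{j\bar 1}$ ($j\geq 2$) of intermediate order controlled by $|z^1|^{1-\b}$. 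Combining these estimates,
\begin{align*}
V^1 &= g^{1\bar 1} u_{\bar 1} + \textstyle\sum_{j\geq 2} g^{1\bar j} u_{\bar j} = O(|z^1|^{1-\b}) \longrightarrow 0,\\
V^j &= g^{j\bar 1} u_{\bar 1} + \textstyle\sum_{k\geq 2} g^{j\bar k} u_{\bar k} = O(1),\quad 2\leq j\leq n.
\end{align*}

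Finally, since each $V^i$ is holomorphic on $M$ and bounded in a neighbourhood of the smooth divisor $D$, the classical Riemann extension theorem (Hartogs across a smooth hypersurface) yields a holomorphic extension of $V$ across $D$ to a holomorphic vector field on all of $X$. The vanishing $V^1|_{D}=0$ extracted above forces the extended field to be tangential to $D$, completing the proof. The main obstacle will be the careful bookkeeping of the off-diagonal inverse metric components $g^{1\bar j}$ and of the asymptotics of $u_{\bar j}$; this is precisely where the refined regularity of Section \ref{Asymptotic behaviours} (and the weight $\kappa = \b-\a\b$) is needed, since for large cone angle the naive bounds are not strong enough to conclude $V^1\to 0$ without using the improved behaviour of the cscK metric.
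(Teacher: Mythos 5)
Your proof is correct and follows essentially the same route as the paper's: \corref{hvfonmpt} gives $\nabla^g\nabla^g u=0$ on $M$, hence $V=g^{i\bar j}u_{\bar j}\p_{z^i}$ is holomorphic on the regular part, and the $C^{2,\a,\b}$ regularity of $u$ plus the quasi-isometry with $\om_{cone}$ yields boundedness of the tangential components and vanishing of the normal component, so $V$ extends holomorphically across the smooth divisor and is tangent to it (the paper states this extension step in one line; you supply the asymptotics and the Riemann extension theorem explicitly). One small correction to your closing remark: the estimates $V^1=O(|z^1|^{1-\b})$ and $V^j=O(1)$ already follow from $u\in C^{2,\a,\b}$ and the quasi-isometry alone, so the refined regularity of Section \ref{Asymptotic behaviours} and the weight $\kappa=\b-\a\b$ are not what rescues this step for large angle — they are needed upstream, in Proposition \ref{nnu} and the integration by parts of \lemref{hvfonm}, to establish \corref{hvfonmpt} in the first place.
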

\begin{proof}
According to \corref{hvfonmpt}, when $\nabla^g\nabla^g u=0$ on the regular part $M$. Then $X=g^{i\bar j}\frac{\p u}{\p z^{\bar j}}\frac{\p }{\p z^{i}}$ is a holomorphic vector field on $M$.
Since $u\in C^{2,\a,\b}$, $X$ vanishes along $D$ and the tangent directions could be extended to $D$, which extends $X$ to a holomorphic vector field on the whole $X$.
 \end{proof}


\section{Bifurcation of the $J$-twisted path}\label{bifurcation}
We now introduce a new continuity path, providing a path towards a cscK metric,
\begin{align}\label{path}
\Phi(t,\vphi)=t(S-\underline S)- (1-t)(\frac{\om^n}{\om_\vphi^n}-1).
\end{align} 
The twisted term also appears in \cite{MR3412393,MR3085100}.
It is clear that this path is the critical point of the following perturbed functional
\begin{align}\label{perturbedenergy}
E(\vphi(t))=t \nu(\vphi(t))+(1-t)J(\vphi(t)).
\end{align}


We would consider the end-point at $t=1$.
When $\vphi,P\in C^{2,\a,\b}$, we consider the continuity path with different parameter, which is written as the fully non-linear operator,
\begin{align}\label{vphippath}
\Phi(t,\vphi(t))=-\tri_{\vphi(t)} P(t)+g_{\vphi(t)}^{i\bar j} R_{i\bar j}(\om_\theta)-\underline S_\b-(1-t)(\frac{\om^n}{\om_\vphi^n}-1).
\end{align}
and
\begin{align}
\log\frac{\om_{\vphi(t)}^n}{\om_\theta^n}=P.
\end{align}
Or as a formally fourth order operator outside the divisor, it is 
\begin{align*}
\Phi(t,\vphi(t))=S(\om_\vphi)-\underline S_\b-(1-t)(\frac{\om^n}{\om_\vphi^n}-1).
\end{align*}


We denote by $\mathcal O$ an orbit of the cscK cone metrics. We minimise the functional $J$ over $\mathcal O$.
\begin{prop}\label{minimiseJ}
	The functional $J$ has a unique minimiser on $\mathcal O$.
\end{prop}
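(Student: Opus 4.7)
The plan is to combine the reductivity of $\Aut_0(X;D)$ from \thmref{preceisereductivity} with the strict convexity of the log-$J$-functional along cone geodesics (\propref{Jconvex}) to obtain both uniqueness and existence of the minimizer, following the strategy pioneered by Bando-Mabuchi in the smooth setting.

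I would first parametrize the orbit. Fix a reference cscK cone metric $\om_0\in\mathcal O$ and let $G = \Aut_0(X;D)$; by \thmref{preceisereductivity}, $G$ is reductive with Lie algebra $\mathfrak h(X;D) = \mathfrak a(X;D)\oplus \mathfrak h'(X;D)$. Let $K\subset G$ be the maximal compact subgroup, which coincides with the $\om_0$-isometry group, so that $\mathcal O$ is naturally identified with the non-positively curved symmetric space $G/K$. A direct change of variables shows the log-$J$-functional to be $K$-invariant on $\mathcal O$. Via the Cartan decomposition $G = K\exp(\mathfrak p)K$, where $\mathfrak p$ is the non-compact part of $\mathfrak h'(X;D)$ corresponding to gradients of real-valued kernel functions of $\mathbb{L}\mathrm{ic}_{\om_0}$, any two points of $\mathcal O$ can, after $K$-translation, be joined by a one-parameter subgroup path $\sigma_t = \exp(tv)$ for some $v\in\mathfrak p$.

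The key geometric observation is that each such path $\vphi_t$ defined by $\sigma_t^*\om_0 = \om_{\vphi_t}$ is a cone geodesic of class $C^{1,1,\b}_{\mathbf w}$. The potential $u_t := \dot\vphi_t$ is the Hamiltonian of the imaginary part of $v$ with respect to $g_{\vphi_t}$, and by \thmref{preceisereductivity} together with \thmref{Fredholm} we have $u_0 \in C^{4,\a,\b}_{\mathbf w}(\om_0)$; diffeomorphism equivariance transports this regularity along the path, while the standard computation $\ddot\vphi_t = |\partial u_t|^2_{g_{\vphi_t}}$ verifies the homogeneous Monge-Amp\`ere equation~\eqref{geo ma}. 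Since each $(M,g_{\vphi_t})$ is isometric to $(M,g_0)$ via $\sigma_t$, the quantity $\int_M |\partial u_t|^2_{g_{\vphi_t}}\om_{\vphi_t}^n$ equals $\int_M|\partial u_0|^2_{g_0}\om_0^n$, which is a strictly positive constant whenever $v\neq 0$.

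Uniqueness is then immediate from \propref{Jconvex}: if $\vphi_1,\vphi_2\in\mathcal O$ both realize $\inf_{\mathcal O}J$, the $K$-invariance and Cartan decomposition reduce matters to a cone geodesic joining them with equal endpoint values, and strict convexity forces the geodesic to be constant, so $\vphi_1=\vphi_2$. The main obstacle is existence, for which I would descend $J$ to the finite-dimensional symmetric space $G/K$; the constancy of $\int_M|\partial u_t|^2_{g_{\vphi_t}}\om_{\vphi_t}^n$ along orbit geodesics implies that the second variation of $J$ along any non-trivial ray is a strictly positive constant, so $J$ grows quadratically along rays and is proper on $G/K$. A minimum is therefore attained by standard finite-dimensional compactness.
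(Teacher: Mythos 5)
Your overall strategy coincides with the paper's: reductivity (\thmref{preceisereductivity}) identifies $\mathcal O$ with a finite-dimensional symmetric space, one-parameter subgroups $\exp(t\,\Re X)$ produce $C^{1,1,\b}_{\mathbf w}$ cone geodesics joining any two points of the orbit, and the strict convexity of the log-$J$-functional along cone geodesics (Proposition \ref{Jconvex}) forces a critical point to be the unique minimiser. Your uniqueness argument is therefore sound and is essentially the paper's; the paper itself is content to assert that a critical point exists ``since the orbit is finite dimensional'' and then invokes Proposition \ref{Jconvex}.

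The gap is in your properness argument for existence. By Lemma \ref{Jweakconvex}, the second derivative of $J$ along a cone geodesic is $\frac{1}{V}\int_M|\p\vphi'_t|^2_{g_{\vphi_t}}\,\om_\theta^n$ (equivalently $\p_t^2 J=\frac{1}{V}\int_M\vphi''_t\,\om_0^n$ with $\vphi''_t=|\p\vphi'_t|^2_{g_{\vphi_t}}$ from the geodesic equation): the measure appearing here is the \emph{fixed} reference measure, not $\om_{\vphi_t}^n$. Your isometry computation shows that $\int_M|\p u_t|^2_{g_{\vphi_t}}\om_{\vphi_t}^n$ is constant in $t$, which is a different quantity; since $\sigma_t$ is not an isometry of the reference metric, this gives no lower bound on $\int_M|\p u_t|^2_{g_{\vphi_t}}\om_0^n$ as $t\to\infty$, because the density $\om_{\vphi_t}^n/\om_0^n$ degenerates along the ray. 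Hence the claimed ``strictly positive constant second variation,'' and with it the quadratic growth of $J$ along rays and the properness of $J$ on $G/K$, is not established. To close this you would need a genuine lower bound for $\liminf_{t\to\infty}\p_t J(\vphi_t)$ along each ray, uniform over the unit sphere of $\mathfrak p$, or some other coercivity argument for $J$ restricted to the orbit; convexity alone does not force a convex function on a finite-dimensional space to attain its infimum.
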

\begin{proof}
	Since the orbit $\mathcal O$ is finite dimensional, we could find the critical points of $J$ over $\mathcal O$. Let $\theta$ be such critical point.
	It follows from \lemref{Jconvex} and again the reductivity of the automorphism group, i.e. any cscK cone metric on $\mathcal O$ could be connected to $\theta$ by the cone geodesic
	$exp(t \Re X)$
	for some holomorphic vector field $X$. From Proposition \ref{Jconvex}, i.e. $J$ is convex along the $C_{\mathbf w}^{1,1,\b}$ cone geodesic, we see that the critical point $\om_\theta$ is actually the unique minimiser of $J$ over $\mathcal O$.
\end{proof}
Now the minimiser $\\theta$ is a cscK cone metric and $\la_\theta$ is the K\"ahler cone potential such that
$$\theta:=\om_{\theta}=\om+i\p\bar\p\la_\theta.$$
We also denote by $H_\theta$ the kernel space of the Lichnerowicz operator, 
\begin{align}
Lic_{\theta}(u)=\Delta_\theta^2 u +  u^{i\bar j}T_{i\bar j}(\theta).
\end{align}
Thus at $\theta$, for any $u\in H_\theta$,
\begin{align}\label{propeetyofgauge}
\frac{1}{V}\int_{M}u \cdot (\frac{\om^n}{\om_\vphi^n}-1) \om_\vphi^n=0.
\end{align}




Now we prove \thmref{Bifurcation} at the cscK cone metric $\theta$.
\begin{thm}\label{bifrucationattheta} Assume the notations above. There exists a parameter $\tau>0$ such that $\vphi(t)$ is extended uniquely to a smooth one-parameter family of solutions of the $J$-twisted path \eqref{vphippath} on $(1-\tau, 1]$.
\end{thm}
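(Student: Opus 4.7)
The plan is to apply the implicit function theorem to the map
\[
F(t,\vphi) := \Phi(t,\vphi) = (S(\om_\vphi)-\underline S_\b) - (1-t)\Big(\frac{\om^n}{\om_\vphi^n}-1\Big)
\]
regarded as a smooth map $(1-\tau_0,1+\tau_0)\times\mathcal U\to C^{0,\a,\b}$, with $\mathcal U$ a neighborhood of $\la_\theta$ in the mean-zero subspace of $C_{\mathbf w}^{4,\a,\b}(\theta)$. Smoothness of $F$ in this setting uses the sharp regularity of cscK cone metrics (Theorem \ref{csck geo existence} and Corollary \ref{dmetric}) and the asymptotics of $C_{\mathbf w}^{4,\a,\b}$-functions from Section \ref{Asymptotics of functions}, which justify the integrations by parts that would otherwise be only formal near the cone locus. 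A direct linearization at $t=1$ yields $D_\vphi F(1,\la_\theta) = -{\mathbb{L}\mathrm{ic}}_\theta$.

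By Theorem \ref{Fredholm}, the operator $-{\mathbb{L}\mathrm{ic}}_\theta$ is Fredholm of index zero from $C_{\mathbf w}^{4,\a,\b}(\theta)$ into $C^{0,\a,\b}_0$, with finite-dimensional kernel $H_\theta$; by the reductivity theorem (Theorem \ref{preceisereductivity}) this kernel is identified with the real-potential counterpart of the reductive Lie algebra $\mathfrak h'(X;D)$, and via $L^2(\theta^n)$-pairing the cokernel is also canonically $H_\theta$. I would then carry out a Lyapunov--Schmidt reduction: decompose source and target as $H_\theta\oplus H_\theta^{\perp}$ using the $L^2(\theta^n)$-inner product, restrict to the slice $\la_\theta+H_\theta^{\perp}$, and apply the implicit function theorem in Banach spaces to the projected equation $\pi_{\perp}F(t,\vphi)=0$. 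The restricted linearization, an isomorphism $H_\theta^{\perp}\to H_\theta^{\perp}$ by Theorem \ref{Fredholm}, produces a unique smooth family $\vphi(t)\in\la_\theta+H_\theta^{\perp}$ with $\pi_{\perp}\Phi(t,\vphi(t))=0$ on $|t-1|<\tau$.

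The hard part, and the step in which the $J$-minimization of $\theta$ on the orbit $\mathcal O$ (Proposition \ref{minimiseJ}) is essential, is to verify that the finite-dimensional projection $\pi_H\Phi(t,\vphi(t))\in H_\theta$ actually vanishes, so that $\vphi(t)$ genuinely satisfies \eqref{vphippath}. For any $u\in H_\theta$, at $t=1$ one has $\pi_H\Phi(1,\la_\theta)=0$ trivially, and differentiating in $t$ the pairing $\int_M u\,\Phi(t,\vphi(t))\,\om_{\vphi(t)}^n$ splits at $t=1$ into an ${\mathbb{L}\mathrm{ic}}_\theta$-term that vanishes by self-adjointness and $u\in\ker{\mathbb{L}\mathrm{ic}}_\theta$, and a $J$-twist term $\int_M u\bigl(\om^n/\theta^n-1\bigr)\theta^n$ that vanishes precisely by the identity \eqref{propeetyofgauge}. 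To promote this first-order vanishing to identical vanishing on $(1-\tau,1]$, I would enlarge the ansatz by composing $\vphi(t)$ with an unknown one-parameter family $\sigma_t\in\Aut(X;D)$ near the identity --- exploiting reductivity of $\mathfrak{h}(X;D)$ from Theorem \ref{preceisereductivity} --- and apply IFT to the augmented system, whose linearization becomes surjective onto the full $C^{0,\a,\b}_0$ because the infinitesimal $\Aut(X;D)$-action spans $H_\theta$ via $grad_\theta$. This parallels the Bando--Mabuchi bifurcation strategy used in \cite{arxiv:1511.02410} for the cone K\"ahler--Einstein path and in \cite{arxiv:1506.01290} for the extremal continuity path, with the weakly regular space $C_{\mathbf w}^{4,\a,\b}(\theta)$ introduced in Section \ref{Linear theory for Lichnerowicz operator} providing the functional framework required throughout the full angle range $0<\b\leq 1$.
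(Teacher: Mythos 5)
Your first two steps and the first-order vanishing computation match the paper's Steps 1--3: the Lyapunov--Schmidt decomposition $C_{\mathbf w}^{4,\a,\b}(\theta)=H_\theta\oplus H_\theta^{\perp}$, the vertical IFT using invertibility of $(1-P){\mathbb{L}\mathrm{ic}}_\theta$ on $H_\theta^{\perp}$, and the observation that $\pi_H\Phi(t,\vphi(t))$ vanishes to first order at $t=1$ via self-adjointness and \eqref{propeetyofgauge}.

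The final step, however, has a genuine gap. You propose to absorb the horizontal obstruction by composing with an unknown $\sigma_t\in\Aut(X;D)$ and then claim the augmented linearization at $t=1$ becomes surjective ``because the infinitesimal $\Aut(X;D)$-action spans $H_\theta$.'' But the relevant question is not whether the action spans $H_\theta$ as a space of potentials; it is what the $\sigma$-derivative contributes to the linearized \emph{equation}. At $t=1$ we have $\Phi(1,\sigma^*\la_\theta)=0$ for all $\sigma$ (pullback of a cscK cone metric by an automorphism fixing $D$ is still cscK), so $D_\sigma\bigl[\Phi(1,\sigma^*\la_\theta)\bigr]\big|_{\sigma=\mathrm{id}}=0$; equivalently, the infinitesimal variation $u_Y$ produced by a vector field $Y$ lies in $H_\theta=\ker{\mathbb{L}\mathrm{ic}}_\theta$ and hence is annihilated by the linearized operator. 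The $\sigma$-augmentation therefore adds nothing to the image of the linearization at $t=1$, and the IFT still fails there. This is precisely the degeneracy that the Bando--Mabuchi trick is designed to handle: one must replace the finite-dimensional equation $\Phi^{\parallel}(t,\vphi^{\parallel})=0$ by the divided equation $\tilde\Phi^{\parallel}:=\Phi^{\parallel}/(t-1)=0$, which is well defined exactly because of the first-order vanishing you established. The paper then computes (Lemma \ref{longcomputation}, using \eqref{vphibottheta}) that
\begin{align*}
\int_M \delta_{\vphi^{\parallel}}\tilde\Phi^{\parallel}\big|_{(1,0)}(u)\cdot v\,\theta^n
=\int_M\langle\p u,\bar\p v\rangle_\theta\,\om^n,
\end{align*}
a positive-definite form on the mean-zero kernel $H_\theta$ --- this is where the strict convexity of $J$ along cone geodesics and the minimizing property of $\theta$ from Proposition \ref{minimiseJ} are actually used. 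The IFT then applies to $\tilde\Phi^{\parallel}$ on the finite-dimensional $H_\theta$, producing $\vphi^{\parallel}(t)$, and together with your vertical slice solution this completes the bifurcation. You should replace the automorphism-augmentation step with this division-by-$(t-1)$ argument.
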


We consider the continuity path \eqref{vphippath} with $\vphi(1)=\lambda_\theta$ and construct the bifurcation at $\vphi(1)$.

\textbf{Step 1.}
The linearised operator at $t=1$ is the Lichnerowicz operator. Then we decompose the whole space $C_\mathbf{w}^{4,\a,\b}(\theta)$ into the direct sum of $H_\theta$ and its orthogonal space $H^\bot_\theta$, i.e.
$$C_\mathbf{w}^{4,\a,\b}(\theta)=H_\theta\oplus H^\bot_\theta.$$ The path $\vphi(t)-\la_\theta$ is then decomposed into
\begin{align*}
\vphi(t)-\la_\theta=\vphi^\parallel+\vphi^\bot.
\end{align*}
$\Phi(t,\vphi(t)$ vanishes at $$(t,\vphi^\parallel,\vphi^\bot)=(1,0,0),$$ since $\theta$ is a cscK cone metric.
While, we let $P$ denote the projection from $C_\mathbf{w}^{4,\a,\b}(\theta)$ to $H_\theta$ and decompose the linear operator $\Phi$ into two parts. 

\textbf{Step 2.}
We first consider the vertical part,
\begin{align}\label{phibot}
\Phi^\bot(t,\vphi^\parallel,\vphi^\bot)=(1-P)[S(\vphi(t))-\underline S_\b]-(1-t)(\frac{\om^n}{\om_\vphi^n}-1).
\end{align}
Meanwhiles, its derivative on $\vphi^\bot$ at $(1,0,0)$ is for any $\xi\in H^\bot_\theta$,
\begin{align*}
\delta_{\vphi^\bot}\Phi^\bot\vert_{(1,0,0)}(\xi)=(1-P)Lic_{\theta}\xi 
\end{align*}
which is invertible according to \thmref{Fredholm}. Therefore, we are able to use the implicit function theorem on $C_\mathbf{w}^{4,\a,\b}(\theta)$ space to conclude that there is small neighbourhood $U$ near $(t,\vphi^{\parallel})=(1,0)$ such that when $(t,\vphi^{\parallel})\in U$,
\begin{align*}
\vphi^\bot: U \subset(1-\tau,1]\times H_\theta &\rightarrow H_\theta^\bot,\\
(t, \vphi^\parallel)&\mapsto\vphi^\bot(t, \vphi^\parallel)
\end{align*} solves
\begin{align}\label{phibot}
\Phi^\bot(t,\vphi^\parallel,\vphi^\bot(t,\vphi^{\parallel}))=0,
\end{align} with $\vphi^\bot(1,0)=0.$

\textbf{Step 3.}
We differentiate \eqref{phibot} along the kernel direction,
at $(t,\vphi^{\parallel})=(1,0)$,  is for any $u\in H^\bot_\theta$,
\begin{align*}
0=\delta_{\vphi^{\parallel}} \Phi^{\bot}\vert_{(1,0)}(u)=(1-P)[-Lic_\theta(\delta_{\vphi^\parallel}\vphi^\bot\vert_{(1,0)}(u))].
\end{align*}
Since both the imagine of $1-P$ and $Lic_\theta$ are in $H_\theta^\bot$, we conclude that
\begin{align}\label{parallelbot}
\delta_{\vphi^{\parallel}} \vphi^{\bot}\vert_{(1,0)}(u) &=0, \forall u\in H_\theta.
\end{align}
Furthermore, taking $t$ derivative of the path $\Phi^\bot=0$, we have
\begin{align}
0=\frac{\p \Phi^\bot}{\p t}&=(1-P)\{-\tri^2_\vphi[\frac{\p \vphi^{\parallel}}{\p t}+ \frac{\p \vphi^{\bot}}{\p t}+\delta_{\vphi^{\parallel}} \vphi^{\bot}(\frac{\p \vphi^{\parallel}}{\p t})]\nonumber\\
&-<Ric(\vphi),\p\bar\p [\frac{\p \vphi^{\parallel}}{\p t}+ \frac{\p \vphi^{\bot}}{\p t}+\delta_{\vphi^{\parallel}} \vphi^{\bot}(\frac{\p}{\p t}\vphi^{\parallel})]>_\vphi\nonumber\\
&+(\frac{\om^n}{\om_\vphi^n}-1)+(1-t)\frac{\om^n}{\om_\vphi^n}\tri_\vphi[\frac{\p \vphi^{\parallel}}{\p t}+ \frac{\p \vphi^{\bot}}{\p t}+\delta_{\vphi^{\parallel}} \vphi^{\bot}(\frac{\p \vphi^{\parallel}}{\p t})]\}\nonumber.
\end{align}
At $(t,\vphi^{\parallel})=(1,0)$, it reads
\begin{align*}
0=\frac{\p \Phi^{\bot}}{\p t}\vert_{(1,0)} &=(1-P)[- Lic_\theta( \frac{\p \vphi^{\bot}}{\p t}\vert_{(1,0)} )+\frac{\om^n}{\theta^n}-1].
\end{align*}
From \lemref{propeetyofgauge}, $\frac{\om^n}{\om_\vphi^n}-1 \in H^\bot_\theta$, we have
\begin{align}\label{vphibottheta}
- Lic_\theta( \frac{\p \vphi^{\bot}}{\p t}\vert_{(1,0)} )+\frac{\om^n}{\theta^n}-1=0.
\end{align}

\textbf{Step 4.}
We next consider the horizontal operator on the finite dimensional space $H_\theta$,
\begin{align*}
\Phi^\parallel(t,\vphi^\parallel)&=P[S(\vphi(t))-\underline S_\b-(1-t)(\frac{\om^n}{\theta^n}-1)].
\end{align*} 
Note that $\Phi^\parallel(t,\vphi^\parallel)=0$ at $(1,0)$.
From \eqref{vphibottheta}, we see that $\frac{\p \Phi^{\parallel}}{\p t}\vert_{(1,0)}=0$.
Then we consider the modified functional
\begin{align*}
\tilde \Phi^\parallel(t,\vphi^\parallel)=\frac{\Phi^\parallel(t,\vphi^\parallel)}{t-1}.
\end{align*}
Again, we let $\xi=\frac{\p\vphi^\bot}{\p t}\vert_{(t=1,\vphi^\parallel=0)},$ then compute for all $u\in H_\theta$,
\begin{align*}
\delta_{\vphi^\parallel}\tilde \Phi^\parallel\vert_{(1,0)}(u)
&=\delta_{\vphi^\parallel}\frac{\p}{\p t} \Phi^\parallel\big\vert_{(1,0)}(u)\\
&=P[<\p\bar\p u,\p\bar\p\tri_\theta \xi>_\theta
+\tri_\theta<\p\bar\p u,\p\bar\p \xi>_\theta
+<\p\bar\p \tri_\theta u,\p\bar\p \xi>_\theta\\
&+u^{i\bar l}\theta^{k \bar j}(Ric_\theta)_{i\bar j} \xi_{k\bar l}+\theta^{i\bar l}u^{k\bar l}(Ric_\theta)_{i\bar j} \xi_{k\bar l}-\frac{\om^n}{\theta^n}\tri_\theta u].
\end{align*}
So we obtain from \lemref{longcomputation}, where we carry the detailed computation, that
\begin{align*}(\delta_{\vphi^\parallel}\tilde \Phi^\parallel \vert_{(1,0)}(u), v)_{L^2(\theta)}
\end{align*}
is positive definite, and the equality holds if and only if $u=0$.

\textbf{Step 5.}
Thus we are able to apply the implicit function theorem to $\tilde \Phi^\parallel(t,\vphi^\parallel)$ over $C_\mathbf{w}^{4,\a,\b}(\theta)$ to construct a solution $\vphi^\parallel(t)\in C_\mathbf{w}^{4,\a,\b}(\theta)$ with $t\in (1-\tau,1]$ such that  $\tilde\Phi^\parallel(t,\vphi^\parallel(t))=0,$ and $\vphi(1)=0$.
Then the original nonlinear equation is solved as
$
\Phi(t,\vphi^\parallel(t),\vphi^\bot(t,\vphi^{\parallel}(t)))=0.
$
And moreover, 
\begin{align*}
\vphi(t)=\la_\theta+\vphi^\parallel(t)+\vphi^\bot(t,\vphi^{\parallel}(t))
\end{align*} is the solution to the continuity path on $t\in (1-\tau,1]$ with $\vphi(1)=\la_\theta.$

\begin{lem}\label{longcomputation}The following identity holds,
	\begin{align*}
	\int_M \delta_{\vphi^\parallel}\tilde \Phi^\parallel\vert_{(1,0)}(u)\cdot v\cdot \theta^n
	=\int_{M_\eps} <\p u, \bar\p v>_\theta\om^n.
	\end{align*}
\end{lem}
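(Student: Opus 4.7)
My first reduction is to drop the projection $P$. Since $P$ is the $L^2(\theta)$-orthogonal projection onto the finite dimensional kernel $H_\theta$ and $v\in H_\theta$ satisfies $Pv=v$, self-adjointness yields $(Pw,v)_{L^2(\theta)}=(w,v)_{L^2(\theta)}$ for every $w\in C_\mathbf{w}^{4,\a,\b}(\theta)$; so I only need to analyse the six bulk terms of $\delta_{\vphi^\parallel}\tilde\Phi^\parallel|_{(1,0)}(u)$ paired against $v$.

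The core is a succession of integration-by-parts moves coupled with the K\"ahler commutator $[\tri_\theta,\nabla_i]=R_{i\bar j}\nabla^{\bar j}$. I would first use self-adjointness of $\tri_\theta$ to rewrite the middle term, $\int \tri_\theta\langle\p\bar\p u,\p\bar\p\xi\rangle\,v\,\theta^n$, as $\int \langle\p\bar\p u,\p\bar\p\xi\rangle\,\tri_\theta v\,\theta^n$. Moving one $\tri_\theta$ in the first and third bulk terms by the same identity produces additional Ricci-type commutators that cancel exactly against the two $R_{i\bar j}$ contractions. What remains is a single quantity proportional to $\int u\cdot \mathbb{L}\mathrm{ic}_\theta\xi\cdot v\,\theta^n$ together with the last bulk term $-\int\frac{\om^n}{\theta^n}\tri_\theta u\cdot v\,\theta^n$. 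Substituting the defining equation \eqref{vphibottheta}, $\mathbb{L}\mathrm{ic}_\theta\xi=\frac{\om^n}{\theta^n}-1$, makes the $\frac{\om^n}{\theta^n}$ contributions cancel, and the residual $-\int\tri_\theta u\cdot v\,\theta^n$ converts by one final integration by parts to $\int\langle\p u,\bar\p v\rangle_\theta\,\theta^n$; the $M_\eps$ and $\om^n$ in the displayed form should be read as reflecting the tubular exhaustion used in the cone-singular calculus.

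Every integration by parts must be justified at the divisor $D$. I would carry out the entire computation on the tubular exhaustion $M_\eps=M\setminus\{|s|<\eps\}$ with the cut-off $\chi_\eps$ of Section \ref{Reductivity of automorphism group}, then let $\eps\to 0$. The boundary contributions are controlled by \corref{connection} (asymptotics of the Christoffel symbols of $\theta$), \propref{nnu} (the growth $|\nabla^g\nabla^g u|_g=O(r^{\a\b-\b})$ for $u\in C_\mathbf{w}^{4,\a,\b}(\theta)$), and the analogous asymptotics for $\xi$ inherited from the Fredholm regularity of Section \ref{Linear theory for Lichnerowicz operator}. The main obstacle is precisely that $C_\mathbf{w}^{4,\a,\b}(\theta)$ provides no direct control on the pure singular derivatives $\p_1\p_1 u$: each step involving such derivatives must be reorganised as a covariant identity before $\chi_\eps$ is applied, and one must check that the weights $\a\b-\b$ and $\b-1$ combine with the volume form $\om^n$ of order $r^{2\b-2}$ to give boundary terms that are integrable and vanish in the limit $\eps\to 0$.
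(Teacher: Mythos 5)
Your framework — dropping $P$ by self-adjointness, regularising with $\chi_\eps$, controlling boundary contributions via \corref{connection} and \propref{nnu}, substituting $\mathbb{L}\mathrm{ic}_\theta\xi=\frac{\om^n}{\theta^n}-1$ — is sound, and it matches the paper in spirit. But the central computational step is wrong, and this derails the proof.

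Concretely, after the rearrangement the intermediate quantity is \emph{not} proportional to $\int u\cdot\mathbb{L}\mathrm{ic}_\theta\xi\cdot v\,\theta^n$. The closed form (the paper cites Lemma 2.4 of \cite{arxiv:1506.01290} for it) is
\[
\delta_{\vphi^\parallel}\tilde\Phi^\parallel\big|_{(1,0)}(u)=P\Bigl[\mathbb{L}\mathrm{ic}_\theta\langle\p u,\bar\p\xi\rangle_\theta-\langle\p u,\bar\p\,\mathbb{L}\mathrm{ic}_\theta\xi\rangle_\theta-\tfrac{\om^n}{\theta^n}\tri_\theta u\Bigr],
\]
where the first term is annihilated because $\mathrm{Im}(\mathbb{L}\mathrm{ic}_\theta)\perp H_\theta$, hence $P\circ\mathbb{L}\mathrm{ic}_\theta=0$. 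So the surviving term is $-\langle\p u,\bar\p\,\mathbb{L}\mathrm{ic}_\theta\xi\rangle_\theta$, a gradient pairing, not $u\cdot\mathbb{L}\mathrm{ic}_\theta\xi$. With $\mathbb{L}\mathrm{ic}_\theta\xi=\frac{\om^n}{\theta^n}-1$, this becomes $-\langle\p u,\bar\p\frac{\om^n}{\theta^n}\rangle_\theta$. The claimed cancellation of the $\frac{\om^n}{\theta^n}$ contributions against $-\int\frac{\om^n}{\theta^n}\tri_\theta u\cdot v\,\theta^n$ then fails for your version: the integrands $u\cdot\frac{\om^n}{\theta^n}v$ and $\frac{\om^n}{\theta^n}\tri_\theta u\cdot v$ have different derivative orders on $u$ and cannot cancel termwise. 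The correct cancellation occurs after integrating $-\int\frac{\om^n}{\theta^n}\tri_\theta u\cdot v\,\theta^n=-\int\tri_\theta u\cdot v\,\om^n$ by parts with respect to $\theta$, using $\om^n=\frac{\om^n}{\theta^n}\theta^n$: this produces $\int\langle\p u,\bar\p\frac{\om^n}{\theta^n}\rangle_\theta v\,\theta^n+\int\langle\p u,\bar\p v\rangle_\theta\,\om^n$, the first of which cancels the term from $\mathbb{L}\mathrm{ic}_\theta\xi$ and the second of which is the answer.

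This exposes a second error: you read the $\om^n$ in the statement as an artefact of the tubular exhaustion, implicitly expecting $\theta^n$. It is not. The $\om^n$ measure in $\int\langle\p u,\bar\p v\rangle_\theta\,\om^n$ is intrinsic and comes precisely from the factor $\frac{\om^n}{\theta^n}$ in the last bulk term; substituting $\theta^n$ would give the wrong bilinear form and would break the positivity argument in Step 4 of Section \ref{bifurcation}. You should re-derive the intermediate identity (or quote it, as the paper does) and redo the integration by parts with the correct $\om^n$ bookkeeping.
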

\begin{proof}
	On the regular part $M$, we use the direct computation (see Lemma 2.4 in \cite{arxiv:1506.01290}) to see that
	\begin{align*}
	&\delta_{\vphi^\parallel}\tilde \Phi^\parallel\vert_{(1,0)}(u)\\
	&=P[Lic_\theta<\p u, \bar\p \xi>_\theta-<\p u, \bar\p Lic_\theta( \xi )>_\theta-\frac{\om^n}{\theta^n}\tri_\theta u].
	\end{align*}
	From \eqref{vphibottheta}, we have
	\begin{align*}
	RHS&=-P[<\p u, \bar\p (\frac{\om^n}{\theta^n}-1)>_\theta+\frac{\om^n}{\theta^n}\tri_\theta u].
	\end{align*}
	We multiply with $v$ and thc cut-off function (see \lemref{hvfonm}) and integrate over $X$, we have
	\begin{align*}
	\int_{X} \delta_{\vphi^\parallel}\tilde \Phi^\parallel\vert_{(1,0)}(u)\cdot v\cdot\chi_{\eps} \cdot\theta^n
	=-\int_M [<\p u, \bar\p (\frac{\om^n}{\theta^n})>_\theta+\frac{\om^n}{\theta^n}\tri_\theta u]\cdot v\cdot\chi_{\eps} \cdot\theta^n.
	\end{align*}
	The second term is
	\begin{align*}
	&-\int_M \frac{\om^n}{\theta^n}\tri_\theta u \cdot v\cdot \chi_{\eps}\cdot \theta^n\\
	&=\int_M <\p u, \bar\p (\frac{\om^n}{\theta^n})>_\theta\cdot v\cdot \chi_{\eps}\cdot\theta^n+\int_M  <\p u, \bar\p v>_\theta\cdot\chi_{\eps}\cdot\om^n\\
	&+\int_M  <\p u, \bar\p \chi_{\eps}>_\theta\cdot v\cdot\om^n.
	\end{align*}
	Then we have
		\begin{align*}
	&\int_{X} \delta_{\vphi^\parallel}\tilde \Phi^\parallel\vert_{(1,0)}(u)\cdot v\cdot\chi_{\eps} \cdot\theta^n\\
	&=\int_M  <\p u, \bar\p v>_\theta\cdot\chi_{\eps}\cdot\om^n
	+\int_M  <\p u, \bar\p \chi_{\eps}>_\theta\cdot v\cdot\om^n.
	\end{align*}
	 Thus the lemma is proved as $\eps\rightarrow 0$, since $u,v,\frac{\om^n}{\theta^n}\in C^{2,\a,\b}$.
\end{proof}











\begin{bibdiv}
\begin{biblist}

\bib{arxiv:1306.1867}{article}{
	author={Aleyasin, S.Ali},
	author={Chen, Xiuxiong},
	title={On the Geodesics in the space of K\"ahler metrics with prescribed singularities},
	journal= {arxiv:1306.1867}
}

\bib{MR3589348}{article}{
	author={Auvray, Hugues},
	title={The space of Poincar\'e type K\"ahler metrics on the complement of a
		divisor},
	journal={J. Reine Angew. Math.},
	volume={722},
	date={2017},
	pages={1--64},
	issn={0075-4102},
	review={\MR{3589348}},
}

\bib{MR946233}{article}{
	author={Bando, Shigetoshi},
	author={Mabuchi, Toshiki},
	title={Uniqueness of Einstein K\"ahler metrics modulo connected group
		actions},
	conference={
		title={Algebraic geometry, Sendai, 1985},
	},
	book={
		series={Adv. Stud. Pure Math.},
		volume={10},
		publisher={North-Holland, Amsterdam},
	},
	date={1987},
	pages={11--40},
}

\bib{arXiv:1111.7158}{article}{
   author={Berman, Robert},
   author={Boucksom, S\'ebastien},
   author={Eyssidieux, Philippe},
   author={Guedj, Vincent},
   author={Zeriahi, Ahmed},
   title={K\"ahler-Einstein metrics and the K\"ahler-Ricci flow on log Fano varieties},
   journal={J. reine angew. Math.},
}

\bib{MR2884031}{article}{
	author={Berman, Robert},
	author={Demailly, Jean-Pierre},
	title={Regularity of plurisubharmonic upper envelopes in big cohomology
		classes},
	conference={
		title={Perspectives in analysis, geometry, and topology},
	},
	book={
		series={Progr. Math.},
		volume={296},
		publisher={Birkh\"auser/Springer, New York},
	},
	date={2012},
	pages={39--66},
	review={\MR{2884031}},
}

\bib{MR674165}{article}{
	author={Bedford, Eric},
	author={Taylor, B. A.},
	title={A new capacity for plurisubharmonic functions},
	journal={Acta Math.},
	volume={149},
	date={1982},
	number={1-2},
	pages={1--40},
	issn={0001-5962},
	review={\MR{674165}},
}

\bib{MR0445006}{article}{
	author={Bedford, Eric},
	author={Taylor, B. A.},
	title={The Dirichlet problem for a complex Monge-Amp\`ere equation},
	journal={Invent. Math.},
	volume={37},
	date={1976},
	number={1},
	pages={1--44},
	issn={0020-9910},
	review={\MR{0445006}},
}

\bib{MR495680}{article}{
	author={Bedford, Eric},
	author={Taylor, B. A.},
	title={Variational properties of the complex Monge-Amp\`ere equation. I.
		Dirichlet principle},
	journal={Duke Math. J.},
	volume={45},
	date={1978},
	number={2},
	pages={375--403},
	issn={0012-7094},
	review={\MR{495680}},
}

\bib{MR546307}{article}{
	author={Bedford, Eric},
	author={Taylor, B. A.},
	title={Variational properties of the complex Monge-Amp\`ere equation. II.
		Intrinsic norms},
	journal={Amer. J. Math.},
	volume={101},
	date={1979},
	number={5},
	pages={1131--1166},
	issn={0002-9327},
	review={\MR{546307}},
}

\bib{MR3671939}{article}{
	author={Berman, Robert},
	author={Berndtsson, Bo},
	title={Convexity of the $K$-energy on the space of K\"ahler metrics and
		uniqueness of extremal metrics},
	journal={J. Amer. Math. Soc.},
	volume={30},
	date={2017},
	number={4},
	pages={1165--1196},
	issn={0894-0347},
	review={\MR{3671939}},
}

\bib{MR3090260}{article}{
	author={Berman, Robert},
	author={Boucksom, S\'ebastien},
	author={Guedj, Vincent},
	author={Zeriahi, Ahmed},
	title={A variational approach to complex Monge-Amp\`ere equations},
	journal={Publ. Math. Inst. Hautes \'Etudes Sci.},
	volume={117},
	date={2013},
	pages={179--245},
	issn={0073-8301},
	review={\MR{3090260}},
}

\bib{MR3323577}{article}{
   author={Berndtsson, Bo},
   title={A Brunn-Minkowski type inequality for Fano manifolds and some
   uniqueness theorems in K\"ahler geometry},
   journal={Invent. Math.},
   volume={200},
   date={2015},
   number={1},
   pages={149--200},
   issn={0020-9910},
   review={\MR{3323577}},
}

\bib{MR2299485}{article}{
	author={B\l ocki, Zbigniew},
	author={Ko\l odziej, S\l awomir},
	title={On regularization of plurisubharmonic functions on manifolds},
	journal={Proc. Amer. Math. Soc.},
	volume={135},
	date={2007},
	number={7},
	pages={2089--2093},
	issn={0002-9939},
	review={\MR{2299485}},
}

\bib{MR2746347}{article}{
	author={Boucksom, S\'ebastien},
	author={Eyssidieux, Philippe},
	author={Guedj, Vincent},
	author={Zeriahi, Ahmed},
	title={Monge-Amp\`ere equations in big cohomology classes},
	journal={Acta Math.},
	volume={205},
	date={2010},
	number={2},
	pages={199--262},
	issn={0001-5962},
	review={\MR{2746347}},
}

\bib{MR1969662}{article}{
	author={Calabi, E.},
	author={Chen, Xiuxiong},
	title={The space of K\"ahler metrics. II},
	journal={J. Differential Geom.},
	volume={61},
	date={2002},
	number={2},
	pages={173--193},
	issn={0022-040X},
	review={\MR{1969662}},
}

\bib{MR3405866}{article}{
   author={Calamai, Simone},
   author={Zheng, Kai},
   title={Geodesics in the space of K\"ahler cone metrics, I},
   journal={Amer. J. Math.},
   volume={137},
   date={2015},
   number={5},
   pages={1149--1208},
   issn={0002-9327},
   review={\MR{3405866}},
}

\bib{MR3085100}{article}{
   author={Cao, H.-D.},
   author={Keller, J.},
   title={On the Calabi problem: a finite-dimensional approach},
   journal={J. Eur. Math. Soc. (JEMS)},
   volume={15},
   date={2013},
   number={3},
   pages={1033--1065},
   issn={1435-9855},
   review={\MR{3085100}},
}

\bib{MR1333510}{article}{
   author={Chen, Wen Xiong},
   author={Li, Congming},
   title={What kinds of singular surfaces can admit constant curvature?},
   journal={Duke Math. J.},
   volume={78},
   date={1995},
   number={2},
   pages={437--451},
   issn={0012-7094},
   review={\MR{1333510}},
}

\bib{MR1753319}{article}{
	author={Chen, Xiuxiong},
	title={Obstruction to the existence of metric whose curvature has
		umbilical Hessian in a $K$-surface},
	journal={Comm. Anal. Geom.},
	volume={8},
	date={2000},
	number={2},
	pages={267--299},
	issn={1019-8385},
	review={\MR{1753319}},
}

\bib{arXiv:1506.06423}{article}{
    AUTHOR = {Chen, Xiuxiong},
    TITLE = {On the existence of constant scalar curvature K\"ahler metric: a new perspective},
      JOURNAL = {Ann. Math. Qu\'ebec},
       VOLUME = {},
     PAGES= {},
       YEAR={2017},

}

\bib{MR1863016}{article}{
   author={Chen, Xiuxiong},
   title={The space of K\"ahler metrics},
   journal={J. Differential Geom.},
   volume={56},
   date={2000},
   number={2},
   pages={189--234},
   issn={0022-040X},
   review={\MR{1863016}},
}

\bib{MR3264766}{article}{
	author={Chen, Xiuxiong},
	author={Donaldson, Simon},
	author={Sun, Song},
	title={K\"ahler-Einstein metrics on Fano manifolds. I: Approximation of
		metrics with cone singularities},
	journal={J. Amer. Math. Soc.},
	volume={28},
	date={2015},
	number={1},
	pages={183--197},
	issn={0894-0347},
	review={\MR{3264766}},
}

\bib{MR3582114}{article}{
	author={Chen, XiuXiong},
	author={Li, Long},
	author={P\u auni, Mihai},
	title={Approximation of weak geodesics and subharmonicity of Mabuchi
		energy},
	language={English, with English and French summaries},
	journal={Ann. Fac. Sci. Toulouse Math. (6)},
	volume={25},
	date={2016},
	number={5},
	pages={935--957},
	issn={0240-2963},
	review={\MR{3582114}},
	doi={10.5802/afst.1516},
}

\bib{arxiv:1506.01290}{article}{
	author={Chen, Xiuxiong},
	author={P\u aun, Mihai},
	author={Zeng, Yu},
	title={On deformation of extremal metrics},
	journal= {arxiv:1506.01290}
}

\bib{MR3464220}{article}{
	author={Chen, Xiuxiong},
	author={Wang, Yuanqi},
	title={$C^{2,\alpha}$-estimate for Monge-Amp\`ere equations with
		H\"older-continuous right hand side},
	journal={Ann. Global Anal. Geom.},
	volume={49},
	date={2016},
	number={2},
	pages={195--204},
	issn={0232-704X},
	review={\MR{3464220}},
}

\bib{MR3668765}{article}{
	author={Chen, Xiuxiong},
	author={Wang, Yuanqi},
	title={On the regularity problem of complex Monge-Ampere equations with
		conical singularities},
	language={English, with English and French summaries},
	journal={Ann. Inst. Fourier (Grenoble)},
	volume={67},
	date={2017},
	number={3},
	pages={969--1003},
	issn={0373-0956},
	review={\MR{3668765}},
}

\bib{MR3431580}{article}{
	author={Datar, Ved V.},
	author={Song, Jian},
	title={A remark on K\"ahler metrics with conical singularities along a
		simple normal crossing divisor},
	journal={Bull. Lond. Math. Soc.},
	volume={47},
	date={2015},
	number={6},
	pages={1010--1013},
	issn={0024-6093},
	review={\MR{3431580}},
}

\bib{demaillybook}{book}{
	author={Demailly, Jean-Pierre},
	title={Complex Analytic and Differential Geometry},
	book={https://www-fourier.ujf-grenoble.fr/~demailly/manuscripts/agbook.pdf},
	date={1997},
	
}

\bib{MR3564626}{article}{
	author={Dervan, Ruadha\'\i },
	title={Uniform stability of twisted constant scalar curvature K\"ahler
		metrics},
	journal={Int. Math. Res. Not. IMRN},
	date={2016},
	number={15},
	pages={4728--4783},
	issn={1073-7928},
	review={\MR{3564626}},
}

\bib{arxiv:1612.01866}{article}{
	AUTHOR = {de Borbon, Martin},
	TITLE = {K\"ahler metrics with cone singularities along a divisor of bounded Ricci curvature},
	JOURNAL = {Ann Glob Anal Geom},
	VOLUME = {},
	PAGES= {},
	YEAR={2017},
	
}

\bib{MR3381498}{article}{
   author={Donaldson, Simon K.},
   title={Algebraic families of constant scalar curvature K\"ahler metrics},
   conference={
      title={Surveys in differential geometry 2014. Regularity and evolution
      of nonlinear equations},
   },
   book={
      series={Surv. Differ. Geom.},
      volume={19},
      publisher={Int. Press, Somerville, MA},
   },
   date={2015},
   pages={111--137},
   review={\MR{3381498}},
}

\bib{MR2103718}{article}{
	author={Donaldson, Simon K.},
	title={Conjectures in K\"ahler geometry},
	conference={
		title={Strings and geometry},
	},
	book={
		series={Clay Math. Proc.},
		volume={3},
		publisher={Amer. Math. Soc., Providence, RI},
	},
	date={2004},
	pages={71--78},
	review={\MR{2103718}},
}

\bib{MR2975584}{article}{
   author={Donaldson, S. K.},
   title={K\"ahler metrics with cone singularities along a divisor},
   conference={
      title={Essays in mathematics and its applications},
   },
   book={
      publisher={Springer, Heidelberg},
   },
   date={2012},
   pages={49--79},
   review={\MR{2975584}},
}

\bib{MR1736211}{article}{
	author={Donaldson, S. K.},
	title={Symmetric spaces, K\"ahler geometry and Hamiltonian dynamics},
	conference={
		title={Northern California Symplectic Geometry Seminar},
	},
	book={
		series={Amer. Math. Soc. Transl. Ser. 2},
		volume={196},
		publisher={Amer. Math. Soc., Providence, RI},
	},
	date={1999},
	pages={13--33},
	review={\MR{1736211}},
}

\bib{MR1814364}{book}{
	author={Gilbarg, David},
	author={Trudinger, Neil S.},
	title={Elliptic partial differential equations of second order},
	series={Classics in Mathematics},
	note={Reprint of the 1998 edition},
	publisher={Springer-Verlag, Berlin},
	date={2001},
	pages={xiv+517},
	isbn={3-540-41160-7},
	review={\MR{1814364}},
}

\bib{MR1247995}{article}{
	author={Guan, Bo},
	author={Spruck, Joel},
	title={Boundary-value problems on $S^n$ for surfaces of constant Gauss
		curvature},
	journal={Ann. of Math. (2)},
	volume={138},
	date={1993},
	number={3},
	pages={601--624},
	issn={0003-486X},
	review={\MR{1247995}},
}

\bib{MR3617346}{book}{
   author={Guedj, Vincent},
   author={Zeriahi, Ahmed},
   title={Degenerate complex Monge-Amp\`ere equations},
   series={EMS Tracts in Mathematics},
   volume={26},
   publisher={European Mathematical Society (EMS), Z\"urich},
   date={2017},
   pages={xxiv+472},
   isbn={978-3-03719-167-5},
   review={\MR{3617346}},
   doi={10.4171/167},
}

\bib{MR2932441}{article}{
	author={Guedj, Vincent},
	author={Zeriahi, Ahmed},
	title={Dirichlet problem in domains of $\Bbb C^n$},
	conference={
		title={Complex Monge-Amp\`ere equations and geodesics in the space of
			K\"ahler metrics},
	},
	book={
		series={Lecture Notes in Math.},
		volume={2038},
		publisher={Springer, Heidelberg},
	},
	date={2012},
	pages={13--32},
	review={\MR{2932441}},
}

\bib{MR2352488}{article}{
	author={Guedj, Vincent},
	author={Zeriahi, Ahmed},
	title={The weighted Monge-Amp\`ere energy of quasiplurisubharmonic
		functions},
	journal={J. Funct. Anal.},
	volume={250},
	date={2007},
	number={2},
	pages={442--482},
	issn={0022-1236},
	review={\MR{2352488}},
}

\bib{MR3488129}{article}{
	author={Guenancia, Henri},
	author={P\u aun, Mihai},
	title={Conic singularities metrics with prescribed Ricci curvature:
		general cone angles along normal crossing divisors},
	journal={J. Differential Geom.},
	volume={103},
	date={2016},
	number={1},
	pages={15--57},
	issn={0022-040X},
	review={\MR{3488129}},
}

\bib{arxiv:1508.02640}{article}{
	author={Hashimoto, Yoshinori},
	title={Scalar curvature and Futaki invariant of K\"ahler metrics with cone singularities along a divisor},
	journal= {arxiv:1508.02640}
}	

\bib{MR3298665}{article}{
	author={He, Weiyong},
	title={On the space of K\"ahler potentials},
	journal={Comm. Pure Appl. Math.},
	volume={68},
	date={2015},
	number={2},
	pages={332--343},
	issn={0010-3640},
	review={\MR{3298665}},
}

\bib{MR3432582}{article}{
	author={Jeffres, Thalia},
	author={Mazzeo, Rafe},
	author={Rubinstein, Yanir A.},
	title={K\"ahler-Einstein metrics with edge singularities},
	journal={Ann. of Math. (2)},
	volume={183},
	date={2016},
	number={1},
	pages={95--176},
	issn={0003-486X},
	review={\MR{3432582}},
}

\bib{arxiv:1703.06312}{article}{
   author={Keller, Julien},
   author={Zheng, Kai},
   title={Construction of constant scalar curvature K\"ahler cone metrics},
   journal= {arxiv:1703.06312}
}

\bib{MR2425147}{article}{
   author={Ko\l odziej, S\l awomir},
   title={H\"older continuity of solutions to the complex Monge-Amp\`ere
   equation with the right-hand side in $L^p$: the case of compact K\"ahler
   manifolds},
   journal={Math. Ann.},
   volume={342},
   date={2008},
   number={2},
   pages={379--386},
   issn={0025-5831},
   review={\MR{2425147}},
}

\bib{arxiv:1511.00178}{article}{
	author={Li, Long},
	title={Subharmonicity of conic Mabuchi's functional, I },
	journal= {arxiv:1511.00178}
}

\bib{arxiv:1511.02410}{article}{
   author={Li, Long},
   author={Zheng, Kai},
   title={Generalized Matsushima's theorem and K\"ahler-Einstein cone metrics },
   journal= {arxiv:1511.02410}
}

\bib{arxiv:1603.01743}{article}{
    AUTHOR = {Li, Long},
    AUTHOR = {Zheng, Kai},
    TITLE = {Uniqueness of constant scalar curvature K\"ahler metrics with cone singularities, I: Reductivity},
      JOURNAL = {},
       VOLUME = {},
     PAGES= {arxiv:1603.01743},
       YEAR={},

}

\bib{MR3556430}{article}{
	author={Mondello, Gabriele},
	author={Panov, Dmitri},
	title={Spherical metrics with conical singularities on a 2-sphere: angle
		constraints},
	journal={Int. Math. Res. Not. IMRN},
	date={2016},
	number={16},
	pages={4937--4995},
	issn={1073-7928},
	review={\MR{3556430}},
}

\bib{MR1768112}{article}{
   author={Tian, Gang},
   author={Zhu, Xiaohua},
   title={Uniqueness of K\"ahler-Ricci solitons},
   journal={Acta Math.},
   volume={184},
   date={2000},
   number={2},
   pages={271--305},
   issn={0001-5962},
   review={\MR{1768112}},
}

\bib{MR1773558}{article}{
	author={Wang, Guofang},
	author={Zhu, Xiaohua},
	title={Extremal Hermitian metrics on Riemann surfaces with singularities},
	journal={Duke Math. J.},
	volume={104},
	date={2000},
	number={2},
	pages={181--210},
	issn={0012-7094},
	review={\MR{1773558}},
}

\bib{MR3368100}{article}{
   author={Yao, Chengjian},
   title={Existence of weak conical K\"ahler-Einstein metrics along smooth
   hypersurfaces},
   journal={Math. Ann.},
   volume={362},
   date={2015},
   number={3-4},
   pages={1287--1304},
   issn={0025-5831},
   review={\MR{3368100}},
}

\bib{MR480350}{article}{
	author={Yau, Shing Tung},
	title={On the Ricci curvature of a compact K\"ahler manifold and the
		complex Monge-Amp\`ere equation. I},
	journal={Comm. Pure Appl. Math.},
	volume={31},
	date={1978},
	number={3},
	pages={339--411},
	issn={0010-3640},
	review={\MR{480350}},
}

\bib{arxiv:1609.03111}{article}{
	AUTHOR = {Yin, Hao},
	AUTHOR = {Zheng, Kai},
	TITLE = {Expansion formula for complex Monge-Amp\`ere equation along cone
		singularities},
	JOURNAL = {},
	VOLUME = {},
	PAGES= {arxiv:1609.03111},
	YEAR={},
	
}

\bib{MR3412393}{article}{
   author={Zheng, Kai},
   title={$I$-properness of Mabuchi's $K$-energy},
   journal={Calc. Var. Partial Differential Equations},
   volume={54},
   date={2015},
   number={3},
   pages={2807--2830},
   issn={0944-2669},
   review={\MR{3412393}},
}

\bib{MR3496771}{article}{
   author={Zheng, Kai},
   title={K\"ahler metrics with cone singularities and uniqueness problem},
   conference={
      title={Current trends in analysis and its applications},
   },
   book={
      series={Trends Math.},
      publisher={Birkh\"auser/Springer, Cham},
   },
   date={2015},
   pages={395--408},
   review={\MR{3496771}},
}

\end{biblist}
\end{bibdiv}

\end{document}